\newtheorem{thm}{Theorem}[section]
\newtheorem{cor}[thm]{Corollary}
\newtheorem{lem}[thm]{Lemma}
\newtheorem{theorem}{Theorem}
\newtheorem{example}{Example}
\newtheorem{conjecture}{Conjecture}
\newtheorem{lemma}{Lemma}
\newtheorem{thmy}{Theorem}
\newenvironment{thmx}{\stepcounter{thm}\begin{thmy}}{\end{thmy}}
\newtheorem{exm}[thm]{Example}
\newtheorem{prop}[thm]{Proposition}
\theoremstyle{definition}
\newtheorem{defn}[thm]{Definition}
\theoremstyle{remark}
\newtheorem{rem}[thm]{\bf Remark}
\numberwithin{equation}{section}
\newcommand{\sg}{\mathrm{sg}}
\newcommand{\HH}{\mathrm{HH}}
\newcommand{\ie}{{\em i.e.}\ }
\newcommand{\Z}{\mathbb{Z}}
\newcommand{\N}{\mathbb{N}}
\newcommand{\ca}{{\mathcal A}}
\newcommand{\cb}{{\mathcal B}}
\newcommand{\cc}{{\mathcal C}}
\newcommand{\cd}{{\mathcal D}}
\newcommand{\ch}{{\mathcal H}}
\newcommand{\cn}{{\mathcal N}}
\newcommand{\cp}{{\mathcal P}}
\newcommand{\cR}{{\mathcal R}}
\newcommand{\cs}{{\mathcal S}}
\newcommand{\ct}{{\mathcal T}}
\newcommand{\iso}{\xrightarrow{_\sim}}
\newcommand{\ko}{\: , \;}
\newcommand{\ten}{\otimes}
\newcommand{\op}{\mathrm{op}}
\newcommand{\Hom}{\mathrm{Hom}}
\newcommand{\Ext}{\mathrm{Ext}}
\newcommand{\lten}{\overset{\boldmath{L}}{\ten}}
\newcommand{\id}{\mathbf{1}}
\newcommand{\Si}{\Sigma}
\newcommand{\Mod}{\mathrm{Mod}\,}
\newcommand{\dgMod}{\mathrm{dgMod}\,}
\newcommand{\dgcat}{\mathrm{dgcat}}
\newcommand{\Com}{\mathrm{Com}\,}
\newcommand{\dgCom}{\mathrm{dgCom}\,}
\newcommand{\Inj}{\mathrm{Inj}\,}
\renewcommand{\mod}{\mathrm{mod}\,}
\newcommand{\com}{\mathrm{com}\,}
\newcommand{\per}{\mathrm{per}\,}
\newcommand{\thick}{\mathrm{thick}\,}
\newcommand{\pretr}{\mathrm{pretr}\,}
\newcommand{\gr}{\mathrm{gr}\,}
\begin{document}
\title[Leavitt algebra, singular Yoneda category and singularity]{The dg Leavitt algebra, singular Yoneda category and singularity category}
\author[Xiao-Wu Chen, Zhengfang Wang] {Xiao-Wu Chen and Zhengfang Wang,\\
 with an appendix  by Bernhard Keller and Yu Wang}

\thanks{}
\subjclass[2010]{16E45, 18G80, 18E35, 16G20}
\date{\today}

\keywords{dg Leavitt path algebra, singular Yoneda category, singularity category, dg localization, dg quotient}%

\maketitle

\dedicatory{}%
\commby{}%

\begin{abstract}
For any finite dimensional algebra $\Lambda$ given by a quiver with relations,  we prove that its dg singularity category is quasi-equivalent to the perfect dg derived category of a dg Leavitt path algebra. The result might be viewed as a deformed version of the known description of the dg singularity category of a radical-square-zero algebra in terms of a Leavitt path algebra with trivial differential.

The above result is achieved in two steps. We first introduce the singular Yoneda dg category of $\Lambda$, which is quasi-equivalent to the dg singularity category of $\Lambda$. The construction of this new dg category follows from a general operation for dg categories, namely an explicit dg localization inverting a natural transformation from the identity functor to a dg endofunctor. This localization turns out to be quasi-equivalent to a  dg quotient category. Secondly,  we prove that the endomorphism algebra of the quotient of $\Lambda$ modulo its Jacobson radical in the singular Yoneda dg category is  isomorphic to the dg Leavitt path algebra. The appendix is devoted to  an alternative proof of the result  using Koszul-Moore duality and derived localizations.
 \end{abstract}

\tableofcontents

\section{Introduction}

\subsection{The background and main results}

Let $\mathbb{K}$ be a field and $\Lambda$  a finite dimensional algebra over $\mathbb{K}$. The \emph{singularity category} $\mathbf{D}_{\rm sg}(\Lambda)$ of $\Lambda$ is defined as the Verdier quotient
of the bounded derived category $\mathbf{D}^b(\Lambda\mbox{-mod})$ of finitely generated left $\Lambda$-modules by the full subcategory of perfect complexes. This notion was first introduced in \cite{Buc} and then rediscovered in \cite{Orl} motivated by the homological mirror symmetry conjecture.  The singularity category measures the homological singularity of the algebra: it vanishes if and only if the algebra $\Lambda$ is of finite global dimension.

The homotopy category $\mathbf{K}_{\rm ac}(\Lambda\mbox{-Inj})$ \cite{Kra} of acyclic complexes of arbitrary injective $\Lambda$-modules is a \emph{compactly generated completion} of $\mathbf{D}_{\rm sg}(\Lambda)$. This means that $\mathbf{K}_{\rm ac}(\Lambda\mbox{-Inj})$ is compactly generated and that its full subcategory of compact objects is triangle equivalent to $\mathbf{D}_{\rm sg}(\Lambda)$. However, in general, we do not know whether $\mathbf{D}_{\rm sg}(\Lambda)$ determines  $\mathbf{K}_{\rm ac}(\Lambda\mbox{-Inj})$ uniquely as a triangulated category.

As is well known, triangulated categories arising naturally in algebra usually have a dg enhancement, that is, there is a pretriangulated dg category whose zeroth cohomology yields the given triangulated category \cite{LO}. For instance,  the \emph{dg singularity category} $\mathbf{S}_{\rm dg}(\Lambda)$  \cite{Kel18, BRTV, BrDy} is a canonical dg enhancement of $\mathbf{D}_{\rm sg}(\Lambda)$, which is defined to be the dg quotient of the bounded dg derived category $\mathbf{D}_{\rm dg}^b(\Lambda\mbox{-mod})$ by the full dg subcategory of perfect complexes.

In comparison with the singularity category, the dg singularity category contains more information and has more invariants. For example, the above completion $\mathbf{K}_{\rm ac}(\Lambda\mbox{-Inj})$ is uniquely determined by $\mathbf{S}_{\rm dg}(\Lambda)$: there is a triangle equivalence
\begin{align}\label{intro:1}
\mathbf{K}_{\rm ac}(\Lambda\mbox{-Inj})\simeq \mathbf{D}(\mathbf{S}_{\rm dg}(\Lambda)^{\rm op}),
\end{align}
where $\mathbf{D}(\mathbf{S}_{\rm dg}(\Lambda)^{\rm op})$ is the derived category of right dg $\mathbf{S}_{\rm dg}(\Lambda)$-modules; see \cite{Kra, CLiuW}.  The main theorem in \cite{Kel18}  states that under mild conditions the Hochschild cohomology of $\mathbf{S}_{\rm dg}(\Lambda)$ is isomorphic to the singular Hochschild cohomology \cite{ZF} of the algebra $\Lambda$.

We are interested in describing the (dg) singularity categories of $\Lambda$. Let us assume for a moment that $\Lambda=\mathbb{K}Q/I$, where $\mathbb{K}Q$ is the path algebra of a finite quiver $Q$ and $I$ is an admissible ideal of $\mathbb{K}Q$.

Recall from \cite{Smi} the description of $\mathbf{D}_{\rm sg}(\Lambda)$ when $\Lambda$ is radical square zero, i.e.\ the ideal $I$ contains all paths of length two. Then $\Lambda=\mathbb{K}Q_0\oplus \mathbb{K}Q_1$ has a basis given by vertices and arrows in $Q$. Denote by $Q^\circ$ the finite quiver without sinks, which is obtained from $Q$ by removing sinks repeatedly. The corresponding  Leavitt path algebra $L(Q^\circ )$ in the sense of \cite{AA, AGGP, AMP} is naturally graded, and is viewed as a dg algebra with trivial differential.  One of the main results in \cite{Smi} states a triangle equivalence
$$\mathbf{D}_{\rm sg}(\Lambda)\simeq \mathbf{per}(L(Q^\circ)).$$
Here, $\mathbf{per}(L(Q^\circ))$ denotes the perfect derived category of left dg $L(Q^\circ)$-modules. Indeed, by the work \cite{Li,CLW},  such a triangle equivalence lifts to a quasi-equivalence between the corresponding dg enhancements
$$\mathbf{S}_{\rm dg}(\Lambda)\simeq \mathbf{per}_{\rm dg}(L(Q^\circ)).$$
Combining (\ref{intro:1}) with the quasi-equivalence above, we  recover the following triangle  equivalence in \cite{CY}:
$$\mathbf{K}_{\rm ac}(\Lambda\mbox{-Inj})\simeq \mathbf{D}(L(Q^\circ)),$$
where $\mathbf{D}(L(Q^\circ))$ denotes the derived category of left dg $L(Q^\circ)$-modules.

 We observe that usually the Leavitt path algebra $L(Q^\circ)$ is infinite dimensional in each degree, and that correspondingly the singularity category of $\Lambda$ is usually Hom-infinite \cite{Chen11}. Leavitt path algebras are related to noncommutative geometry \cite{Smi}, symbolic dynamic systems \cite{ALPS, Haz}, graph $C^*$-algebras \cite{CO, AAM} and algebraic bivariant K-theory \cite{Cor,CorMon18, CorMon20}.

We will extend the above description to the general case. For $\Lambda=\mathbb{K}Q/I$, we have a natural decomposition $\Lambda=\mathbb{K}Q_0\oplus J$ with $J$ its Jacobson radical.  Following \cite{Sch}, we introduce the \emph{radical quiver} $\widetilde Q$ of $\Lambda$: it has the same vertex set as $Q$, that is, $\widetilde Q_0=Q_0$; for any vertices $i$ and $j$, the arrows from $i$ to $j$ correspond to elements in a basis of $e_jJe_i$. Here, $e_i$ denotes the corresponding primitive idempotent of the vertex $i$. In other words, we identify $J$ with $\mathbb{K}\widetilde Q_1$, the vector space spanned by the arrow set $\widetilde Q_1$ of $\widetilde Q$. The multiplication on $J$ is transferred to an associative product
$$\mu\colon \mathbb{K}\widetilde Q_1\otimes_{\mathbb{K}\widetilde Q_0} \mathbb{K}\widetilde Q_1\longrightarrow \mathbb{K}\widetilde Q_1.$$
In this way, $\Lambda$ is viewed as a \emph{deformation} of the radical-square-zero algebra $\widetilde{\Lambda}:=\mathbb{K}\widetilde Q_0\oplus \mathbb{K}\widetilde Q_1$; see \cite{Sch,BW}. We mention that the algebra $\Lambda$ may be recovered from $\widetilde{\Lambda}$ using the product $\mu$.

It is well known that such an associative product $\mu$ gives rise to a differential on the path algebra of the opposite quiver of $\widetilde Q$; see \cite{BSZ}.  In the same manner, it gives rise to a differential $\partial$ on the Leavitt path algebra $L(\widetilde Q^\circ)$. Here, $\widetilde Q^\circ$ is the quiver without sinks that is obtained from $\widetilde Q$ by removing sinks repeatedly. The resulting  dg Leavitt path algebra is denoted by  $L(\widetilde Q^\circ)_\partial$ temporarily.

The following result describes the (dg) singularity categories of $\Lambda$ in terms of dg Leavitt path algebras; see Theorem~\ref{thm:quiveralgebra}. It indicates that dg Leavitt path algebras are ubiquitous in the study of singularity categories.

\begin{thmx}\label{thmA} Let $\Lambda=\mathbb{K}Q/I$ be a finite dimensional algebra, and  $\widetilde Q$  be its radical quiver. Then there is a quasi-equivalence
\[
\mathbf{S}_{\rm dg}(\Lambda) \simeq \mathbf{per}_{\rm dg}(L(\widetilde Q^\circ)_\partial).
\]
Consequently, there are triangle equivalences
$$\mathbf{D}_{\rm sg}(\Lambda) \simeq \mathbf{per}(L(\widetilde Q^\circ)_\partial) \quad \mbox{ and } \quad \mathbf{K}_{\rm ac}(\Lambda\mbox{-}{\rm Inj})\simeq \mathbf{D}(L(\widetilde Q^\circ)_\partial).$$
\end{thmx}

If the algebra $\Lambda$ is radical square zero, then $\widetilde Q=Q$ and the differential $\partial$ vanishes. Applying Theorem~\ref{thmA} to this situation, we recover the mentioned results in \cite{Smi} and \cite{CY,CLW}.

 The idea behind Theorem~\ref{thmA} is illustrated by the following diagram.
\begin{align}\label{align:deformationdiagram}
\xymatrix@C=4pc{
\widetilde{\Lambda} \ar@{~>}[d]_-{\text{deform}}\ar@{<->}[rr]^-{\text{quasi-equivalence}} && L(\widetilde Q^{\circ}) \ar@{~>}[d]^-{\text{deform}}\\
\Lambda   \ar@{<->}[rr]^-{\text{quasi-equivalence}} && L(\widetilde Q^\circ)_\partial
}
\end{align}
The horizontal arrows indicate the quasi-equivalences between the relevant dg singularity categories and perfect dg derived categories. For the vertical arrow on the right, we mention that it is customary to deform a dg algebra by only changing its differential, which is a particular $A_\infty$-deformation \cite{TT}; compare \cite{Kel11}. However, we do not know how to deduce the quasi-equivalence at the bottom from the one at the top via the deformation theory \cite{KeLo, LoVa} of dg categories; see Remark~\ref{rem:deformation}(2).

  The proof of Theorem~\ref{thmA} is divided into two steps. We first introduce the \emph{singular Yoneda dg category} $\mathcal{SY}$ of $\Lambda$, which turns out to be quasi-equivalent to $\mathbf{S}_{\rm dg}(\Lambda)$.
  Secondly,   using the explicit description of $\mathcal{SY}$,  we show that  the endomorphism algebra of $\mathbb{K}Q_0$ in  $\mathcal{SY}$ is isomorphic to the dg Leavitt path algebra $L(\widetilde Q^\circ)_\partial$.

  The construction of $\mathcal{SY}$ follows from a general operation for dg categories described as follows.   Let $\mathcal{C}$ be a dg category, $\Omega$ a dg endofunctor on $\mathcal{C}$, and $\theta\colon {\rm Id}_\mathcal{C}\rightarrow \Omega$ a closed natural transformation of degree zero satisfying $\theta\Omega=\Omega\theta$. By inverting  $\theta_X$ for all objects $X$, we construct a new and explicit dg category $\mathcal{SC}$ with a dg functor
   $$\iota\colon \mathcal{C}\longrightarrow \mathcal{SC},$$
   called the (strict) \emph{dg localization} along $\theta$. The objects of $\mathcal{SC}$ are the same as $\mathcal C$, and the Hom complexes are defined via a colimit construction which is similar to the one in defining the singular Hochschild cochain complex \cite{Wan1}.

    To obtain $\mathcal{SY}$ from the above  general operation, we consider the \emph{Yoneda dg category} $\mathcal Y$, a natural dg enhancement of the derived category of $\Lambda$ via the bar resolution \cite{Kel94}. The relevant dg endofunctor on $\mathcal Y$ is induced by noncommutative differential forms \cite{CQ,ZF}.

   The quasi-equivalence between $\mathcal{SY}$ and $\mathbf{S}_{\rm dg}(\Lambda)$ is a special case of the following general result; see Theorem~\ref{thm:dgl}. We mention that a similar idea of this result is implicitly contained in \cite[Section~7]{Kel05}.

\begin{thmx} \label{thmB} Assume that $\mathcal{C}$ is pretriangulated. Then $\mathcal{SC}$ is pretriangulated and $\iota$ induces a quasi-equivalence
$$\mathcal{C}/\mathcal{N}\stackrel{\sim}\longrightarrow \mathcal{SC},$$
where $\mathcal{N}$ is the full dg subcategory formed by the cones of $\theta_X$, and $\mathcal{C}/\mathcal{N}$ denotes the dg quotient category.
\end{thmx}

For dg quotient categories, we refer to \cite{Kel99, Dri}. In general, the structure of a dg quotient category is rather complicated and mysterious. Theorem~\ref{thmB} describes certain dg quotient categories explicitly.

\subsection{Conventions and structure}  We fix a commutative ring $\mathbb{K}$ and work over $\mathbb{K}$. This means that we require that all the  categories and functors are  $\mathbb{K}$-linear. In Section~\ref{section10} we will further assume that $\mathbb K$ is a field.

We fix two $\mathbb{K}$-algebras $E$ and $\Lambda$ together with a fixed homomorphism $E\rightarrow \Lambda$ of $\mathbb{K}$-algebras. Denote by $\overline{\Lambda}$  its cokernel, which is naturally an $E$-$E$-bimodule. In most cases,  we will further assume that $\Lambda$ is augmented over $E$, that is, there is an algebra homomorphism $\pi\colon \Lambda\rightarrow E$ such that the composition $E\rightarrow \Lambda\stackrel{\pi}\rightarrow E$ is the identity. Then the $E$-$E$-bimodule $\overline{\Lambda}$ has an induced associative product $\mu\colon \overline{\Lambda}\otimes_E\overline{\Lambda}\rightarrow \overline{\Lambda}$ by identifying $\overline \Lambda$ with the kernel of $\pi$. We will view $E$ as a $\Lambda$-module via the homomorphism $\pi$.

In the sequel, we will work in the relative setup. For example, we will study various $E$-relative derived categories and $E$-relative singularity categories of $\Lambda$.

By default, a module means a left module. For example, ${\rm Hom}_E(-,-)$ means the Hom bifunctor on the category of left $E$-modules. A left $E$-module $M$ is usually denoted by $_EM$, which emphasizes the $E$-action from the left side.

Throughout, we use cohomological notation. In the dg setup, we always consider homogeneous elements or morphisms. The translation functor on any triangulated category is denoted by $\Sigma$.

\bigskip

The paper is organized as follows. In Section~\ref{section2}, we study the Cohn algebra $C_E(M)$ and Leavitt algebra $L_E(M)$ associated to an $E$-$E$-bimodule $M$. We prove that the Leavitt algebra is isomorphic to the colimit of an explicit sequence; see Theorem~\ref{thm:Leavitt-col}.

In Section~\ref{section3}, we assume that the bimodule $M$ is equipped with an associative product $\mu\colon M\otimes_E M\rightarrow M$. We show that $\mu$  induces a differential on the Cohn algebra, which descends to a differential on the Leavitt algebra. Consequently, we obtain the dg Cohn algebra and dg Leavitt algebra associated to $(M, \mu)$. In Section~\ref{section4}, we work in the quiver case. More precisely, for a finite quiver $Q$, we set $E=\mathbb{K}Q_0$ and $M=\mathbb{K}Q_1$. Applying the results in Section~\ref{section3} to this situation, we obtain the dg Cohn path algebra and dg Leavitt path algebra; compare Proposition~\ref{prop:iso-path}.

We recall basic facts on pretriangulated dg categories in Section~\ref{section5}. We introduce an explicit dg localization in Section~\ref{section6}. The universal property in Proposition~\ref{prop:dgl} justifies this terminology.  Theorem~\ref{thm:dgl} shows that the dg localization is quasi-equivalent to a dg quotient category.

Inspired by \cite{Kel94}, we introduce the Yoneda dg category $\mathcal{Y}$ of $\Lambda$ in Section~\ref{section7}. It is quasi-equivalent to the dg derived category of $\Lambda$; see Proposition~\ref{prop:Theta} and Corollary~\ref{cor:finite}.  We prove that the endomorphism algebra of $E$ in $\mathcal{Y}$ is isomorphic to a dg tensor algebra associated to $(\overline{\Lambda}, \mu)$; see Proposition~\ref{prop:iso-Y-tensor}.

In Section~\ref{section8}, we introduce noncommutative differential forms \cite{CQ,ZF} with values in complexes of $\Lambda$-modules. This gives rise to a dg endofunctor  $\Omega_{\rm nc}$ on $\mathcal{Y}$, together with a natural transformation $\theta \colon \mathrm{Id}_{\mathcal Y} \to \Omega_{\rm nc}$. We actually show that the assumptions for the dg localization in Section~\ref{section6} are satisfied on $(\mathcal{Y}, \Omega_{\rm nc}, \theta)$.

In Section~\ref{section9}, we take the dg localization of $\mathcal{Y}$ along $\theta$, and obtain the singular Yoneda dg category $\mathcal{SY}$ of $\Lambda$. This terminology is justified by Proposition~\ref{prop:V-SY} and Corollary~\ref{cor:finite-SY}, that is, the singular Yoneda dg category $\mathcal{SY}$ is quasi-equivalent to the dg singularity category.   In Theorem~\ref{thm:SY-Leavitt}, we prove that the endomorphism algebra of $E$ in $\mathcal{SY}$ is exactly isomorphic to the dg Leavitt algebra $L_E(\overline{\Lambda})$ associated to $(\overline{\Lambda}, \mu)$, studied in Section~\ref{section3}.

In Section \ref{section10}, we apply Theorem~\ref{thm:SY-Leavitt} to any finite dimensional algebra $\Lambda$. Theorem~\ref{thm:quiveralgebra} relates the dg singularity category of $\Lambda$ to the dg Leavitt path algebra, which is associated to the radical quiver $\widetilde{Q}$ of $\Lambda$ and a transferred associative product $\mu$ on $\mathbb{K}\widetilde{Q}_1$. In the end, we give an explicit example of a dg Leavitt path algebra, whose minimal $A_\infty$-model is  explicitly described.

In the appendix, Bernhard Keller and Yu Wang give an alternative proof of Theorem ~\ref{thm:quiveralgebra}  using Koszul-Moore duality in \cite{Kel03a} and derived localizations in \cite{BCL18}.

\section{The Cohn and Leavitt algebras}\label{section2}

Throughout this section, we assume that $E$ is a $\mathbb{K}$-algebra and that $M$ is an $E$-$E$-bimodule on which $\mathbb{K}$ acts centrally. We study the Cohn algebra and Leavitt algebra associated to $M$.

 Denote by $M^*={\rm Hom}_E(M, E)$ the left dual $E$-$E$-bimodule whose bimodule structure is given by
\begin{align}
\label{equ:dual-action}
(afb)(m)=f(ma)b \quad \quad \text{for $a, b\in E$, $m\in M$ and $f\in M^*$.}
\end{align}
Denote by
$$T_E(M^*)=E\oplus M^*\oplus (M^*)^{\otimes_E 2}\oplus \cdots$$ the tensor algebra. Its typical element $f_1\otimes_E f_2\otimes_E \cdots \otimes_E f_q$, with $f_i\in M^*$ for  each  $1\leq i\leq q$, will be abbreviated as $f_{1, q}$. If $q=0$, the notation $f_{1,q}$ usually means the unit element $1_E$.

Inspired by \cite[\S~8]{Co} and \cite[Definition~1.5.1]{AAM}, we will define the \emph{Cohn algebra} $C_E(M)$ associated to $M$ as follows. As a $\mathbb{K}$-module, we have
$$C_E(M):=T_E(M^*) \otimes_E T_E(M) =  \bigoplus_{p\geq 0} T_E(M^*)\otimes_E {M^{\otimes_E p}}.$$
Its typical element
\begin{align}\label{align:typicalelement}
f_1\otimes_E f_2\otimes_E \cdots \otimes_E f_q\otimes_E x_1\otimes_E x_2\otimes_E \cdots \otimes_E x_p
\end{align}
will be abbreviated as $f_{1,q}\otimes_E x_{1,p}$ for $f_i\in M^*$, $x_j\in M$ and $p, q\geq 0$. Take another typical element $g_{1, s}\otimes_E y_{1, t}$. The multiplication of $C_E(M)$ is determined by  the following rule:
\begin{align}\label{equ:mult}
(f_{1,q}\otimes_E x_{1,p}) \bullet (g_{1, s}\otimes_E y_{1, t})= f_{1, q}\otimes_E Z\otimes_E y_{1,t}
\end{align}
where the middle tensor $Z$ is equal to
\begin{align*}
\begin{cases}
g_p(x_1g_{p-1}(x_2g_{p-2}(\dotsb (x_{p-1}g_1(x_p))\dotsb)))\,  g_{p+1,s}\in (M^*)^{\otimes_E s-p},                    & \text{ if  $p<s$}; \\
g_p(x_1g_{p-1}(x_2g_{p-2}(\dotsb (x_{p-1}g_1(x_p))\dotsb))) \quad \in E,     & \text{ if  $p=s$};\\
x_{1, p-s} \, g_s(x_{p-s+1}g_{s-1}(x_{p-s+2}g_{s-2}(\dotsb (x_{p-1}g_1(x_p))\dotsb))) \in M^{\otimes_E p-s},                                 & \text{ if $p>s$}.
\end{cases}
\end{align*}
It is routine to verify that the above multiplication makes $C_E(M)$ into an associative $\mathbb{K}$-algebra and that its unit is given by $1_E \in E$. We observe that $T_E(M^*)$ and $T_E(M)$ are naturally subalgebras of $C_E(M)$.

The following example illustrates the multiplication $\bullet$ of $C_E(M)$ in more detail.

\begin{exm} We have
$$(f_{1,q}\otimes_E x_{1,3}) \bullet (g_{1,4}\otimes_E y_{1,t})=f_{1, q}\otimes_E g_3(x_1 g_2(x_2 g_1(x_3)))g_4\otimes_E y_{1,t},$$
which lies in $(M^*)^{\otimes_E (q+1)}\otimes_E M^{\otimes_E t}$. The element $g_3(x_1 g_2(x_2 g_1(x_3)))$ lies in $E$, and the expression $g_3(x_1 g_2(x_2 g_1(x_3)))g_4$ means the left $E$-action of $g_3(x_1 g_2(x_2 g_1(x_3)))$ on the element $g_4 \in M^*$; see (\ref{equ:dual-action}). Similarly, we have
$$(f_{1,q}\otimes_E x_{1,4}) \bullet (g_{1,3}\otimes_E y_{1,t})=f_{1, q}\otimes_E x_1g_3(x_2 g_2(x_3 g_1(x_4)))\otimes_E y_{1,t},$$
which lies in $(M^*)^{\otimes_E q}\otimes_E M^{\otimes_E (t+1)}$. The expression $x_1g_3(x_2 g_2(x_3 g_1(x_4)))$ means the right $E$-action of $g_3(x_2 g_2(x_3 g_1(x_4))) \in E$ on the element $x_1 \in M$.
\end{exm}

We observe that $x\bullet g=g(x)\in E$ for $x\in M$ and $g\in M^*$. Therefore, the inclusion $M^*\oplus M\subseteq C_E(M)$ induces a well-defined $\mathbb{K}$-algebra homomorphism
$$\Phi \colon T_E(M^*\oplus M)/{ ( x\otimes_E g-g(x)\; |\; x\in M, g\in M^*)}\longrightarrow C_E(M).$$

\begin{prop}\label{prop:isoC}
The above algebra homomorphism $\Phi$ is an isomorphism.
\end{prop}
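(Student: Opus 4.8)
The plan is to construct an explicit inverse to $\Phi$ and to check both composites are the identity. The domain algebra $A := T_E(M^*\oplus M)/(x\otimes_E g - g(x))$ has an obvious $\mathbb{K}$-linear spanning set given by words in letters from $M^*$ and $M$; the relation lets us move any $M$-letter past any $M^*$-letter to its right at the cost of replacing the pair $x\otimes_E g$ by the scalar-like element $g(x)\in E$ (absorbed into adjacent tensor factors via the bimodule structure). So every element of $A$ is a $\mathbb{K}$-linear combination of \emph{reduced words} of the form $f_{1,q}\otimes_E x_{1,p}$, i.e. all $M^*$-letters first, then all $M$-letters. First I would make this normal-form claim precise: define a rewriting procedure on words that repeatedly applies $x\otimes_E g \mapsto g(x)$ wherever an $M$-letter immediately precedes an $M^*$-letter, and observe it terminates (the number of such "bad" adjacent pairs, or a suitable inversion count, strictly decreases) and that the resulting reduced words span $A$.

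Next I would define the candidate inverse $\Psi\colon C_E(M)\to A$ on the basis-like elements by $\Psi(f_{1,q}\otimes_E x_{1,p}) = f_1\otimes_E\cdots\otimes_E f_q\otimes_E x_1\otimes_E\cdots\otimes_E x_p$ (the image of the reduced word), and extend $\mathbb{K}$-linearly; this is well-defined because $C_E(M)$ is \emph{by construction} the direct sum $\bigoplus_{p,q} (M^*)^{\otimes_E q}\otimes_E M^{\otimes_E p}$, so there is no ambiguity on the $C_E(M)$ side. Then $\Phi\circ\Psi = \mathrm{id}_{C_E(M)}$ is essentially immediate from the definition of $\Phi$ on generators. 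The real content is $\Psi\circ\Phi = \mathrm{id}_A$: since $A$ is spanned by reduced words and $\Phi$ sends a reduced word to the corresponding element of $C_E(M)$ whose $\Psi$-image is that same reduced word, it suffices to know that $\Phi$ is \emph{surjective} onto the spanning set (clear) and that $\Psi$ is a left inverse on generators, but to conclude $\Psi\Phi=\mathrm{id}$ one must rule out that $\Phi$ collapses $A$ further than the reduction relations already do. Equivalently, one shows $\Phi$ is injective by showing the reduced words, whose images under $\Phi$ are linearly independent in $C_E(M)$ (they sit in distinct or linearly independent pieces of the direct sum decomposition), are themselves linearly independent in $A$ — but that is exactly what having a well-defined normal form gives us.

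So the logical skeleton is: (i) reduced words span $A$; (ii) $\Phi$ carries reduced words bijectively onto a $\mathbb{K}$-spanning family of $C_E(M)$ whose $\mathbb{K}$-linear independence is visible from the direct-sum decomposition $C_E(M)=\bigoplus_{p,q}(M^*)^{\otimes_E q}\otimes_E M^{\otimes_E p}$; (iii) therefore reduced words are linearly independent in $A$ and form a basis, and $\Psi$ (defined on this basis) is a two-sided inverse to $\Phi$; (iv) check $\Psi$ is an algebra homomorphism, or alternatively check directly that $\Phi$ is multiplicative — and here one verifies that the three-case formula for the middle tensor $Z$ in (\ref{equ:mult}) is precisely what the repeated application of $x\otimes_E g\mapsto g(x)$ produces when one concatenates a reduced word $f_{1,q}\otimes_E x_{1,p}$ with a reduced word $g_{1,s}\otimes_E y_{1,t}$ and pushes the $x$'s rightward through the $g$'s. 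Indeed the nested expression $g_p(x_1 g_{p-1}(x_2\cdots))$ is exactly the outcome of reducing $x_1\cdots x_p\otimes g_1\cdots g_s$ from the inside out, matching $x_p\otimes g_1$ first.

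I expect the main obstacle to be bookkeeping in step (iv): confirming that the combinatorics of iterated reduction on the juxtaposition of two reduced words reproduces, on the nose, the nested-evaluation formula for $Z$ in all three cases $p<s$, $p=s$, $p>s$, including the correct placement of the surviving $E$-scalar acting on the adjacent $M^*$- or $M$-factor as in (\ref{equ:dual-action}). Once one trusts that the multiplication $\bullet$ on $C_E(M)$ was \emph{defined} so as to make the reduction map multiplicative, the rest is formal; the cleanest write-up is probably to prove $\Phi$ is bijective by the normal-form argument (steps (i)--(iii)), note $\Phi$ is an algebra map since it is induced by an inclusion into an associative algebra, and then the inverse of a bijective algebra homomorphism is automatically an algebra homomorphism, so $\Psi$ need not be checked separately. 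The termination/confluence of the rewriting (a Diamond-Lemma style argument, cf. Bergman) is the one genuinely technical point, though in this situation confluence is trivial because the only rewrite rule acts on a length-two subword with no overlaps.
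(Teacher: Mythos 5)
Your core construction---the explicit $\mathbb{K}$-linear map $\Psi$ sending the tensor $f_{1,q}\otimes_E x_{1,p}\in C_E(M)$ back to the image of the corresponding tensor in $R:=T_E(M^*\oplus M)/(x\otimes_E g-g(x))$---is exactly the map $\Phi'$ in the paper's proof, and surjectivity of $\Phi$ is handled identically. The flaw is in how you justify $\Psi\circ\Phi=\mathrm{id}_R$. In steps (ii)--(iii) you assert that the reduced words $f_{1,q}\otimes_E x_{1,p}$ are $\mathbb{K}$-linearly independent, "visible from the direct-sum decomposition", and you invoke a Diamond-Lemma normal form. This is not available here: $M$ and $M^*$ are arbitrary $E$-bimodules over an arbitrary $\mathbb{K}$-algebra $E$, with no distinguished bases, so within a fixed bidegree $(q,p)$ the pure tensors satisfy all the $E$-balanced multilinearity relations of $(M^*)^{\otimes_E q}\otimes_E M^{\otimes_E p}$ (for instance $f\otimes_E x+f'\otimes_E x=(f+f')\otimes_E x$). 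There is no basis of reduced words, and Bergman's Diamond Lemma, which operates in a free algebra on a set, does not apply in this generality (it would in the quiver case of Section~\ref{section4}, where $M=\mathbb{K}Q_1$ has the arrows as a basis, but Proposition~\ref{prop:isoC} is stated for general bimodules).

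Fortunately the independence claim is not load-bearing, and your argument survives once it is removed. Either: (a) observe that $\Psi\circ\Phi$ is a well-defined $\mathbb{K}$-linear endomorphism of $R$ fixing the image of every reduced word, and that these images span $R$ (induction on word length using $x\otimes_E g=g(x)$); a linear map agreeing with the identity on a spanning set is the identity, no independence and no confluence needed---there is nothing further to "rule out" about $\Phi$ collapsing $R$. Or (b), the paper's route: verify directly from the three-case formula \eqref{equ:mult} that $\Psi$ is an algebra homomorphism (this is the same bookkeeping you flag in step (iv)); then $\Psi\circ\Phi$ is an algebra endomorphism of $R$ agreeing with the identity on the generating set $E\oplus(M^*\oplus M)$, hence equals $\mathrm{Id}_R$, and injectivity of $\Phi$ follows formally, with no rewriting argument at all. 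In either case you should also record why $\Psi$ is well defined: it is defined summand by summand via the universal property of the tensor product over $E$, which is where the direct-sum decomposition of $C_E(M)$ actually does its work.
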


\begin{proof}
Denote the domain of $\Phi$ by $R$. In $C_E(M)$, we have
$$f_{1,q}\otimes_E x_{1,p}=f_1\bullet \cdots \bullet f_q\bullet x_1\bullet \cdots \bullet x_p.$$
It follows that $E\oplus (M^*\oplus M)$ generates $C_E(M)$ and thus $\Phi$ is surjective.

We define a $\mathbb{K}$-linear map
$$\Phi' \colon C_E(M)\longrightarrow R,$$
 which sends a typical element $f_{1,q}\otimes_E x_{1,p}\in C_E(M)$ to the image of the corresponding tensor $f_{1,q}\otimes_E x_{1,p}\in T_E(M^*\oplus M)$ in $R$. Using (\ref{equ:mult}), we verify that $\Phi'$ is an algebra homomorphism. We deduce $\Phi' \circ \Phi={\rm Id}_R$ by evaluating the both sides on $E\oplus (M^*\oplus M)$. Then $\Phi$ is injective, proving the required statement.
\end{proof}

\begin{rem}\label{rem:CO}
The following evaluation map
$${\rm ev}\colon M\otimes_E M^*\longrightarrow E, \quad x\otimes_E g\mapsto g(x)$$
is an $E$-$E$-bimodule map. Then $(M, M^*, {\rm ev})$ is an $R$-system in the sense of \cite[Definition~1.1]{CO}. By the above isomorphism, we observe that the Cohn algebra $C_E(M)$ is isomorphic to the Toeplitz ring of $(M, M^*, {\rm ev})$; see \cite[Theorem~1.7]{CO}.
\end{rem}

Assume that the underlying left $E$-module of  $M$ is finitely generated projective. We have the canonical isomorphism of $E$-$E$-bimodules
$$M^*\otimes_E M\stackrel{\sim}\longrightarrow {\rm Hom}_E(M, M), \quad\quad  f\otimes_E x\mapsto (m\mapsto f(m)x).$$
We denote by $c\in M^*\otimes_E M$ the preimage of ${\rm Id}_M$, which is called the \emph{Casimir element} of $M$. We observe that $ac=ca$ for any $a\in E$.

 Write $c=\sum_{i\in S}\alpha_i^*\otimes_E \alpha_i\in M^*\otimes_E M$. Then $\{\alpha_i\}_{i\in S}$ and $\{\alpha_i^*\}_{i\in S}$ form a dual basis of $M$, i.e.\ we have
\begin{align}\label{equ:dualbasis}
x=\sum_{i\in S}\alpha_i^*(x)\alpha_i \quad \quad \mbox{and} \quad \quad f=\sum_{i\in S}\alpha_i^* f(\alpha_i)
\end{align}
for any $x\in M$ and $f\in M^*$. We will view $c$ as an element in $C_E(M)$.

The following definition is inspired by \cite[\S~3]{Lea} and \cite{AA, AGGP, AMP}.

\begin{defn}
Let $M$ be an $E$-$E$-bimodule with $_EM$ finitely generated projective. The \emph{Leavitt algebra} $L_E(M)$ associated to $M$ is defined to be the  quotient algebra
$$L_E(M)=C_E(M)/{(1_E-c)}.$$
\end{defn}

\vskip 3pt

By the above isomorphism $\Phi$, we infer an isomorphism of algebras
$$L_E(M)\simeq T_{E}(M^*\oplus M)/{( m\otimes_E g-g(m), \ 1_E-c \mid m\in M, g\in M^*)}.$$
Similar to Remark~\ref{rem:CO}, the Leavitt algebra $L_E(M)$ is isomorphic to the Cuntz-Pimsner ring of $(M, M^*, {\rm ev})$ relative to the whole algebra $E$; see \cite[Definition~3.16]{CO} and compare \cite[Example~5.8]{CO}.

\begin{lem}\label{lem:princ}
The principal ideal $(1_E-c)$ of $C_E(M)$ is spanned, as an $E$-$E$-bimodule, by  elements of the form $f_{1,q}\otimes_E x_{1,p}-f_{1,q}\otimes_E c\otimes_E x_{1,p}$ for $p, q \geq 0$.
\end{lem}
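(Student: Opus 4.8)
The plan is to describe the principal ideal $(1_E-c)$ explicitly by multiplying $1_E-c$ on both sides by arbitrary elements of $C_E(M)$ and simplifying using the multiplication rule (\ref{equ:mult}). Recall that $c=\sum_{i\in S}\alpha_i^*\otimes_E\alpha_i$ is central in $C_E(M)$ modulo nothing — in fact $ac=ca$ in $C_E(M)$ for $a\in E$, but $c$ is \emph{not} central in general, so I will have to be careful: $(1_E-c)$ is a genuine two-sided ideal and a spanning set must absorb multiplication from both sides.

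First I would observe that, as a $\mathbb{K}$-module, $C_E(M)$ is spanned by the typical elements $f_{1,q}\otimes_E x_{1,p}$, so $(1_E-c)$ is spanned by products $(f_{1,q}\otimes_E x_{1,p})\bullet(1_E-c)\bullet(g_{1,s}\otimes_E y_{1,t})$. I would then compute $(f_{1,q}\otimes_E x_{1,p})\bullet c\bullet(g_{1,s}\otimes_E y_{1,t})$ using (\ref{equ:mult}) twice. The key elementary identities are: for $x\in M$ and $g\in M^*$ one has $x\bullet c=x$ (from $x=\sum_i\alpha_i^*(x)\alpha_i$, i.e.\ the first formula in (\ref{equ:dualbasis})) and $c\bullet g=g$ (from the second formula in (\ref{equ:dualbasis})); more precisely $x_{1,p}\bullet c = x_{1,p}$ when $p\geq 1$ and $c\bullet g_{1,s}=g_{1,s}$ when $s\geq 1$. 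Hence inserting $c$ in the middle, $(f_{1,q}\otimes_E x_{1,p})\bullet c\bullet(g_{1,s}\otimes_E y_{1,t})$: if $p\geq 1$ then $x_{1,p}\bullet c=x_{1,p}$, so the product collapses to $(f_{1,q}\otimes_E x_{1,p})\bullet(g_{1,s}\otimes_E y_{1,t})$; symmetrically if $s\geq 1$. Therefore the only products $(f_{1,q}\otimes_E x_{1,p})\bullet(1_E-c)\bullet(g_{1,s}\otimes_E y_{1,t})$ that can be nonzero are those with $p=0$ and $s=0$, i.e.\ of the form $(f_{1,q})\bullet(1_E-c)\bullet(y_{1,t})$ with $f_{1,q}\in(M^*)^{\otimes_E q}$ and $y_{1,t}\in M^{\otimes_E t}$. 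Expanding $c=\sum_i\alpha_i^*\otimes_E\alpha_i$ and using (\ref{equ:mult}) (the middle tensor $Z$ is simply $\alpha_i^*\otimes_E\alpha_i$ inserted between, since each of $f_{1,q}$ and $y_{1,t}$ is purely in $M^*$ respectively $M$), this gives exactly $f_{1,q}\otimes_E y_{1,t}-\sum_i f_{1,q}\otimes_E\alpha_i^*\otimes_E\alpha_i\otimes_E y_{1,t}=f_{1,q}\otimes_E y_{1,t}-f_{1,q}\otimes_E c\otimes_E y_{1,t}$.

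To finish, I would note that this shows $(1_E-c)$ is spanned as a $\mathbb{K}$-module — hence a fortiori as an $E$-$E$-bimodule — by the stated elements $f_{1,q}\otimes_E x_{1,p}-f_{1,q}\otimes_E c\otimes_E x_{1,p}$ for $p,q\geq 0$, after relabelling $y_{1,t}$ as $x_{1,p}$. Conversely each such element lies in $(1_E-c)$ since it equals $f_{1,q}\bullet(1_E-c)\bullet x_{1,p}$. I expect the main (though still routine) obstacle to be bookkeeping in the two nested applications of (\ref{equ:mult}): one must check that when $p=0$ the left factor $f_{1,q}$ genuinely passes through unchanged in the product with $c$ and then with $x_{1,p}$, i.e.\ that no contraction occurs on the $M^*$ side, and likewise on the $M$ side; this is where the three cases $p<s$, $p=s$, $p>s$ of the multiplication rule and the centrality relation $ac=ca$ must be invoked carefully. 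Once that is in place, the statement follows immediately.
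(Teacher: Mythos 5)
Your argument is correct, and it reorganizes the proof differently from the paper. The paper proceeds ``from the inside out'': it defines $I$ to be the $E$-$E$-span of the elements $f_{1,q}\otimes_E x_{1,p}-f_{1,q}\otimes_E c\otimes_E x_{1,p}$, notes $I\subseteq (1_E-c)$ via the factorization $f_{1,q}\bullet(1_E-c)\bullet x_{1,p}$, and then shows $I$ is a two-sided ideal by checking stability under left multiplication by elements of $M$ (the case $q=0$ collapsing to zero by (\ref{equ:dualbasis})); since $1_E-c\in I$, equality follows. You work ``from the outside in'': the ideal is $\mathbb{K}$-spanned by the products $u\bullet(1_E-c)\bullet v$ over typical elements $u,v$, and you show these all vanish unless $u\in T_E(M^*)$ and $v\in T_E(M)$, using the one-sided identities $x_{1,p}\bullet c=x_{1,p}$ ($p\geq 1$) and $c\bullet g_{1,s}=g_{1,s}$ ($s\geq 1$), both instances of (\ref{equ:dualbasis}); the surviving products are exactly the stated spanning elements. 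The two proofs rest on the same dual-basis identity and are of comparable length; yours has the small bonus of making visible that most of the generating products of the ideal are actually zero, while the paper's isolates the ideal-closure property of the span, which is perhaps more reusable. One cosmetic point: your parenthetical about $c$ not being central is a harmless but distracting aside, and your claim of a $\mathbb{K}$-module spanning set should be read with the degree-$(0,0)$ typical elements ranging over all of $E$ (not just $1_E$); in any case the lemma only asserts the $E$-$E$-bimodule span, which your computation certainly gives.
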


\begin{proof}
Denote by $I$  the $E$-$E$-subbimodule of $C_E(M)$ spanned by elements of the form $f_{1,q}\otimes_E x_{1,p}-f_{1,q}\otimes_E c\otimes_E x_{1,p}.$  Since
$$f_{1,q}\otimes_E x_{1,p}-f_{1,q}\otimes_E c\otimes_E x_{1,p}=f_{1, q}\bullet (1_E-c)\bullet x_{1, p},$$
it follows that $I\subseteq (1_E-c)$.  We claim that $I$ is a two-sided ideal of $C_E(M)$. Then  the required statement follows.

We only prove that $I$ is a left ideal, since similarly one proves that it is also a right ideal. It is clear that $I$ is a left $T_E(M^*)$-submodule of $C_E(M)$. Hence, it suffices to prove that for any $x \in M$, the element $w:=x\bullet (f_{1,q}\otimes_E x_{1,p}-f_{1,q}\otimes_E c\otimes_E x_{1,p})$ still lies in  $I$.

There are two cases.  If $q\geq 1$, then $w=f_1(x)(f_{2,q}\otimes_E x_{1,p}-f_{2,q}\otimes_E c\otimes_E x_{1,p})$, which clearly lies in $I$. If $q=0$, we have
$$w=x\otimes_E x_{1, p}-\sum_{i\in S}\alpha_i^*(x)\alpha_i\otimes_E x_{1, p}=0,$$
where the right equality follows from (\ref{equ:dualbasis}). Then $w$ trivially lies in $I$.
\end{proof}

For each $p\geq 0$, we have a natural morphism of $T_E(M^*)$-$E$-bimodules
\begin{align}\label{equ:nat-mor-c}
T_E(M^*)\otimes_E M^{\otimes_E p}&\longrightarrow T_E(M^*)\otimes_E M^{\otimes_E(p+1)}\\
 f_{1, q}\otimes_E x_{1, p}&\longmapsto f_{1, q}\otimes_E c\otimes_E x_{1, p}. \nonumber
\end{align}
Letting $p$ vary, we obtain a sequence of morphisms.

We have the following structure theorem on Leavitt algebras; compare \cite[Subsections~1.2 and 5.5]{Smi}.

\begin{thm}\label{thm:Leavitt-col}
Let $M$ be an $E$-$E$-bimodule with $_EM$ finitely generated projective. Then as a $T_E(M^*)$-$E$-bimodule, the Leavitt algebra $L_E(M)$ is isomorphic to the colimit of the above sequence.
\end{thm}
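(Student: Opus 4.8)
The plan is to exhibit the colimit explicitly and then identify it with $L_E(M)$. The sequence in question is
$$T_E(M^*)\otimes_E M^{\otimes_E 0}\longrightarrow T_E(M^*)\otimes_E M^{\otimes_E 1}\longrightarrow T_E(M^*)\otimes_E M^{\otimes_E 2}\longrightarrow \cdots,$$
with each map inserting the Casimir element $c$ in front of the $M$-tensors. Write $\mathrm{colim}$ for its colimit in the category of $T_E(M^*)$-$E$-bimodules. By the universal property of the colimit, the natural $T_E(M^*)$-$E$-bimodule maps $T_E(M^*)\otimes_E M^{\otimes_E p}\hookrightarrow C_E(M)\twoheadrightarrow L_E(M)$ assemble into a single $T_E(M^*)$-$E$-bimodule map $\Psi\colon \mathrm{colim}\to L_E(M)$, provided they are compatible with the transition maps; compatibility is exactly the relation $1_E=c$ holding in $L_E(M)$, i.e.\ $f_{1,q}\otimes_E x_{1,p}\equiv f_{1,q}\otimes_E c\otimes_E x_{1,p}$ modulo $(1_E-c)$, which is immediate. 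So the first step is to construct $\Psi$ and record that it is a well-defined homomorphism of $T_E(M^*)$-$E$-bimodules.

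Next I would prove $\Psi$ is surjective. Since $C_E(M)=\bigoplus_{p\geq 0}T_E(M^*)\otimes_E M^{\otimes_E p}$ as a $T_E(M^*)$-$E$-bimodule, every element of $L_E(M)$ is the image of a finite sum of typical elements $f_{1,q}\otimes_E x_{1,p}$, each of which lies in the image of the component $T_E(M^*)\otimes_E M^{\otimes_E p}$, hence in the image of $\Psi$. For injectivity, I would build the inverse map directly. The point is that $L_E(M)$ should be spanned by ``reduced'' elements $f_{1,q}\otimes_E x_{1,p}$ where one has used the relation $1_E=c$ to push everything to a normal form; concretely, one defines a $T_E(M^*)$-$E$-bimodule map $C_E(M)\to\mathrm{colim}$ by sending $f_{1,q}\otimes_E x_{1,p}$ to the class of that same element sitting in the $p$-th term $T_E(M^*)\otimes_E M^{\otimes_E p}$ of the colimit diagram, and one checks this kills the ideal $(1_E-c)$. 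Here is exactly where Lemma~\ref{lem:princ} does the work: that ideal is spanned as an $E$-$E$-bimodule by the elements $f_{1,q}\otimes_E x_{1,p}-f_{1,q}\otimes_E c\otimes_E x_{1,p}$, and each such generator maps to $0$ in the colimit because the two terms become identified under the transition map $T_E(M^*)\otimes_E M^{\otimes_E p}\to T_E(M^*)\otimes_E M^{\otimes_E (p+1)}$. Thus the map descends to $L_E(M)\to\mathrm{colim}$, and one verifies on generators that it is a two-sided inverse to $\Psi$.

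The main obstacle I anticipate is not any single calculation but making sure the map $C_E(M)\to\mathrm{colim}$ is genuinely well defined, i.e.\ respects the $E$-balancing in each tensor $T_E(M^*)\otimes_E M^{\otimes_E p}$ and is a $T_E(M^*)$-$E$-bimodule homomorphism compatible with the direct-sum decomposition; since $c$ is $E$-central ($ac=ca$ in $C_E(M)$, as noted before the definition of $L_E(M)$), inserting $c$ commutes with the $E$-actions, so the transition maps are honest bimodule maps and the colimit is computed termwise — but this needs to be stated carefully. A secondary point is that the whole statement is only at the level of $T_E(M^*)$-$E$-bimodules, not algebras, which actually simplifies matters: I do not need to track how multiplication in $C_E(M)$ interacts with the colimit, only the one-sided $T_E(M^*)$-action and the right $E$-action, both of which are manifestly compatible with inserting $c$ on the far right of the $M^*$-string. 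Once $\Psi$ and its inverse are both shown to be $T_E(M^*)$-$E$-bimodule maps and mutually inverse on generators, the theorem follows.
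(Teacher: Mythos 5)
Your proposal is correct and follows essentially the same route as the paper: both arguments reduce to the standard description of a sequential colimit as the quotient of $\bigoplus_{p\geq 0}T_E(M^*)\otimes_E M^{\otimes_E p}=C_E(M)$ by the subbimodule spanned by the differences $f_{1,q}\otimes_E x_{1,p}-f_{1,q}\otimes_E c\otimes_E x_{1,p}$, and then invoke Lemma~\ref{lem:princ} to identify that subbimodule with the principal ideal $(1_E-c)$. The paper states this in one step, whereas you unpack it into a pair of explicitly constructed mutually inverse bimodule maps; the content is the same.
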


\begin{proof}
By the construction of colimits, the mentioned colimit is isomorphic to the following quotient bimodule
$$(\bigoplus_{p\geq 0} T_E(M^*)\otimes_E M^{\otimes_E p})/I=C_E(M)/I,$$
where $I$ is the $E$-$E$-subbimodule spanned by elements of the form $f_{1, q}\otimes_E x_{1, p}- f_{1, q}\otimes_E c\otimes_E x_{1, p}$. By Lemma~\ref{lem:princ}, $I$ coincides with the principal ideal $(1_E-c)$ of $C_E(M)$. Then we are done.
\end{proof}

\section{The dg Cohn and Leavitt algebras}\label{section3}

As in the previous section, let $E$ be a $\mathbb{K}$-algebra and $M$ be an $E$-$E$-bimodule on which $\mathbb{K}$ acts centrally. Throughout this section, we further assume that $_EM$ is finitely generated projective.  We will introduce the dg Cohn algebra and dg Leavitt algebra associated to a pair $(M, \mu)$, where $\mu$ is an associative bilinear map on $M$.

Recall that $V^*={\rm Hom}_E(V, E)$ for any $E$-$E$-bimodule $V$. We observe that the following canonical map of $E$-$E$-bimodules
\[
\begin{array}{rccc}
{\rm can}\colon&  M^*\otimes_E M^* &\longrightarrow & (M\otimes_E M)^*\\
 & f_1\otimes_E f_2 &\longmapsto& (x_1\otimes_E x_2\mapsto f_2(x_1f_1(x_2))\in E)
 \end{array}
\]
is an isomorphism.

We fix an $E$-$E$-bimodule homomorphism
$$\mu\colon M\otimes_E M\longrightarrow M$$
 which is associative, that is,
$$\mu\circ (\mu\otimes_E {\rm Id}_M)=\mu\circ ({\rm Id}_M\otimes_E \mu).$$
Then we have two induced maps of $E$-$E$-bimodules:
\begin{align}\label{equ:par+}
\partial_{+}\colon M^*\stackrel{\mu^*}\longrightarrow (M\otimes_E M)^* \xrightarrow{{\rm can}^{-1}} M^*\otimes_E M^*
\end{align}
and
\begin{align}\label{equ:par-}
\partial_{-}\colon M\longrightarrow M^*\otimes_E M\otimes_E M \xrightarrow{{\rm Id}_{M^*}\otimes_E \mu} M^*\otimes_E M.
\end{align}
Here, the unnamed arrow sends $x$ to $c\otimes_E x$ with $c$ the Casimir element of $M$.

The following elementary facts are well known; compare \cite[3.7]{Swe}. We mention that the first statement is somehow dual to \cite[Remark~4.17]{Kuel} in the bocs theory.

\begin{lem}\label{lem:coass}
Keep the notation as above. Then the following statements hold.
\begin{enumerate}
\item[(1)] The map $\partial_+$ is coassociative, that is, $(\partial_+\otimes_E {\rm Id}_{M^*})\circ \partial_+=( {\rm Id}_{M^*}\otimes_E \partial_+)\circ \partial_+$.
    \item[(2)] The map $\partial_{-}$ makes $M$ into a left $(M^*, \partial_+)$-comodule, that is, $(\partial_{+}\otimes_E {\rm Id}_M)\circ \partial_{-}=({\rm Id}_{M^*}\otimes_E \partial_{-})\circ \partial_{-}$.
\end{enumerate}
\end{lem}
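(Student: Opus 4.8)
The plan is to verify the two identities by direct computation, unwinding the definitions of $\partial_+$ and $\partial_-$ in terms of the dual basis $\{\alpha_i\}_{i\in S}$, $\{\alpha_i^*\}_{i\in S}$ of $M$. The key observation is that both $\partial_+$ and $\partial_-$ can be written explicitly using $\mu$ and the Casimir element $c=\sum_i\alpha_i^*\otimes_E\alpha_i$, so each side of the asserted equations is a finite sum indexed over $S\times S$, and the point is that associativity of $\mu$ matches these two sums term by term (after reindexing via the dual basis relations (\ref{equ:dualbasis})).

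First I would record the formula for $\partial_+$ on a generator. Using the isomorphism ${\rm can}$ and the definition $\partial_+=\mathrm{can}^{-1}\circ\mu^*$, one gets $\partial_+(f)=\sum_{i,j\in S}\alpha_i^*\otimes_E\alpha_j^*\,\cdot(\text{scalar})$; more precisely, writing it out, $\partial_+(f)$ is the unique element $\sum g_1\otimes_E g_2\in M^*\otimes_E M^*$ with $g_2(x_1g_1(x_2))=f(\mu(x_1\otimes_E x_2))$ for all $x_1,x_2$, which by the dual basis is $\partial_+(f)=\sum_{i,j\in S}\alpha_i^*\otimes_E\alpha_j^*\, f\big(\mu(\alpha_i\otimes_E\mu(\,\cdot\,))\big)$ — I would fix the exact bookkeeping so that the three-fold application in part (1) produces $f$ evaluated on a triple product $\mu(\mu(\alpha_i\otimes_E\alpha_j)\otimes_E\alpha_k)$ on one side and $f$ evaluated on $\mu(\alpha_i\otimes_E\mu(\alpha_j\otimes_E\alpha_k))$ on the other. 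Then $(\partial_+\otimes_E{\rm Id})\circ\partial_+=({\rm Id}\otimes_E\partial_+)\circ\partial_+$ is exactly the associativity $\mu\circ(\mu\otimes_E{\rm Id})=\mu\circ({\rm Id}\otimes_E\mu)$ transported through ${\rm can}$ and the evaluation pairing. A clean way to package this is: ${\rm can}$ identifies $\partial_+$ with the transpose $\mu^*$, and coassociativity of $\mu^*$ is formally dual to associativity of $\mu$, provided one checks that the relevant ${\rm can}$-type isomorphisms $M^*\otimes_E M^*\otimes_E M^*\cong(M\otimes_E M\otimes_E M)^*$ are compatible — that compatibility is the only mildly technical point.

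For part (2), I would similarly unwind $\partial_-(x)=({\rm Id}_{M^*}\otimes_E\mu)(c\otimes_E x)=\sum_{i\in S}\alpha_i^*\otimes_E\mu(\alpha_i\otimes_E x)$. Applying $\partial_-$ again to the right tensor factor gives $({\rm Id}_{M^*}\otimes_E\partial_-)\circ\partial_-(x)=\sum_{i,j\in S}\alpha_i^*\otimes_E\alpha_j^*\otimes_E\mu(\alpha_j\otimes_E\mu(\alpha_i\otimes_E x))$, while $(\partial_+\otimes_E{\rm Id}_M)\circ\partial_-(x)=\sum_i\partial_+(\alpha_i^*)\otimes_E\mu(\alpha_i\otimes_E x)$, and expanding $\partial_+(\alpha_i^*)$ and resumming over the dual basis (using $\sum_i\alpha_i^*\,h(\alpha_i)=h$ and its left-module analogue) should collapse this to $\sum_{i,j}\alpha_i^*\otimes_E\alpha_j^*\otimes_E\mu(\mu(\alpha_j\otimes_E\alpha_i)\otimes_E x)$; associativity of $\mu$ then finishes it. The main obstacle I anticipate is purely notational: keeping the two-sided $E$-action straight when sliding scalars across $\otimes_E$ (the formulas (\ref{equ:dual-action}) and $ac=ca$ for the Casimir element will be used repeatedly), and making sure the order of the dual-basis indices in the iterated applications matches on both sides. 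There is no conceptual difficulty — once the explicit formulas for $\partial_+$ and $\partial_-$ are in hand, both identities reduce to associativity of $\mu$ — so I would state the explicit formulas as a preliminary computation and then present each verification as a short chain of equalities, citing \cite{Swe} for the classical coalgebra shadow of the argument.
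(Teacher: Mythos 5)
Your proposal is correct and follows essentially the same route as the paper: part (1) is handled by identifying $(\partial_+\otimes_E\mathrm{Id})\circ\partial_+$ and $(\mathrm{Id}\otimes_E\partial_+)\circ\partial_+$ with the transposes of the two associativity brackets of $\mu$ via the canonical isomorphism $M^*\otimes_E M^*\otimes_E M^*\cong(M\otimes_E M\otimes_E M)^*$, and part (2) reduces, exactly as in the paper, to the dual-basis identity $\sum_i\partial_+(\alpha_i^*)\otimes_E\alpha_i=\sum_{i,j}\alpha_i^*\otimes_E\alpha_j^*\otimes_E\mu(\alpha_j\otimes_E\alpha_i)$ together with associativity of $\mu$. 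The only loose end is the index bookkeeping in your explicit formula for $\partial_+(f)$ (the correct form is $\sum_{i,j}\alpha_i^*\otimes_E\alpha_j^*\,f(\mu(\alpha_j\otimes_E\alpha_i))$), which you already flag and which works out exactly as you anticipate.
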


\begin{proof}
(1)  follows from the associativity of $\mu$ by duality. We observe that
$$(\partial_+\otimes_E {\rm Id}_{M^*})\circ \partial_+=({\rm can}_1)^{-1}\circ ( \mu \otimes_E {\rm Id}_M)^*\circ \mu^* =({\rm can}_1)^{-1}\circ (\mu\circ( \mu \otimes_E {\rm Id}_M))^*.$$
Here,  ${\rm can}_1\colon M^*\otimes_E M^*\otimes_E M^*\rightarrow (M\otimes_E M\otimes_E M)^*$ is the canonical isomorphism sending $f_{1,3}$ to the map $(x_{1, 3}\mapsto f_3(x_1f_2(x_2f_1(x_3)))\in E)$. Similarly, we have
\[
({\rm Id}_{M^*} \otimes_E \partial_+ )\circ \partial_+ = ({\rm can}_1)^{-1} \circ (\mu \circ ({\rm Id}_{M}  \otimes_E \mu))^*.
\]

(2) Recall that $c=\sum_{i \in S}\alpha_i^*\otimes_E \alpha_i\in M^*\otimes_E M$. For any $x\in M$,  we have
$$(\partial_{+}\otimes_E {\rm Id}_M)(\partial_{-}(x))=\sum_{i\in S}\partial_+(\alpha_i^*)\otimes_E \mu(\alpha_i\otimes_E x)$$
and
\begin{align*}
({\rm Id}_{M^*}\otimes_E \partial_{-})(\partial_{-}(x)) &=\sum_{i,j\in S}\alpha_i^*\otimes_E \alpha_j^*\otimes_E \mu(\alpha_j\otimes_E\mu(\alpha_i\otimes_E x))\\
&=\sum_{i,j\in S}\alpha_i^*\otimes_E \alpha_j^*\otimes_E \mu(\mu(\alpha_j\otimes_E\alpha_i)\otimes_E x)).
\end{align*}
The associativity of $\mu$ is used in the last equality. Therefore, it suffices to verify the following identity in
$M^*\otimes_E M^*\otimes_E M$.
\begin{align}\label{equ:comod}
\sum_{i\in S}\partial_+(\alpha_i^*)\otimes_E \alpha_i=\sum_{i,j\in S}\alpha_i^*\otimes_E \alpha_j^*\otimes_E \mu(\alpha_j\otimes_E \alpha_i)
\end{align}

  There is a canonical isomorphism ${\rm can}_2\colon M^*\otimes_E M^*\otimes_E M \rightarrow {\rm Hom}_E(M\otimes_E M, M)$ sending $f_{1,2}\otimes_E y$ to the map $(x_{1,2}\mapsto f_2(x_1f_1(x_2))y\in M)$. On one hand, we have
  \begin{align*}
  {\rm can}_2\Big(\sum_{i\in S}\partial_+(\alpha_i^*)\otimes_E \alpha_i\Big)(x_1\otimes_E x_2)=\sum_{i\in S} \alpha_i^*(\mu(x_1\otimes_E x_2))\alpha_i=\mu(x_1\otimes_E x_2).
  \end{align*}
  Here, the left equality uses the definition of $\partial_+$ and the right one uses (\ref{equ:dualbasis}). On the other hand, we have
  \begin{align*}
  {\rm can}_2\Big(\sum_{i,j\in S}\alpha_i^*\otimes_E \alpha_j^*\otimes_E \mu(\alpha_j\otimes_E \alpha_i)\Big)(x_1\otimes_E x_2)&=\sum_{i, j\in S} \alpha_j^*(x_1\alpha_i^*(x_2))\mu(\alpha_j\otimes_E \alpha_i)\\
    & =\sum_{i\in S}  \mu\Big(\sum_{j\in S}\alpha_j^*(x_1\alpha_i^*(x_2))\alpha_j\otimes_E \alpha_i\Big)\\
    &=\sum_{i \in S} \mu(x_1\alpha_i^*(x_2)\otimes_E \alpha_i)\\
    &=\mu\Big(x_1\otimes_E \sum_{i\in S}\alpha_i^*(x_2)\alpha_i\Big)\\
    &=\mu(x_1\otimes_E x_2).
  \end{align*}
  Here, both the third and fifth equalities use (\ref{equ:dualbasis}). Then we are done with (\ref{equ:comod}).
\end{proof}

We consider the tensor algebra $T_E(M^*\oplus M)$. It is $\mathbb{Z}$-graded by means of ${\rm deg}\;E=0$, ${\rm deg}\; M^*=1$ and ${\rm deg}\;  M=-1$.

 We apply  \cite[Lemma~1.8]{BSZ} with $\delta_0\colon E \rightarrow M^*\oplus M$ being the zero map and $\delta_1$ given by the following map:
 $$M^*\oplus M \xrightarrow{\partial_+\oplus \partial_{-}} (M^*\otimes_E M^*)\oplus (M^*\otimes_E M)\subseteq (M^*\oplus M)\otimes_E (M^*\oplus M).$$
 Then there is a  unique $E$-derivation
$$\partial\colon T_E(M^*\oplus M)\longrightarrow T_E(M^*\oplus M)$$
 of degree one, such that $\partial (x) = \partial_-(x)$ and $\partial(f) = \partial_+(f)$ for any $x \in M$ and $f \in M^*$. This means that $\partial$ satisfies the following graded Leibniz rule
\begin{align}\label{equ:Leibniz}
\partial(u \otimes_E v)=\partial(u)\otimes_E v +(-1)^{|u|} u\otimes_E \partial(v)
\end{align}
for any homogenous elements $u, v\in T_E(M^*\oplus M)$, and $\partial(a)=0$ for any $a\in E$.

 We observe that $\partial^2|_{M^*}=0$ by Lemma~\ref{lem:coass}(1) and that $\partial^2|_{M}=0$ by Lemma~\ref{lem:coass}(2); here, we use the minus sign in the graded Leibniz rule. By \cite[Remark~1.7(3)]{BSZ}, we infer that $\partial^2=0$. In other word, $(T_E(M^*\oplus M), \partial)$ is a dg tensor algebra; compare \cite[Section~1]{BSZ}.

\begin{rem}\label{rem:dgten}
We mention the following \emph{asymmetry} in the dg tensor algebra $(T_E(M^*\oplus M), \partial)$: the subalgebra $T_E(M)$ is not closed under $\partial$, while the subalgebra $T_E(M^*)$ is closed under $\partial$ and thus becomes a dg subalgebra.
\end{rem}

\begin{lem}\label{lem:c-closed}
The Casimir element $c$, viewed as an element in $T_E(M\oplus M^*)$, is closed, that is, $\partial(c)=0$.
\end{lem}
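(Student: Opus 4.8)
The plan is to compute $\partial(c)$ directly from the definitions of $\partial_+$ and $\partial_-$ and the graded Leibniz rule. Write $c=\sum_{i\in S}\alpha_i^*\otimes_E\alpha_i$; since $\deg\alpha_i^*=1$ and $\deg\alpha_i=-1$, the Leibniz rule (\ref{equ:Leibniz}) gives
\[
\partial(c)=\sum_{i\in S}\partial_+(\alpha_i^*)\otimes_E\alpha_i-\sum_{i\in S}\alpha_i^*\otimes_E\partial_-(\alpha_i),
\]
an element of $M^*\otimes_E M^*\otimes_E M$. Unwinding (\ref{equ:par-}), the second sum equals $\sum_{i,j\in S}\alpha_i^*\otimes_E\alpha_j^*\otimes_E\mu(\alpha_j\otimes_E\alpha_i)$. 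Hence $\partial(c)=0$ is exactly the identity (\ref{equ:comod}) that was already established in the proof of Lemma~\ref{lem:coass}(2). So the proof is essentially a one-line reduction to that identity.

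Concretely, first I would invoke the graded Leibniz rule and the degrees of the generators to expand $\partial(c)$ into the two displayed sums, being careful with the sign $(-1)^{|\alpha_i^*|}=-1$ in the second term. Second, I would substitute the definition of $\partial_-$ from (\ref{equ:par-}), which introduces a second copy of the Casimir element, producing the double sum over $S\times S$ with the factor $\mu(\alpha_j\otimes_E\alpha_i)$. Third, I would observe that the resulting equation is precisely (\ref{equ:comod}), which has already been verified (via the isomorphism ${\rm can}_2$ and the dual-basis relations (\ref{equ:dualbasis})) inside the proof of Lemma~\ref{lem:coass}(2). Therefore $\partial(c)=0$.

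There is no real obstacle here; the only point requiring a little care is bookkeeping of the sign in the Leibniz rule and making sure the two incarnations of the Casimir element (the one defining $c$ and the one hidden in $\partial_-$) are matched up with the correct indices so that what remains is verbatim the identity (\ref{equ:comod}). An alternative, essentially equivalent, route would be to use that $c$ corresponds to ${\rm Id}_M$ under $M^*\otimes_E M\cong{\rm Hom}_E(M,M)$ and that $\partial(c)$ then corresponds, under ${\rm can}_2$, to the difference $\mu-\mu=0$ coming from the two ways of reading (\ref{equ:comod}); but since (\ref{equ:comod}) is already on record, the direct citation is the cleanest.
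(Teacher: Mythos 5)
Your proof is correct and coincides with the paper's own argument: both expand $\partial(c)$ via the graded Leibniz rule (with the sign $(-1)^{|\alpha_i^*|}=-1$), substitute the definition (\ref{equ:par-}) of $\partial_-$, and reduce to the identity (\ref{equ:comod}) established in the proof of Lemma~\ref{lem:coass}(2). Nothing further is needed.
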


\begin{proof}
By the graded Leibniz rule, we have
\begin{align*}
\partial(c)=&\sum_{i\in S} \partial_{+}(\alpha_i^*)\otimes_E \alpha_i-\sum_{i\in S} \alpha_i^*\otimes_E \partial_{-}(\alpha_i)\\
&=\sum_{i\in S} \partial_{+}(\alpha_i^*)\otimes_E \alpha_i-\sum_{i,j\in S} \alpha_i^*\otimes_E \alpha_j^*\otimes_E\mu(\alpha_j\otimes_E\alpha_i)=0.
\end{align*}
Here, the second equality uses the construction \eqref{equ:par-} of $\partial_{-}$ and the last equality is precisely (\ref{equ:comod}).
\end{proof}

\begin{lem}\label{lem:dgidealclosed}
The two-sided ideal  $( x\otimes_E g-g(x)\; |\; x\in M, g\in M^*)$ of  $T_E(M^*\oplus M)$  is a dg ideal, that is, it is closed under $\partial$.
\end{lem}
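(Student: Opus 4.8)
The plan is to verify directly that $\partial$ sends the generators $x\otimes_E g - g(x)$ of the ideal $\ci := ( x\otimes_E g-g(x)\mid x\in M, g\in M^*)$ into $\ci$ itself; once this is done, the graded Leibniz rule \eqref{equ:Leibniz} immediately forces $\partial$ to preserve the whole two-sided ideal, because for $u\otimes_E r\otimes_E v$ with $r$ a generator one has $\partial(u\otimes_E r\otimes_E v)=\partial(u)\otimes_E r\otimes_E v \pm u\otimes_E\partial(r)\otimes_E v \pm u\otimes_E r\otimes_E\partial(v)$, and the first and last terms lie in $\ci$ trivially while the middle term lies in $\ci$ by the generator computation. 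So the content is entirely in computing $\partial(x\otimes_E g - g(x))$ for $x\in M$, $g\in M^*$.

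First I would note that $\partial(g(x))=0$ since $g(x)\in E$, so the task reduces to showing $\partial(x\otimes_E g)\in\ci$. By the graded Leibniz rule, $\partial(x\otimes_E g) = \partial_-(x)\otimes_E g - x\otimes_E \partial_+(g)$ (the sign because $|x|=-1$). Now I expand both pieces using a dual basis $c=\sum_{i\in S}\alpha_i^*\otimes_E\alpha_i$: by \eqref{equ:par-}, $\partial_-(x)=\sum_i \alpha_i^*\otimes_E\mu(\alpha_i\otimes_E x)$, and by \eqref{equ:par+} together with the definition of ${\rm can}$, $\partial_+(g)$ is the unique element $\sum_{j}h_j\otimes_E h_j'$ of $M^*\otimes_E M^*$ with $h_j'(x_1 h_j(x_2)) = g(\mu(x_1\otimes_E x_2))$; concretely, using the dual basis one checks $\partial_+(g)=\sum_{j\in S}\alpha_j^*\otimes_E \big(\text{the functional }x\mapsto g(\mu(x\otimes_E \alpha_j))\big)$, or more usefully one keeps it implicit and evaluates. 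The key point will be to show
\[
\sum_{i\in S}\alpha_i^*\otimes_E\mu(\alpha_i\otimes_E x)\otimes_E g \;\equiv\; x\otimes_E\partial_+(g) \pmod{\ci},
\]
and the way to see this is to use the relation $\mu(\alpha_i\otimes_E x)\otimes_E g \equiv \alpha_i\otimes_E x\otimes_E g \otimes_E \mu(\alpha_i,-)\ldots$ — more precisely, inside the quotient $T_E(M^*\oplus M)/\ci \cong L_E(M)$ (or rather $C_E(M)$ before modding out $1_E-c$), the generators become the relations $x\bullet g = g(x)$, so I can rewrite $\sum_i\alpha_i^*\otimes_E\mu(\alpha_i\otimes_E x)\otimes_E g$ by inserting a Casimir and using $\sum_i\alpha_i^*\otimes_E\alpha_i\otimes_E(\cdots) = c\otimes_E(\cdots)$, then sliding $x$ leftward past the $M^*$-factors via $x\bullet g = g(x)$ to land on $x\otimes_E(\text{image of }\partial_+(g))$.

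The main obstacle I anticipate is bookkeeping: getting the dual-basis manipulations and the ${\rm can}$-identifications to line up so that the two expansions literally cancel modulo $\ci$, keeping careful track of which $M^*$-factor is being evaluated on which $M$-factor (the maps ${\rm can}$, ${\rm can}_1$, ${\rm can}_2$ from Lemma~\ref{lem:coass} encode exactly these pairings). A cleaner route that sidesteps some of this is to work in $C_E(M)$ directly via the isomorphism $\Phi$ of Proposition~\ref{prop:isoC}: transport $\partial$ to $C_E(M)$, observe that modding out $\ci$ corresponds to the passage $T_E(M^*\oplus M)\to C_E(M)$, and then the claim is simply that the induced $\partial$ on $C_E(M)$ is well-defined — which amounts to checking $\partial$ is compatible with the single defining relation $x\bullet g = g(x)$, i.e. $\partial(x\bullet g)=\partial(g(x))=0$ in $C_E(M)$. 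Either way the computation is short; I would present the $C_E(M)$ version if the transported $\partial$ has already been described, and otherwise do the three-line dual-basis computation above. Finally I'd remark that, combined with Lemma~\ref{lem:c-closed}, this shows the further relation $1_E-c$ also generates a dg ideal, so that $\partial$ descends to the dg Leavitt algebra $L_E(M)$ — though that is the content of a subsequent statement rather than this lemma.
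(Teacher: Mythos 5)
Your proposal is correct and follows essentially the same route as the paper: reduce to the generators $x\otimes_E g-g(x)$ via the Leibniz rule, note $\partial(g(x))=0$, and show that both $\partial_-(x)\otimes_E g$ and $x\otimes_E\partial_+(g)$ collapse modulo the ideal to the same element of $M^*$ (namely $\phi$ with $\phi(y)=g(\mu(y\otimes_E x))$), using the dual basis and the characterization of $\partial_+$ via $\mathrm{can}$. The only caveat is that your "cleaner" alternative of transporting $\partial$ to $C_E(M)$ first is circular as stated (the lemma is precisely what makes that transport possible), but your primary computation stands on its own.
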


\begin{proof}
Denote the above ideal by $J$. By the graded Leibniz rule, it suffices to prove that $\partial(x\otimes_E g-g(x))$ still lies in $J$. Since $g(x) \in E$ and thus $\partial (g(x)) = 0$, we have
$$\partial(x\otimes_E g-g(x))=\partial_{-}(x)\otimes_E g - x\otimes_E \partial_+(g).$$

Define an element  $\phi\in M^*$  by
$$\phi(y)=g(\mu(y\otimes_E x))\; \mbox{ for any } y\in M.$$
 By the definition of $\partial_{-}$, we have
 $$\partial_{-}(x)\otimes_E g=\sum_{i\in S}\alpha_i^*\otimes_E \mu(\alpha_i\otimes_E x)\otimes_E g.$$
  Therefore, we infer that the element $w_1:=\partial_{-}(x)\otimes_E g-\sum_{i \in S}\alpha_i^*g(\mu(\alpha_i\otimes_E x))$ lies in the two-sided ideal $J$. Moreover, we have
$$w_1=\partial_{-}(x)\otimes_E g-\sum_{i\in S}\alpha_i^*\phi(\alpha_i)=\partial_{-}(x)\otimes_E g-\phi, $$
where  the right equality uses (\ref{equ:dualbasis}).

Write $\partial_+(g)=\sum_{j\in T} h_j\otimes_E f_j$. Then the following element
$$w_2:= x\otimes_E \partial_+(g)-\sum_{j\in T} h_j(x)f_j=\Big(\sum_{j\in T} (x\otimes_E h_j-h_j(x))\Big)\otimes_E f_j$$
 lies in $J$. By the definition \eqref{equ:par+} of $\partial_+$, we have $\mathrm{can} \circ \partial_+(g)(y \otimes_E x) = \mu^*(g)(y \otimes_E x)$ for any $y \in M$. Namely,  we have
 \begin{align}\label{align:verydefintionpartial}
 \sum_{j\in T}f_j(yh_j(x))=g(\mu(y\otimes_E x))=\phi(y).
 \end{align}
 We infer that $w_2=x\otimes_E \partial_{+}(g)-\phi$. Since $w_1, w_2$ lie in $J$, so does their difference $w_1-w_2 = \partial_{-}(x)\otimes_E g  - x\otimes_E \partial_+(g)$. We deduce the required statement.
\end{proof}

Combining Proposition~\ref{prop:isoC} with Lemma~\ref{lem:dgidealclosed}, we infer that the differential $\partial$ on $T_{E}(M^*\oplus M)$ descends to the Cohn algebra $C_E(M)$. By Lemma~\ref{lem:c-closed}, the Casimir element $c$ is closed in $C_E(M)$. Therefore, the differential $\partial$ descends further to the Leavitt algebra $L_E(M)$.

By abuse of notation, we will use $\partial$ to denote the induced differentials on both $C_E(M)$ and $L_E(M)$. We emphasize that $\partial$ is uniquely determined by $\partial_{+}$ in (\ref{equ:par+}) and $\partial_{-}$  in (\ref{equ:par-}) via the graded Leibniz rule (\ref{equ:Leibniz}).

\begin{defn}\label{defn:dgleavittalgebra}
Let $M$ be an $E$-$E$-bimodule with $_EM$ finitely generated projective. Assume that $\mu\colon M\otimes_E M\rightarrow M$ is an associative morphism of $E$-$E$-bimodules.  The resulting dg algebras $(C_E(M), \partial)$ and $(L_E(M), \partial)$ are called the \emph{dg Cohn algebra} and \emph{dg Leavitt algebra} associated to $(M, \mu)$, respectively.
\end{defn}

\begin{rem}\label{rem:partial}
We claim that the differential $\partial$ on $L_E(M)$ is completely determined by $\partial_{+}$. To be more precise, we assume that $\partial'$ is any $E$-derivation on $L_E(M)$ whose restriction to $M^*$ is $\partial_{+}$. We will show that the restriction of $\partial'$ to $M$ necessarily coincides with $\partial_{-}$, which particularly yields $\partial' = \partial$.

Recall the Casimir element $c = \sum_{i \in S} \alpha_i^* \otimes_E \alpha$. For any $x\in M$, we have $x\otimes_E\alpha_i^*=\alpha_i^*(x)\in E$. Therefore, we have
$$0=\partial'(x\otimes_E\alpha_i^*)=\partial'(x)\otimes_E\alpha_i^*-x\otimes_E\partial_+(\alpha_i^*).$$
By the relation $1_E=c$, we have
\begin{align}\label{equ:partial-uniq}
\partial'(x)=\sum_{i\in S}\partial'(x)\otimes_E\alpha_i^*\otimes_E\alpha_i=\sum_{i \in S}x\otimes_E\partial_{+}(\alpha_i^*)\otimes_E\alpha_i.
\end{align}
The above identity together with the graded Leibniz rule \eqref{equ:Leibniz}  already confirms the claim. Moreover, we have
\begin{align*}
\sum_{i \in S}x\otimes_E \partial_{+}(\alpha_i^*) \otimes_E\alpha_i &=\sum_{i, j\in S}\alpha_i^*(x) \alpha_j^* \otimes_E \mu(\alpha_j\otimes_E \alpha_i)\\
&= \sum_{i, j \in S}\alpha_j^* \otimes_E \mu(\alpha_j\alpha_i^*(x)\otimes_E \alpha_i)\\
&= \sum_{i, j \in S}\alpha_j^* \otimes_E \mu(\alpha_j\otimes_E \alpha_i^*(x)\alpha_i)\\
&= \sum_{j \in S}\alpha_j^* \otimes_E \mu(\alpha_j\otimes_E x)=\partial_{-}(x).
\end{align*}
Here, the first equality uses (\ref{equ:comod}) and the relation $x\otimes_E \alpha_i^*=\alpha_i^*(x)$, the second one uses the fact $\alpha_i^*(x)c=c\alpha_i^*(x)\in M^*\otimes_E M$, and the fourth one uses (\ref{equ:dualbasis}). This proves $\partial'(x)=\partial_{-}(x)$. Since any $E$-derivation on $L_E(M)$ is uniquely determined by its values at the generating $E$-$E$-bimodule $M^* \oplus M$, it follows that $\partial'= \partial.$
\end{rem}

\section{Quivers and path algebras}\label{section4}

In this section, we study the dg Cohn algebra and dg Leavitt algebra in the quiver situation, namely the dg Cohn path algebra and dg Leavitt path algebra, respectively. The differentials are described explicitly.

A quiver is a directed graph. Formally, it is a quadruple  $Q=(Q_{0}, Q_{1}; s, t)$ consisting of a set $Q_{0}$ of vertices, a set $Q_{1}$ of arrows and two maps $s, t\colon  Q_{1}\xrightarrow []{}Q_{0}$, which associate to each arrow $\alpha$ its starting vertex $s(\alpha)$ and its terminating vertex $t(\alpha)$, respectively. We visualize an arrow $\alpha$ as $\alpha\colon s(\alpha)\rightarrow t(\alpha)$. A vertex is called a {\it sink} if no arrow starts in this vertex.  The quiver $Q$ is \emph{finite} provided that both $Q_0$ and $Q_1$ are finite sets.

We fix a finite quiver $Q$.  A path of length $n$ is a sequence $p=\alpha_{n}\dotsb\alpha_{2}\alpha_{1}$ of arrows with $t(\alpha_{j})=s(\alpha_{j+1})$ for $1\leq j\leq n-1$. Denote by $l(p)=n$.  The starting vertex of $p$, denoted by $s(p)$, is defined to be $s(\alpha_{1})$. The terminating vertex of $p$, denoted by $t(p)$, is defined to be  $t(\alpha_{n})$. We identify an arrow with a path of length one. We associate to each vertex $i\in Q_{0}$ a \emph{trivial} path $e_{i}$ of length zero, and set $s(e_i)=i=t(e_i)$. Denote by $Q_n$ the set of paths of length $n$.

The \emph{path algebra} $\mathbb{K} Q=\bigoplus_{n\geq 0}\mathbb{K} Q_{n}$ is a free $\mathbb{K}$-module with  a basis given by all the paths in $Q$, whose multiplication is given as follows: for two paths $p$ and $q$ satisfying $s(p)=t(q)$, the product $pq$ is their concatenation; otherwise,  the product $pq$ is defined to be zero. Here, we write the concatenation of paths from right to left. For example, we have  $e_{t(p)}p=p=pe_{s(p)}$ for each path $p$.

  We denote by $\overline{Q}$ the \emph{double quiver} of $Q$, which is obtained from $Q$ by adding for each arrow $\alpha\in Q_1$ a new arrow $\alpha^*$ in the opposite direction, that is, we have $s(\alpha^*)=t(\alpha)$ and $t(\alpha^*)=s(\alpha)$. The added arrows $\alpha^*$ are called the \emph{ghost} arrows. Denote by $Q_1^*$ the set formed by the ghost arrows. More generally, we denote by $Q_n^*$ the set formed by  all paths of length $n$ which consist entirely of ghost arrows. For a path $p=\alpha_n\cdots \alpha_2\alpha_1\in Q_n$, we set $p^*=\alpha_1^*\alpha_2^*\cdots \alpha_n^*\in Q^*_n$.

Set $E=\mathbb{K}Q_0=\bigoplus_{i\in Q_0}\mathbb{K}e_i$ and $M=\mathbb{K}Q_1$. Then $E$ is a subalgebra of $\mathbb{K}Q$ and $M$ is naturally an $E$-$E$-bimodule. Recall that $M^*={\rm Hom}_E(M, E)$. Each ghost arrow $\alpha^*$ gives rise to an element $(\beta\mapsto \delta_{\alpha, \beta}e_{t(\alpha)})$ in  $M^*$. Here, $\delta$ is the Kronecker symbol. In this way, we have an identification of $E$-$E$-bimodules.
$$M^*=\mathbb{K}Q_1^*$$

It is well known that the inclusions $E\subseteq \mathbb{K}Q$ and $M\subseteq \mathbb{K}Q$ induce a canonical isomorphism $$T_E(M)\stackrel{\sim}\longrightarrow \mathbb{K}Q$$
of algebras. In more detail, a tensor $\alpha_n\otimes_E\cdots \otimes_E \alpha_2 \otimes_E\alpha_1$  of arrows is sent to the corresponding path $p=\alpha_n\cdots \alpha_2\alpha_1$. Similarly, we use the above identification $M^*=\mathbb{K}Q_1^*$ and embed $M^*$ into $\mathbb{K}\overline{Q}$. Then we obtain a canonical isomorphism
\begin{align}\label{iso:tensor}
T_E(M^*\oplus M)\stackrel{\sim}\longrightarrow \mathbb{K}\overline{Q}
\end{align}
of algebras.

As in \cite[Definition~1.5.1]{AAM}, the \emph{Cohn path algebra} $C(Q)$ is defined as the following quotient algebra.
$$C(Q)=\mathbb{K}\overline{Q}/{(\alpha\beta^*-\delta_{\alpha, \beta}e_{t(\alpha)}\; |\; \alpha, \beta\in Q_1)}$$
Following \cite{AA, AGGP, AMP}, the \emph{Leavitt path algebra} $L(Q)$ is defined as the further quotient algebra.
$$L(Q)=C(Q)/{(e_i-\sum_{\{\alpha\in Q_1\; |\; s(\alpha)=i\}} \alpha^*\alpha \; |\; i\in Q_0 \mbox{ are non-sinks in $Q$})}$$
The relations $\alpha\beta^*-\delta_{\alpha, \beta}e_{t(\alpha)}$ and $e_i-\sum_{\{\alpha\in Q_1\; |\; s(\alpha)=i\}} \alpha^*\alpha$ are known as the first and second \emph{Cuntz-Krieger relations}, respectively.  For relations between Cohn path algebras and Leavitt path algebras, we refer to \cite[Theorem~1.5.18]{AAM}.

Denote by $Q^\circ$ the quiver without sinks, that is obtained from $Q$ by removing sinks repeatedly.

\begin{prop}\label{prop:iso-path}
Keep the notation as above. Then the following statements hold.
\begin{enumerate}
\item[(1)] There is a canonical isomorphism $C_E(M)\simeq C(Q)$ of algebras.
\item[(2)] There is a canonical isomorphism $L_E(M)\simeq L(Q^\circ)$ of algebras.
\end{enumerate}
\end{prop}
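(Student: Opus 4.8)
The plan is to prove both isomorphisms by exhibiting explicit identifications of generators and relations, using the isomorphism $T_E(M^*\oplus M)\stackrel{\sim}\longrightarrow \mathbb{K}\overline{Q}$ of \eqref{iso:tensor} as the starting point.

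For statement (1), I would first recall from Proposition~\ref{prop:isoC} that $C_E(M)\simeq T_E(M^*\oplus M)/(x\otimes_E g - g(x)\mid x\in M,\ g\in M^*)$. Transporting this along the algebra isomorphism \eqref{iso:tensor}, the ideal $(x\otimes_E g - g(x)\mid x\in M,\ g\in M^*)$ corresponds to an ideal of $\mathbb{K}\overline{Q}$. Since $M=\mathbb{K}Q_1$ and $M^*=\mathbb{K}Q_1^*$ have bases $\{\alpha\}_{\alpha\in Q_1}$ and $\{\beta^*\}_{\beta\in Q_1}$, and since the map $\beta^*\colon M\to E$ sends $\alpha\mapsto \delta_{\alpha,\beta}e_{t(\alpha)}$, the generating relations $\alpha\otimes_E\beta^* - \beta^*(\alpha)$ become exactly $\alpha\beta^* - \delta_{\alpha,\beta}e_{t(\alpha)}$ under the convention that composition in $\overline{Q}$ is written right-to-left. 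Hence the ideal coincides with the first Cuntz–Krieger ideal, and we obtain $C_E(M)\simeq \mathbb{K}\overline{Q}/(\alpha\beta^* - \delta_{\alpha,\beta}e_{t(\alpha)}) = C(Q)$. The only point needing care here is a compatibility check between the convention that paths are concatenated right to left and the convention \eqref{equ:dual-action} for the dual bimodule action, together with bilinearity to reduce from general $x\in M$, $g\in M^*$ to basis arrows.

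For statement (2), I would use the definition $L_E(M) = C_E(M)/(1_E - c)$, where $c = \sum_{i\in S}\alpha_i^*\otimes_E\alpha_i$ is the Casimir element. Here the natural dual basis of $M=\mathbb{K}Q_1$ as a left $E$-module is given by the arrows: for each non-sink vertex $i$, the arrows $\{\alpha\in Q_1\mid s(\alpha)=i\}$ furnish a basis of $e_i M$ with dual basis the corresponding ghost arrows, so $c = \sum_{\alpha\in Q_1}\alpha^*\otimes_E\alpha$ and, decomposing $1_E = \sum_{i\in Q_0}e_i$, the relation $1_E - c$ breaks up, vertex by vertex, into the relations $e_i - \sum_{\{\alpha\mid s(\alpha)=i\}}\alpha^*\alpha$ for non-sink $i$ — precisely the second Cuntz–Krieger relations — while for a sink $i$ (where $e_i M=0$) the relation reads $e_i = 0$. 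Thus $L_E(M)$ is the quotient of $C(Q)$ obtained by imposing the second Cuntz–Krieger relations at all non-sinks and killing $e_i$ at all sinks. It remains to identify this with $L(Q^\circ)$. For this I would argue that killing $e_i$ for a sink $i$ forces, in the quotient, the vanishing of every arrow into $i$ and every ghost arrow out of $i$ (since $\alpha = e_{t(\alpha)}\alpha$), so passing to the quotient amounts to deleting the sink $i$ and all arrows incident to it; iterating this removal of sinks reproduces exactly the passage from $Q$ to $Q^\circ$, and after this reduction the surviving generators and relations are visibly those of $L(Q^\circ) = C(Q^\circ)/(\text{second Cuntz–Krieger at non-sinks})$. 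Here one should note that a vertex of $Q$ that becomes a sink in $Q^\circ$ was originally a non-sink, but all arrows out of it point to vertices that have been deleted, so its second Cuntz–Krieger relation degenerates to $e_i = 0$ consistently — I would phrase the induction on the number of sink-removal steps carefully to make this bookkeeping clean.

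The main obstacle I anticipate is precisely this last bookkeeping step: showing rigorously that the two-sided ideal generated by $\{1_E - c\}$ inside $C(Q)$, once expanded over vertices, produces an ideal whose quotient coincides on the nose with $L(Q^\circ)$ rather than merely being Morita equivalent or isomorphic up to some identification. One must track how killing $e_i$ at a sink propagates to annihilate incident arrows and ghost arrows, confirm that no further relations are accidentally imposed among the surviving generators, and verify that the iterated sink-removal stabilizes at $Q^\circ$ independently of the order of removal (which follows from the fact that $Q^\circ$ is the largest subquiver of $Q$ without sinks). Everything else is a routine comparison of presentations, and the isomorphisms produced are manifestly canonical, being induced by \eqref{iso:tensor} on generators.
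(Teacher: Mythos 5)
Your proposal is correct and follows essentially the same route as the paper: part (1) via Proposition~\ref{prop:isoC} and the isomorphism \eqref{iso:tensor}, and part (2) by identifying the Casimir element with $\sum_{\alpha\in Q_1}\alpha^*\alpha$, splitting $1_E-c$ vertex by vertex into the second Cuntz--Krieger relations at non-sinks and $e_j=0$ at sinks, and then propagating the vanishing of sink idempotents through iterated sink removal to reach $Q^\circ$. The only cosmetic difference is that the paper organizes the last step by first passing to $L(Q)$ and observing that the ideal $(e_j \mid j \mbox{ a sink})$ coincides with $(e_j \mid j\in Q_0\setminus Q_0^\circ)$, which is exactly your propagation argument.
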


\begin{proof}
(1) follows from Proposition~\ref{prop:isoC} and the isomorphism (\ref{iso:tensor}). Here, we observe that the element $\alpha\otimes_E \beta^*-\beta^*(\alpha)\in T_E(M^*\oplus M)$ corresponds to $\alpha\beta^*-\delta_{\alpha, \beta}e_{t(\alpha)}\in \mathbb{K}\overline{Q}$.

(2) Recall that $1_E=\sum_{i\in Q_0}e_i$. Using the above identification $M^*=\mathbb{K}Q_1^*$, we infer that the Casimir element $c\in M^*\otimes_E M$ corresponds to $\sum_{\alpha\in Q_1}\alpha^*\alpha\in C(Q)$. Therefore, using (1), we infer that $L_E(M)$ is isomorphic to
\begin{align*}
&C(Q)\big /{\big(\sum_{i\in Q_0}e_i-\sum_{\alpha\in Q_1}\alpha^*\alpha\big)}\\
={} &C(Q)\Big/ {\Big(e_i-\sum_{\{\alpha\in Q_1\; |\; s(\alpha)=i\}} \alpha^*\alpha,\  e_j\; |\; i\in Q_0 \mbox{ non-sinks, }  j\in Q_0 \mbox{ sinks}\Big)}.
\end{align*}
It follows that $L_E(M)$ is isomorphic to $L(Q)/{(e_j\; |\; j\in Q_0 \mbox{ sinks})}$.

We claim that the following equality holds in $L(Q)$:
$${(e_j\; |\; j\in Q_0 \mbox{ sinks})}=(e_j\; |\; j\in Q_0\backslash Q^\circ_0).$$
Denote the  ideal on the left hand side by $I$. Clearly, $I$ lies in the ideal on the right hand side.
 Then it suffices to show that for each $j\in Q_0\backslash Q^\circ_0$, $e_j$ belongs to $I$. By the construction of $Q^\circ$, we have a filtration of full subquivers
$$Q^\circ=F_NQ \subseteq F_{N-1}Q\subseteq \cdots \subseteq F_1Q\subseteq F_0Q=Q,$$
such that each $F_nQ$ is obtained from $F_{n-1}Q$ by removing all the sinks. For each $j\in Q_0\backslash Q^\circ_0$, we define its \emph{height}, denoted by $h(j)$, to be the maximal $h$ satisfying $j\in (F_hQ)_0$.

We use induction on $h(j)$ to prove that $e_j$ belongs to $I$ for any $j\in Q_0\backslash Q^\circ_0$. We observe that $h(j)=0$ if and only if $j$ is a sink in $Q$. Then the case $j=0$ is trivial. Now assume that $h(j)=h>0$; in particular, $j$ is not a sink in $Q$. Any arrow $\alpha$ starting at $j$ necessarily satisfies $h(t(\alpha))<h$.  By the induction hypothesis, we have $e_{t(\alpha)}\in I$. The second Cuntz-Krieger relation yields
$$e_j=\sum_{\{\alpha\in Q_1\; |\; s(\alpha)=j\}}\alpha^*\alpha=\sum_{\{\alpha\in Q_1\; |\; s(\alpha)=j\}}\alpha^*e_{t(\alpha)}\alpha.$$
Therefore, $e_j$ belongs to the two-sided ideal $I$, proving the claim.

By the claim above, the quotient algebra $L(Q)/I$ equals $L(Q)/{(e_j\; |\; j\in Q_0\backslash Q^\circ_0)}$, where the latter  is clearly isomorphic to $L(Q^\circ)$. Then the required isomorphism follows readily.
\end{proof}

By Proposition~\ref{prop:iso-path}(1) and the explicit construction of $C_E(M)$,  we infer that $C(Q)$ is a free $\mathbb{K}$-module with  a basis given by the following set
$$\{p^*q\; |\; p \mbox{ and } q \mbox{ are paths in } Q \mbox{ satisfying } t(p)=t(q)\}.$$
We observe that both $C(Q)$ and $L(Q)$ is naturally $\mathbb{Z}$-graded  such that $|e_i|=0$, $|\alpha|=-1$ and $|\alpha^*|=1$.

\begin{rem}
The chosen grading of $L(Q)$ here is different from the one in \cite{Smi, CY, CLW}, where the degrees of $e_i$, $\alpha$ and $\alpha^*$ equal $0$, $1$ and $-1$, respectively.  There is an involution $(-)^* \colon L(Q) \to L(Q)$ of algebras given by $(e_i)^* = e_i, (\alpha)^*=\alpha^*$ and $(\alpha^*)^* =\alpha$. We observe that the involution identifies the two gradings on $L(Q)$. Therefore, there is no essential difference between these two gradings. In the following consideration of dg Leavitt path algebras, one sees that the grading here is more reasonable.
\end{rem}

Recall that $E=\mathbb{K}Q_0$ and $M=\mathbb{K}Q_1$. We fix an associative morphism $\mu\colon M\otimes_E M \rightarrow M$ of $E$-$E$-bimodules. Identifying $M\otimes_E M$ with $\mathbb{K}Q_2$, we might view $\mu$ as an $E$-$E$-bimodule map
$$\mu\colon \mathbb{K}Q_2\longrightarrow \mathbb{K}Q_1.$$
Consequently, it is uniquely determined by the following formula: for each path $p$ of length two in $Q$, we have
\begin{align}\label{equ:struc-co}
\mu(p)=\sum_{\{\alpha\in Q_1\; |\; \alpha// p\}} \lambda_{p, \alpha}\alpha
\end{align}
for some structure coefficients $\lambda_{p, \alpha}\in \mathbb{K}$. Here $\alpha // p$ indicates that $\alpha$ is \emph{parallel} to $p$, i.e.\ $s(\alpha) = s(p)$ and $t(\alpha) = t(p)$. The associativity of $\mu$ is equivalent to the following condition: for each path $q=\alpha_3\alpha_2\alpha_1$ of length three and each arrow $\alpha$ parallel to $q$, we have
$$\sum_{\{\beta\in Q_1\; |\; \beta // \alpha_3\alpha_2\}} \lambda_{\alpha_3\alpha_2, \beta} \lambda_{\beta\alpha_1, \alpha}=\sum_{\{\beta'\in Q_1\; |\; \beta'//  \alpha_2\alpha_1\}} \lambda_{\alpha_2\alpha_1, \beta'}\lambda_{\alpha_3\beta', \alpha}.$$

Recall from the previous section that $C_E(M)$ is naturally a dg algebra. By the canonical isomorphism in Proposition~\ref{prop:iso-path}(1) and transferring the structures, we infer that $C(Q)$ is a dg algebra with differential $\partial$, called the \emph{dg Cohn path algebra} associated to $(Q, \mu)$. As the differential descends to $L(Q^\circ)$, we obtain the dg algebra $(L(Q^\circ), \partial)$, called the \emph{dg Leavitt path algebra} associated to $(Q, \mu)$.

\begin{rem}
By Lemma~\ref{lem:c-closed} and its proof, we observe that the differential $\partial$ on $C(Q)$ descends to $L(Q)$. Then the differential of $L(Q^\circ)$ is inherited from the one of $L(Q)$ via the isomorphism
$$L(Q)/{(e_j\; |\; j\in Q_0 \mbox{ sinks})}\simeq L(Q^\circ).$$
By Proposition~\ref{prop:iso-path}(2), the dg Leavitt path algebra $(L(Q^\circ), \partial)$, rather than $(L(Q), \partial)$, is more relevant to us.
\end{rem}

Recall that the differential $\partial$ of $C(Q)$ is completely determined by $\partial_+$ and $\partial_{-}$; see (\ref{equ:par+}) and (\ref{equ:par-}). Both maps are  uniquely determined by the structure coefficients $\lambda_{p, \alpha}$ in (\ref{equ:struc-co}).

To make them explicit, we use the identification $M^*=\mathbb{K}Q^*_1$. Moreover, we identify $M^*\otimes_E M^*$ with $\mathbb{K}Q_2^*$, sending a typical tensor $\alpha^*\otimes_E \beta^*$ of ghost arrows to $(\beta\alpha)^*=\alpha^*\beta^*\in Q_2^*$. Then we have
\begin{align}\label{equ:+}
\begin{array}{cccc}
\partial_{+}\colon & \mathbb{K}Q_1^* & \longrightarrow  & \mathbb{K}Q^*_1\otimes_E \mathbb{K}Q^*_1=\mathbb{K}Q_2^*,\\
 & \alpha^*   & \mapsto & \displaystyle \sum_{\{p\in Q_2\; |\; \alpha// p \}}\lambda_{p, \alpha}p^*
 \end{array}
 \end{align}
and
\begin{align}\label{equ:-}
\begin{array}{cccc}
\partial_{-}\colon &  \mathbb{K}Q_1 & \longrightarrow &  \mathbb{K}Q_1^*\otimes_E \mathbb{K}Q_1\subseteq C(Q), \\
&  \alpha  & \mapsto & \displaystyle \sum_{\{\beta\in Q_1\; |\; s(\beta)=t(\alpha)\}}  \beta^* \mu(\beta\alpha) =  \sum \lambda_{\beta \alpha, \beta'}\; \beta^* \beta'.
\end{array}
\end{align}
where the last sum without subscript runs over ${\{\beta, \beta'\in Q_1\; |\; s(\beta)=t(\alpha),\;  \beta'//\beta\alpha \}}$.

We now give a concrete example.

\begin{exm}\label{example:quiverone}
Let $n\geq 1$ and $R_n$ be the rose quiver with one vertex and $n$ loops.

\[\xymatrix{
\cdot \ar@(l,u)@<+2pt>[]|{x_1} \ar@(u,r)@<+2pt>[]|{x_2}
\ar@(d,l)@<+2pt>[]|{x_n} \ar@(r,d)@<-2pt>[]|{\cdots}
}\]
Then $E=\mathbb{K}$ and $M=\bigoplus_{i=1}^n\mathbb{K}x_i$. Define a $\mathbb{K}$-linear product $\mu\colon M\otimes_\mathbb{K}M\rightarrow M$ according to the following rule:
\begin{align*}
\mu(x_i\otimes x_j) =
\begin{cases}
x_{i+j}, & \text{ if $i+j\leq n$}; \\
0, & \text{otherwise.}
\end{cases}
\end{align*}
We observe that $\mu$ is associative.

The dg Cohn path algebra and dg Leavitt path algebra associated to $(R_n, \mu)$ are described as follows:
$$C(R_n)=\mathbb{K}\langle x_1, \dotsc, x_n, y_1, \dotsc, y_n\rangle/{(x_iy_j-\delta_{i, j}\; |\; 1\leq i, j \leq n)}$$
and
$$L(R_n)=\mathbb{K}\langle x_1, \dotsc, x_n, y_1, \dotsc, y_n\rangle/{(x_iy_j-\delta_{i, j}, \ 1-\sum_{k=1}^n y_kx_k \; |\; 1\leq i, j \leq n)}.$$
Both algebras are graded such that $|x_i|=-1$ and $|y_i|=1$. Here, we write $y_i$ for the ghost arrow $x_i^*$. The differential $\partial$ on both algebras is uniquely determined by the following formula: for each $1\leq i\leq n$, we have
$$\partial(y_i)=\sum_{1\leq j\leq i-1}y_jy_{i-j} \quad \mbox{ and } \quad \partial(x_i)=\sum_{i<j\leq n} y_{j-i}x_j.$$
In particular, we have $\partial(y_1)=0=\partial(x_n)$.

The algebras $C(R_n)$ and $L(R_n)$ are known as the (classical) Cohn algebra and Leavitt algebra, respectively. We mention that $C(R_1)$ is also called the Toeplitz-Jacobson algebra; see \cite[Proposition~1.3.7]{AAM} and \cite{IS}. Moreover, the dg algebra $L(R_1)$ has the trivial differential, and  is isomorphic to the graded Laurent polynomial algebra $\mathbb K[y, y^{-1}]$ in  one variable, where $y$ has degree $1$ and $y^{-1}$ has degree $-1$.
\end{exm}

\section{Pretriangulated dg categories}\label{section5}

In this section, we recall some basic facts on dg categories. We are mainly concerned with pretriangulated dg categories, dg quotient categories and the perfect dg derived categories of dg algebras. The main references are \cite{Dri, Kel06}.

\subsection{DG categories and functors}  Let $\mathcal{C}$ be a dg category. For two objects $X$ and $Y$, its Hom complex  is usually denoted by $\mathcal{C}(X, Y)=(\bigoplus_{p\in \mathbb{Z}}\mathcal{C}(X, Y)^p, d_\mathcal{C})$. Morphisms in $\mathcal{C}(X, Y)^p$ are said to be homogeneous of degree $p$. A morphism $f\colon X\rightarrow Y$ is said to be closed, if $d_\mathcal{C}(f)=0$.

We denote by $Z^0(\mathcal{C})$ the ordinary category of $\mathcal{C}$, which has the same objects as $\mathcal{C}$ and whose morphisms are precisely closed morphisms in $\mathcal{C}$ of degree zero, that is, its Hom modules are  the zeroth cocycles of the corresponding Hom complexes. Similarly, the \emph{homotopy category} $H^0(\mathcal{C})$ has the same objects and its Hom modules are given by the zeroth cohomology of the corresponding Hom complexes.  An object $X$ is \emph{contractible} in $\mathcal{C}$ if ${\rm Id}_X$ is a coboundary, or equivalently, $X$ is isomorphic to the zero object in $H^0(\mathcal{C})$.

We denote by $\mathcal{C}^{\rm op}$ the \emph{opposite} dg category of $\mathcal{C}$, whose composition $\circ^{\rm op}$ is given by $g\circ^{\rm op} f =(-1)^{|g||f|}f\circ g$.

A closed morphism $f\colon X\rightarrow Y$ of degree zero is called a \emph{dg-isomorphism}, if it is an isomorphism in $Z^0(\mathcal{C})$, or equivalently in $\mathcal{C}$; it is called a \emph{homotopy equivalence}, if its image in $H^0(\mathcal{C})$ is an isomorphism.

In the following examples, we fix the notation which will be used later. For a $\mathbb{K}$-algebra $\Lambda$, we denote by $\Lambda\mbox{-Mod}$ the abelian category of left $\Lambda$-modules.

\begin{exm}\label{exm:Cdg}
Let $\Lambda$ be a $\mathbb{K}$-algebra. For two complexes $X$ and $Y$ of $\Lambda$-modules, we denote by ${\rm Hom}_\Lambda(X, Y)$ the Hom complex  given as follows: its $p$-th homogeneous component is given by an infinite product
   $${\rm Hom}_\Lambda(X, Y)^p=\prod_{n \in \mathbb{Z}} {\rm Hom}_{\Lambda\mbox{-}{\rm Mod}}(X^n, Y^{n+p}),$$
 whose elements  will be denoted by $f=\{f^n\}_{n\in \mathbb{Z}}$ with $f^n \in {\rm Hom}_{\Lambda\mbox{-}{\rm Mod}}(X^n, Y^{n+p})$; the differential $d$ acts on $f$ via
 $$d(f)^{n}=d_Y^{n+p}\circ f^n-(-1)^{|f|}f^{n+1}\circ d_X^n, \quad \quad \text{for each $n\in \mathbb{Z}.$}$$

 The collection of all complexes of $\Lambda$-modules with these Hom complexes yields a dg category, denoted by $C_{\rm dg}(\Lambda\mbox{-}{\rm Mod})$. We observe that $Z^0C_{\rm dg}(\Lambda\mbox{-}{\rm Mod})=C(\Lambda\mbox{-{\rm Mod}})$ is the category of complexes of $\Lambda$-modules and that $H^0C_{\rm dg}(\Lambda\mbox{-}{\rm Mod})=\mathbf{K}(\Lambda\mbox{-{\rm Mod}})$ is the classical homotopy category of complexes of $\Lambda$-modules.

 Let $\mathfrak{a}$ be an additive category. Slightly generalizing the above construction, we obtain the dg category $C_{\rm dg}(\mathfrak{a})$ of complexes in $\mathfrak{a}$. The homotopy category $H^0C_{\rm dg}(\mathfrak{a})$ equals $\mathbf{K}(\mathfrak{a})$, the classical homotopy category of complexes in $\mathfrak a$.
     \end{exm}

The dg category $C_{\rm dg}(\mathbb{K}\mbox{-}{\rm Mod})$ is usually denoted by $C_{\rm dg}(\mathbb{K})$.

\begin{exm}
Let $\mathcal{C}$ and $\mathcal{D}$ be two dg categories. Assume that $\mathcal{C}$ is small. For two dg functors $F, G\colon \mathcal{C}\rightarrow \mathcal{D}$, a natural transformation $\eta=(\eta_X)_{X\in {\rm Obj}(\mathcal{C})} \colon F\rightarrow G$ of degree $p$ consists of morphisms $\eta_X\colon F(X)\rightarrow G(X)$ of degree $p$ in $\mathcal{D}$ satisfying  the following graded naturality property: for any morphism $a\colon X\rightarrow X'$ in $\mathcal{C}$, we have
$$G(a)\circ \eta_X=(-1)^{p|a|}\eta_{X'}\circ F(a) \colon F(X) \to G(X').$$
We now define the Hom complex ${\rm Hom}(F, G)$ such that its $p$-th component is formed by natural transformations of degree $p$ from $F$ to $G$ and that its differential is given by $d(\eta)_{X}=d_\mathcal{D}(\eta_X)$ for any object $X\in \mathcal{C}$.

The collection of all dg functors from $\mathcal{C}$ to $\mathcal{D}$ together with the Hom complexes yields a dg category, denoted by ${\rm Fun}_{\rm dg}(\mathcal{C}, \mathcal{D}).$ In particular, a natural transformation $\eta = (\eta_X)_{X\in {\rm Obj(\mathcal{C})}}$ is {\rm closed} if $d_{\mathcal D}(\eta_X) = 0$ for any object $X \in \mathcal C$.

 By a left dg $\mathcal{C}$-module, we mean a dg functor $M\colon \mathcal{C}\rightarrow C_{\rm dg}(\mathbb{K})$. Write
$$\mathcal{C}\mbox{-{\rm DGMod}} = {\rm Fun}_{\rm dg}(\mathcal{C}, C_{\rm dg}(\mathbb{K}))$$
for the dg category formed by left dg $\mathcal{C}$-modules. Write $\mathbf{K}(\mathcal{C})=H^0(\mathcal{C}\mbox{-{\rm DGMod}})$ for the homotopy category of dg $\mathcal{C}$-modules. Denote by $\mathbf{D}(\mathcal{C})=\mathbf{K}(\mathcal{C})/{\mathbf{K}^{\rm ac}(\mathcal{C})}$ the derived category of dg $\mathcal{C}$-modules, where $\mathbf{K}^{\rm ac}(\mathcal{C})$ is the triangulated subcategory of $\mathbf{K}(\mathcal{C})$ formed by acyclic modules, and $\mathbf{K}(\mathcal{C})/{\mathbf{K}^{\rm ac}(\mathcal{C})}$ means the Verdier quotient.
\end{exm}

 For a left dg $\mathcal{C}$-module $M$ and each $i\in \mathbb{Z}$, the shifted $\mathcal C$-module $\Sigma^i(M)$ is defined as follows: as a complex
 $$\Sigma^i(M)(X)=\Sigma^i(MX), \quad \quad \text{for each object $X$ in $\mathcal C$}$$
 and for any morphism $f\colon X\rightarrow X'$ in $\mathcal{C}$, the induced map $\Sigma^i(M)(f)\colon \Sigma^i(MX)\rightarrow \Sigma^i(MX')$ sends $m$ to $(-1)^{i|f|}M(f)(m)$.
 Here, for simplicity,  we write $MX = M(X)$ and $MX' = M(X')$.
 For a closed morphism $\eta\colon M\rightarrow N$ of degree zero between dg modules, its \emph{cone} ${\rm Cone}(\eta)$ is a dg module defined as follows:
 $${\rm Cone}(\eta)(X):={\rm Cone}(\eta_X)=NX\oplus \Sigma(MX)$$
  is the mapping cone of $\eta_X\colon MX\rightarrow NX$, and ${\rm Cone}(\eta)(f)$ is given by $\left(\begin{smallmatrix} N(f) & 0\\ 0 & \Sigma(M)(f)\end{smallmatrix}\right)$.

A dg functor $F\colon \mathcal{C}\rightarrow \mathcal{D}$ is said to be quasi-fully faithful if for any objects $X$ and $Y$, the induced cochain map
$$\mathcal{C}(X, Y)\longrightarrow \mathcal{D}(FX, FY)$$
is a quasi-isomorphism. Consequently, $H^0(F)\colon H^0(\mathcal{C})\rightarrow H^0(\mathcal{D})$ is fully faithful. The dg functor $F$ is called a \emph{quasi-equivalence}, provided that it is quasi-fully faithful and $H^0(F)$ is essentially surjective.

We denote by $\mathbf{dgcat}$ the category of small dg categories, whose morphisms are dg functors.  The \emph{homotopy category} $\mathbf{Hodgcat}$ is the localization of $\mathbf{dgcat}$ with respect to all the quasi-equivalences. In other words, $\mathbf{Hodgcat}$ is obtained from $\mathbf{dgcat}$ by formally inverting quasi-equivalences. By the model structure \cite{Tab} on $\mathbf{dgcat}$, the morphisms between two objects in $\mathbf{Hodgcat}$ form a set; compare \cite[Appendix B.4-6]{Dri}.

For dg categories $\mathcal{C}$ and $\mathcal{D}$, a morphism in $\mathbf{Hodgcat}$ between them is sometimes called a \emph{dg quasi-functor}. It can be realized as a roof
$$\mathcal{C} \stackrel{F}\longleftarrow \mathcal{C}' \stackrel{F'}\longrightarrow \mathcal{D}$$
 of dg functors, where $F$ is a quasi-equivalence; moreover, $F$ can be taken as a semi-free resolution of $\mathcal{C}$; moreover, the dg quasi-functor is an isomorphism if and only if the dg functor $F'$ is a quasi-equivalence. For details, we refer to \cite{Tab} and \cite[Appendix~B.5]{Dri}.

 In the sequel, we abuse a dg quasi-functor with a genuine dg functor, and abuse isomorphisms in $\mathbf{Hodgcat}$ with quasi-equivalences. In practice, we will relax the smallness assumption by  the following remark; compare \cite[Remark~1.22 and Appendix~A]{LO}.

\begin{rem}
When we consider complexes or dg modules possibly without finite generation conditions, we usually encounter non-small dg categories.  Then we have to choose a universe $\mathbb{U}$ and restrict ourselves to $\mathbb{U}$-small complexes or dg modules; compare \cite[Section~2]{Toe} and \cite[Subsection~4.4, p.172]{Kel06}. This allows us to treat them equally in the framework of the homotopy category $\mathbf{Hodgcat}$.
 \end{rem}

\subsection{Exact and pretriangulated dg categories}

Let $\mathcal{C}$ be  a small dg category. Consider the Yoneda embedding
$$\mathbf{Y}_\mathcal{C}\colon \mathcal{C}\longrightarrow \mathcal{C}^{\rm op}\mbox{-DGMod}, \; X\mapsto \mathcal{C}(-, X).$$
It is a fully faithful dg functor, which induces a fully faithful functor
$$H^0(\mathbf{Y}_\mathcal{C})\colon H^0(\mathcal{C})\longrightarrow \mathbf{K}(\mathcal{C}^{\rm op}).$$

Recall from \cite[Section~2]{Kel99} that the dg category $\mathcal{C}$ is \emph{exact} (= strongly pretriangulated in the sense of \cite{BK}) if the essential image of $\mathbf{Y}_\mathcal{C}$ is closed under shifts and cones. In other words, all the shifted modules $\Sigma^i\mathcal{C}(-, X)$ and cones ${\rm Cone}(\mathcal{C}(-, f))$ are dg-representable for any object $X$ and any closed morphism $f$ of degree zero.

The following internal characterization of exact dg categories is well known; see \cite[Section~3]{BLL}.

\begin{lem}\label{lem:exactdg}
Let $\mathcal{C}$ be a small dg category. Then $\mathcal{C}$ is exact if and only if the following two conditions are satisfied:
\begin{enumerate}
\item[(1)] the internal shifts of objects exist, that is, for each object $X$, there exist two objects $X_1$ and $X_2$ with  two closed isomorphisms $ X\rightarrow X_1$ and $X_2\rightarrow X$ in $\mathcal{C}$ of degree one;
\item[(2)] the internal cones of morphisms exist, that is,  for each closed morphism $f\colon X\rightarrow X'$ of degree zero, there is a diagram in $\mathcal{C}$
    \[
\xymatrix{ X' \ar[rr]^-{j} && Z \ar@/^1pc/@{.>}[ll]^-{t} \ar[rr]^-{p} &&  X \ar@/^1pc/@{.>}[ll]^-{s} }\]
with $|j|=0=|t|$, $|p|=1$ and $|s|=-1$ subject to the following identities:
$$p\circ j=0=t\circ s,\ \ {\rm Id}_Z=s\circ p+j\circ t,\ \  {\rm Id}_{X'}=t\circ j, \ \ {\rm Id}_{X}=p\circ s$$
and
$$ d_\mathcal{C}(j)=0=d_{\mathcal{C}}(p),\ \ f=t\circ d_{\mathcal{C}}(s).$$
\end{enumerate}
\end{lem}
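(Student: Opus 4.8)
The plan is to prove both directions of Lemma~\ref{lem:exactdg} by translating the defining condition of exactness---that the essential image of $\mathbf{Y}_\mathcal{C}$ is closed under shifts and cones---into the existence of representing objects together with the concrete structure maps witnessing such representations. The key observation throughout is that a dg-isomorphism in $\mathcal{C}$ corresponds, under the fully faithful Yoneda functor $\mathbf{Y}_\mathcal{C}$, to a dg-isomorphism of $\mathcal{C}^{\rm op}$-modules, and more importantly that the mapping cone of $\mathcal{C}(-,f)$ in $\mathcal{C}^{\rm op}\mbox{-DGMod}$ carries, by its very construction as $\mathcal{C}(-,X')\oplus \Sigma\mathcal{C}(-,X)$, canonical structure morphisms (the inclusions and projections of the two summands) satisfying exactly the homotopy-decomposition identities $p\circ j=0=t\circ s$, ${\rm Id}=s\circ p+j\circ t$, ${\rm Id}_{X'}=t\circ j$, ${\rm Id}_X=p\circ s$, and whose differential is the off-diagonal map recording $f$. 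So the whole content is: ``the abstract module-level structure on a mapping cone transports through the Yoneda equivalence onto a representing object.''

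For the forward direction, suppose $\mathcal{C}$ is exact. For (1): given $X$, the shifted modules $\Sigma\mathcal{C}(-,X)$ and $\Sigma^{-1}\mathcal{C}(-,X)$ are dg-representable, say by $X_1$ and $X_2$; the canonical closed degree-one isomorphisms $\mathcal{C}(-,X)\to \Sigma\mathcal{C}(-,X)$ and $\Sigma^{-1}\mathcal{C}(-,X)\to \mathcal{C}(-,X)$ (identity maps with a degree shift) transport via the full faithfulness of $\mathbf{Y}_\mathcal{C}$ to closed degree-one isomorphisms $X\to X_1$ and $X_2\to X$ in $\mathcal{C}$. For (2): given a closed degree-zero $f\colon X\to X'$, let $Z$ represent ${\rm Cone}(\mathcal{C}(-,f))=\mathcal{C}(-,X')\oplus\Sigma\mathcal{C}(-,X)$; the structure maps $\binom{{\rm Id}}{0}$, $(\,{\rm Id}\ 0\,)$, $(\,0\ {\rm Id}\,)$, $\binom{0}{{\rm Id}}$ at module level (where the two $\Sigma$-summand maps have degrees $1$ and $-1$ respectively) are natural transformations, hence by the Yoneda lemma correspond to morphisms $j,t,p,s$ in $\mathcal{C}$; one then checks the listed identities hold because they hold at module level by inspection of the cone construction, and that $f=t\circ d_\mathcal{C}(s)$ because the differential on the cone module is precisely $\left(\begin{smallmatrix}d & \mathcal{C}(-,f)\\ 0 & d\end{smallmatrix}\right)$ (up to the sign conventions already fixed in Example~\ref{exm:Cdg}), so $d_\mathcal{C}(s)$ is the map through $\mathcal{C}(-,f)$.

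For the converse, assume (1) and (2). Given (1), the closed degree-one isomorphisms $X\to X_1$, $X_2\to X$ in $\mathcal{C}$ yield, under $\mathbf{Y}_\mathcal{C}$, closed degree-one isomorphisms $\mathcal{C}(-,X)\cong\mathcal{C}(-,X_1)$ and $\mathcal{C}(-,X_2)\cong\mathcal{C}(-,X)$; composing with the canonical identifications shows $\Sigma\mathcal{C}(-,X)$ and $\Sigma^{-1}\mathcal{C}(-,X)$ are dg-representable, hence the essential image is closed under shifts. Given (2), for a closed degree-zero $f\colon X\to X'$ produce $Z$, $j$, $t$, $p$, $s$ with the stated relations; the relations $p\circ j=0$, ${\rm Id}_Z=s\circ p+j\circ t$, ${\rm Id}_{X'}=t\circ j$, ${\rm Id}_X=p\circ s$, $t\circ s=0$ say precisely that $\mathcal{C}(-,Z)$ splits, as a graded module, as $\mathcal{C}(-,X')\oplus\Sigma\mathcal{C}(-,X)$ via $j,p$ with sections $t,s$; the differential conditions $d_\mathcal{C}(j)=0=d_\mathcal{C}(p)$ and $f=t\circ d_\mathcal{C}(s)$ then pin down the differential on $\mathcal{C}(-,Z)$ under this splitting to be exactly the mapping cone differential of $\mathcal{C}(-,f)$, so $\mathcal{C}(-,Z)\cong{\rm Cone}(\mathcal{C}(-,f))$ as dg modules, giving closure under cones. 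Since every object of $H^0(\mathcal{C})$ arises this way and shifts/cones generate, $\mathcal{C}$ is exact.

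The main obstacle is purely bookkeeping: keeping the Koszul signs consistent between the composition rule, the shift functor's sign on morphisms (the $(-1)^{i|f|}$ in $\Sigma^i(M)(f)$), the cone differential's sign, and the Yoneda correspondence, so that the transported morphisms genuinely satisfy the identities \emph{as stated} rather than up to sign. I would fix the cone and shift conventions once at the outset (matching Example~\ref{exm:Cdg} and the definition of ${\rm Cone}(\eta)$ given there), verify the identities at the level of $\mathcal{C}^{\rm op}$-modules where everything is explicit, and then invoke full faithfulness of $\mathbf{Y}_\mathcal{C}$ to descend; no genuinely hard step is involved beyond this sign discipline. It is also worth remarking that one may cite \cite[Section~3]{BLL} for the precise sign-matched formulation rather than re-deriving it.
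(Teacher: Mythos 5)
Your argument is correct and is exactly the standard one: the paper itself gives no proof of Lemma~\ref{lem:exactdg}, delegating it to \cite[Section~3]{BLL}, and your transport of the explicit summand inclusions/projections of ${\rm Cone}(\mathcal{C}(-,f))=\mathcal{C}(-,X')\oplus\Sigma\mathcal{C}(-,X)$ through the fully faithful Yoneda embedding, in both directions, is precisely that cited argument. The only caveat is the one you already flag yourself --- fixing the shift and cone sign conventions once so that $d_\mathcal{C}(j)=0=d_\mathcal{C}(p)$ and $f=t\circ d_\mathcal{C}(s)$ come out on the nose rather than up to sign --- and with that discipline the proof is complete.
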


The dg category $\mathcal{C}$ is \emph{pretriangulated} \cite{BK} if all the shifted modules $\Sigma^i\mathcal{C}(-, X)$ and cones ${\rm Cone}(\mathcal{C}(-, f))$ are homotopy equivalent to representable functors, or equivalently, the essential image of $H^0(\mathbf{Y}_\mathcal{C})$ is a triangulated subcategory of $\mathbf{K}(\mathcal{C}^{\rm op})$. Consequently, for a pretriangulated dg category $\mathcal{C}$, its homotopy category $H^0(\mathcal{C})$ has a canonical triangulated structure in the following sense: for any dg functor $F\colon \mathcal{C}\rightarrow \mathcal{D}$ between pretriangulated dg categories, the induced functor $H^0(F)\colon H^0(\mathcal{C})\rightarrow H^0(\mathcal{D})$ is naturally a triangle functor. We mention that an exact dg category is clearly pretriangulated.

For any dg category $\mathcal{C}$, its \emph{exact hull} means an exact dg category $\mathcal{C}^{\rm ex}$ with a fully faithful dg functor
$${\rm can}_\mathcal{C}\colon \mathcal{C}\longrightarrow \mathcal{C}^{\rm ex},$$
which induces a dg-equivalence
\begin{align}\label{align:exacthull}
{\rm Fun}_{\rm dg}(\mathcal{C}^{\rm ex}, \mathcal{D})\longrightarrow {\rm Fun}_{\rm dg}(\mathcal{C}, \mathcal{D}), \; F\mapsto F\circ {\rm can}_\mathcal{C}
\end{align}
for any exact dg category $\mathcal{D}$. Indeed, one might take $\mathcal{C}^{\rm ex}$ to be the smallest full dg subcategory of $\mathcal{C}^{\rm op}\mbox{-DGMod}$ containing the representable functors and closed under shifts and cones; then ${\rm can}_\mathcal{C}$ is given by the Yoneda embedding. For an explicit construction of the exact hull, we refer to \cite{BK} and \cite[Subsection~2.4]{Dri}.

 The following facts are standard. The first statement implies that pretriangulated dg categories are invariant under quasi-equivalences. In contrast, exact dg categories usually are not invariant under quasi-equivalences.

 \begin{lem}\label{lem:pretri}
 Let $\mathcal{C}$ and $\mathcal{D}$ be two small dg categories. Then the following hold.
 \begin{enumerate}
 \item Let $F\colon \mathcal{C}\rightarrow \mathcal{D}$ be a quasi-equivalence. Then $\mathcal{C}$ is pretriangulated if and only if so is $\mathcal{D}$.
 \item Assume that $\mathcal{D}$ is exact and that  $F\colon \mathcal{C}\rightarrow \mathcal{D}$ is a fully-faithful dg functor with  $H^0(F)$ essentially surjective. Then $\mathcal{C}$ is pretriangulated.
 \end{enumerate}
 \end{lem}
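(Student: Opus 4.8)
The plan is to reduce both statements to the characterization recalled just above the lemma: a small dg category $\mathcal{A}$ is pretriangulated exactly when the essential image of $H^0(\mathbf{Y}_\mathcal{A})$ inside $\mathbf{K}(\mathcal{A}^{\mathrm{op}})$ is a triangulated subcategory. The first step is to upgrade this to a statement about $\mathbf{D}(\mathcal{A}^{\mathrm{op}})$. Since representable dg modules $\mathcal{A}(-,X)$ are h-projective, and h-projectivity is preserved by shifts and by cones of morphisms between h-projective modules, each $\Sigma^i\mathcal{A}(-,X)$ and each $\mathrm{Cone}(\mathcal{A}(-,f))$ (with $f$ closed of degree zero) is again h-projective; hence it is homotopy equivalent to a representable if and only if it is isomorphic to a representable in $\mathbf{D}(\mathcal{A}^{\mathrm{op}})$, because $\mathbf{K}(\mathcal{A}^{\mathrm{op}})\to\mathbf{D}(\mathcal{A}^{\mathrm{op}})$ is fully faithful on h-projectives. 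So $\mathcal{A}$ is pretriangulated if and only if the essential image $\mathcal{A}^{\wedge}$ of $\mathbf{Y}_\mathcal{A}$ in $\mathbf{D}(\mathcal{A}^{\mathrm{op}})$ is a triangulated subcategory.

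For (1), I would invoke the standard fact that a quasi-equivalence $F\colon\mathcal{C}\to\mathcal{D}$ induces, via restriction of scalars along $F^{\mathrm{op}}$ and its derived adjoint, a triangle equivalence $\mathbf{D}(\mathcal{C}^{\mathrm{op}})\xrightarrow{\sim}\mathbf{D}(\mathcal{D}^{\mathrm{op}})$; see \cite{Kel06}. Because $F$ is quasi-fully faithful this equivalence carries the representable $\mathcal{C}(-,X)$ to the representable $\mathcal{D}(-,FX)$, and because $H^0(F)$ is dense every representable $\mathcal{D}(-,D)$ is isomorphic in $\mathbf{D}(\mathcal{D}^{\mathrm{op}})$ to one of the form $\mathcal{D}(-,FX)$. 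Hence the equivalence restricts to an equivalence $\mathcal{C}^{\wedge}\simeq\mathcal{D}^{\wedge}$. Since a triangle equivalence commutes with shifts and sends triangles to triangles, it preserves and reflects the property that a replete full subcategory closed under isomorphism is a triangulated subcategory; so $\mathcal{C}^{\wedge}$ is triangulated in $\mathbf{D}(\mathcal{C}^{\mathrm{op}})$ if and only if $\mathcal{D}^{\wedge}$ is triangulated in $\mathbf{D}(\mathcal{D}^{\mathrm{op}})$, which by the previous paragraph is exactly the claim.

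For (2), I would first note that a fully-faithful dg functor is in particular quasi-fully faithful, so the hypotheses already make $F$ a quasi-equivalence; since $\mathcal{D}$ is exact it is pretriangulated, and (1) then yields that $\mathcal{C}$ is pretriangulated. Alternatively one can argue directly, which is perhaps more transparent: identifying $\mathcal{C}$ with a full dg subcategory of $\mathcal{D}$, Lemma~\ref{lem:exactdg} provides internal shifts and internal cones in $\mathcal{D}$; for $X$ in $\mathcal{C}$ the closed degree-one isomorphism $X\to X_1$ gives a dg-module isomorphism $\mathcal{D}(-,X_1)\cong\Sigma\mathcal{D}(-,X)$, and the internal-cone data of a closed degree-zero $f$ in $\mathcal{C}$ gives a homotopy equivalence $\mathcal{D}(-,Z)\simeq\mathrm{Cone}(\mathcal{D}(-,f))$ of dg $\mathcal{D}$-modules; restricting these along $\mathcal{C}\hookrightarrow\mathcal{D}$ (restriction is a dg functor, hence preserves homotopy equivalences) and using density of $H^0(F)$ to replace $X_1$ and $Z$ by objects of $\mathcal{C}$ up to homotopy isomorphism, one obtains that $\Sigma^i\mathcal{C}(-,X)$ and $\mathrm{Cone}(\mathcal{C}(-,f))$ are homotopy equivalent to representables.

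The one genuinely delicate point I expect is the first paragraph: justifying that "homotopy equivalent to a representable" and "isomorphic in $\mathbf{D}(\mathcal{A}^{\mathrm{op}})$ to a representable" agree, which rests on h-projectivity of representables and the full faithfulness of $\mathbf{K}\to\mathbf{D}$ on h-projectives, together with — in (1) — checking that the derived equivalence of module categories genuinely intertwines the two Yoneda embeddings. Both are standard, but should be spelled out with a little care rather than asserted.
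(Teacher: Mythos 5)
Your argument is correct, and it proves the lemma by the same basic strategy as the paper — comparing the essential images of the two Yoneda embeddings via a functor on module categories induced by $F$ — but the technical realization differs. The paper never leaves the homotopy category: it uses the (underived) induction $-\otimes_{\mathcal{C}}\mathcal{D}\colon \mathbf{K}(\mathcal{C}^{\rm op})\to\mathbf{K}(\mathcal{D}^{\rm op})$, which is a triangle functor sending $\mathcal{C}(-,X)$ to $\mathcal{D}(-,FX)$, and concludes from the commutative square with the two Yoneda embeddings. You instead pass to $\mathbf{D}$ and invoke the derived Morita equivalence $\mathbf{D}(\mathcal{C}^{\rm op})\simeq\mathbf{D}(\mathcal{D}^{\rm op})$ attached to a quasi-equivalence; this buys you a genuine equivalence of the ambient triangulated categories, so that ``the essential image is a triangulated subcategory'' is manifestly preserved and reflected — whereas the paper's version implicitly needs that $-\otimes_{\mathcal{C}}\mathcal{D}$ is fully faithful on the triangulated hull of the representables (a Beilinson-type d\'evissage from quasi-full-faithfulness of $F$). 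The price you pay is the first paragraph: the h-projectivity bookkeeping needed to translate ``homotopy equivalent to a representable'' into ``isomorphic in $\mathbf{D}$ to a representable,'' which is correct but is machinery the paper avoids entirely. Two small points: the identification of the image of $\mathcal{C}(-,X)$ with $\mathcal{D}(-,FX)$ under induction holds because representables are h-flat (no derived correction), not because $F$ is quasi-fully faithful — the latter is only needed to make the whole functor an equivalence; and your proof of (2) via (1) is exactly the paper's (the direct argument you sketch as an alternative is a fine bonus and essentially re-proves Lemma~\ref{lem:exactdg} transported along a quasi-equivalence).
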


\begin{proof}
For (1), we consider the following diagram, which is commutative up to isomorphism.
\[\xymatrix{
H^0(\mathcal{C}) \ar[d]_-{H^0(\mathbf{Y}_\mathcal{C})}\ar[rr]^{H^0(F)} && H^0(\mathcal{D}) \ar[d]^-{H^0(\mathbf{Y}_\mathcal{D})}\\
\mathbf{K}(\mathcal{C}^{\rm op}) \ar[rr]^{-\otimes_\mathcal{C}\mathcal{D}} && \mathbf{K}(\mathcal{D}^{\rm op})
}\]
Here, $\mathcal{D}$ is viewed a dg $\mathcal{C}$-$\mathcal{D}$-bimodule. The vertical arrows are fully faithful and the bottom arrow is a triangle functor. As $H^0(F)$ is an equivalence, it follows immediately that $H^0(\mathcal{C})$ is a triangulated subcategory of  $\mathbf{K}(\mathcal{C}^{\rm op})$ if and only if the same holds for $\mathcal{D}$. Then (1) follows immediately.

(2) is a very special case of (1), once we observe that $\mathcal{D}$ is pretriangulated and that $F$ is a quasi-equivalence.
\end{proof}

The following elementary fact will be used often; see \cite[Lemma~2.5]{LS16} and \cite[Lemma~3.1]{CC}.

\begin{lem}\label{lem:tri-quasi-equiv}
Let $F\colon \mathcal{C}\rightarrow \mathcal{D}$ be a dg functor between two pretriangulated dg categories. Assume that $H^0(F)$ is a triangle equivalence. Then $F$ is a quasi-equivalence.
\end{lem}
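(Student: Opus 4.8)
The plan is to establish the two ingredients of a quasi-equivalence separately. Density of $H^0(F)$ is immediate, since by hypothesis $H^0(F)$ is an equivalence; so the real content is to prove that $F$ is quasi-fully faithful, that is, that for all objects $X,Y$ of $\mathcal{C}$ the chain map $F_{X,Y}\colon\mathcal{C}(X,Y)\to\mathcal{D}(FX,FY)$ induces an isomorphism on $H^p$ for every $p\in\mathbb{Z}$.

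First I would record the standard identification available in any pretriangulated dg category: combining the Yoneda lemma for dg modules, $\mathrm{Hom}_{\mathcal{C}^{\mathrm{op}}\mbox{-}{\rm DGMod}}(\mathcal{C}(-,X),N)\cong N(X)$, with the full faithfulness of $H^0(\mathbf{Y}_\mathcal{C})$ and the homotopy equivalence $\mathcal{C}(-,\Sigma^pY)\simeq\Sigma^p\mathcal{C}(-,Y)$ furnished by pretriangulatedness, one obtains natural isomorphisms
$$H^p(\mathcal{C}(X,Y))\;\cong\;H^0\big(\Sigma^p\mathcal{C}(X,Y)\big)\;\cong\;\mathbf{K}(\mathcal{C}^{\mathrm{op}})\big(\mathcal{C}(-,X),\mathcal{C}(-,\Sigma^pY)\big)\;\cong\;H^0(\mathcal{C})(X,\Sigma^pY),$$
and likewise $H^p(\mathcal{D}(FX,FY))\cong H^0(\mathcal{D})(FX,\Sigma^p(FY))$.

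Next I would check that, under these identifications, $H^p(F_{X,Y})$ corresponds to the composite
$$H^0(\mathcal{C})(X,\Sigma^pY)\;\xrightarrow{\ H^0(F)\ }\;H^0(\mathcal{D})(FX,F(\Sigma^pY))\;\xrightarrow{\ \sim\ }\;H^0(\mathcal{D})(FX,\Sigma^p(FY)),$$
where the second arrow is induced by the natural isomorphism $F\circ\Sigma^p\cong\Sigma^p\circ F$ on homotopy categories coming from the fact (recalled in the excerpt) that $H^0(F)$ is a triangle functor. Concretely this amounts to the functoriality identity $F(\sigma\circ g)=F(\sigma)\circ F(g)$ applied to a closed degree-$p$ homotopy equivalence $\sigma\colon\Sigma^pY\to Y$ that realizes the shift (an honest dg-isomorphism in the exact case, cf. Lemma~\ref{lem:exactdg}(1)), together with the naturality of the dg Yoneda isomorphism. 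Granting this, the first arrow above is an isomorphism because $H^0(F)$ is fully faithful, and the second is an isomorphism by construction; hence $H^p(F_{X,Y})$ is an isomorphism for every $p$, so $F_{X,Y}$ is a quasi-isomorphism and $F$ is a quasi-equivalence.

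The step I expect to be the main obstacle is the compatibility check in the previous paragraph: although each individual identification is routine, one must fix the homotopy equivalences realizing the shifts coherently and verify that they are respected both by $F$ and by the triangle-functor structure on $H^0(F)$, with the attendant sign bookkeeping. Everything else — density, the Yoneda identification, and full faithfulness of $H^0(F)$ — is either immediate or entirely standard.
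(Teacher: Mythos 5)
Your proof is correct and is the standard argument; the paper itself does not prove this lemma but only cites \cite[Lemma~2.5]{LS16} and \cite[Lemma~3.1]{CC}, which proceed exactly along these lines. The compatibility check you single out as the main obstacle is in fact harmless: for each pair $(Y,p)$ one only needs a single closed morphism $\sigma$ of degree $p$ realizing the shift, characterized by being invertible in the graded cohomology category $H^*(\mathcal{C})$, and since $F$ strictly preserves composition and differentials, $F(\sigma)$ is again invertible in $H^*(\mathcal{D})$; the square comparing $H^p(F_{X,Y})$ with $H^0(F)$ applied to $\mathcal{C}(X,\Sigma^pY)$ then commutes on the nose, with no coherence between different shifts and no appeal to the triangle-functor structure of $H^0(F)$ needed — only its full faithfulness (and density, for the remaining half of quasi-equivalence).
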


Let $\mathcal{C}$ be a small dg category. For a full dg subcategory $\mathcal{D}$, we denote by $\mathcal{C}/\mathcal{D}$ the corresponding dg quotient \cite{Kel99,Dri}. Denote by $q\colon \mathcal{C}\rightarrow \mathcal{C}/\mathcal{D}$ the quotient functor, which acts on objects by the identity.

 When all the Hom complexes in $\mathcal{C}$ are homotopically flat over $\mathbb{K}$, an explicit construction of $\mathcal{C}/\mathcal{D}$ by freely adjoining contracting homotopies is given in \cite[Subsection~3.1]{Dri}; compare \cite[Section~4]{Kel99}. In general, we refer to \cite[Subsection~3.5]{Dri} or \cite[Subsection~3.1]{Tab10}. More precisely, we replace $\mathcal{C}$ by its semi-free resolution $\widetilde{C}$, and $\mathcal{D}$ by the corresponding full dg subcategory $\widetilde{\mathcal{D}}$ of $\widetilde{C}$. The Hom complexes in $\widetilde{C}$ are semi-free over $\mathbb{K}$, and thus homotopically flat. Then $\mathcal{C}/\mathcal{D}$ is defined to $\widetilde{\mathcal{C}}/{\widetilde{\mathcal{D}}}$, where the latter is constructed explicitly in \cite[Subsection~3.1]{Dri}. Therefore, strictly speaking, $q\colon \mathcal{C}\rightarrow \mathcal{C}/\mathcal{D}$ is a dg quasi-functor, which is not necessarily a genuine dg functor. In other words, to study dg quotient categories, one has to work in the homotopy category $\mathbf{Hodgcat}$; see \cite{Tab10}.

The following universal property of $q$ is due to \cite[Theorem~1.6.2(ii)]{Dri}; compare \cite[Theorem~4.8]{Kel06}. For a cleaner version, we refer to \cite[Theorem~4.0.1]{Tab10}.

\begin{lem}\label{lem:q-universal}
Assume that $F\colon \mathcal{C}\rightarrow \mathcal{C}'$ is a dg functor such that $F(D)$ is contractible for any object $D$ in $\mathcal{D}$. Then there is a unique morphism $\overline{F}\colon \mathcal{C}/\mathcal{D}\rightarrow \mathcal{C}'$ in $\mathbf{Hodgcat}$ satisfying $F=\overline{F}\circ q$.
\end{lem}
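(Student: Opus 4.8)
The plan is to work with Drinfeld's explicit model of the dg quotient and to check the two halves of the statement directly on the extra generators. Recall from \cite[Subsections~3.1--3.3]{Dri} that, after replacing $\mathcal{C}$ by a semi-free resolution $\widetilde{\mathcal{C}}\to\mathcal{C}$ — this is harmless in $\mathbf{Hodgcat}$ and is exactly what forces $q$ to be a quasi-functor rather than a genuine dg functor — the category $\mathcal{C}/\mathcal{D}$ is obtained from $\mathcal{C}$ by freely adjoining, for every object $D$ of $\mathcal{D}$, a new endomorphism $\varepsilon_D$ of degree $-1$ subject only to $d_{\mathcal{C}/\mathcal{D}}(\varepsilon_D)=\mathrm{Id}_D$, the functor $q$ being the identity-on-objects inclusion. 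Conceptually, $\mathcal{C}/\mathcal{D}$ is the homotopy pushout of $\mathcal{C}\hookleftarrow\coprod_{D\in\mathcal{D}}\mathbb{K}\,\mathrm{Id}_D\to\coprod_{D\in\mathcal{D}}\mathcal{E}_D$, where $\mathcal{E}_D$ is the one-object dg category whose endomorphism algebra is the free dg algebra $\mathbb{K}\langle\varepsilon_D\rangle$ with $|\varepsilon_D|=-1$ and $d\varepsilon_D=1$; since $\mathbb{K}\langle\varepsilon_D\rangle$ is acyclic, $\mathcal{E}_D$ represents ``one contractible object'' in $\mathbf{Hodgcat}$, and the statement to be proved is precisely the universal property of this homotopy pushout.

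For existence, note that by hypothesis $\mathrm{Id}_{F(D)}$ is a coboundary in $\mathcal{C}'(F(D),F(D))$ for every $D\in\mathcal{D}$; fix $s_D$ of degree $-1$ with $d_{\mathcal{C}'}(s_D)=\mathrm{Id}_{F(D)}$. Define $\overline{F}\colon\mathcal{C}/\mathcal{D}\to\mathcal{C}'$ to agree with $F$ on (the image of) $\mathcal{C}$ and to send each $\varepsilon_D$ to $s_D$. The only compatibility to verify is $d_{\mathcal{C}'}(\overline{F}(\varepsilon_D))=\overline{F}(d_{\mathcal{C}/\mathcal{D}}(\varepsilon_D))$, which holds by the choice of $s_D$; since the $\varepsilon_D$ are adjoined freely over $\mathcal{C}$, prescribing their images determines a well-defined dg functor, and by construction $F=\overline{F}\circ q$.

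Uniqueness in $\mathbf{Hodgcat}$ is the harder half, and it is where I expect the real work to lie. Suppose $\overline{F}_1,\overline{F}_2\colon\mathcal{C}/\mathcal{D}\to\mathcal{C}'$ are morphisms in $\mathbf{Hodgcat}$ with $\overline{F}_i\circ q=F$; after a further semi-free resolution of $\mathcal{C}/\mathcal{D}$ and absorbing the identifications $\overline{F}_i\circ q\cong F$ one reduces to the case where both are genuine dg functors restricting to $F$ on $\mathcal{C}$. Then $t_D:=\overline{F}_2(\varepsilon_D)-\overline{F}_1(\varepsilon_D)$ is a closed endomorphism of $F(D)$ of degree $-1$, and in fact a coboundary: with $a=\overline{F}_1(\varepsilon_D)$ and $b=\overline{F}_2(\varepsilon_D)$, the graded Leibniz rule gives $d_{\mathcal{C}'}(a\circ b)=(d_{\mathcal{C}'}a)\circ b-a\circ(d_{\mathcal{C}'}b)=b-a=t_D$. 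Feeding the primitives $a\circ b$, together with the identity on $\mathcal{C}$, into a transformation between $\overline{F}_1$ and $\overline{F}_2$ produces a homotopy $\overline{F}_1\simeq\overline{F}_2$, hence $\overline{F}_1=\overline{F}_2$ in $\mathbf{Hodgcat}$. The main obstacle is to organize these homotopies coherently, while also tracking the auxiliary homotopies introduced by the semi-free resolutions and by the identifications $\overline{F}_i\circ q\cong F$; this bookkeeping is exactly the technical core of Drinfeld's treatment, so in the final write-up I would simply invoke \cite[Theorem~1.6.2(ii)]{Dri}, comparing also with \cite[Theorem~4.8]{Kel06}.
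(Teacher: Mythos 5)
The paper gives no proof of this lemma at all: it is quoted verbatim from \cite[Theorem~1.6.2(ii)]{Dri}, and your write-up ultimately does the same, so you are taking essentially the paper's route. Your supplementary sketch is sound where it is concrete — the existence half (choosing primitives $s_D$ of $\mathrm{Id}_{F(D)}$ and using that the $\varepsilon_D$ are freely adjoined) and the Leibniz computation $d(a\circ b)=b-a$ are both correct — and the genuinely delicate point, namely that ``$\overline{F}_i\circ q=F$ in $\mathbf{Hodgcat}$'' only gives dg functors restricting to $F$ up to homotopy and that the resulting homotopies must be assembled coherently, is one you correctly identify and defer to \cite{Dri}, exactly as the paper does.
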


The following fundamental fact will be used frequently; see \cite[Theorem~3.4]{Dri} and \cite[Theorem~1.3(i) and Lemma~1.5]{LO}. The homotopically flatness conditions required in \cite[Theorem~3.4]{Dri} are not essential, because we might replace $\mathcal{C}$ by  its semi-free resolution $\widetilde{\mathcal{C}}$,  on which the homotopically flatness conditions hold automatically; compare \cite[Subsection~3.5]{Dri}.

\begin{lem}\label{lem:quo}
Assume that both $\mathcal{C}$ and $\mathcal{D}$ are pretriangulated. Then $\mathcal{C}/\mathcal{D}$ is pretriangulated. Moreover, the quotient functor $q$ induces  an isomorphism of triangulated categories
        $$H^0(\mathcal{C})/H^0(\mathcal{D})\stackrel{\sim}\longrightarrow H^0(\mathcal{C}/\mathcal{D}),$$
        where $H^0(\mathcal{C})/H^0(\mathcal{D})$ denotes the corresponding Verdier quotient.
\end{lem}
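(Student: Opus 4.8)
The plan is to first reduce to the case where $\mathcal{C}$ is exact, and then to compute the morphism complexes of $\mathcal{C}/\mathcal{D}$ directly from Drinfeld's explicit model. Both assertions are invariant under replacing $\mathcal{C}$ by a quasi-equivalent dg category (with the corresponding full subcategory in place of $\mathcal{D}$), since the dg quotient descends to $\mathbf{Hodgcat}$, $H^0$ carries quasi-equivalences to triangle equivalences, and Verdier quotients are invariant under triangle equivalences. When $\mathcal{C}$ is pretriangulated the Yoneda functor ${\rm can}_{\mathcal{C}}\colon \mathcal{C}\to \mathcal{C}^{\rm ex}$ is itself a quasi-equivalence: it is fully faithful, and on homotopy categories its essential image is the closure of $H^0(\mathcal{C})$ under shifts and cones inside $\mathbf{K}(\mathcal{C}^{\rm op})$; but $H^0(\mathcal{C})$ is \emph{already} a triangulated subcategory there, so that closure is all of $H^0(\mathcal{C}^{\rm ex})$. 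Hence I would assume from the outset that $\mathcal{C}$ is exact, with $\mathcal{D}$ a full dg subcategory such that $H^0(\mathcal{D})$ is a triangulated subcategory of $H^0(\mathcal{C})$ (which holds here because $\mathcal{D}$ is pretriangulated).

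Next I would treat the statement about homotopy categories. By construction the quotient functor $q\colon \mathcal{C}\to \mathcal{C}/\mathcal{D}$ is the identity on objects and sends every object of $\mathcal{D}$ to a contractible object, so $H^0(q)$ annihilates $H^0(\mathcal{D})$; by the universal property of the Verdier quotient it induces a triangle functor $\bar q\colon H^0(\mathcal{C})/H^0(\mathcal{D})\to H^0(\mathcal{C}/\mathcal{D})$ which is the identity on objects, hence dense. The substantive point is that $\bar q$ is fully faithful, and for this I would work with Drinfeld's presentation: one adjoins to $\mathcal{C}$, for each object $U$ of $\mathcal{D}$, an endomorphism $\varepsilon_U$ of degree $-1$ with $d\varepsilon_U = {\rm id}_U$, and then $(\mathcal{C}/\mathcal{D})(X,Y)$ carries an exhaustive increasing filtration whose $k$-th subquotient is a sum, over tuples $U_1,\dots,U_k$ of objects of $\mathcal{D}$, of the tensor complexes $\mathcal{C}(U_k,Y)\otimes \mathcal{C}(U_{k-1},U_k)\otimes\cdots\otimes \mathcal{C}(U_1,U_2)\otimes \mathcal{C}(X,U_1)$, suitably shifted. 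Passing to cohomology and using exactness of $\mathcal{C}$, so that $H^{*}\mathcal{C}(X,Y)={\rm Hom}_{H^0(\mathcal{C})}(X,\Sigma^{*}Y)$ and cones exist strictly, I would identify the spectral sequence of this filtration with the calculus of left fractions that expresses ${\rm Hom}_{H^0(\mathcal{C})/H^0(\mathcal{D})}(X,Y)$ as the filtered colimit over roofs $X\xleftarrow{s}Z\to Y$ with ${\rm Cone}(s)$ in $H^0(\mathcal{D})$. Comparing the two descriptions shows $H^{*}(\bar q)$ is an isomorphism, so $\bar q$ is fully faithful, hence a triangle equivalence.

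Pretriangulatedness of $\mathcal{C}/\mathcal{D}$ would then follow cheaply. Since $q$ is the identity on objects and $\mathcal{C}$ is exact, the internal shifts and the internal cone data of Lemma~\ref{lem:exactdg} attached to objects and closed degree-zero morphisms of $\mathcal{C}$ survive under $q$, which already shows that $\Sigma^{i}(\mathcal{C}/\mathcal{D})(-,X)$ is representable up to homotopy for every object $X$ and that ${\rm Cone}((\mathcal{C}/\mathcal{D})(-,f))$ is representable up to homotopy whenever $f$ comes from $\mathcal{C}$. For an arbitrary closed degree-zero morphism $g$ of $\mathcal{C}/\mathcal{D}$, I would invoke the identification of $H^0(\mathcal{C}/\mathcal{D})$ with the Verdier quotient: $g$ has a cone there since Verdier quotients are triangulated, and transporting that triangle back along $\bar q$ exhibits ${\rm Cone}((\mathcal{C}/\mathcal{D})(-,g))$ as homotopy equivalent to a representable; alternatively one may apply Lemma~\ref{lem:pretri}(1) to a quasi-equivalence from $\mathcal{C}/\mathcal{D}$ onto an exact dg category. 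Finally the triangle functor $\bar q\colon H^0(\mathcal{C})/H^0(\mathcal{D})\to H^0(\mathcal{C}/\mathcal{D})$, being dense and fully faithful, is a triangle equivalence, which is the second assertion.

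I expect the one genuine obstacle to be the cohomology computation of the mapping complexes in Drinfeld's model: keeping track of the signs and of the differential on the bar filtration, and verifying that the resulting spectral sequence (or filtered colimit) reproduces exactly the calculus-of-fractions description of morphisms in the Verdier quotient. Everything else---the reduction to the exact case, the factorisation through $\bar q$ furnished by the universal property, and the deduction of pretriangulatedness---is formal and uses only facts already recorded above, notably Lemmas~\ref{lem:q-universal}, \ref{lem:exactdg} and \ref{lem:pretri}.
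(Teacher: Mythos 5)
The paper offers no proof of this lemma: it is recorded as a known fact with references to Drinfeld (Theorem~3.4) and Lunts--Orlov, so there is no internal argument to compare yours against. Your formal scaffolding is fine: the reduction to exact $\mathcal{C}$ via ${\rm can}_\mathcal{C}$ (which is indeed a quasi-equivalence when $\mathcal{C}$ is pretriangulated), the factorisation of $H^0(q)$ through the Verdier quotient using Lemma~\ref{lem:q-universal}, the density of $\bar q$, and the deduction of pretriangulatedness of $\mathcal{C}/\mathcal{D}$ once $\bar q$ is known to be an equivalence are all correct reductions.

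The gap is that the step you defer --- full faithfulness of $\bar q$ via the filtration on Drinfeld's mapping complexes and its comparison with the calculus of fractions --- is not a routine verification; it \emph{is} the content of the lemma. Two concrete problems. First, the explicit model obtained by adjoining the contracting homotopies $\varepsilon_U$ computes the dg quotient only when $\mathcal{C}$ is homotopically flat over $\mathbb{K}$ (and the paper works over a general commutative ring in this section), whereas the exact hull $\mathcal{C}^{\rm ex}$ you reduced to need not be flat; you would have to interleave a semi-free (hence flat) resolution with your reduction, and such a resolution destroys exactness, so the two halves of your setup are in tension outside the field case. Second, even granting flatness, the $E_1$-page of your filtration spectral sequence is a bar-type complex built from the graded groups $H^*\mathcal{C}(U_i,U_{i+1})$, and identifying its abutment with the filtered colimit over roofs $X\leftarrow Z\to Y$ is a delicate convergence-and-comparison argument that you announce but do not carry out; this is exactly where the difficulty of the theorem lives. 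Drinfeld's actual proof avoids this computation entirely: he passes to derived categories of dg modules, establishes that $\mathbf{D}(\mathcal{C}/\mathcal{D})$ is the Verdier quotient of $\mathbf{D}(\mathcal{C})$ by the localizing subcategory generated by the $\mathcal{D}$-representables (using Keller's localization theory), and then recovers the statement on $H^0$ by restricting to the pretriangulated hulls; Lunts--Orlov package this as their Theorem~1.3(i) and Lemma~1.5. To make your proposal into a proof you would need either to reproduce that module-theoretic argument or to genuinely execute the filtration computation; as written, the core assertion is named rather than established.
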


\subsection{The perfect dg derived categories}

Recall that an additive category $\mathfrak{a}$ is \emph{idempotent-split} provided that each idempotent morphism $e\colon X\rightarrow X$ admits a factorization $X\stackrel{u}\rightarrow Y \stackrel{v}\rightarrow X$ satisfying $u\circ v={\rm Id}_Y$.

Let $\mathcal{T}$ be a triangulated category. For any set $S$ of objects, we denote by ${\rm thick}\langle S\rangle$ its \emph{thick hull}, that is, the smallest triangulated subcategory of $\mathcal{T}$ containing $S$ and closed under direct summands. An object $X$ is said to be a \emph{generator} of $\mathcal{T}$, provided that $\mathcal{T}={\rm thick}\langle X\rangle$.

Let $A$ be a dg algebra. We will always view $A$ as a dg category with a single object. Then we have the dg category $A\mbox{-DGMod}$ of left dg $A$-modules and the homotopy category $\mathbf{K}(A)=H^0(A\mbox{-DGMod})$. The thick hull ${\rm thick}\langle A\rangle$ of $A$ is usually denoted by $\mathbf{per}(A)$, whose objects are called \emph{perfect modules}.  Denote by $\mathbf{per}_{\rm dg}(A)$ the full dg subcategory of $A\mbox{-DGMod}$ formed by perfect $A$-modules, called the \emph{perfect dg derived category} of $A$.

The following result is implicitly contained in \cite[Subsection~4.2]{Kel94}. Recall that $A^{\rm op}$ denotes the opposite dg algebra of $A$.

\begin{prop}\label{prop:quasi-equiv}
Let $\mathcal{C}$ be a pretriangulated dg category. Assume that $H^0(\mathcal{C})$ is idempotent-split and that $X$ is a generator of $H^0(\mathcal{C})$. Then there is a quasi-equivalence
$$\mathcal{C}(X, -)\colon \mathcal{C}\stackrel{\sim}\longrightarrow \mathbf{per}_{\rm dg}(\mathcal{C}(X, X)^{\rm op}).$$
\end{prop}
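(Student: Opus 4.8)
The plan is to realize $\mathcal{C}$ as the perfect dg derived category of the dg algebra $A := \mathcal{C}(X,X)^{\mathrm{op}}$ by transporting the object $X$ along the (derived) Yoneda functor. First I would consider the dg functor
\[
F := \mathcal{C}(X,-) \colon \mathcal{C} \longrightarrow A\mbox{-}\mathrm{DGMod},
\]
which sends an object $Y$ to the dg $A$-module $\mathcal{C}(X,Y)$, with the left $A = \mathcal{C}(X,X)^{\mathrm{op}}$-action given by precomposition (the opposite is needed precisely so that precomposition becomes a left action). This is a genuine dg functor, and by construction $F(X) = \mathcal{C}(X,X) = A$ as a dg $A$-module. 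The first point is that $F$ is quasi-fully faithful \emph{when restricted to} the full dg subcategory on $X$, essentially tautologically; the content is to propagate quasi-full-faithfulness to all of $\mathcal{C}$.

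The key steps, in order, are as follows. (1) Identify the essential image of $H^0(F)$. Since $F(X) = A$ and $H^0(F)$ is a triangle functor between triangulated categories (here I use that $\mathcal{C}$ is pretriangulated, so $H^0(\mathcal{C})$ is triangulated, and that $A\mbox{-}\mathrm{DGMod}$ is pretriangulated with $H^0 = \mathbf{K}(A)$), the image of $H^0(F)$ is contained in $\mathrm{thick}\langle A\rangle = \mathbf{per}(A)$ as soon as we know $H^0(F)$ lands in $\mathbf{K}(A)$ and is triangulated. Actually I want to factor $F$ through $\mathbf{per}_{\mathrm{dg}}(A)$: one checks that for $Y$ in the image of the Yoneda-type construction, $\mathcal{C}(X,Y)$ is a perfect $A$-module by a dévissage on $\mathcal{C}$ starting from $X$ and closing under shifts, cones, and (for idempotent-splitting) direct summands. (2) Prove $F$ is quasi-fully faithful. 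Fix $Y$; I want $\mathcal{C}(Y,Z) \to \mathrm{Hom}_{A}(\mathcal{C}(X,Y), \mathcal{C}(X,Z))$ to be a quasi-isomorphism for all $Z$. This holds for $Y = X$ by definition, and the class of $Y$ for which it holds is closed under shifts and cones (both sides are exact in $Y$) and under direct summands; since $X$ generates $H^0(\mathcal{C})$ and $H^0(\mathcal{C})$ is idempotent-split, this class is all of $\mathcal{C}$. (3) Prove $H^0(F)$ is dense onto $\mathbf{per}(A) = H^0(\mathbf{per}_{\mathrm{dg}}(A))$. The image is a triangulated, summand-closed subcategory of $\mathbf{per}(A)$ containing $A = F(X)$; since $\mathbf{per}(A) = \mathrm{thick}\langle A\rangle$, the image is everything. (4) Conclude by Lemma~\ref{lem:tri-quasi-equiv}: $F$ corestricted to $\mathbf{per}_{\mathrm{dg}}(A)$ is a dg functor between pretriangulated dg categories inducing a triangle equivalence on $H^0$, hence a quasi-equivalence. (One should note $\mathbf{per}_{\mathrm{dg}}(A)$ is pretriangulated, which is standard.)

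The main obstacle I expect is step (2), the propagation of quasi-full-faithfulness: one must carefully use exactness of the Hom complexes of $\mathcal{C}$ in each variable (both $\mathcal{C}(Y,-)$ and $\mathcal{C}(-,Z)$ carry distinguished triangles to distinguished triangles up to the sign conventions recorded earlier for cones and shifts of dg modules) together with the five-lemma on long exact cohomology sequences, and then invoke idempotent-splitting of $H^0(\mathcal{C})$ to pass from $\mathrm{thick}\langle X\rangle$ to all objects. A subtlety worth flagging is that quasi-isomorphism of Hom complexes is not obviously preserved under passing to direct summands at the dg level, but it is at the level of cohomology, which is all that is needed; the summand-closure argument is therefore carried out after applying $H^*$. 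The remaining steps (1), (3), (4) are then formal, relying on Lemma~\ref{lem:tri-quasi-equiv} and the characterization $\mathbf{per}(A) = \mathrm{thick}\langle A\rangle$ together with the basic properties of $\mathbf{per}_{\mathrm{dg}}(A)$.
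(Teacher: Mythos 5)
Your proposal is correct and follows essentially the same route as the paper: the functor $F=\mathcal{C}(X,-)$, landing in $\mathbf{per}_{\rm dg}(A)$ because $X$ generates, fully faithful on $H^0$ by propagation from the tautological case $Y=X$, dense because the image is a thick subcategory of $\mathbf{per}(A)={\rm thick}\langle A\rangle$ containing $A$, and then Lemma~\ref{lem:tri-quasi-equiv}. The only cosmetic difference is that where you spell out the d\'evissage for quasi-full-faithfulness by hand, the paper simply cites Beilinson's lemma after checking the isomorphisms on ${\rm Hom}(X,\Sigma^n X)$.
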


\begin{proof}
Write $A=\mathcal{C}(X, X)^{\rm op}$ and $F=\mathcal{C}(X, -)\colon \mathcal{C}\rightarrow A\mbox{-DGMod}$. We observe that $F(X)=A$.

Consider the triangle functor $H^0(F)\colon H^0(\mathcal{C})\rightarrow \mathbf{K}(A)$.  Since $X$ generates $H^0(\mathcal{C})$, we infer that the essential image of $H^0(F)$ lies in $\mathbf{per}(A)$. By the Yoneda embedding, $F$ induces a quasi-isomorphism
$$\mathcal{C}(X, X) \longrightarrow A\mbox{-DGMod}(F(X), F(X)).$$
The above complexes compute $H^0(\mathcal{C})(X, \Sigma^n(X))$ and ${\rm Hom}_{\mathbf{K}(A)}(F(X), \Sigma^nF(X))$, respectively. We conclude that $H^0(F)$ induces an isomorphism
 $$H^0(\mathcal{C})(X, \Sigma^n(X)) \simeq {\rm Hom}_{\mathbf{K}(A)}(F(X), \Sigma^nF(X)), \quad \text{for each $n \in \mathbb Z$}.$$
 Since $X$ generates $H^0(\mathcal{C})$, we infer from \cite[Lemma~1]{Bei} that $H^0(F)$ is fully faithful. Since $H^0(\mathcal{C})$ is idempotent-split, we infer that
 $$H^0(F)\colon H^0(\mathcal{C})\longrightarrow \mathbf{per}(A)$$
 is a triangle equivalence. Then the required quasi-equivalence follows immediately from Lemma~\ref{lem:tri-quasi-equiv}.
\end{proof}

Let $\mathcal{T}$ be a triangulated category with arbitrary coproducts. An object $X$ is \emph{compact} if ${\rm Hom}_\mathcal{T}(X, -)$ commutes with arbitrary coproducts. Denote by $\mathcal{T}^c$ the full subcategory formed by compact objects; it is a thick triangulated subcategory. In particular, $\mathcal{T}^c$ is always idempotent-split.  The triangulated category $\mathcal{T}$ is \emph{compactly generated}, provided that  there is a set $\mathcal{S}$ of compact objects such that each nonzero object $X$ satisfies ${\rm Hom}_\mathcal{T}(\Sigma^i(S), X)\neq 0$ for some $S\in \mathcal{S}$ and $i\in \mathbb{Z}$. As a typical example, the derived category $\mathbf{D}(\mathcal{C})$ of dg modules over a small dg category $\mathcal{C}$ is compactly generated; moreover, we have
\begin{align}\label{equ:der-compact}
\mathbf{D}(\mathcal{C})^c={\rm thick}\langle \mathcal{C}(X, -) \mid X\in {\rm Obj}(\mathcal{C})\rangle.
\end{align}
For details, we refer to \cite[Subsection~5.3]{Kel94}.

For each dg algebra $A$, we have an inclusion $A\hookrightarrow \mathbf{per}_{\rm dg}(A)^{\rm op}$ of dg categories, which sends the unique object to the dg $A$-module $A$ itself. We have the restriction
$${\rm res}\colon \mathbf{D}(\mathbf{per}_{\rm dg}(A)^{\rm op})\longrightarrow \mathbf{D}(A), \quad M\mapsto M(A)$$
along the above inclusion.

The following result is a special case of \cite[Theorem~8.1]{Kel94}. We sketch a proof for the convenience of the reader.

\begin{lem}\label{lem:der-equi}
The above restriction functor is a triangle equivalence.
\end{lem}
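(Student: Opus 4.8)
The plan is to exhibit the restriction functor $\mathrm{res}\colon \mathbf{D}(\mathbf{per}_{\rm dg}(A)^{\rm op})\to \mathbf{D}(A)$ as an equivalence by checking the three standard conditions: it commutes with arbitrary coproducts, it sends a compact generator to a compact generator and induces isomorphisms on the relevant Hom groups, and hence it is fully faithful on compact objects and therefore an equivalence. The restriction functor along the inclusion $\iota\colon A\hookrightarrow \mathbf{per}_{\rm dg}(A)^{\rm op}$ has a left adjoint, the induction (extension of scalars) functor $\iota_!=-\otimes_A \mathbf{per}_{\rm dg}(A)^{\rm op}$, and $\mathrm{res}$ is itself a right adjoint, so it commutes with products; but in fact $\mathrm{res}=M\mapsto M(A)$ evidently commutes with arbitrary coproducts as well, since coproducts of dg modules over a dg category are computed objectwise.

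First I would identify the image of the free module. The free dg $\mathbf{per}_{\rm dg}(A)^{\rm op}$-module on the unique object of $A$ is the representable module $\mathbf{per}_{\rm dg}(A)^{\rm op}(-, A) = \mathbf{per}_{\rm dg}(A)(A, -)$, and its restriction along $\iota$ is $\mathbf{per}_{\rm dg}(A)(A, A) = A$ as a dg $A$-module. So $\mathrm{res}\circ \iota_!$ sends the generator $A$ of $\mathbf{D}(A)$ to $A$. Since $\mathbf{D}(\mathbf{per}_{\rm dg}(A)^{\rm op})$ is compactly generated by the representable modules $\mathbf{per}_{\rm dg}(A)^{\rm op}(-, P)$ with $P\in \mathbf{per}_{\rm dg}(A)$ (by \eqref{equ:der-compact}), and every such $P$ lies in $\mathrm{thick}\langle A\rangle$ inside $\mathbf{per}_{\rm dg}(A)$, the single representable module on the object $A$ is already a compact generator of $\mathbf{D}(\mathbf{per}_{\rm dg}(A)^{\rm op})$. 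On the other side, $A$ is a compact generator of $\mathbf{D}(A)$. Thus it suffices to check that $\mathrm{res}$ restricts to a triangle equivalence between the full subcategories of compact objects, or equivalently that it is fully faithful on the thick subcategory generated by this compact generator.

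The heart of the argument is the computation of morphism complexes between shifts of the compact generator. On the source side,
\[
\mathrm{RHom}_{\mathbf{per}_{\rm dg}(A)^{\rm op}}\bigl(\mathbf{per}_{\rm dg}(A)^{\rm op}(-,A),\, \mathbf{per}_{\rm dg}(A)^{\rm op}(-,A)\bigr) \simeq \mathbf{per}_{\rm dg}(A)^{\rm op}(A,A) = \mathbf{per}_{\rm dg}(A)(A,A) = A
\]
by the dg Yoneda lemma, and the representable module is cofibrant so $\mathrm{RHom}$ is computed by the honest Hom complex; while on the target side $\mathrm{RHom}_A(A, A) = A$. One checks that $\mathrm{res}$ induces precisely the identity map $A\to A$ on these complexes, hence isomorphisms $\mathrm{Hom}(\Sigma^i G, \Sigma^j G)\xrightarrow{\sim}\mathrm{Hom}(\Sigma^i A, \Sigma^j A)$ for all $i,j$, where $G$ denotes the compact generator. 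Since $\mathrm{res}$ is exact and commutes with coproducts and these isomorphisms hold on the generator, a standard dévissage (e.g.\ via \cite[Lemma~1]{Bei} as used in the proof of Proposition~\ref{prop:quasi-equiv}, applied on the compact subcategories which are idempotent-split) shows $\mathrm{res}$ is fully faithful on compacts; density is immediate since $A$ is in the image. Finally, since both categories are compactly generated and $\mathrm{res}$ is an exact coproduct-preserving functor that is an equivalence on compact objects, it is an equivalence.

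The main obstacle I anticipate is purely a matter of care rather than depth: ensuring the cofibrancy/representability bookkeeping is right so that $\mathrm{RHom}$ between representable dg modules really does collapse to the Yoneda answer $\mathbf{per}_{\rm dg}(A)(A,A)$ with no derived correction, and keeping straight the two layers of opposites ($\mathbf{per}_{\rm dg}(A)$ versus $\mathbf{per}_{\rm dg}(A)^{\rm op}$, left versus right modules) so that the identification of the restricted generator with the dg $A$-module $A$ — rather than with $A^{\rm op}$ or a shift — comes out correctly.
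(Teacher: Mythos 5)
Your proof is correct and takes essentially the same route as the paper's: reduce, via preservation of arbitrary coproducts, to showing that $\mathrm{res}$ restricts to an equivalence on compact objects, and verify this on representables (the only cosmetic difference being that you use the single compact generator, the endomorphism computation and Beilinson d\'evissage, while the paper notes directly that $\mathrm{res}$ sends each representable $\mathbf{per}_{\rm dg}(A)(-,P)$ to $P$, so that the composite with the Yoneda identification $H^0(\mathbf{per}_{\rm dg}(A))\simeq \mathbf{D}(\mathbf{per}_{\rm dg}(A)^{\rm op})^c$ is the identity). One small variance slip to flag: the representable \emph{left} $\mathbf{per}_{\rm dg}(A)^{\rm op}$-module at the object $A$ is $\mathbf{per}_{\rm dg}(A)^{\rm op}(A,-)=\mathbf{per}_{\rm dg}(A)(-,A)$ rather than $\mathbf{per}_{\rm dg}(A)(A,-)$ as written, though its value at $A$ is still $\mathrm{Hom}_A(A,A)\cong A$ and nothing in your argument is affected.
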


\begin{proof}
Write $\mathcal{C}=\mathbf{per}_{\rm dg}(A)$. As $\mathcal{C}$ is exact and $H^0(\mathcal{C})$ is idempotent-split, the Yoneda embedding $\mathbf{Y}_\mathcal{C}$ allows us to identify $H^0(\mathcal{C})$ with $\mathbf{D}(\mathcal{C}^{\rm op})^c$; see (\ref{equ:der-compact}).

The restriction functor `$\rm res$' preserves infinite coproducts. Therefore, it suffices to prove that it preserves compact objects and restricts to an equivalence between the full subcategories formed by compact objects.

Since  `${\rm res}$' sends a representable functor $\mathcal{C}(-, P)$ to $\mathcal{C}(A, P)=P$, it follows that  it preserves compact objects. Moreover, the following composition is the identity functor.
\[
H^0(\mathcal{C}) \xrightarrow{\sim}  \mathbf{D}(\mathcal{C}^{\rm op})^c \xrightarrow{\rm res} \mathbf{D}(A)^c \xrightarrow{\sim } \mathbf{per}(A) = H^0(\mathcal{C})
\]
We infer that `${\rm res}$' restricts to an equivalence between  $\mathbf{D}(\mathcal{C}^{\rm op})^c$ and $ \mathbf{D}(A)^c$.
 \end{proof}

\section{An explicit dg localization}\label{section6}

We introduce an explicit dg localization.  Throughout this section, we fix a triple  $(\mathcal{C}, \Omega, \theta)$. Here, $\mathcal C$ is a dg category, $\Omega\colon \mathcal{C}\rightarrow \mathcal{C}$ is a dg endofunctor, and $\theta\colon {\rm Id}_\mathcal{C}\rightarrow \Omega$ is a closed natural transformation of degree zero satisfying $\theta \Omega = \Omega \theta$.

In the setup, for any object $X$, we have $d_\mathcal{C}(\theta_X)=0$,   $|\theta_X|=0$ and \ $\theta_{\Omega X} = \Omega(\theta_X) \in {\mathcal C}(\Omega X, \Omega^2(X))$. Moreover, $\theta_X$ is natural in $X$.  Here, for each $p \geq 1$, we denote by $\Omega^p$ the $p$-th iterated composition of $\Omega$. Set $\Omega^0={\rm Id}_\mathcal{C}$.

We will define a new dg category $\mathcal{SC}$ as follows: its objects are the same as $\mathcal{C}$ and the Hom complexes are given by
\begin{align}\label{align:colimit}
\mathcal{SC}(X, Y)=\varinjlim_{p\geq 0} \; \mathcal{C}(X, \Omega^p(Y)),
\end{align}
where $\mathcal{C}(X, \Omega^p(Y))\rightarrow \mathcal{C}(X, \Omega^{p+1}(Y))$ sends $f$ to $\theta_{\Omega^p(Y)}\circ f$.  For each $f\in \mathcal{C}(X, \Omega^p(Y))$, its image in $\mathcal{SC}(X, Y)$ is denoted by $[f;p]$. Therefore, we have
\begin{align}\label{equ:colim}
[f;p]=[\theta_{\Omega^{p}(Y)}\circ f;p+1].
\end{align}
The degree of $[f;p]$ equals the one of $f$. The differential of $\mathcal{SC}(X, Y)$ is given  such that $d([f;p])=[d_\mathcal{C}(f);p]$. For two morphisms $[f;p]\colon X\rightarrow Y$ and $[g;q]\colon Y\rightarrow Z$, their composition is given by
$$[g; q]\circ [f;p]:=[\Omega^p(g)\circ f; p+q].$$
One verifies that the composition is well defined, and that $\mathcal{SC}$  is a dg category. In particular,  $[{\rm Id}_X; 0]$ is the identity of $X$ in $\mathcal{SC}$.  We mention that the above construction resembles the one in \cite[Subsection~5.1]{Kel05}.

\begin{lem}\label{lem:SC}
Keep the notation as above. Then the following  statements hold.
\begin{enumerate}
\item[(1)] For each object $X$, the morphism $[\theta_X; 0]\colon X\rightarrow \Omega(X)$ is a dg-isomorphism in $\mathcal{SC}$ with $[\theta_X; 0]^{-1}=[{\rm Id}_{\Omega(X)}; 1]$.
\item[(2)] For any $p\geq 0$ and morphism $f\colon X\rightarrow \Omega^p(Y)$ in $\mathcal{C}$, we have
$$[f; p]=[\theta_Y;0]^{-1}\circ [\theta_{\Omega(Y)}; 0]^{-1}\circ  \cdots \circ [\theta_{\Omega^{p-1}(Y)};0]^{-1}\circ [f;0].$$
\item[(3)] An object $X$ is contractible in $\mathcal{SC}$ if and only if the morphism $\theta_{\Omega^{n}(X)}\circ \cdots \circ \theta_{\Omega(X)}\circ \theta_X\in \mathcal{C}(X, \Omega^{n+1}(X))$ is a coboundary for some $n$.
\end{enumerate}
\end{lem}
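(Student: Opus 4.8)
The plan is to prove the three statements in order, exploiting the colimit description of the Hom complexes and the relation \eqref{equ:colim}. For (1), I would simply exhibit the candidate inverse $[{\rm Id}_{\Omega(X)};1]$ and compute both composites. On the one hand, $[{\rm Id}_{\Omega(X)};1]\circ[\theta_X;0]=[\Omega^0(\,{\rm Id}_{\Omega(X)})\circ\theta_X;1]=[\theta_X;1]=[{\rm Id}_X;0]$, where the last equality is precisely \eqref{equ:colim} with $f={\rm Id}_X$ and $p=0$. On the other hand, $[\theta_X;0]\circ[{\rm Id}_{\Omega(X)};1]=[\Omega^1(\theta_X)\circ{\rm Id}_{\Omega(X)};1]=[\Omega(\theta_X);1]=[\theta_{\Omega(X)};1]$, using the hypothesis $\theta\Omega=\Omega\theta$; and again $[\theta_{\Omega(X)};1]=[{\rm Id}_{\Omega(X)};0]$ by \eqref{equ:colim}. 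Since $[\theta_X;0]$ is closed of degree zero (its degree is that of $\theta_X$, which is $0$, and $d([\theta_X;0])=[d_{\mathcal C}(\theta_X);0]=0$), it is a dg-isomorphism.

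For (2), I would induct on $p$, or more directly just iterate \eqref{equ:colim}: starting from $[f;p]$ and applying \eqref{equ:colim} ``backwards'' $p$ times, one peels off the maps $\theta_{\Omega^{p-1}(Y)},\dots,\theta_{\Omega(Y)},\theta_Y$. Concretely, $[f;p]$ equals $[\theta_{\Omega^{p-1}(Y)};0]^{-1}\circ[f;p-1]$ after noting $[\theta_{\Omega^{p-1}(Y)};0]^{-1}=[{\rm Id}_{\Omega^{p}(Y)};1]$ by part (1) applied to the object $\Omega^{p-1}(Y)$, and $[{\rm Id}_{\Omega^{p}(Y)};1]\circ[f;p-1]=[\Omega^{p-1}({\rm Id})\circ f;p]=[f;p]$ — wait, I should instead write $[f;p-1]$ as an element of $\mathcal C(X,\Omega^{p-1}(Y))$ and check $[\theta_{\Omega^{p-1}(Y)};0]\circ[f;p]$ lands back at $[f;p-1]$; this is the same computation as in (1). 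Repeating $p$ times yields the stated factorization. This is routine bookkeeping once (1) is in hand.

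For (3), recall that $X$ is contractible in $\mathcal{SC}$ iff ${\rm Id}_X=[{\rm Id}_X;0]$ is a coboundary in the complex $\mathcal{SC}(X,X)=\varinjlim_p\mathcal C(X,\Omega^p(X))$. An element of this colimit is a coboundary iff it has a representative at some level $p$ that is a coboundary in $\mathcal C(X,\Omega^p(X))$ — here I must be slightly careful: a class $[u;p]$ is a coboundary in the colimit iff $[u;p]=d([v;p'])$ for some $[v;p']$, and since the colimit of complexes is computed levelwise and the structure maps $f\mapsto\theta_{\Omega^p(X)}\circ f$ are cochain maps, this holds iff for some $n\geq p$ the image of $u$ at level $n$, namely $\theta_{\Omega^{n-1}(X)}\circ\cdots\circ\theta_{\Omega^p(X)}\circ u$, is a coboundary in $\mathcal C(X,\Omega^n(X))$. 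Applying this to $u={\rm Id}_X$ at level $p=0$: the image of ${\rm Id}_X$ at level $n+1$ is exactly $\theta_{\Omega^n(X)}\circ\cdots\circ\theta_{\Omega(X)}\circ\theta_X$ (using \eqref{equ:colim} repeatedly, or directly the definition of the structure maps), so ${\rm Id}_X$ is a coboundary in $\mathcal{SC}(X,X)$ iff this composite is a coboundary in $\mathcal C(X,\Omega^{n+1}(X))$ for some $n$. The main obstacle is precisely this point: justifying that ``coboundary in a filtered colimit of complexes'' is detected at a finite stage. Since filtered colimits are exact and commute with taking cohomology, $H^*(\mathcal{SC}(X,X))=\varinjlim_p H^*(\mathcal C(X,\Omega^p(X)))$, and a class dies in the colimit of cohomologies iff it dies at some finite stage; translating this back to the cochain level (the class of ${\rm Id}_X$ in $H^0(\mathcal{SC}(X,X))$ vanishes iff its image at some stage is a coboundary) gives the claim. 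I would spell out this translation carefully, as it is the only non-formal step.
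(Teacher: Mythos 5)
Your proposal is correct and takes essentially the same route as the paper: part (1) via the two direct computations using \eqref{equ:colim} and the hypothesis $\theta\Omega=\Omega\theta$, part (2) by iterating (1), and part (3) by reducing contractibility of $X$ to $[{\rm Id}_X;0]$ being a coboundary and detecting this at a finite stage of the sequential colimit (the paper phrases the finite-stage detection as equality of representatives, using that the $\theta$'s are closed of degree zero so that the structure maps are cochain maps, while you invoke exactness of filtered colimits and their commutation with cohomology; the content is the same). The only blemish is the momentarily garbled indexing in (2), but the conclusion you reach --- that the factorization follows by repeating the computation of (1) exactly $p$ times, i.e.\ $[{\rm Id}_{\Omega^{p-i+1}(Y)};1]\circ[f;i-1]=[f;i]$ --- is precisely the paper's argument.
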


\begin{proof}
(1) follows from the following direct computations:
$$[{\rm Id}_{\Omega(X)}; 1]\circ [\theta_X;0]=[\theta_X;1]=[{\rm Id}_X; 0],$$
and
$$[\theta_X;0]\circ [{\rm Id}_{\Omega(X)}; 1]=[\Omega(\theta_X);1]=[\theta_{\Omega(X)};1]=[{\rm Id}_{\Omega(X)};0]. $$
Here, in both identities we use (\ref{equ:colim}); moreover, in the second identity, we use the assumption $\Omega\theta=\theta\Omega$.

(2) By (1), the right hand side of the required identity equals
$$[{\rm Id}_{\Omega(Y)};1]\circ [{\rm Id}_{\Omega^2(Y)}; 1]\circ  \cdots \circ [{\rm Id}_{\Omega^{p}(Y)};1]\circ [f;0].$$
This composition equals $[f;p]$.

(3) Assume that $X$ is contractible in $\mathcal{SC}$, that is, there is a morphism $[f;p]$ of degree $-1$ such that
$$[{\rm Id}_X;0]=d([f;p])=[d_\mathcal{C}(f);p],$$
where $f\colon X\rightarrow \Omega^p(X)$ is of degree $-1$. From the colimit construction \eqref{align:colimit}, the identity $[{\rm Id}_X;0]=[d_\mathcal{C}(f);p]$ means that there is a sufficiently large $n$ such that the following identity holds in $\mathcal{C}$
 \begin{align*}
 \theta_{\Omega^{n}(X)}\circ \cdots \circ \theta_{\Omega(X)}\circ \theta_X \circ {\rm Id}_X & =\theta_{\Omega^{n}(X)}\circ \cdots \circ \theta_{\Omega^p(X)} \circ d_\mathcal{C}(f)\\
 &=d_\mathcal{C}(\theta_{\Omega^{n}(X)}\circ \cdots \circ \theta_{\Omega^p(X)} \circ f).
 \end{align*}
 Here,  the second equality uses the assumption that $\theta$ is closed and of degree zero.
 This implies the ``only if'' part. Similarly, we may prove the ``if'' part. \end{proof}

There is a canonical dg functor
$$\iota\colon \mathcal{C}\longrightarrow \mathcal{SC}$$
 given by $\iota(X)=X$ and $\iota(f)=[f;0]$. By Lemma~\ref{lem:SC}(1), each morphism $\iota(\theta_X)$ is a dg-isomorphism. By the following universal property, we might call $\iota$ a (strict)  \emph{dg localization} of $\mathcal{C}$ along $\theta$; compare \cite[Subsection~8.2]{Toe} and \cite[Subsection~3.9]{Kel11}.

\begin{prop}\label{prop:dgl}
Let $F\colon \mathcal{C}\rightarrow \mathcal{D}$ be a dg functor such that for each object $X$ in $\mathcal{C}$, $F(\theta_X)$ is a dg-isomorphism in $\mathcal{D}$. Then there is a unique dg functor $F'\colon \mathcal{SC}\rightarrow \mathcal{D}$ satisfying $F= F' \circ \iota$.
\end{prop}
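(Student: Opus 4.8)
```latex
\begin{proof}[Proof plan]
The plan is to define $F'$ on objects and morphisms in the only way the equation $F = F'\circ\iota$ allows, check that this is well defined (the colimit defining the Hom complexes forces compatibility with the transition maps), verify it is a dg functor, and finally argue uniqueness. On objects we are forced to set $F'(X) = F(X)$, since $\iota$ is the identity on objects. On morphisms, a typical morphism of $\mathcal{SC}$ has the form $[f;p]$ with $f\colon X\to\Omega^p(Y)$ in $\mathcal{C}$; using Lemma~\ref{lem:SC}(2), which expresses $[f;p]$ as a composite of $[f;0]=\iota(f)$ with inverses of the dg-isomorphisms $[\theta_{\Omega^k(Y)};0]=\iota(\theta_{\Omega^k(Y)})$, the requirement $F=F'\circ\iota$ dictates
\[
F'([f;p]) = F(\theta_Y)^{-1}\circ F(\theta_{\Omega(Y)})^{-1}\circ\cdots\circ F(\theta_{\Omega^{p-1}(Y)})^{-1}\circ F(f),
\]
where each $F(\theta_{\Omega^k(Y)})$ is invertible in $\mathcal{D}$ by hypothesis. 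This simultaneously proves uniqueness and tells us the formula to work with.

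First I would check that this formula is independent of the representative of the class $[f;p]$. By construction of the colimit, two representatives differ by applying transition maps $f\mapsto\theta_{\Omega^p(Y)}\circ f$, so it suffices to see that $F'([\theta_{\Omega^p(Y)}\circ f;p+1])=F'([f;p])$; plugging into the displayed formula, the extra factor $F(\theta_{\Omega^p(Y)})^{-1}\circ F(\theta_{\Omega^p(Y)})$ cancels (using functoriality of $F$ and $\theta_{\Omega^p(Y)} = \Omega^p(\theta_Y)$ together with the relation $\theta\Omega=\Omega\theta$ to identify the relevant arrows), which gives the claim. Here one should be slightly careful to match the indices correctly, since the chain of inverses in Lemma~\ref{lem:SC}(2) involves $\theta_{\Omega^0(Y)},\dots,\theta_{\Omega^{p-1}(Y)}$ and not $\theta_{\Omega^p(Y)}$; expanding both $[f;p]$ and $[\theta_{\Omega^p(Y)}\circ f;p+1]$ via that lemma shows the two expressions coincide in $\mathcal{SC}$, and applying $F'$ (now known to be a function on representatives at least informally) preserves this.

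Next I would verify $F'$ is a dg functor: it preserves the differential, since $d([f;p])=[d_{\mathcal{C}}(f);p]$ and the formula for $F'$ is built from $F$ (which commutes with differentials) post-composed with \emph{closed} degree-zero isomorphisms $F(\theta_{\Omega^k(Y)})^{-1}$ whose inverses are therefore also closed of degree zero, so no sign corrections intervene; it preserves degrees, since $|[f;p]|=|f|$ and the prepended isomorphisms have degree zero; it preserves identities, since $[{\rm Id}_X;0]=\iota({\rm Id}_X)$; and it preserves composition. For composition, given $[f;p]\colon X\to Y$ and $[g;q]\colon Y\to Z$ with $[g;q]\circ[f;p]=[\Omega^p(g)\circ f;p+q]$, I would expand all three sides via Lemma~\ref{lem:SC}(2) and $F$-functoriality, reducing the identity to the naturality square $F(\Omega^p(g))\circ F(\theta_{\Omega^k(Y)})^{\pm 1} = F(\theta_{\Omega^k(Z)})^{\pm 1}\circ F(\Omega^{p}(\ldots))$ for the appropriate shifted transformations — this is where the hypothesis $\theta\Omega=\Omega\theta$, the naturality of $\theta$, and the dg-functoriality of $\Omega$ all get used, in the same bookkeeping as in the proof of Lemma~\ref{lem:SC}. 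The main obstacle is precisely this composition check: keeping track of which iterate $\Omega^k$ each $\theta$ is indexed by, and verifying the telescoping of inverse isomorphisms against the naturality relations, is the one place where a careful (if routine) diagram chase is genuinely required; everything else is formal.
\end{proof}
```
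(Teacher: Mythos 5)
Your proposal is correct and follows essentially the same route as the paper: derive the forced formula for $F'([f;p])$ from Lemma~\ref{lem:SC}(2) (which gives uniqueness), then take that formula as the definition and verify it is a well-defined dg functor. The paper dismisses the verification as ``routine''; your expansion of it — the telescoping cancellation for well-definedness, and the use of the degree-zero naturality of $\theta$ for compatibility with composition — is the correct bookkeeping.
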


\begin{proof}
Let us first prove the uniqueness. By $F=F' \circ \iota $, we have that $F'(X)=F(X)$ and $F'([f;0])=F(f)$ for any object $X$ and morphism $f$ in $\mathcal{C}$.  For a  general morphism $[f;p]$ in $\mathcal{SC}$, we apply  Lemma~\ref{lem:SC}(2) to deduce
\begin{align}\label{equ:F'}
F'([f;p])=F(\theta_Y)^{-1}\circ F(\theta_{\Omega(Y)})^{-1} \circ \cdots\circ F(\theta_{\Omega^{p-1}(Y)})^{-1}\circ F(f).
\end{align}
This identity implies the uniqueness of $F'$.

To construct such a dg functor, we set $F'(X)=F(X)$ and use  (\ref{equ:F'}) to define the action of  $F'$ on morphisms. It is routine to verify that $F'$ is a well-defined dg functor and is the required one.
\end{proof}

\begin{lem}\label{lem:SC-pre}
Let $\iota\colon \mathcal{C}\rightarrow \mathcal{SC}$ be as above. If $\mathcal{C}$ is exact (resp. pretriangulated), then so is $\mathcal{SC}$.
\end{lem}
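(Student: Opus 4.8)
The plan is to verify the internal characterizations of exact and pretriangulated dg categories from Lemma~\ref{lem:exactdg} and Section~\ref{section5} directly, using the dg functor $\iota\colon\mathcal{C}\to\mathcal{SC}$ to transport the relevant structural data from $\mathcal{C}$ to $\mathcal{SC}$. The key observation is that $\iota$ is the identity on objects, and that $\Omega$ descends to a dg endofunctor on $\mathcal{SC}$; since $[\theta_X;0]$ is a dg-isomorphism by Lemma~\ref{lem:SC}(1), the functor $\Omega$ on $\mathcal{SC}$ is even dg-isomorphic to the identity, so passing to $\mathcal{SC}$ does not disturb any of the constructions.

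First I would treat the exact case. Suppose $\mathcal{C}$ is exact, so by Lemma~\ref{lem:exactdg} each object $X$ admits internal shifts $X_1, X_2$ with closed degree-one isomorphisms $X\to X_1$ and $X_2\to X$ in $\mathcal{C}$, and each closed degree-zero morphism admits an internal cone realized by the data $(j,t,p,s)$ satisfying the listed identities. Applying $\iota$ to all of this data produces closed morphisms $[-;0]$ in $\mathcal{SC}$; since $\iota$ is a dg functor, it preserves composition, degree, the differential, and all the algebraic identities among these morphisms. In particular $\iota(X)\to\iota(X_1)$ and $\iota(X_2)\to\iota(X)$ are still closed degree-one isomorphisms, and the cone data still satisfies the identities of Lemma~\ref{lem:exactdg}(2). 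Hence $\mathcal{SC}$ satisfies both conditions of Lemma~\ref{lem:exactdg} and is exact. The only subtlety to check is that a closed degree-zero \emph{morphism of $\mathcal{SC}$} need not be in the image of $\iota$: it is of the form $[f;p]$ with $f\colon X\to\Omega^p(Y)$ closed of degree zero. But by Lemma~\ref{lem:SC}(2), $[f;p] = [\theta_Y;0]^{-1}\circ\cdots\circ[\theta_{\Omega^{p-1}(Y)};0]^{-1}\circ[f;0]$, a composite of a morphism in the image of $\iota$ with dg-isomorphisms; so it suffices to build an internal cone of $[f;0]$ and then pre-compose/conjugate by these dg-isomorphisms, which preserves the required identities. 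Thus I would reduce to building internal cones and internal shifts for objects and for morphisms of the form $[f;0]$, which is exactly what $\iota$ delivers.

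For the pretriangulated case, I would argue analogously but at the level of $H^0$. If $\mathcal{C}$ is pretriangulated, then by definition the essential image of $H^0(\mathbf{Y}_\mathcal{C})$ is a triangulated subcategory of $\mathbf{K}(\mathcal{C}^{\rm op})$; equivalently, for each object $X$ the shifted representable functors and each cone ${\rm Cone}(\mathcal{C}(-,f))$ are homotopy equivalent to representables. One route is to pass to exact hulls: embed $\mathcal{C}\hookrightarrow\mathcal{C}^{\rm ex}$ and observe that $\Omega,\theta$ need not extend canonically, so instead I would prefer the direct approach. Apply $H^0$ to everything: shifted representable $\mathcal{SC}$-modules and cones of closed degree-zero morphisms $[f;p]$ of $\mathcal{SC}$ must be shown homotopy equivalent to representables. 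Using the reduction above, every closed degree-zero morphism is conjugate via dg-isomorphisms to one of the form $\iota(g)$ for a closed degree-zero $g$ in $\mathcal{C}$; conjugation by isomorphisms in $Z^0$ induces isomorphisms on cones, so ${\rm Cone}(\mathcal{SC}(-,[f;p]))$ is isomorphic in $\mathbf{K}(\mathcal{SC}^{\rm op})$ to ${\rm Cone}(\mathcal{SC}(-,\iota(g)))$. Then I need to compare the latter with $\iota$ applied to ${\rm Cone}(\mathcal{C}(-,g))$, which is homotopy equivalent to a representable in $\mathcal{C}$ hence (applying $\iota$, which is a dg functor and so preserves homotopy equivalences and representables) to a representable in $\mathcal{SC}$. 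The only genuine work is checking that the dg module ${\rm Cone}(\mathcal{SC}(-,\iota(g)))$ over $\mathcal{SC}^{\rm op}$ agrees, up to homotopy equivalence, with the dg module obtained by restriction-of-scalars/induction from ${\rm Cone}(\mathcal{C}(-,g))$ along $\iota$—this follows because $\iota$ is the identity on objects, so the restriction of $\mathcal{SC}(-,\iota(g))$ along $\iota$ is exactly $\mathcal{C}(-,g)$, and restriction along $\iota$ is fully faithful on $H^0$ of representables since each $\mathcal{C}(X,Y)\to\mathcal{SC}(X,Y)$ is a filtered colimit along quasi-isomorphisms when... — actually here one should just invoke the already-proved facts rather than re-deriving; the cleanest formulation uses Lemma~\ref{lem:pretri}(1) once Theorem~\ref{thmB} (or at least the exact case just proved together with $\iota$ being a suitable comparison) is in hand.

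The main obstacle I anticipate is the bookkeeping in the pretriangulated case: unlike exactness, pretriangulatedness is only defined up to homotopy, so I cannot simply transport structural identities along $\iota$—I must control cones of arbitrary closed degree-zero morphisms $[f;p]$, not just those in the image of $\iota$, and verify that the colimit defining $\mathcal{SC}(X,Y)$ interacts well with taking cones at the level of $H^0$. The reduction via Lemma~\ref{lem:SC}(2) (writing $[f;p]$ as a composite of $[f;0]$ with dg-isomorphisms $[\theta_{\Omega^i(Y)};0]^{-1}$) is what makes this tractable, since conjugation by dg-isomorphisms carries homotopy-equivalence-to-representable to homotopy-equivalence-to-representable. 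I expect the exact case to be essentially formal and short, and the pretriangulated case to follow from the exact case plus Lemma~\ref{lem:pretri}, the point being that $\mathcal{SC}$ sits quasi-fully-faithfully inside its own exact hull.
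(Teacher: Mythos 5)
Your treatment of the exact case is correct and is essentially the paper's argument: transport internal shifts and internal cones along $\iota$, and handle a general closed degree-zero morphism $[f;p]$ by writing it, via Lemma~\ref{lem:SC}(2), as $[f;0]$ post-composed with dg-isomorphisms, which does not affect the existence of an internal cone.

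The pretriangulated half, however, has a genuine gap. First, you dismiss the exact-hull route on the grounds that ``$\Omega,\theta$ need not extend canonically'' to $\mathcal{C}^{\rm ex}$ --- but they do: the universal property \eqref{align:exacthull} is a dg-\emph{equivalence} ${\rm Fun}_{\rm dg}(\mathcal{C}^{\rm ex},\mathcal{C}^{\rm ex})\to{\rm Fun}_{\rm dg}(\mathcal{C},\mathcal{C}^{\rm ex})$, so $\Omega$ extends to a dg endofunctor $\Omega^{\rm ex}$ and $\theta$ to a closed degree-zero $\theta^{\rm ex}$. This is exactly the paper's proof: form $\mathcal{S}(\mathcal{C}^{\rm ex})$, note it is exact by part (1), observe that $\mathcal{SC}\to\mathcal{S}(\mathcal{C}^{\rm ex})$ is fully faithful with $H^0$ dense, and apply Lemma~\ref{lem:pretri}(2). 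Second, your ``direct approach'' stalls at precisely the step you flag: comparing ${\rm Cone}(\mathcal{SC}(-,\iota(g)))$ with the image of ${\rm Cone}(\mathcal{C}(-,g))$. The justification you begin to offer --- that $\mathcal{C}(X,Y)\to\mathcal{SC}(X,Y)$ is ``a filtered colimit along quasi-isomorphisms'' --- is false in general (if the structure maps $\mathcal{C}(X,\Omega^pY)\to\mathcal{C}(X,\Omega^{p+1}Y)$ were quasi-isomorphisms, $\iota$ would be a quasi-equivalence and the localization would be vacuous). And the fallback of invoking Theorem~\ref{thmB} is circular: the proof of Theorem~\ref{thm:dgl} uses the pretriangulatedness of $\mathcal{SC}$ (via Lemma~\ref{lem:tri-quasi-equiv}), i.e.\ it uses the very lemma you are proving. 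The direct route could in fact be completed by using the \emph{induction} dg functor $-\otimes_{\mathcal{C}}\mathcal{SC}$ along $\iota$, which sends representables to representables, commutes with shifts and cones, and preserves homotopy equivalences --- but that argument is not in your proposal, and what is there does not close the gap.
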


\begin{proof}
(1) Assume first that $\mathcal{C}$ is exact. To show that $\mathcal{SC}$ is exact, it suffices to verify the two conditions in Lemma~\ref{lem:exactdg}. The first condition  is clear, as $\mathcal{C}$ satisfies the same condition.

Let us verify the second condition.  We first observe that $\iota(f)=[f;0]$ has an internal cone, which is given by the internal cone of $f$ in $\mathcal{C}$. For a general morphism $[f;p]$, we just combine Lemma~\ref{lem:SC}(2)  with the following general fact: given a dg-isomorphism $h\colon X\rightarrow Y$ in a dg category $\mathcal{D}$, a closed morphism  $g\colon X'\rightarrow X$ of degree zero has an internal cone if and only if so does $h\circ g$.

(2) Assume that $\mathcal{C}$ is pretriangulated. Consider its exact hull ${\rm can}_\mathcal{C}\colon\mathcal{C}\rightarrow \mathcal{C}^{\rm ex}$. By the universal property \eqref{align:exacthull} of the exact hull, the dg endofunctor $\Omega$ extends uniquely to a dg endofunctor $\Omega^{\rm ex}$ on $\mathcal{C}^{\rm ex}$; moreover, $\theta\colon {\rm Id}_\mathcal{C}\rightarrow \Omega$ extends to $\theta^{\rm ex}\colon {\rm Id}_{\mathcal{C}^{\rm ex}}\rightarrow \Omega^{\rm ex}$, which is also closed of degree zero. Therefore, we can form the dg localization $\iota^{\rm ex}\colon \mathcal{C}^{\rm ex}\rightarrow \mathcal{S}(\mathcal{C}^{\rm ex})$. We have the following commutative diagram.
 \[
 \xymatrix{
 \mathcal{C}\ar[d]_-{\iota} \ar[rr]^-{{\rm can}_\mathcal{C}} && \mathcal{C}^{\rm ex}\ar[d]^-{\iota^{\rm ex}}\\
  \mathcal{SC} \ar[rr]^-{\mathcal{S}{\rm can}_\mathcal{C}} && \mathcal{S}(\mathcal{C}^{\rm ex})
 }\]
 The bottom dg functor is induced from ${\rm can}_\mathcal{C}$ and thus is also fully faithful.  Since $\mathcal{C}$ is pretriangulated, $H^0({\rm can}_\mathcal{C})$ is an equivalence. Applying $H^0$ to the commutative diagram, we infer  that $H^0(\mathcal{S}{\rm can}_\mathcal{C})$ is essentially surjective. By (1), we know that $\mathcal{S}(\mathcal{C}^{\rm ex})$ is exact. Applying Lemma~\ref{lem:pretri}(2) to $\mathcal{S}{\rm can}_\mathcal{C}$, we infer that $\mathcal{SC}$ is pretriangulated.
\end{proof}

We assume now that the dg category $\mathcal{C}$ is pretriangulated.  For each object $X$, we denote by  ${\rm Cone}(\theta_X)$ the cone of the image of $\theta_X$ in $H^0(\mathcal{C})$. In other words, ${\rm Cone}(\theta_X)$ is determined by the following exact triangle in $H^0(\mathcal{C})$.
 \begin{align}\label{tri:cone}
 X\stackrel{\theta_X}\longrightarrow \Omega(X) \longrightarrow {\rm Cone}(\theta_X)\longrightarrow \Sigma(X)
 \end{align}
 Here, we confuse $\theta_X$ with its image in $H^0(\mathcal{C})$.  Denote by
$${\rm thick}\langle {\rm Cone}(\theta_X)\; |\; X\in {\rm Obj}(\mathcal{C})\rangle $$
 the thick hull of these cones in $H^0(\mathcal{C})$. The full dg subcategory of $\mathcal{C}$ formed by the objects in ${\rm thick}\langle {\rm Cone}(\theta_X)\; |\; X\in {\rm Obj}(\mathcal{C})\rangle $ is denoted by $\mathcal{N}$.

The following general result shows that the dg localization is quasi-equivalent to a dg quotient. A similar idea appears implicitly in \cite[Section~7, the proof of Theorem~2]{Kel05}, which relates the dg orbit category in \cite[Subsection~5.1]{Kel05} to a dg quotient. The precise relationship between the following result and the mentioned one in \cite{Kel05} will be  explored elsewhere.

 \begin{thm}\label{thm:dgl}
 Let $\mathcal{C}$ be a pretriangulated dg category, and let $\iota\colon \mathcal{C}\rightarrow \mathcal{SC}$ be the dg localization along $\theta$. Then $\iota$ induces an isomorphism in $\mathbf{Hodgcat}$
 $$\mathcal{C}/\mathcal{N}\stackrel{\sim}\longrightarrow \mathcal{SC}$$
 which yields an isomorphism of  triangulated categories
 $$H^0(\mathcal{C})/{{\rm thick}\langle {\rm Cone}(\theta_X)\; |\; X\in {\rm Obj}(\mathcal{C})\rangle } \stackrel{\sim}\longrightarrow H^0(\mathcal{SC}).$$
 \end{thm}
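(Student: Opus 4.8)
The plan is to construct $\bar\iota$ from the universal property of the dg quotient, reduce the statement to the homotopy level, and then compare morphism spaces by a cofinality argument. First I would check that $\iota$ annihilates $\mathcal N$. Since $\iota$ is a dg functor between pretriangulated dg categories — recall $\mathcal{SC}$ is pretriangulated by the preceding lemma — $H^0(\iota)$ is a triangle functor; by Lemma~\ref{lem:SC}(1) it sends $\theta_X$ to the invertible morphism $[\theta_X;0]$, so applying it to the triangle (\ref{tri:cone}) gives $H^0(\iota)({\rm Cone}(\theta_X))\cong 0$. The objects of $H^0(\mathcal{C})$ killed by $H^0(\iota)$ form a thick subcategory, hence contain $\mathcal{S}:={\rm thick}\langle {\rm Cone}(\theta_X)\mid X\in{\rm Obj}(\mathcal{C})\rangle$, so every object of $\mathcal N$ becomes contractible in $\mathcal{SC}$. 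By Lemma~\ref{lem:q-universal} we obtain a unique morphism $\bar\iota\colon\mathcal{C}/\mathcal{N}\to\mathcal{SC}$ in $\mathbf{Hodgcat}$ with $\iota=\bar\iota\circ q$.

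Next I would reduce to the homotopy level. As $\mathcal N$ is the full dg subcategory of $\mathcal C$ supported on a thick subcategory of $H^0(\mathcal{C})$, it is pretriangulated, so $\mathcal{C}/\mathcal{N}$ is pretriangulated and $H^0(\mathcal{C}/\mathcal{N})=H^0(\mathcal{C})/\mathcal{S}$ by Lemma~\ref{lem:quo}. Since $\mathcal{SC}$ is pretriangulated, Lemma~\ref{lem:tri-quasi-equiv} reduces the theorem to showing that $H^0(\bar\iota)\colon H^0(\mathcal{C})/\mathcal{S}\to H^0(\mathcal{SC})$ is a triangle equivalence; it is the identity on objects, so it is dense, and only full faithfulness is at issue. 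Because filtered colimits of complexes of $\mathbb{K}$-modules are exact, applying $H^0$ to (\ref{align:colimit}) yields
$${\rm Hom}_{H^0(\mathcal{SC})}(X,Y)\;=\;\varinjlim_{p\geq 0}\,{\rm Hom}_{H^0(\mathcal{C})}(X,\Omega^p Y),$$
the transition maps being post-composition with $\theta$.

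The heart of the argument is then a cofinality statement. Set $\tau_p=\theta_{\Omega^{p-1}Y}\circ\cdots\circ\theta_Y\colon Y\to\Omega^pY$, with $\tau_0={\rm Id}_Y$; by the octahedral axiom ${\rm Cone}(\tau_p)$ is an iterated extension of the objects ${\rm Cone}(\theta_{\Omega^iY})$, hence lies in $\mathcal{S}$, so each $\tau_p$ is a morphism with cone in $\mathcal{S}$. Using the calculus of fractions attached to the Verdier quotient, $H^0(\mathcal{C})/\mathcal{S}$ satisfies
$${\rm Hom}_{H^0(\mathcal{C})/\mathcal{S}}(X,Y)\;=\;\varinjlim_{s\colon Y\to Y',\ {\rm Cone}(s)\in\mathcal{S}}\,{\rm Hom}_{H^0(\mathcal{C})}(X,Y'),$$
a filtered colimit. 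I would show the subsystem $\{\tau_p\colon Y\to\Omega^pY\}_{p\geq 0}$ is cofinal in this index category. Given $s\colon Y\to Y'$ with ${\rm Cone}(s)\in\mathcal{S}$, the object ${\rm Cone}(s)$ lies in $\mathcal N$, hence is contractible in $\mathcal{SC}$, so Lemma~\ref{lem:SC}(3) makes the composite of $\theta$'s out of $\Sigma^{-1}{\rm Cone}(s)$ vanish in $H^0(\mathcal C)$ past some stage; by naturality of $\tau_p$ this forces $\tau_p$ to kill the connecting map $\Sigma^{-1}{\rm Cone}(s)\to Y$ for $p\gg 0$, so $\tau_p$ factors through $s$, giving $g\colon Y'\to\Omega^pY$ with $g\circ s=\tau_p$ — a morphism $s\to\tau_p$ in the index category. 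The same vanishing, applied to a difference $g_0-g_1$ factoring through ${\rm Cone}(s)$, shows any two such factorizations agree after composing with a structural map $\tau_p\to\tau_{p+N}$. Cofinality then identifies the two colimits, and one checks this identification is exactly $H^0(\bar\iota)$; this gives full faithfulness, hence the quasi-equivalence $\mathcal{C}/\mathcal{N}\stackrel{\sim}\longrightarrow\mathcal{SC}$, and the asserted isomorphism of triangulated categories follows by applying $H^0$ and Lemma~\ref{lem:quo} once more.

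I expect the main obstacle to be making the cofinality argument fully rigorous: one has to keep precise track of which composites $\tau_p$ of $\theta$'s occur, move them across morphisms via the naturality of $\theta$ (and, where needed, the identity $\theta\Omega=\Omega\theta$), and repeatedly translate ``the object $W$ lies in $\mathcal N$'' into ``$\theta_{\Omega^nW}\circ\cdots\circ\theta_W=0$ in $H^0(\mathcal C)$ for $n\gg 0$'' through Lemma~\ref{lem:SC}(3); in addition one needs the octahedral bookkeeping for ${\rm Cone}(\tau_p)$ and the fact that the morphisms with cone in $\mathcal{S}$ genuinely form a filtered (localizing) system before invoking cofinality. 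Everything outside this step is formal.
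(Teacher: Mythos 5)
Your argument is correct, and after the (identical) first step — killing $\mathcal{N}$ via Lemma~\ref{lem:SC}(1), invoking Lemma~\ref{lem:q-universal} to get $\overline{\iota}$, and reducing to full faithfulness of $H^0(\overline{\iota})$ via Lemmas~\ref{lem:quo} and~\ref{lem:tri-quasi-equiv} — it diverges genuinely from the paper's proof. The paper gets fullness almost for free from Lemma~\ref{lem:SC}(2) (every $[f;p]$ is a composite of $\iota(f)$ with inverses of the $\iota(\theta_{\Omega^i Y})$, all of which lie in the image), and then obtains faithfulness indirectly: it shows the functor is \emph{faithful on objects} (an object contractible in $\mathcal{SC}$ is, by Lemma~\ref{lem:SC}(3) and the same octahedral bookkeeping you use for ${\rm Cone}(\tau_p)$, a direct summand of an object of $H^0(\mathcal{N})$), and then appeals to Rickard's observation that a full triangle functor which is faithful on objects is faithful. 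You instead compute both sides: $H^0$ commutes with the filtered colimit defining $\mathcal{SC}(X,Y)$, the Verdier quotient Hom is the filtered colimit over co-roofs $s\colon Y\to Y'$ with cone in $\mathcal{S}$, and the subsystem $\{\tau_p\}$ is cofinal because $\tau_p$ kills the connecting map $\Sigma^{-1}{\rm Cone}(s)\to Y$ for $p\gg 0$ (naturality of $\theta$ plus Lemma~\ref{lem:SC}(3) applied to the contractible object ${\rm Cone}(s)$), with the same mechanism disposing of ambiguity between two factorizations. Both routes lean on exactly the same two ingredients (Lemma~\ref{lem:SC}(3) and the iterated-extension structure of ${\rm Cone}(\tau_p)$); yours is more computational and must verify the cofinality criterion carefully, but it buys an explicit sequential-colimit description of the Hom-sets in the Verdier quotient and avoids the ``full $+$ faithful on objects $\Rightarrow$ faithful'' trick, while the paper's is shorter at the cost of that black box.
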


\begin{proof}
Recall that $\iota(\theta_X)$ is a dg-isomorphism in $\mathcal{SC}$. Therefore, its image in $H^0(\mathcal{SC})$ is an isomorphism. Applying $H^0(\iota)$ to (\ref{tri:cone}),  we infer that ${\rm Cone}(\theta_X)$ is annihilated by  $H^0(\iota)$, that is, $\iota({\rm Cone}(\theta_X))$ is contractible in $\mathcal{SC}$.  It follows that $\iota$ sends any object in $\mathcal{N}$ to a contractible object in $\mathcal{SC}$. Therefore, by the universal property in Lemma~\ref{lem:q-universal}, $\iota$ induces a morphism
$$\overline{\iota}\colon \mathcal{C}/\mathcal{N} \longrightarrow \mathcal{SC}$$
in $\mathbf{Hodgcat}$. Moreover, it induces a triangle functor
$$\Phi:=H^0(\overline{\iota})\colon H^0(\mathcal{C})/{H^0(\mathcal{N})} \longrightarrow H^0(\mathcal{SC}).$$

We claim that the induced triangle functor $\Phi$ is fully faithful.  As $\Phi$ acts on objects by the identity and thus it is essentially surjective, this claim implies that  $\Phi$ is an isomorphism of triangulated categories. By Lemma~\ref{lem:tri-quasi-equiv}, $\overline{\iota}$ is an isomorphism in $\mathbf{Hodgcat}$.

Let us prove the above claim. We observe that $\theta_X$ becomes an isomorphism in $H^0(\mathcal{C})/{H^0(\mathcal{N})}$. Therefore, for any object $X$,  $\iota(\theta_X)^{-1}\in H^0(\mathcal{SC})$ belongs to the image of $\Phi$. By Lemma~\ref{lem:SC}(2), any general morphism $[f;p]\colon X\rightarrow Y$ in $H^0(\mathcal{SC})$ is of the following form
$$[f;p]=\iota(\theta_Y)^{-1}\circ \iota(\theta_{\Omega(Y)})^{-1}\circ  \cdots \circ \iota(\theta_{\Omega^{p-1}(Y)})^{-1}\circ \iota(f).$$
It follows that $[f;p]$ necessarily lies in the image of $\Phi$, that is, $\Phi$ is full.

We assume that $\Phi(X)\simeq 0$, that is, $X$ is contractible in $\mathcal{SC}$. By Lemma~\ref{lem:SC}(3), the morphism $\theta_{\Omega^{n}(X)}\circ \cdots \circ \theta_{\Omega(X)}\circ \theta_X$ is a coboundary. Consider the following exact triangle in $H^0(\mathcal{C})$.
$$X\xrightarrow{\theta_{\Omega^{n}(X)}\circ \cdots \circ \theta_{\Omega(X)}\circ \theta_X} \Omega^{n+1}(X)\longrightarrow C\longrightarrow \Sigma(X)$$
It follows that $\Sigma(X)$ is isomorphic to a direct summand of $C$ in $H^0(\mathcal{C})$. On the other hand, as the cone of a composite morphism,  $C$ is an iterated extension of ${\rm Cone}(\theta_{\Omega^i(X)})$ for $0\leq i\leq n$. We conclude that $C$ and thus $X$ lie in $H^0(\mathcal{N})$ and are isomorphic to zero in $H^0(\mathcal{C})/H^0(\mathcal{N})$. This proves that $\Phi$ is faithful on objects. The claim follows from the following general fact in \cite[the proof of Theorem~3.5, p.446]{Ric89}: a full triangle functor which is faithful on objects is necessarily faithful.
\end{proof}

\section{The Yoneda dg category}\label{section7}

We introduce, using the bar resolution,  the Yoneda dg category that is a natural dg enhancement of the derived category. We prove that the dg tensor algebra studied in Section~\ref{section3} is isomorphic to the endomorphism algebra of a specific  object in the Yoneda dg category; see Proposition~\ref{prop:iso-Y-tensor}.  Throughout, we work in the relative situation.

\subsection{The bar and Yoneda dg categories}
\label{subsection7.1}

Let $E\rightarrow \Lambda$ be an algebra homomorphism. Its cokernel is denoted by  $\overline{\Lambda}$, which has a natural $E$-$E$-bimodule structure. We denote by $s\overline{\Lambda}$ the graded $E$-$E$-bimodule concentrated in degree $-1$. Its element is usually written as $s\overline{a}$.

The normalized $E$-relative bar resolution $\mathbb{B}$ of $\Lambda$ is a complex of $\Lambda$-$\Lambda$-bimodules given as follows. As a graded $\Lambda$-$\Lambda$-bimodule, we have
$$\mathbb{B}=\Lambda\otimes_E T_E(s\overline{\Lambda})\otimes_E \Lambda,$$
where ${\rm deg}(a_0\otimes_E s\overline{a}_{1, n}\otimes_E a_{n+1})=-n$. Here, for simplicity, we write
$$s\overline{a}_{1, n} := s\overline{a}_1\otimes_E s\overline{a}_2 \otimes_E\cdots \otimes_E s\overline{a}_n.$$
The differential $d$ is given such that $d(a_0\otimes_E a_1)=0$ and that
\begin{align*}
d(a_0\otimes_E s\overline{a}_{1, n}\otimes_E a_{n+1})={}  & a_0a_1\otimes_E s\overline{a}_{2,n}\otimes_E a_{n+1}+ (-1)^n a_0\otimes_E s\overline{a}_{1,n-1}\otimes_E a_{n}a_{n+1}\\
&+\sum_{i=1}^{n-1} (-1)^i a_0\otimes_E s\overline{a}_{1, i-1}\otimes_E s\overline{a_ia_{i+1}}\otimes_E s\overline{a}_{i+2, n}\otimes_E a_{n+1}.
\end{align*}
Here and later, as usual, $s\overline{a}_{1, 0}$ and $s\overline{a}_{n+1, n}$ are understood to be the empty word and should be ignored.

It is well known that $\mathbb{B}$ is a coalgebra in the monoidal category of  complexes of $\Lambda$-$\Lambda$-bimodules. To be more precise, we have a cochain map between complexes of $\Lambda$-$\Lambda$-bimodules
$$\Delta\colon \mathbb{B}\longrightarrow \mathbb{B}\otimes_\Lambda\mathbb{B}$$
given by
$$\Delta(a_0\otimes_E s\overline{a}_{1, n}\otimes_E a_{n+1})=\sum_{i=0}^n (a_0\otimes_E s\overline{a}_{1,i}\otimes_E 1_\Lambda)\otimes_\Lambda (1_\Lambda\otimes_E s\overline{a}_{i+1, n}\otimes_E a_{n+1}).$$
The natural cochain map $\varepsilon\colon  \mathbb{B}\rightarrow \Lambda$ is induced by the multiplication of $\Lambda$. We have the following coassociative property
$$(\Delta\otimes_\Lambda {\rm Id}_\mathbb{B})\circ \Delta = ({\rm Id}_\mathbb{B} \otimes_\Lambda \Delta)\circ \Delta$$
and the counital property
$$(\varepsilon\otimes_\Lambda {\rm Id}_\mathbb{B}) \circ \Delta={\rm Id}_\mathbb{B}=({\rm Id}_\mathbb{B}\otimes_\Lambda \varepsilon) \circ \Delta.$$

Following the treatment in \cite[Subsection~6.6]{Kel94}, we define the \emph{$E$-relative bar dg category} $\mathcal{B}=\mathcal{B}_{\Lambda/E}$ as follows. The objects are precisely all the complexes of $\Lambda$-modules, and the Hom complex between two objects $X$ and $Y$ is given by
$$\mathcal{B}(X, Y)={\rm Hom}_\Lambda(\mathbb{B}\otimes_\Lambda X, Y).$$
The composition of two morphisms $f\in \mathcal{B}(X, Y)$ and $g\in \mathcal{B}(Y, Z)$ is defined to be
$$g\ast f := (\mathbb{B}\otimes_\Lambda X\xrightarrow{\Delta\otimes_\Lambda {\rm Id}_X} \mathbb{B}\otimes_\Lambda \mathbb{B}\otimes_\Lambda X\xrightarrow{{\rm Id}_\mathbb{B}\otimes_\Lambda f} \mathbb{B}\otimes_\Lambda Y\xrightarrow{g} Z).$$
Moreover, the identity endomorphism in $\mathcal{B}(X, X)$ is given by
$$\mathbb{B}\otimes_\Lambda X \xrightarrow{\varepsilon\otimes_\Lambda {\rm Id}_X} \Lambda\otimes_\Lambda X=X.$$
We mention that the bar dg category might be viewed as the coKleisli category of the comonad $\mathbb{B}\otimes_\Lambda-$ on  the dg category $C_{\rm dg}(\Lambda\mbox{-Mod})$ of complexes of $\Lambda$-modules; see \cite[Definition~4.8(ii)]{Kuel}.

We will unpack the above definition of $\mathcal{B}$ and obtain its alternative form. The \emph{$E$-relative Yoneda dg category} $\mathcal{Y}=\mathcal{Y}_{\Lambda/E}$ has the same objects as $\mathcal{B}$. For two complexes $X$ and $Y$ of $\Lambda$-modules, the underlying graded $\mathbb{K}$-module of the Hom complex $\mathcal{Y}(X, Y)$ is given by an infinite product
$$\mathcal{Y}(X, Y)=\prod_{n\geq 0}{\rm Hom}_E((s\overline{\Lambda})^{\otimes_E n}\otimes_EX, Y).$$
We denote
$$\mathcal{Y}_n(X, Y): ={\rm Hom}_E((s\overline{\Lambda})^{\otimes_E n}\otimes_E X, Y),$$
and say that elements in $\mathcal{Y}_n(X, Y)$ are of \emph{ filtration-degree} $n$. Observe that $\mathcal{Y}_0(X, Y)={\rm Hom}_E(X, Y)$.  The differential $\delta$ of $\mathcal{Y}(X, Y)$ is determined by
$$\begin{pmatrix}\delta_{\rm in}\\
\delta_{\rm ex}\end{pmatrix}\colon \mathcal{Y}_n(X, Y)\longrightarrow \mathcal{Y}_n(X, Y)\oplus \mathcal{Y}_{n+1}(X, Y),$$
where
$$\delta_{\rm in}(f)(s\overline{a}_{1,n}\otimes_E x)=d_Y (f(s\overline{a}_{1,n}\otimes_E x))-(-1)^{|f|+n}f(s\overline{a}_{1,n}\otimes_E d_X(x))$$
and
\begin{align*}
\delta_{\rm ex}(f)(s\overline{a}_{1,n+1}\otimes_E x) = & (-1)^{|f|+1}a_1 f(s\overline{a}_{2,n+1}\otimes_E x)+(-1)^{|f|+n}f(s\overline{a}_{1,n}\otimes_E a_{n+1}x)\\
&+\sum_{i=1}^{n}(-1)^{|f|+i+1} f(s\overline{a}_{1, i-1}\otimes_E s\overline{a_ia_{i+1}}\otimes_E s\overline{a}_{i+2, n+1}\otimes_E x).
\end{align*}
The composition $\odot$ of morphisms is defined as follows: for $f\in \mathcal{Y}_n(X, Y)$ and $g\in \mathcal{Y}_m(Y, Z)$, their composition $g\odot f\in \mathcal{Y}_{n+m}(X, Z)$ is given such that
\begin{equation}\label{equation:yonedaproduct}
(g\odot f)(s\overline{a}_{1, m+n}\otimes_E x)=(-1)^{m|f|} g(s\overline{a}_{1, m}\otimes_E f(s\overline{a}_{m+1, m+n}\otimes_E x)).
\end{equation}
The identity endomorphism in $\mathcal{Y}(X, X)$ is given by the genuine identity map ${\rm Id}_X\in \mathcal{Y}_0(X, X)$.

\begin{lem}
There is an isomorphism $\mathcal{B}\simeq \mathcal{Y}$ of dg categories.
\end{lem}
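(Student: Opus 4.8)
The plan is to construct an explicit dg functor $\Psi\colon \mathcal{B}\to\mathcal{Y}$ which is the identity on objects and a dg-isomorphism on each Hom complex; being bijective on objects, it is then an isomorphism of dg categories. The starting observation is that, as a graded $\Lambda$-$\Lambda$-bimodule, $\mathbb{B}=\Lambda\otimes_E T_E(s\overline{\Lambda})\otimes_E\Lambda$, so for any complex $X$ of $\Lambda$-modules one has a canonical identification $\mathbb{B}\otimes_\Lambda X\cong\Lambda\otimes_E T_E(s\overline{\Lambda})\otimes_E X$, the rightmost tensor factor $\Lambda$ being absorbed into $X$. Applying the adjunction between extension of scalars $\Lambda\otimes_E-$ and restriction along $E\to\Lambda$, together with the fact that ${\rm Hom}_E$ out of the coproduct $T_E(s\overline{\Lambda})=\bigoplus_{n\geq0}(s\overline{\Lambda})^{\otimes_E n}$ is the product of the corresponding ${\rm Hom}_E$'s, one obtains an isomorphism of graded $\mathbb{K}$-modules
$$\Psi_{X,Y}\colon\mathcal{B}(X,Y)={\rm Hom}_\Lambda(\mathbb{B}\otimes_\Lambda X,Y)\;\xrightarrow{\ \sim\ }\;\prod_{n\geq0}{\rm Hom}_E((s\overline{\Lambda})^{\otimes_E n}\otimes_E X,Y)=\mathcal{Y}(X,Y),$$
sending a $\Lambda$-linear $\phi$ to the family $(f_n)_{n\geq0}$ with $f_n(s\overline{a}_{1,n}\otimes_E x)=\phi(1_\Lambda\otimes_E s\overline{a}_{1,n}\otimes_E x)$. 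It then remains to check that $\Psi$ respects differentials, compositions, and identities.

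For the differentials, the differential $D$ of $\mathcal{B}(X,Y)$ (the Hom-complex differential of Example~\ref{exm:Cdg}) splits as $D=D_{\rm in}+D_{\rm ex}$, where $D_{\rm in}(\phi)$ collects the contributions of $d_X$ and $d_Y$, and $D_{\rm ex}(\phi)=-(-1)^{|\phi|}\phi\circ(d_{\mathbb{B}}\otimes{\rm Id}_X)$ is the contribution of the bar differential $d$ on $\mathbb{B}$. Substituting $a_0=1_\Lambda$ into the defining formula for $d$, using $\Lambda$-linearity of $\phi$ to pull out the leading $a_1$ and the relation in $\mathbb{B}\otimes_\Lambda X$ to absorb the trailing $a_{n+1}$ into $x$, a direct computation shows that $\Psi$ carries $D_{\rm in}$ to $\delta_{\rm in}$ and $D_{\rm ex}$ to $\delta_{\rm ex}$: the signs $(-1)^{|f|+1}$, $(-1)^{|f|+n}$ and $(-1)^{|f|+i+1}$ appearing in $\delta_{\rm ex}$ are exactly $-(-1)^{|\phi|}$ times the signs $1$, $(-1)^n$ and $(-1)^i$ in $d$, and similarly for the internal Koszul sign on the $d_X$-term of $\delta_{\rm in}$.

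For the composition, the decisive point is that, under $\mathbb{B}\otimes_\Lambda\mathbb{B}\cong\Lambda\otimes_E T_E(s\overline{\Lambda})\otimes_E T_E(s\overline{\Lambda})\otimes_E\Lambda$, the comultiplication $\Delta$ corresponds to the deconcatenation coproduct of the tensor coalgebra, $s\overline{a}_{1,n}\mapsto\sum_{i=0}^{n}s\overline{a}_{1,i}\otimes s\overline{a}_{i+1,n}$. Feeding the definition $g\ast f=g\circ({\rm Id}_{\mathbb{B}}\otimes f)\circ(\Delta\otimes{\rm Id}_X)$ through $\Psi$ then produces, in filtration-degree $k$, the sum $\sum_{i+j=k}g_i\odot f_j$, where the Koszul sign $(-1)^{m|f|}$ of (\ref{equation:yonedaproduct}) is generated when the degree-$|f|$ map $f$ is transposed past the filtration-degree-$m$ word processed by $g$. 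Finally $\varepsilon\colon\mathbb{B}\to\Lambda$ corresponds under $\Psi$ to the projection $T_E(s\overline{\Lambda})\twoheadrightarrow E$, so the identity endomorphism $\varepsilon\otimes{\rm Id}_X$ of $X$ in $\mathcal{B}$ maps to the family concentrated in filtration-degree $0$ with value ${\rm Id}_X$, i.e.\ the identity of $X$ in $\mathcal{Y}$. I expect the only real obstacle to be sign bookkeeping: the two presentations use superficially different Koszul conventions, so one should fix one convention once and for all (say the Hom-complex convention of Example~\ref{exm:Cdg}) and carry it consistently through the computations of $D_{\rm ex}$ and of $g\ast f$; there is no conceptual difficulty.
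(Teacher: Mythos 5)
Your proposal is correct and follows essentially the same route as the paper: both identify $\mathbb{B}\otimes_\Lambda X$ with $\bigoplus_{n\geq 0}\Lambda\otimes_E(s\overline{\Lambda})^{\otimes_E n}\otimes_E X$ and use the extension/restriction adjunction along $E\to\Lambda$ to get the isomorphism of Hom complexes (your $\Psi_{X,Y}$ is the inverse of the paper's $f\mapsto\tilde f$), after which the compatibility with differentials, $\Delta$-composition and units is the routine check you carry out. One tiny bookkeeping point: when comparing $\delta_{\rm ex}$ on a word of length $n+1$ with the bar differential, the middle sign of $d$ is $(-1)^{n+1}$ (the displayed formula for $d$ is indexed on words of length $n$), which is what makes it match $(-1)^{|f|+n}$ after multiplying by $-(-1)^{|\phi|}$.
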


\begin{proof}
We observe that $\mathbb{B}\otimes_\Lambda X$ is canonically isomorphic to
$$\bigoplus_{n\geq 0}\Lambda\otimes_E (s\overline{\Lambda})^{\otimes_E n}\otimes_E X.$$
 Therefore, we have
{\footnotesize \begin{align}\label{equ:BY}
\mathcal{B}(X, Y) \simeq \prod_{n\geq 0} {\rm Hom}_\Lambda (\Lambda\otimes_E (s\overline{\Lambda})^{\otimes_E n}\otimes_E X, Y)\simeq \prod_{n\geq 0} {\rm Hom}_E((s\overline{\Lambda})^{\otimes_E n}\otimes_E X, Y)=\mathcal{Y}(X, Y).
\end{align}}
The above isomorphism identifies $f\in \mathcal{Y}_n(X, Y)$ with $\tilde{f}\colon \Lambda\otimes_E (s\overline{\Lambda})^{\otimes_E n}\otimes_E X\rightarrow Y $ given by
$$\tilde{f}(a_0\otimes_E s\overline{a}_{1, n}\otimes_E x)=a_0 f(s\overline{a}_{1, n}\otimes_E x).$$
The two dg categories $\mathcal{B}$ and $\mathcal{Y}$ have the same objects. It is routine to verify that the above isomorphism of the Hom complexes induces the required isomorphism of dg categories.
\end{proof}

\subsection{The dg derived categories}
Recall from Example~\ref{exm:Cdg} that $C_{\rm dg}(\Lambda\mbox{-Mod})$ denotes the dg category of complexes of $\Lambda$-modules. A complex $X$ of $\Lambda$-modules is called \emph{$E$-relatively acyclic} if it is contractible as a complex of $E$-modules, or equivalently,  $X  \simeq 0$ in $\mathbf{K}(E\mbox{-Mod})$; see \cite[Subsection~7.4]{Kel98}. In particular, an $E$-relatively acyclic complex is acyclic. Those complexes form a full dg subcategory $C_{\rm dg}^{\rm rel\mbox{-}ac}(\Lambda\mbox{-Mod})$. The corresponding dg quotient
$$\mathbf{D}_{\rm dg}(\Lambda/E)=C_{\rm dg}(\Lambda\mbox{-Mod})/{C_{\rm dg}^{\rm rel\mbox{-}ac}(\Lambda\mbox{-Mod})}$$
is called the \emph{$E$-relative dg derived category} of $\Lambda$. By Lemma~\ref{lem:quo}, its homotopy category $H^0(\mathbf{D}_{\rm dg}(\Lambda/E))$ is isomorphic to the $E$-relative derived category
$$\mathbf{D}(\Lambda/E)=\mathbf{K}(\Lambda\mbox{-Mod})/{\mathbf{K}^{\rm rel\mbox{-}ac}(\Lambda\mbox{-Mod})}.$$

A cochain map $f\colon X\rightarrow Y$ between complexes of $\Lambda$-modules is said to be an \emph{$E$-relative quasi-isomorphism} if its mapping cone is $E$-relatively acyclic, i.e.\ $\mathrm{Cone}(f) \simeq 0$ in $\mathbf{K}(E\mbox{-Mod})$.

Recall that a $\Lambda$-module $N$ is {\it $E$-relatively projective} if it is a direct summand of $\Lambda \otimes_E V$ for some $E$-module $V$. A complex $P$ of $\Lambda$-modules is called \emph{$E$-relatively dg-projective} provided that each component $P^i$ is $E$-relatively projective and the Hom complex ${\rm Hom}_\Lambda(P, X)$ is acyclic for any $E$-relatively acyclic complex $X$.

The following facts are standard.

\begin{lem}\label{lem:dgproj} For any complex $X$ of $\Lambda$-modules,  the following statements hold.
\begin{enumerate}
\item The complex $\mathbb{B}\otimes_\Lambda X$ of $\Lambda$-modules is $E$-relatively dg-projective.

\item The natural surjection $\varepsilon\otimes_\Lambda {\rm Id}_X\colon \mathbb{B}\otimes_\Lambda X \twoheadrightarrow \Lambda\otimes_\Lambda X=X$ is an $E$-relative quasi-isomorphism.

\item If $X$ is $E$-relatively acyclic, then $\mathbb{B}\otimes_\Lambda X$ is contractible.
\item  If $X$ is $E$-relatively dg-projective, then $\varepsilon\otimes_\Lambda {\rm Id}_X$ is a homotopy equivalence.
\end{enumerate}
\end{lem}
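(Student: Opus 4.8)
The plan is to establish (2) by exhibiting an explicit $E$-linear contracting homotopy on the bar resolution, to establish (1) by filtering $\mathbb{B}$ by stupid truncations, and then to deduce (3) and (4) formally from (1), (2), and the standard closure properties of $E$-relatively (dg-)projective complexes. For (2), recall that the normalized $E$-relative bar resolution carries the classical contracting datum: the degree-zero map $\iota\colon\Lambda\to\mathbb{B}$ with $\iota(a)=1_\Lambda\otimes_E a$, and the degree $-1$ map $s\colon\mathbb{B}\to\mathbb{B}$ with
\[
s(a_0\otimes_E s\overline{a}_{1,n}\otimes_E a_{n+1})=1_\Lambda\otimes_E s\overline{a}_0\otimes_E s\overline{a}_{1,n}\otimes_E a_{n+1}.
\]
Using the $E$-balance of the tensor products one checks that $\iota$ and $s$ are both morphisms of $E$-$\Lambda$-bimodules (left $E$-action, right $\Lambda$-action) and that $\varepsilon\circ\iota={\rm id}_\Lambda$ and $d_\mathbb{B}\circ s+s\circ d_\mathbb{B}={\rm id}_\mathbb{B}-\iota\circ\varepsilon$; hence $\varepsilon\colon\mathbb{B}\to\Lambda$ is a homotopy equivalence of complexes of $E$-$\Lambda$-bimodules. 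Since $\iota$ and $s$ are right $\Lambda$-linear, applying $-\otimes_\Lambda X$ produces a homotopy equivalence $\varepsilon\otimes_\Lambda{\rm id}_X\colon\mathbb{B}\otimes_\Lambda X\to X$ of complexes of left $E$-modules, so its mapping cone is contractible over $E$, which is exactly (2).

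For (1), the graded identification $\mathbb{B}\otimes_\Lambda X\cong\bigoplus_{n\geq 0}\Lambda\otimes_E(s\overline{\Lambda})^{\otimes_E n}\otimes_E X$ shows each homogeneous component has the form $\Lambda\otimes_E(-)$ and is thus $E$-relatively projective, so the remaining point is that ${\rm Hom}_\Lambda(\mathbb{B}\otimes_\Lambda X,W)$ is acyclic for every $E$-relatively acyclic $W$. I would filter $\mathbb{B}$ by the stupid truncations $F_p=\sigma_{\geq -p}\mathbb{B}$; these are subcomplexes of $\Lambda$-$\Lambda$-bimodules with $\bigcup_p F_p=\mathbb{B}$, and $F_p/F_{p-1}\cong\Lambda\otimes_E(s\overline{\Lambda})^{\otimes_E p}\otimes_E\Lambda$ lives in degree $-p$ with zero internal differential. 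Tensoring with $X$ over $\Lambda$ keeps the inclusions degreewise split, and $(F_p/F_{p-1})\otimes_\Lambda X\cong\Lambda\otimes_E\bigl((s\overline{\Lambda})^{\otimes_E p}\otimes_E X\bigr)[p]$ is of the form $\Lambda\otimes_E(\text{complex of }E\text{-modules})$, hence $E$-relatively dg-projective; so by adjunction ${\rm Hom}_\Lambda\bigl((F_p/F_{p-1})\otimes_\Lambda X,W\bigr)\cong{\rm Hom}_E((s\overline{\Lambda})^{\otimes_E p}\otimes_E X,W)[-p]$ is acyclic because $W$ is contractible over $E$. Induction along the finitely many degreewise-split short exact sequences gives that ${\rm Hom}_\Lambda(F_p\otimes_\Lambda X,W)$ is acyclic for every $p$; finally ${\rm Hom}_\Lambda(\mathbb{B}\otimes_\Lambda X,W)=\varprojlim_p{\rm Hom}_\Lambda(F_p\otimes_\Lambda X,W)$ with surjective transition maps, so the Milnor exact sequence yields $H^\ast\bigl({\rm Hom}_\Lambda(\mathbb{B}\otimes_\Lambda X,W)\bigr)=\varprojlim_p H^\ast(\cdots)=0$.

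Statements (3) and (4) are then formal. First note that a complex $P$ which is simultaneously $E$-relatively dg-projective and $E$-relatively acyclic is contractible: ${\rm Hom}_\Lambda(P,P)$ is acyclic by the very definition of $E$-relative dg-projectivity applied to the $E$-relatively acyclic complex $P$, so the cocycle ${\rm id}_P$ is a coboundary. For (3): if $X$ is $E$-relatively acyclic then by (2) so is $\mathbb{B}\otimes_\Lambda X$, which is $E$-relatively dg-projective by (1); hence it is contractible. For (4): as a graded module ${\rm Cone}(\varepsilon\otimes_\Lambda{\rm id}_X)=X\oplus\Sigma(\mathbb{B}\otimes_\Lambda X)$, and both summands are $E$-relatively dg-projective ($X$ by hypothesis, $\mathbb{B}\otimes_\Lambda X$ by (1)), a property stable under degreewise-split extensions; thus the cone is $E$-relatively dg-projective, and by (2) it is also $E$-relatively acyclic, hence contractible, so $\varepsilon\otimes_\Lambda{\rm id}_X$ is a homotopy equivalence of complexes of $\Lambda$-modules.

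The only genuinely technical point I anticipate is the vanishing of the cohomology of ${\rm Hom}_\Lambda(\mathbb{B}\otimes_\Lambda X,W)$ in (1): because of the infinite product $\prod_n{\rm Hom}_E(-,W)$ one cannot argue componentwise, and the Mittag--Leffler property of the tower of truncated Hom-complexes (equivalently, completeness of the filtration by filtration-degree) must be invoked. Everything else is routine bookkeeping with the bar differential and with the closure properties of $E$-relatively (dg-)projective complexes.
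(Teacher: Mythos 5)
Your proof is correct and follows essentially the same route as the paper: the same explicit $E$-$\Lambda$-bilinear contracting homotopy for (2) (which the paper cites from Mac~Lane rather than writing out), the same filtration of $\mathbb{B}\otimes_\Lambda X$ with $E$-relatively dg-projective subquotients for (1) (where the paper outsources the colimit/Mittag--Leffler bookkeeping to \cite[Proposition~7.5]{Kel98}), and the identical ``dg-projective $+$ relatively acyclic $\Rightarrow$ contractible'' argument for (3) and (4). No gaps.
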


\begin{proof}
For (1),  we observe that $\mathbb{B}\otimes_\Lambda X$ has an ascending filtration with factors
 $$\Lambda\otimes_E (s\overline{\Lambda})^{\otimes_E n}\otimes_E X,$$
  each of which is $E$-relatively dg-projective. By \cite[Proposition~7.5]{Kel98}, we infer that $\mathbb{B}\otimes_E X$ is $E$-relatively dg-projective.

For (2),  we recall the standard fact that $\varepsilon\colon \mathbb{B}\rightarrow \Lambda$ is a homotopy equivalence as a cochain map between complexes of $E$-$\Lambda$-bimodules; see for example \cite[Chapter~IX, Theorem~8.1]{MacL}. That is,
${\rm Cone}(\varepsilon) \simeq 0$ in $\mathbf{K}(\text{$E \otimes \Lambda^{\rm op}$-Mod})$. This yields $${\rm Cone}(\varepsilon\otimes_\Lambda {\rm Id}_X) \simeq {\rm Cone}(\varepsilon) \otimes_\Lambda X \simeq 0$$
 in $\mathbf{K}(\text{$E$-Mod})$. We infer that $\varepsilon\otimes_\Lambda {\rm Id}_X$ is an $E$-relative quasi-isomorphism.

  For (3), it follows from (2)  that $\mathbb{B}\otimes_\Lambda X \simeq X \simeq 0$ in $\mathbf{K}(\text{$E$-Mod})$. Then we infer (3) by the following easy fact: any  complex which is both  $E$-relatively dg-projective and $E$-relatively acyclic is necessarily contractible as a complex of $\Lambda$-modules.

  For (4), we consider the following exact triangle in $\mathbf{K}(\Lambda\mbox{-Mod})$.
$$\mathbb{B}\otimes_\Lambda X \xrightarrow{\varepsilon\otimes_\Lambda {\rm Id}_X} X \longrightarrow {\rm Cone}(\varepsilon\otimes_\Lambda {\rm Id}_X)\longrightarrow \Sigma(\mathbb{B}\otimes_\Lambda X)$$
By (1), $\mathbb{B}\otimes_\Lambda X$ is $E$-relatively dg-projective. Since $X$ is $E$-relatively dg-projective, it follows that  so is ${\rm Cone}(\varepsilon\otimes_\Lambda {\rm Id}_X)$. By the proof of (2), we know that ${\rm Cone}(\varepsilon\otimes_\Lambda {\rm Id}_X)$ is $E$-relatively acyclic. Then by the above easy fact, we infer that ${\rm Cone}(\varepsilon\otimes_\Lambda {\rm Id}_X)$ is contractible as a complex of $\Lambda$-modules. This implies (4).
\end{proof}

Consider the natural dg functor
$${\rm \Theta}\colon C_{\rm dg}(\Lambda\mbox{-Mod})\longrightarrow \mathcal{Y}_{\Lambda/E}=\mathcal{Y}$$
which acts on objects by the identity, and identifies $f\in {\rm Hom}_\Lambda(X, Y)$ with $f\in {\rm Hom}_E(X, Y) = \mathcal{Y}_0(X, Y)\subseteq \mathcal{Y}(X, Y)$. Indeed, $C_{\rm dg}(\Lambda\mbox{-Mod})$ might be viewed as a non-full dg subcategory of $\mathcal{Y}$.

The following result justifies our terminology for $\mathcal{Y}$, since for each $\Lambda$-module $M$, the cohomology of $\mathcal{Y}(M, M)$ is isomorphic to the \emph{$E$-relative Yoneda algebra} of $M$
$${\rm Ext}^*_{\Lambda/E}(M, M)=\bigoplus_{i\geq 0} {\rm Hom}_{\mathbf{D}(\Lambda/E)} (M, \Sigma^i(M)).$$

\begin{prop}\label{prop:Theta}
The above dg functor $\Theta$ induces an isomorphism in $\mathbf{Hodgcat}$
 $$\overline{\Theta}\colon \mathbf{D}_{\rm dg}(\Lambda/E)\simeq \mathcal{Y}_{\Lambda/E}.$$
 Consequently, $\mathcal{Y}_{\Lambda/E}$ is pretriangulated and we have an isomorphism
 $$\mathbf{D}(\Lambda/E)\simeq H^0(\mathcal{Y}_{\Lambda/E})$$
 of triangulated categories.
\end{prop}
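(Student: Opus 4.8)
We outline the plan. By Lemma~\ref{lem:quo} applied to the (exact, hence pretriangulated) dg category $C_{\rm dg}(\Lambda\mbox{-Mod})$ and its full dg subcategory $C_{\rm dg}^{\rm rel\mbox{-}ac}(\Lambda\mbox{-Mod})$ (which is likewise pretriangulated, being closed under shifts and cones), the dg quotient $\mathbf{D}_{\rm dg}(\Lambda/E)$ is pretriangulated and $H^0(\mathbf{D}_{\rm dg}(\Lambda/E))\cong\mathbf{D}(\Lambda/E)$; I will use this freely. The strategy is to first produce $\overline{\Theta}$ from the universal property of the dg quotient, and then to recognize it as a quasi-equivalence by restricting everything to the full dg subcategory $\mathcal{P}\subseteq C_{\rm dg}(\Lambda\mbox{-Mod})$ of $E$-relatively dg-projective complexes. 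For the first step: if $D$ is $E$-relatively acyclic then $\mathcal{Y}(D,D)\simeq{\rm Hom}_\Lambda(\mathbb{B}\otimes_\Lambda D,D)$ is acyclic because $\mathbb{B}\otimes_\Lambda D$ is contractible (Lemma~\ref{lem:dgproj}(3)), so ${\rm Id}_D$ is a coboundary and $\Theta(D)$ is contractible in $\mathcal{Y}$; Lemma~\ref{lem:q-universal} then yields a unique $\overline{\Theta}\colon\mathbf{D}_{\rm dg}(\Lambda/E)\to\mathcal{Y}$ in $\mathbf{Hodgcat}$ with $\Theta=\overline{\Theta}\circ q$, and $\overline{\Theta}$ acts by the identity on objects.

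Next I would check that both $q|_\mathcal{P}$ and $\Theta|_\mathcal{P}$ are quasi-equivalences. On one hand, $q|_\mathcal{P}\colon\mathcal{P}\to\mathbf{D}_{\rm dg}(\Lambda/E)$ is $H^0$-dense, since every complex $X$ is isomorphic in $\mathbf{D}(\Lambda/E)$ to the $E$-relatively dg-projective complex $\mathbb{B}\otimes_\Lambda X$ via the $E$-relative quasi-isomorphism $\varepsilon\otimes_\Lambda{\rm Id}_X$ (Lemma~\ref{lem:dgproj}(1)--(2)), and it is quasi-fully faithful by the standard property of the dg quotient that Hom complexes out of an object $P$ are unchanged whenever $\mathcal{C}(P,-)$ annihilates the subcategory being killed — here ${\rm Hom}_\Lambda(P,N)$ is acyclic for every $E$-relatively acyclic $N$, by the definition of $E$-relative dg-projectivity. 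On the other hand, $\Theta|_\mathcal{P}\colon\mathcal{P}\to\mathcal{Y}$ is quasi-fully faithful: unwinding the isomorphism $\mathcal{B}\simeq\mathcal{Y}$, the map $\Theta\colon{\rm Hom}_\Lambda(P,Y)\to\mathcal{Y}(P,Y)={\rm Hom}_\Lambda(\mathbb{B}\otimes_\Lambda P,Y)$ is precomposition with $\varepsilon\otimes_\Lambda{\rm Id}_P$, which for $E$-relatively dg-projective $P$ is a homotopy equivalence of complexes of $\Lambda$-modules by Lemma~\ref{lem:dgproj}(4), hence induces a homotopy equivalence of Hom complexes.

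The substantive point, which I expect to be the main obstacle, is the $H^0$-density of $\Theta|_\mathcal{P}$: one must show that every complex $X$ is isomorphic in $H^0(\mathcal{Y})$ to $\mathbb{B}\otimes_\Lambda X$. The argument I have in mind is to exhibit, alongside the closed degree-zero morphism $\Theta(\varepsilon\otimes_\Lambda{\rm Id}_X)\colon\mathbb{B}\otimes_\Lambda X\to X$ in $\mathcal{Y}$, the morphism $X\to\mathbb{B}\otimes_\Lambda X$ in $\mathcal{Y}$ corresponding to ${\rm Id}_{\mathbb{B}\otimes_\Lambda X}\in{\rm Hom}_\Lambda(\mathbb{B}\otimes_\Lambda X,\mathbb{B}\otimes_\Lambda X)=\mathcal{Y}(X,\mathbb{B}\otimes_\Lambda X)$, and to verify that these are mutually inverse in $H^0(\mathcal{Y})$: one composite is the identity on the nose by the counital property of $\Delta$, while the other equals the identity up to the standard contracting (``extra degeneracy'') homotopy of the bar complex $\mathbb{B}\otimes_\Lambda X$ (equivalently, the two face maps $\mathbb{B}\otimes_\Lambda(\mathbb{B}\otimes_\Lambda X)\rightrightarrows\mathbb{B}\otimes_\Lambda X$ are chain homotopic, being both homotopy inverse to $\Delta\otimes_\Lambda{\rm Id}_X$). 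This is the familiar fact that the bar resolution genuinely resolves $X$ inside the Yoneda/coKleisli framework, even though $\varepsilon\otimes_\Lambda{\rm Id}_X$ itself need not be a homotopy equivalence of complexes of $\Lambda$-modules.

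Finally I would assemble the pieces: since $\Theta|_\mathcal{P}=\overline{\Theta}\circ q|_\mathcal{P}$ with $q|_\mathcal{P}$ invertible in $\mathbf{Hodgcat}$, we get $\overline{\Theta}=\Theta|_\mathcal{P}\circ(q|_\mathcal{P})^{-1}$, a composite of isomorphisms, so $\overline{\Theta}$ is an isomorphism in $\mathbf{Hodgcat}$, i.e. the asserted quasi-equivalence $\mathbf{D}_{\rm dg}(\Lambda/E)\simeq\mathcal{Y}_{\Lambda/E}$. Then $\mathcal{Y}_{\Lambda/E}$ is pretriangulated by Lemma~\ref{lem:pretri}(1), and $H^0(\overline{\Theta})$ is a triangle equivalence $\mathbf{D}(\Lambda/E)=H^0(\mathbf{D}_{\rm dg}(\Lambda/E))\stackrel{\sim}{\longrightarrow}H^0(\mathcal{Y}_{\Lambda/E})$.
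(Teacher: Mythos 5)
Your proof is correct and follows essentially the same route as the paper: construct $\overline{\Theta}$ via the universal property of the dg quotient (Lemma~\ref{lem:q-universal}), then restrict to the full dg subcategory $\mathcal{P}$ of $E$-relatively dg-projective complexes, where $q|_\mathcal{P}$ is a quasi-equivalence (the paper cites \cite[Proposition~7.4]{Kel98} for this) and $\Theta|_\mathcal{P}$ is quasi-fully faithful by Lemma~\ref{lem:dgproj}(4). The only divergence is that your third paragraph is avoidable: since $\overline{\Theta}$ acts on objects by the identity, $H^0(\overline{\Theta})$ is automatically dense, so quasi-fully-faithfulness of $\overline{\Theta}$ already follows from that of the composite $\Theta|_\mathcal{P}=\overline{\Theta}\circ q|_\mathcal{P}$ together with $q|_\mathcal{P}$ being a quasi-equivalence, and one never needs your (correct, but extra) verification that $X\simeq \mathbb{B}\otimes_\Lambda X$ in $H^0(\mathcal{Y}_{\Lambda/E})$.
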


\begin{proof}
For any $E$-relatively acyclic complex $X$, by Lemma~\ref{lem:dgproj}(3), $\mathbb{B}\otimes_\Lambda X$ is contractible as a complex of $\Lambda$-modules.
Recall the isomorphism in \eqref{equ:BY}
$$\mathcal{Y}(X, X)\simeq \mathcal{B}(X, X)={\rm Hom}_\Lambda(\mathbb{B}\otimes_\Lambda X, X).$$
 It follows that $\mathcal{B}(X, X)$ and thus $\mathcal{Y}(X, X)$ are acyclic. We infer that $X$ is contractible in $\mathcal{Y}$. This shows that $\Theta(X)=X$ is contractible for any $X$ in ${C_{\rm dg}^{\rm rel\mbox{-}ac}(\Lambda\mbox{-Mod})}$.

By the universal property in Lemma~\ref{lem:q-universal}, $\Theta$ induces a morphism in $\mathbf{Hodgcat}$
$$\overline{\Theta}\colon \mathbf{D}_{\rm dg}(\Lambda/E)\longrightarrow  \mathcal{Y}.$$
 As $\overline{\Theta}$ acts on objects by the identity, it suffices to prove that it is quasi-fully faithful.

We claim that for any $E$-relatively dg-projective complex $X$,  the inclusion
$${\rm Hom}_\Lambda(X, Y)\longrightarrow \mathcal{Y}(X, Y)$$
is a quasi-isomorphism. Indeed, note that the inclusion  equals the composition of
$${\rm Hom}_\Lambda(\varepsilon\otimes_\Lambda {\rm Id}_X, Y)\colon {\rm Hom}_\Lambda(X, Y)\longrightarrow \mathcal{B}(X, Y)$$
with the isomorphism (\ref{equ:BY}). By Lemma~\ref{lem:dgproj}(4), $\varepsilon\otimes_\Lambda {\rm Id}_X$ is a homotopy equivalence. Therefore, ${\rm Hom}_\Lambda(\varepsilon\otimes_\Lambda {\rm Id}_X, Y)$ is a quasi-isomorphism, proving the claim.

Denote by $\mathcal{P}$ the full dg subcategory of $C_{\rm dg}(\Lambda\mbox{-Mod})$ formed by $E$-relatively dg-projective complexes. The above claim implies that the following composition
$$\mathcal{P}\stackrel{\rm inc}\longrightarrow C_{\rm dg}(\Lambda\mbox{-Mod})\stackrel{q}\longrightarrow \mathbf{D}_{\rm dg}(\Lambda/E)\stackrel{\overline{\Theta}}\longrightarrow \mathcal{Y}$$
is quasi-fully faithful, where `${\rm inc}$' and $q$ denote the inclusion and quotient functors, respectively.  By \cite[Proposition~7.4]{Kel98}, the composite dg functor  $q \circ {\rm inc}$
is a quasi-equivalence.  This implies that $\overline{\Theta}$ is quasi-fully faithful, as required.

For the second statement, we recall that  the dg derived category $\mathbf{D}_{\rm dg}(\Lambda/E)$ is pretriangulated; see Lemma~\ref{lem:quo}. It follows from Lemma~\ref{lem:pretri}(1) and the isomorphism $\overline{\Theta}$ that $\mathcal{Y}$ is also pretriangulated.
\end{proof}

\begin{rem}\label{rem:bounded-ss-Y}
(1)  Denote by $\mathcal{Y}_{\Lambda/E}^b$ the full dg subcategory of $\mathcal{Y}_{\Lambda/E}$ consisting of bounded complexes. We have the \emph{$E$-relative bounded dg derived category}
$$\mathbf{D}_{\rm dg}^b(\Lambda/E)=C^b_{\rm dg}(\Lambda\mbox{-Mod})/{C_{\rm dg}^{{\rm rel\mbox{-}ac}, b}(\Lambda\mbox{-Mod})},$$
where $C^b_{\rm dg}(\Lambda\mbox{-Mod})$ is the full dg subcategory of $C_{\rm dg}(\Lambda\mbox{-Mod})$ consisting of bounded complexes. Its homotopy category $H^0(\mathbf{D}_{\rm dg}^b(\Lambda/E))$ coincides with the $E$-relative bounded derived category
$$\mathbf{D}^b(\Lambda/E)=\mathbf{K}^b(\Lambda\mbox{-Mod})/{\mathbf{K}^{{\rm rel\mbox{-}ac}, b}(\Lambda\mbox{-Mod})}.$$

 The bounded version of  Proposition~\ref{prop:Theta} claims an isomorphism in $\mathbf{Hodgcat}$
 $$\mathbf{D}^b_{\rm dg}(\Lambda/E)\simeq \mathcal{Y}^b_{\Lambda/E}$$
 and an isomorphism of triangulated categories
 $$\mathbf{D}^b(\Lambda/E)\simeq H^0(\mathcal{Y}^b_{\Lambda/E}).$$

 (2) Assume that $E$ is a semisimple ring. Then the $E$-relative derived dg category $\mathbf{D}_{\rm dg}(\Lambda/E)$ coincides with the (absolute) dg derived category
     $$\mathbf{D}_{\rm dg}(\Lambda\mbox{-Mod})=C_{\rm dg}(\Lambda\mbox{-Mod})/{C_{\rm dg}^{\rm ac}(\Lambda\mbox{-Mod})}.$$
    Similarly, $\mathbf{D}(\Lambda/E)$ coincides with $\mathbf{D}(\Lambda\mbox{-Mod})$, the derived category of $\Lambda\mbox{-Mod}$.  Therefore, we have isomorphisms in $\mathbf{Hodgcat}$
     $$ \mathbf{D}_{\rm dg}(\Lambda\mbox{-Mod})\simeq \mathcal{Y}_{\Lambda/E} \quad \mbox{ and } \quad \mathbf{D}^b_{\rm dg}(\Lambda\mbox{-Mod})\simeq \mathcal{Y}^b_{\Lambda/E}.$$
     They induce isomorphisms of triangulated categories:
     $$ \mathbf{D}(\Lambda\mbox{-Mod})\simeq H^0(\mathcal{Y}_{\Lambda/E}) \quad \mbox{ and } \quad \mathbf{D}^b(\Lambda\mbox{-Mod})\simeq H^0(\mathcal{Y}^b_{\Lambda/E}).$$
\end{rem}

We are mostly interested in the finitely generated modules. In the given algebra extension $E\rightarrow \Lambda$, we assume that $E$ is a semisimple ring and that $\Lambda$ is a left noetherian ring. Denote by $\Lambda\mbox{-mod}$ the full subcategory of $\Lambda\mbox{-Mod}$ formed by finitely generated $\Lambda$-modules.  The \emph{bounded dg derived category}  is defined as
$$\mathbf{D}_{\rm dg}^b(\Lambda\mbox{-mod})=C^b_{\rm dg}(\Lambda\mbox{-mod})/{C_{\rm dg}^{{\rm ac}, b}(\Lambda\mbox{-mod})},$$
whose homotopy category coincides with $\mathbf{D}^b(\Lambda\mbox{-mod})$, the bounded derived category of $\Lambda\mbox{-mod}$. Denote by $\mathcal{Y}_{\Lambda/E}^f$ the full dg subcategory of $\mathcal{Y}_{\Lambda/E}^b$ consisting of bounded complexes in $\Lambda\mbox{-mod}$.

We have the following finite version of Proposition~\ref{prop:Theta}; compare Remark~\ref{rem:bounded-ss-Y}(2).

\begin{cor}\label{cor:finite}
Assume that $E$ is a semisimple ring and that $\Lambda$ is a left noetherian ring. Then there is an isomorphism in $\mathbf{Hodgcat}$
 $$ \mathbf{D}^b_{\rm dg}(\Lambda\mbox{-}{\rm mod})\simeq \mathcal{Y}^f_{\Lambda/E},$$
 which induces an isomorphism of triangulated categories
 $$\mathbf{D}^b(\Lambda\mbox{-}{\rm mod})\simeq H^0(\mathcal{Y}^f_{\Lambda/E}).$$
\end{cor}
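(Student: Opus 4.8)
The plan is to deduce Corollary~\ref{cor:finite} from the bounded, semisimple version recorded in Remark~\ref{rem:bounded-ss-Y}(2), the only non-formal ingredient being the classical fact that for a left noetherian ring $\Lambda$ the canonical functor $\mathbf{D}^b(\Lambda\mbox{-}{\rm mod})\to\mathbf{D}^b(\Lambda\mbox{-}{\rm Mod})$ is fully faithful.

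First I would observe that the dg functor $\Theta\colon C_{\rm dg}(\Lambda\mbox{-}{\rm Mod})\to\mathcal{Y}_{\Lambda/E}$ restricts to a genuine dg functor $\Theta^f\colon C^b_{\rm dg}(\Lambda\mbox{-}{\rm mod})\to\mathcal{Y}^f_{\Lambda/E}$, acting by the identity on objects. Since $E$ is semisimple, an $E$-relatively acyclic complex is simply an acyclic complex; hence by (the proof of) Proposition~\ref{prop:Theta}, every bounded acyclic complex of finitely generated $\Lambda$-modules is contractible in $\mathcal{Y}_{\Lambda/E}$, and, since $\mathcal{Y}^f_{\Lambda/E}$ is a full dg subcategory, it is contractible in $\mathcal{Y}^f_{\Lambda/E}$ as well. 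By the universal property of the dg quotient (Lemma~\ref{lem:q-universal}), $\Theta^f$ induces a morphism in $\mathbf{Hodgcat}$
$$\overline{\Theta^f}\colon \mathbf{D}^b_{\rm dg}(\Lambda\mbox{-}{\rm mod})\longrightarrow \mathcal{Y}^f_{\Lambda/E}$$
which again acts by the identity on objects; in particular $H^0(\overline{\Theta^f})$ is dense.

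Next I would show that $\overline{\Theta^f}$ is quasi-fully faithful. The inclusions $C^b_{\rm dg}(\Lambda\mbox{-}{\rm mod})\hookrightarrow C^b_{\rm dg}(\Lambda\mbox{-}{\rm Mod})$ and $\mathcal{Y}^f_{\Lambda/E}\hookrightarrow\mathcal{Y}^b_{\Lambda/E}$ are compatible with the dg functors $\Theta^f$ and $\Theta^b$ (the restriction of $\Theta$ to bounded complexes), so by the uniqueness in Lemma~\ref{lem:q-universal} they assemble into a commutative square in $\mathbf{Hodgcat}$ whose right-hand vertical map is the quasi-equivalence $\mathbf{D}^b_{\rm dg}(\Lambda\mbox{-}{\rm Mod})\xrightarrow{\sim}\mathcal{Y}^b_{\Lambda/E}$ of Remark~\ref{rem:bounded-ss-Y}(2) and whose bottom map is a full embedding of dg categories, hence induces equalities of Hom complexes. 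Therefore, for bounded complexes $X$ and $Y$ of finitely generated $\Lambda$-modules, $\overline{\Theta^f}$ is a quasi-isomorphism on $\mathcal{Y}^f_{\Lambda/E}(X,Y)$ if and only if the canonical map $\mathbf{D}^b_{\rm dg}(\Lambda\mbox{-}{\rm mod})(X,Y)\to\mathbf{D}^b_{\rm dg}(\Lambda\mbox{-}{\rm Mod})(X,Y)$ is a quasi-isomorphism. Both are Hom complexes of pretriangulated dg categories (Lemma~\ref{lem:quo}), so their cohomology in degree $n$ computes $\mathrm{Hom}(X,\Sigma^n Y)$ in $\mathbf{D}^b(\Lambda\mbox{-}{\rm mod})$, respectively in $\mathbf{D}^b(\Lambda\mbox{-}{\rm Mod})$, and the required statement is exactly the full faithfulness of $\mathbf{D}^b(\Lambda\mbox{-}{\rm mod})\to\mathbf{D}^b(\Lambda\mbox{-}{\rm Mod})$.

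Being quasi-fully faithful and dense, $\overline{\Theta^f}$ is an isomorphism in $\mathbf{Hodgcat}$, which is the first assertion; applying $H^0$ and using $H^0(\mathbf{D}^b_{\rm dg}(\Lambda\mbox{-}{\rm mod}))\simeq\mathbf{D}^b(\Lambda\mbox{-}{\rm mod})$ yields the triangle equivalence. I expect the only genuine point requiring care to be the passage to finitely generated modules, i.e.\ invoking the left noetherian full-faithfulness result cleanly; that result itself is standard and I would cite it rather than reprove it.
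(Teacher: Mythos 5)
Your argument is correct, and it fills in precisely the argument the paper leaves implicit: the corollary is stated as the ``finite version'' of Proposition~\ref{prop:Theta} with a pointer to Remark~\ref{rem:bounded-ss-Y}(2), and your reduction to that remark via the commutative square of dg quotients, with the left noetherian hypothesis entering exactly through the classical full faithfulness of $\mathbf{D}^b(\Lambda\mbox{-}{\rm mod})\to\mathbf{D}^b(\Lambda\mbox{-}{\rm Mod})$, is the intended route. No gaps; the one point you flag as needing care is indeed the only nontrivial ingredient, and citing it is appropriate.
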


\subsection{The dg tensor algebra as an endomorphism algebra}\label{subsec:YE}

Throughout this subsection, we will impose the following finiteness conditions on the algebra extension $E\rightarrow \Lambda$.
\begin{enumerate}
\item[(Fin1)] The left $E$-module $_E\Lambda$ is finitely generated projective.
\item[(Fin2)] There is an algebra homomorphism $\pi\colon \Lambda\rightarrow E$ such that the composition $E\rightarrow \Lambda \stackrel{\pi}\rightarrow E$ is the identity map.
\end{enumerate}

Recall that $\overline \Lambda = \Lambda / (E\cdot 1_\Lambda)$ is the cokernel of $E\to \Lambda$. The natural map
$${\rm Ker}(\pi)\stackrel{\sim}\longrightarrow \overline{\Lambda}, \quad a\mapsto \overline{a}$$
is an isomorphism of $E$-$E$-bimodules. The multiplication on the ideal ${\rm Ker}(\pi)$ induces an associative $E$-$E$-bimodule morphism
\begin{align}\label{equ:mu}
\mu\colon \overline{\Lambda}\otimes_E \overline{\Lambda}\longrightarrow \overline{\Lambda}, \quad \overline{a}\otimes_E\overline{b}\mapsto \overline{ab}.
\end{align}
Here, we emphasize that the elements $a$ and $b$ are chosen to lie in ${\rm Ker}(\pi)$. Associated to $(\overline{\Lambda}, \mu)$, we have the dg tensor algebra $(T_E(\overline{\Lambda}^*), \partial)$; see Remark~\ref{rem:dgten}. We mention that such dg tensor algebras are related to normal bocses; see \cite[Remark~3.4]{BSZ}.

We will view $E$ as a $\Lambda$-module via the algebra homomorphism $\pi$.

\begin{prop}\label{prop:iso-Y-tensor}
Assume that (Fin1) and (Fin2) hold. Then there is an isomorphism $\mathcal{Y}_{\Lambda/E}(E, E)\simeq (T_E(\overline{\Lambda}^*), \partial)^{\rm op}$ of dg algebras.
\end{prop}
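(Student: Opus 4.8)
The plan is to compute the Hom complex $\mathcal{Y}_{\Lambda/E}(E,E)$ directly from its definition and identify it, as a graded module, with the dg tensor algebra $(T_E(\overline{\Lambda}^*),\partial)^{\rm op}$. First I would unpack the definition: since $E$ is viewed as a $\Lambda$-module via $\pi$, and $E$ is a finitely generated projective $E$-module (it is free of rank one), the module of filtration-degree $n$ is
\[
\mathcal{Y}_n(E,E)={\rm Hom}_E\big((s\overline{\Lambda})^{\otimes_E n}\otimes_E E, E\big)\cong {\rm Hom}_E\big((s\overline{\Lambda})^{\otimes_E n}, E\big).
\]
Using (Fin1), the bimodule $\overline{\Lambda}$ has $_E\overline{\Lambda}$ finitely generated projective, so the canonical map ${\rm can}$ of Section~\ref{section3} (iterated) gives an isomorphism $(\overline{\Lambda}^*)^{\otimes_E n}\xrightarrow{\sim}(\overline{\Lambda}^{\otimes_E n})^*$ of $E$-$E$-bimodules. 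Keeping track of the shift $s$ (which only affects signs and degrees), this identifies $\mathcal{Y}_n(E,E)$ with $(\overline{\Lambda}^*)^{\otimes_E n}$, placed in cohomological degree $n$. Taking the product over $n\geq 0$ yields an isomorphism of graded $\mathbb{K}$-modules $\mathcal{Y}_{\Lambda/E}(E,E)\cong\prod_{n\geq 0}(\overline{\Lambda}^*)^{\otimes_E n}$; since $_E\overline{\Lambda}^*$ is finitely generated projective the product agrees with the sum $T_E(\overline{\Lambda}^*)$ in each degree.

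Next I would check the multiplicative structure. The composition $\odot$ in $\mathcal{Y}$ is given by formula~\eqref{equation:yonedaproduct}: for $f\in\mathcal{Y}_n(E,E)$ and $g\in\mathcal{Y}_m(E,E)$, the composite $g\odot f\in\mathcal{Y}_{n+m}(E,E)$ plugs the last $n$ arguments into $f$ and the first $m$ into $g$ (with a Koszul sign $(-1)^{m|f|}$). Under the identification with duals, this is exactly the multiplication of the tensor algebra $T_E(\overline{\Lambda}^*)$ — but with the tensor factors of $g$ appearing \emph{before} those of $f$, which is precisely the opposite multiplication. Hence the isomorphism lands in $T_E(\overline{\Lambda}^*)^{\rm op}$ rather than $T_E(\overline{\Lambda}^*)$; the Koszul sign built into $\odot$ is exactly the one appearing in the opposite dg algebra convention $g\circ^{\rm op}f=(-1)^{|g||f|}f\circ g$, so the signs match. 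The unit ${\rm Id}_E\in\mathcal{Y}_0(E,E)$ corresponds to $1_E\in E=T_E(\overline{\Lambda}^*)^0$.

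Finally I would verify that the differential $\delta=\delta_{\rm in}+\delta_{\rm ex}$ on $\mathcal{Y}_{\Lambda/E}(E,E)$ matches $\partial^{\rm op}=\partial$ (the differential on a dg algebra and its opposite coincide up to the sign already absorbed). Since $E$ has zero differential, $\delta_{\rm in}=0$ on $\mathcal{Y}(E,E)$, so only the external part $\delta_{\rm ex}$ survives; it has two boundary terms $a_1 f(s\overline{a}_{2,\dots})$ and $f(s\overline{a}_{1,\dots}\otimes a_{n+1}x)$ together with the ``inner multiplication'' terms $f(\dots\otimes s\overline{a_ia_{i+1}}\otimes\dots)$. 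Because $x\in E$ is killed by any $\overline{a}\in{\rm Ker}(\pi)$ acting through $\pi$, the term involving $a_{n+1}x$ vanishes; dualizing, the remaining terms assemble into exactly the coproduct-type map that defines $\partial_+$ via $\mu$ in~\eqref{equ:par+}, extended by the graded Leibniz rule~\eqref{equ:Leibniz}. Concretely, $\delta_{\rm ex}$ on $\mathcal{Y}_1(E,E)\cong\overline{\Lambda}^*$ is, after the identification, the map ${\rm can}^{-1}\circ\mu^*=\partial_+$, and the higher-degree formula for $\delta_{\rm ex}$ is forced to be the unique $E$-derivation extending it — which is $\partial$ by Remark~\ref{rem:partial} (or directly by~\cite[Lemma~1.8]{BSZ}).

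\textbf{Main obstacle.} I expect the bookkeeping of signs and of the shift functor $s$ to be the only real difficulty: one must confirm that the Koszul signs in $\delta_{\rm ex}$ (the $(-1)^{|f|+i+1}$ factors) and in $\odot$ (the $(-1)^{m|f|}$ factor) are exactly those produced by the grading conventions $\deg\overline{\Lambda}^*=1$ and by passing to the opposite algebra, so that no spurious sign obstructs the isomorphism of \emph{dg} algebras. Everything else — the module-level identification via iterated ${\rm can}$, the matching of units, the vanishing of $\delta_{\rm in}$ and of the $a_{n+1}x$ term — is routine given (Fin1), (Fin2), and the results of Section~\ref{section3}.
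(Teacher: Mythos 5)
Your proposal is correct and follows essentially the same route as the paper: identify $\mathcal{Y}_n(E,E)$ with $(\overline{\Lambda}^*)^{\otimes_E n}$ via the (iterated, sign-decorated) duality isomorphism, observe that the composition $\odot$ gives the opposite tensor-algebra multiplication, and check the differential only on the generators $E\oplus \overline{\Lambda}^*$, since both sides are derivations. Two cosmetic points: the product over $n$ collapses to the direct sum because $\mathcal{Y}_n(E,E)$ is concentrated in the single cohomological degree $n$ (not because of finite generation of $\overline{\Lambda}^*$), and the other boundary term $a_1f(\cdots)$ in $\delta_{\rm ex}$ vanishes for the same reason as the $a_{n+1}x$ term, namely that elements of ${\rm Ker}(\pi)$ act as zero on $E$.
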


\begin{proof}
In this proof, we write $M=\overline{\Lambda}$. Recall that
$$\mathcal{Y}(E, E)=\prod_{n\geq 0}{\rm Hom}_E((sM)^{\otimes_E n}, E)=\bigoplus_{n\geq 0}{\rm Hom}_E((sM)^{\otimes_E n}, E),$$
whose multiplication is induced by the composition $\odot$ and the differential is given by $\delta_{\rm ex}$; see \eqref{equation:yonedaproduct}. Here, we note that $\delta_{\rm in}$ vanishes.

We note that $T_E(M^*)$ is graded with $(M^*)^{\otimes_E n}$ in  degree $n$. To emphasize the degrees, we replace $M^*$ by $s^{-1}M^*$. Its typical element is denoted by $s^{-1}f$ with $f\in M^*$. So, as a graded algebra, we have
$$T_E(s^{-1}M^*)=\bigoplus_{n\geq 0} (s^{-1}M^*)^{\otimes_E n}.$$
For each degree $n>0$, we have a natural isomorphism of $E$-$E$-bimodules.
\[
\begin{array}{cccc}
\phi^n\colon &  (s^{-1}M^*)^{\otimes_E n} & \rightarrow  & {\rm Hom}_E((sM)^{\otimes_E n}, E)\\
& s^{-1}f_{1, n} &\mapsto & (s\overline{a}_{1,n}\mapsto (-1)^nf_n(\overline{a}_1f_{n-1}(\dotsb (\overline{a}_{n-1}f_1(\overline{a}_n))\cdots))\in E)
\end{array}
\]
Define $\phi^0\colon E\rightarrow {\rm Hom}_E(E, E)$ by $\phi^0(x)(y)=\phi(yx)$.

These $\phi^n$'s yield an isomorphism
$$\phi\colon T_E(s^{-1}M^*)^{\rm op}\stackrel{\sim}\longrightarrow \mathcal{Y}(E, E)$$
of graded $\mathbb{K}$-modules. It is direct to verify that $\phi$ is compatible with the multiplications.

To show that $\phi$ preserves the differentials, it suffices to verify
$$\phi \circ \partial=\delta_{\rm ex}\circ \phi$$
 on $E\oplus s^{-1}M^*$. The verification on $E$ is trivial, since $\partial|_E=0$ and $\delta_{\rm ex}|_{\mathcal{Y}_0(E,E)}=0$.  Recall that the restriction of $\partial$ on $s^{-1}M^*$ is given by $\partial_{+}$ in (\ref{equ:par+}). Then we are done by verifying the commutativity of the following square.
\begin{align}\label{diag:partial+}
\xymatrix{
s^{-1}M^*\ar[d]_-{\partial_{+}}\ar[rr]^-{\phi^1}&& {\rm Hom}_E(sM, E)\ar[d]^-{\delta_{\rm ex}}\\
s^{-1}M^*\otimes_E s^{-1}M^* \ar[rr]^-{\phi^2} && {\rm Hom}_E((sM)^{\otimes_E 2},E)
}\end{align}

For the verification, take any $f\in M^*$. We observe that $\delta_{\rm ex}\circ \phi^1(s^{-1}f)$ is the element in ${\rm Hom}_E((sM)^{\otimes_E 2},E)$, which sends $s\overline{a}_{1, 2}$ to $f(\overline{a_1a_2})$. Here, we use the fact that there are minus signs appearing in both  $\phi^1$ and $\delta_{\rm ex}$.

On the other hand, we assume that
$$\partial_{+}(s^{-1}f)=\sum_{i=1}^m s^{-1}g_i\otimes_E s^{-1}h_i\in s^{-1}M^*\otimes_E s^{-1}M^*$$
 for some $g_i, h_i \in M^*$. By the very definition of $\partial_+$ in (\ref{equ:par+}), we have
\begin{align}\label{equ:ver}
\sum_{i=1}^m h_i(\overline{a}_1 g_i(\overline{a}_2)) = f(\overline{a_1a_2});
\end{align}
see also \eqref{align:verydefintionpartial}. By the definition of $\phi^2$ above, we observe that $$\phi^2(\sum_{i=1}^m s^{-1}g_i\otimes_E s^{-1}h_i)$$
sends $s\overline{a}_{1,2}$ to $\sum_{i=1}^m h_i(\overline{a}_1g_i(\overline{a}_2))$. Combining this observation with (\ref{equ:ver}), we infer that $\phi^2\circ \partial_+(s^{-1}f)$ also sends  $s\overline{a}_{1,2}$ to $f(\overline{a_1a_2})$. This completes the verification of the commutativity above.
\end{proof}

\section{Noncommutative differential forms}\label{section8}

In this section, we study noncommutative differential forms with values in a complex.  This gives rise to a dg endofunctor $\Omega_{\rm nc}$ on the Yoneda dg category $\mathcal Y$.  We also obtain a closed natural transformation $\theta \colon {\rm Id}_{\mathcal Y} \to \Omega_{\rm nc}$ of degree zero satisfying $\theta \Omega_{\rm nc} = \Omega_{\rm nc} \theta$; see Section \ref{section6}. As before, we fix an algebra extension $E\rightarrow \Lambda$ and work in the relative situation.

Let $X$ be a complex of $\Lambda$-modules. The complex of \emph{$X$-valued $E$-relative noncommutative differential $1$-forms} is defined by
$$\Omega_{{\rm nc}, \Lambda/E}(X)=s\overline{\Lambda}\otimes_E X,$$
which is graded by means of ${\rm deg}(s\overline{a}_1\otimes_E x)=|x|-1$ and whose differential is given by $d(s\overline{a}_1\otimes_E x)=-s\overline{a}_1\otimes_E d_X(x)$. The $\Lambda$-action is given by the following nontrivial rule:
\begin{align}\label{equ:nontri-rule}
a\blacktriangleright (s\overline{a}_1\otimes_E x)=s\overline{aa_1}\otimes_E x-s\overline{a}\otimes_E a_1x.
\end{align}

To justify the above terminology, we observe that
$$\Omega_{{\rm nc}, \Lambda/E}(\Lambda)=s\overline{\Lambda}\otimes_E \Lambda$$
 is a stalk complex of $\Lambda$-$\Lambda$-bimodules concentrated in degree $-1$, where the right $\Lambda$-action is given by the multiplication of $\Lambda$. This stalk complex is called the \emph{graded bimodule of $E$-relative right noncommutative differential $1$-forms} \cite{Wan1}. Moreover, we have a canonical isomorphism
 $$\Omega_{{\rm nc}, \Lambda/E}(\Lambda)\otimes_\Lambda X\simeq \Omega_{{\rm nc}, \Lambda/E}(X), $$
 which sends $(s\overline{a}_0\otimes_E a_1)\otimes_\Lambda x$ to $s\overline{a}_0\otimes_E a_1x$. We mention that the study of noncommutative differential forms goes back to \cite[Sections~1~and~2]{CQ}.

 To avoid notational overload, we write $\mathcal{Y}=\mathcal{Y}_{\Lambda/E}$ and $\Omega_{\rm nc}(X)=\Omega_{{\rm nc}, \Lambda/E}(X)$. We have a dg functor $$\Omega_{\rm nc}\colon \mathcal{Y}\longrightarrow \mathcal{Y}, \quad X\mapsto \Omega_{\rm nc}(X),$$
which sends a morphism $f\in \mathcal{Y}_n(X, Y)$ to the morphism in $\mathcal{Y}_{n}(\Omega_{\rm nc}(X), \Omega_{\rm nc}(Y))$:
 $$  \xymatrix@C=3pc{
 (s\overline{\Lambda})^{\otimes_E n}\otimes_E \Omega_{\rm nc}(X) \ar[r]^-{=}  &  (s\overline{\Lambda})^{\otimes_E (n+1)}\otimes_E X \ar[r]^-{\mathrm{Id}_{s\overline{\Lambda}}\otimes_E f} & s\overline{\Lambda}\otimes_E Y=\Omega_{\rm nc}(Y).
 }$$

 We have a closed natural transformation of degree zero
 $$\theta\colon {\rm Id}_\mathcal{Y}\longrightarrow \Omega_{\rm nc}$$
defined as follows. For any $X \in \mathcal{Y}$, $\theta_X$ lies in $\mathcal{Y}_1(X, \Omega_{\rm nc}(X))\subseteq \mathcal{Y}(X, \Omega_{\rm nc}(X))$ and is given by
$$\theta_X(s\overline{a}\otimes_E x)=s\overline{a}\otimes_E x\in \Omega_{\rm nc}(X).$$
Note that $\theta_X$ is of degree zero and $\delta(\theta_X)=0$ using the nontrivial rule (\ref{equ:nontri-rule}). For the naturalness of $\theta$, we observe that for each $f\in \mathcal{Y}_n(X, Y)$, we have
$$\theta_Y\odot f=\Omega_{\rm nc}(f)\odot \theta_X.$$
Indeed, both sides send $s\overline{a}_{1, n+1}\otimes_E x$ to $(-1)^{|f|}s\overline{a}_1\otimes_E f(s\overline{a}_{2, n}\otimes_E x)$.

 We observe that
 $$\Omega_{\rm nc}(\theta_X)=\theta_{\Omega_{\rm nc}(X)},$$
 since both sides lie in $\mathcal{Y}_1(\Omega_{\rm nc}(X), \Omega_{\rm nc}^2(X)) = \mathrm{Hom}_E(\Omega_{\rm nc}^2(X), \Omega_{\rm nc}^2(X))$ and correspond to the identity map of $\Omega_{\rm nc}^2(X)$. In summary, we conclude that the triple $(\mathcal{Y}_{\Lambda/E}, \Omega_{\rm nc}, \theta)$ satisfies the assumptions made in the first paragraph of Section~\ref{section6}.

\vskip 5pt

 In the remaining of this section, we will analyze the cone of $\theta_X$ in $H^0(\mathcal{Y})$.

Let $N$ be the complex $\overline{\Lambda}\otimes_E X$ of $\Lambda$-modules, with the $\Lambda$-action given by
$$b(\overline{a}\otimes_E x)=\overline{ba}\otimes_E x-\overline{b}\otimes_E ax.$$
In view of (\ref{equ:nontri-rule}), we have $\Sigma(N)=\Omega_{\rm nc}(X)$. Consider the following  sequence of complexes of $\Lambda$-modules.
 $$\xi_X\colon 0\longrightarrow N \stackrel{i_X}\longrightarrow \Lambda\otimes_E X\stackrel{m_X}\longrightarrow X\longrightarrow 0$$
Here, $i_X(\overline{a}\otimes_E x)=a\otimes_E x-1_\Lambda \otimes_E ax$ and $m_X(a\otimes_E x)=ax$. We claim that it is a split short exact sequence between the underlying complexes of $E$-modules.

For the claim, we define $s_X\colon X\rightarrow \Lambda\otimes_E X$ by $s_X(x)=1_\Lambda \otimes_E x$, and $t_X\colon \Lambda\otimes_E X\rightarrow N$ by $t_X(a\otimes_E x)=\overline{a}\otimes_E x$. Both $s_X$ and $t_X$ are chain maps between complexes of $E$-modules. We infer the claim from the following easy identities:
$$m_X\circ s_X={\rm Id}_X, \quad  t_X\circ i_X={\rm Id}_N, \quad \mbox{and } \quad  i_X\circ t_X+s_X\circ m_X={\rm Id}_{\Lambda\otimes_E X}.$$

  Since $\Sigma(N)=\Omega_{\rm nc}(X)$, the mapping cone of $i_X$ is described as follows:
  $$\mathrm{Cone}(i_X)=(\Lambda\otimes_E X) \oplus \Omega_{\rm nc}(X), $$
 whose differential is given  by $d_{\mathrm{Cone}(i_X)}(a\otimes_E x, 0)=(a\otimes_E d_X(x), 0)$ and
  $$d_{\mathrm{Cone}(i_X)}(0, s\overline{a}\otimes_E x)= (a\otimes_E x-1_\Lambda \otimes_E ax, \ -s\overline{a}\otimes_E d_X(x)).$$
  Denote the projection by
  \[
  {\rm pr}\colon \mathrm{Cone}(i_X)\rightarrow \Omega_{\rm nc}(X).
  \]
  Since  the underlying short exact sequence of  $\xi_X$ splits over $E$,  the induced cochain map
  $$(m_X,0)\colon \mathrm{Cone}(i_X)\longrightarrow X$$
  is an $E$-relative quasi-isomorphism.

We view both ${\rm pr}$ and $(m_X,0)$ as morphisms in $\mathcal{Y}$, which have filtration-degree zero. Namely, ${\rm pr} \in \mathcal Y_0(\mathrm{Cone}(i_X), \Omega_{\rm nc}(X))$ and $(m_X,0) \in \mathcal Y_0(\mathrm{Cone}(i_X), X).$

\begin{lem}\label{lem:theta}
Keep the notation as above. Then we have $\theta_X\odot (m_X,0)={\rm pr}$ in $H^0(\mathcal{Y}_{\Lambda/E})$.
\end{lem}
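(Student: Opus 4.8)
The plan is to exhibit an explicit contracting homotopy in the Hom complex $\mathcal{Y}(C(X),\Omega_{\rm nc}(X))$ witnessing the equality. First I would record that both sides are closed morphisms of degree zero, so that comparing their classes in $H^0(\mathcal{Y})$ makes sense: $\theta_X\odot (m_X,0)$ is closed as a composition of the closed morphisms $(m_X,0)\in\mathcal{Y}_0(C(X),X)$ and $\theta_X\in\mathcal{Y}_1(X,\Omega_{\rm nc}(X))$, and it lies in $\mathcal{Y}_1(C(X),\Omega_{\rm nc}(X))$, while ${\rm pr}$ is a cochain map (the projection of the module direct sum $C(X)=(\Lambda\otimes_E X)\oplus\Omega_{\rm nc}(X)$), hence closed, and lies in $\mathcal{Y}_0(C(X),\Omega_{\rm nc}(X))$. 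Using the composition rule \eqref{equation:yonedaproduct} I would then unwind $\theta_X\odot (m_X,0)$ to the explicit form $s\overline{a}\otimes_E c\mapsto s\overline{a}\otimes_E (m_X,0)(c)$; concretely it sends $s\overline{a}\otimes_E(a'\otimes_E x)$ to $s\overline{a}\otimes_E a'x$ and kills $s\overline{a}\otimes_E\Omega_{\rm nc}(X)$. It therefore suffices to produce $g\in\mathcal{Y}(C(X),\Omega_{\rm nc}(X))$ of degree $-1$ with $\delta(g)=\theta_X\odot (m_X,0)-{\rm pr}$.

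Next I would introduce the candidate homotopy $h\in\mathcal{Y}_0(C(X),\Omega_{\rm nc}(X))$ of degree $-1$, that is, a degree $-1$ map of $E$-modules $h\colon C(X)\to\Omega_{\rm nc}(X)$ given by
\[
h(a\otimes_E x,\ \omega)=s\overline{a}\otimes_E x
\]
on the summand $\Lambda\otimes_E X$ and $h=0$ on the summand $\Omega_{\rm nc}(X)$. I would check that $h$ is well defined over $E$ and $E$-linear, using the identity $\overline{a}\cdot e=\overline{ae}$ for the right $E$-action on $\overline{\Lambda}$ together with $\overline{1_\Lambda}=0$; here one should remember that the $E$-action on $\Omega_{\rm nc}(X)$ is the restriction of the twisted action $\blacktriangleright$ of \eqref{equ:nontri-rule}, which on $E$ is the naive one precisely because $\overline{e}=0$ for $e\in E$.

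The computation at the heart of the proof is to verify that $\delta(h)={\rm pr}-\theta_X\odot (m_X,0)$, for then $\theta_X\odot (m_X,0)-{\rm pr}=\delta(-h)$ is a coboundary and the lemma follows. I would split $\delta=\delta_{\rm in}+\delta_{\rm ex}$. For $\delta_{\rm in}(h)$: on the summand $\Lambda\otimes_E X$ the terms $d_{\Omega_{\rm nc}(X)}(h(-))$ and $h(d_{C(X)}(-))$ cancel, since $d_{\Omega_{\rm nc}(X)}(s\overline{a}\otimes_E x)=-s\overline{a}\otimes_E d_X(x)$ while $d_{C(X)}(a\otimes_E x,0)=(a\otimes_E d_X(x),0)$; on the summand $\Omega_{\rm nc}(X)$ only $h(d_{C(X)}(0,s\overline{a}\otimes_E x))$ survives, and since $d_{C(X)}(0,s\overline{a}\otimes_E x)=(a\otimes_E x-1_\Lambda\otimes_E ax,\ -s\overline{a}\otimes_E d_X(x))$ this equals $s\overline{a}\otimes_E x$ using $\overline{1_\Lambda}=0$; hence $\delta_{\rm in}(h)={\rm pr}$. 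For $\delta_{\rm ex}(h)$: on the summand $\Omega_{\rm nc}(X)$ everything vanishes (both $h$ and $(m_X,0)$ kill it, and the $\Lambda$-action preserves it), while on the summand $\Lambda\otimes_E X$ the cross term $a_1\blacktriangleright h(a\otimes_E x,0)-h(a_1\cdot(a\otimes_E x,0))$ collapses, via \eqref{equ:nontri-rule}, to $-s\overline{a}_1\otimes_E ax$, which is exactly $-(\theta_X\odot (m_X,0))$ evaluated there; hence $\delta_{\rm ex}(h)=-\theta_X\odot (m_X,0)$. Adding the two pieces gives $\delta(h)={\rm pr}-\theta_X\odot (m_X,0)$, as wanted.

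I expect the only real obstacle to be the bookkeeping of signs: one must simultaneously track the Koszul signs in $\delta_{\rm in}$ and $\delta_{\rm ex}$, the sign in the composition rule \eqref{equation:yonedaproduct}, the sign in the differential of $\Omega_{\rm nc}(X)$, and the shift identification $\Sigma(\overline{\Lambda}\otimes_E X)=\Omega_{\rm nc}(X)$ that enters the definition of the mapping cone $C(X)$ and its differential. A secondary point is to make sure $h$ genuinely respects the $E$-module structures on $C(X)$ and $\Omega_{\rm nc}(X)$, so that it really is an element of $\mathcal{Y}_0(C(X),\Omega_{\rm nc}(X))$; beyond that there is no conceptual difficulty.
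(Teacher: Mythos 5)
Your proposal is correct and follows essentially the same route as the paper: the homotopy $h$ you define is exactly the map $p$ used in the paper's proof, and the computation $\delta_{\rm in}(h)={\rm pr}$, $\delta_{\rm ex}(h)=-\theta_X\odot(m_X,0)$ is identical. The extra checks you flag (closedness of both sides, $E$-linearity of $h$, the sign bookkeeping) all go through as you expect.
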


\begin{proof}
Define a  map $h \colon \mathrm{Cone}(i_X)\rightarrow \Omega_{\rm nc}(X)$ of degree $-1$ by $h(a\otimes_E x, 0)=s\overline{a}\otimes_E x$ and $h(0, s\overline{b}\otimes_E y)=0$. We view $h$ as an element in $\mathcal{Y}_0(\mathrm{Cone}(i_X), \Omega_{\rm nc}(X))$.

We observe that the differential $\delta_{\rm in}(h)\colon \mathrm{Cone}(i_X)\rightarrow \Omega_{\rm nc}(X)$ is determined by
$$\delta_{\rm in}(h)(a\otimes_E x, 0)=0 \quad \mbox{ and } \quad \delta_{\rm in}(h)(0, s\overline{b}\otimes_E y)=s\overline{b}\otimes_E y.$$
The differential $\delta_{\rm ex}(h)$ lies in
$$\mathcal{Y}_1(\mathrm{Cone}(i_X), \Omega_{\rm nc}(X)) = \mathrm{Hom}_E( s\overline{\Lambda}\otimes_E \mathrm{Cone}(i_X),  \Omega_{\rm nc}(X)),$$
which is determined by
$$\delta_{\rm ex}(h)(s\overline{a}_1\otimes_E (a\otimes_E x, 0))=-s\overline{a}_1\otimes_E ax\quad \mbox{ and }\quad \delta_{\rm ex}(h)(s\overline{a}_1\otimes_E(0, s\overline{b}\otimes_E y))=0.$$
Note that $ \delta_{\rm in}(h)= \mathrm{pr}$ and $\delta_{\rm ex}(h) = -\theta_X\odot (m_X,0)$. We conclude that
$$\delta(h)=\delta_{\rm in}(h)+\delta_{\rm ex}(h)={\rm pr}-\theta_X\odot (m_X,0).$$
This proves the required equality in $H^0(\mathcal{Y})$.
\end{proof}

Recall that the cone of $\theta_X$ is determined by the following exact triangle in $H^0(\mathcal{Y})$.
$$X\stackrel{\theta_X}\longrightarrow \Omega_{\rm nc}(X)\longrightarrow {\rm Cone}(\theta_X)\longrightarrow \Sigma(X)$$

\begin{prop}\label{prop:cone}
There is an isomorphism ${\rm Cone}(\theta_X)\simeq \Sigma(\Lambda\otimes_E X)$ in $H^0(\mathcal{Y}_{\Lambda/E})$.
\end{prop}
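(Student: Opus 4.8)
The plan is to replace the filtration-degree-one morphism $\theta_X$ by the honest cochain map ${\rm pr}$ by means of Lemma~\ref{lem:theta}, and then to identify ${\rm Cone}({\rm pr})$ through a degreewise split short exact sequence of complexes of $\Lambda$-modules.

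First I would record that $(m_X,0)\colon C(X)\to X$ is an isomorphism in $H^0(\mathcal{Y})$: it is an $E$-relative quasi-isomorphism, hence becomes invertible in $\mathbf{D}(\Lambda/E)\simeq H^0(\mathcal{Y})$ by Proposition~\ref{prop:Theta}. By Lemma~\ref{lem:theta}, the square
\[
\xymatrix{
C(X) \ar[r]^-{{\rm pr}} \ar[d]_-{(m_X,0)} & \Omega_{\rm nc}(X) \ar@{=}[d]\\
X \ar[r]^-{\theta_X} & \Omega_{\rm nc}(X)
}
\]
commutes in $H^0(\mathcal{Y})$. Since $H^0(\mathcal{Y})$ is triangulated (Proposition~\ref{prop:Theta}) and the left-hand vertical map is an isomorphism, completing this square to a morphism of exact triangles yields an isomorphism ${\rm Cone}({\rm pr})\stackrel{\sim}\longrightarrow{\rm Cone}(\theta_X)$ in $H^0(\mathcal{Y})$.

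It remains to compute ${\rm Cone}({\rm pr})$. The map ${\rm pr}\colon C(X)=(\Lambda\otimes_E X)\oplus\Omega_{\rm nc}(X)\to\Omega_{\rm nc}(X)$ is a surjective cochain map of complexes of $\Lambda$-modules (a short check against the formula for $d_{C(X)}$ and the differential of $\Omega_{\rm nc}(X)$), whose kernel is exactly the subcomplex $\Lambda\otimes_E X$, the first summand, which is closed under $d_{C(X)}$. Hence there is a short exact sequence of complexes of $\Lambda$-modules
\[
0\longrightarrow \Lambda\otimes_E X\longrightarrow C(X)\stackrel{{\rm pr}}\longrightarrow \Omega_{\rm nc}(X)\longrightarrow 0,
\]
and it splits in each degree as $\Lambda$-modules, since $C(X)$ is a degreewise direct sum. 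Such a degreewise split short exact sequence gives an exact triangle $\Lambda\otimes_E X\to C(X)\stackrel{{\rm pr}}\to\Omega_{\rm nc}(X)\to\Sigma(\Lambda\otimes_E X)$ already in $\mathbf{K}(\Lambda\mbox{-Mod})$, hence in $H^0(\mathcal{Y})$ via the triangle functor $H^0(\Theta)$, which is the identity on objects and on filtration-degree-zero morphisms. Rotating this triangle shows ${\rm Cone}({\rm pr})\simeq\Sigma(\Lambda\otimes_E X)$ in $H^0(\mathcal{Y})$; combined with the isomorphism of the previous paragraph, this gives ${\rm Cone}(\theta_X)\simeq\Sigma(\Lambda\otimes_E X)$, as claimed.

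The argument is essentially formal once Lemma~\ref{lem:theta} is in hand; the only points to check carefully are that ${\rm pr}$ is a cochain map (a sign verification) and the standard fact that the connecting morphism of a degreewise split short exact sequence realizes the mapping cone. Neither is a genuine obstacle, so the substantive content of the proposition is really concentrated in Lemma~\ref{lem:theta}, which trades the higher morphism $\theta_X$ for the honest chain map ${\rm pr}$.
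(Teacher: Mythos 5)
Your argument is correct and is essentially the proof in the paper: both hinge on Lemma~\ref{lem:theta} together with the fact that $(m_X,0)\colon C(X)\to X$ is an $E$-relative quasi-isomorphism and hence invertible in $H^0(\mathcal{Y}_{\Lambda/E})\simeq\mathbf{D}(\Lambda/E)$. The only (cosmetic) difference is that the paper identifies $\theta_X$ with the connecting morphism of the triangle induced by $\xi_X$ in $\mathbf{D}(\Lambda/E)$ and then rotates, whereas you first replace ${\rm Cone}(\theta_X)$ by ${\rm Cone}({\rm pr})$ and compute that cone directly from the degreewise split sequence $0\to\Lambda\otimes_E X\to C(X)\to\Omega_{\rm nc}(X)\to 0$ in $\mathbf{K}(\Lambda\mbox{-Mod})$ --- the same computation carried out in a different order.
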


\begin{proof}
The short exact sequence $\xi_X$ induces an exact triangle in $\mathbf{D}(\Lambda/E)$
\begin{align}\label{equ:ext-c}
N\stackrel{i_X}\longrightarrow \Lambda\otimes_E X\stackrel{m_X}\longrightarrow X \stackrel{c}\longrightarrow \Omega_{\rm nc}(X),
\end{align}
where  the connecting morphism $c$ is given by the following roof
$$X  \xleftarrow{(m_X, 0)} \mathrm{Cone}(i_X) \stackrel{\rm pr}\longrightarrow \Omega_{\rm nc}(X).$$ Thus, we have  $c = \mathrm{pr} \circ (m_X, 0)^{-1}$ in $\mathbf{D}(\Lambda/E)$.

Recall from Proposition~\ref{prop:Theta} the triangle isomorphism $H^0(\overline{\Theta})\colon \mathbf{D}(\Lambda/E)\simeq H^0(\mathcal{Y})$, which acts on objects by the identity. As $H^0(\overline{\Theta})$ sends cochain maps identically to the corresponding morphisms in $\mathcal{Y}$ of filtration-degree zero, we obtain
$$H^0(\overline{\Theta})(c)= \mathrm{pr} \circ (m_X, 0)^{-1} = \theta_X,$$
where the right equality follows from Lemma~\ref{lem:theta}.
This yields  $$H^0(\overline{\Theta})({\rm Cone}(c)) = {\rm Cone}(\theta_X).$$ By (\ref{equ:ext-c}), ${\rm Cone}(c)$ is isomorphic to $\Sigma(\Lambda\otimes_E X)$ in $\mathbf{D}(\Lambda/E)$. Then the required statement follows since $H^0(\overline{\Theta})(\Sigma(\Lambda\otimes_E X)) =\Sigma(\Lambda\otimes_E X).$
\end{proof}

\section{The singular Yoneda dg category}\label{section9}
In this section, we introduce the singular Yoneda dg category, which provides dg enhancements for various singularity categories. We fix an algebra extension $E\rightarrow \Lambda $ as before. We prove that the endomorphism algebra of $E$ in the singular Yoneda dg category of $\Lambda$ is isomorphic to a dg Leavitt algebra; see Theorem~\ref{thm:SY-Leavitt}.

As we have seen, the triple $(\mathcal{Y}_{\Lambda/E}, \Omega_{\rm nc}, \theta)$ obtained in Section~\ref{section8} satisfies the assumptions made in Section~\ref{section6}. We thus form the dg localization along $\theta$
$$\iota\colon \mathcal{Y}_{\Lambda/E}\longrightarrow \mathcal{SY}_{\Lambda/E}.$$
The obtained dg category $\mathcal{SY}_{\Lambda/E}$ is called the \emph{$E$-relative singular Yoneda dg category} of $\Lambda$.

Let us describe $\mathcal{SY}=\mathcal{SY}_{\Lambda/E}$ more explicitly. Its objects are just complexes of $\Lambda$-modules. For two objects $X$ and $Y$, the Hom complex is defined to be the colimit of the following sequence of cochain complexes.
$$\mathcal{Y}(X, Y)\longrightarrow \mathcal{Y}(X, \Omega_{\rm nc}(Y))\longrightarrow \cdots \longrightarrow \mathcal{Y}(X, \Omega_{\rm nc}^p(Y))\longrightarrow \mathcal{Y}(X, \Omega_{\rm nc}^{p+1}(Y))\longrightarrow \cdots$$
The structure map sends $f$ to $\theta_{\Omega_{\rm nc}^p(Y)}\odot f$; see \eqref{equation:yonedaproduct}. More precisely, for any $f \in  \mathcal{Y}_n(X, \Omega_{\rm nc}^p(Y))$, the map $ \theta_{\Omega_{\rm nc}^p(Y)}\odot f \in  \mathcal{Y}_{n+1}(X, \Omega_{\rm nc}^{p+1}(Y))$ is given by
\begin{align}\label{align:thetaidentification}
s \overline{a}_{1, n+1} \otimes_E x \longmapsto (-1)^{|f|} s\overline{a}_1 \otimes_E f(s\overline{a}_{2, n+1}\otimes_E x).
\end{align}

The image of $f\in \mathcal{Y}(X, \Omega_{\rm nc}^p(Y))$ in $\mathcal{SY}(X, Y)$ is denoted by $[f;p]$. The composition of $[f;p]$ with $[g;q]\in \mathcal{SY}(Y, Z)$ is defined by
$$[g;q]\odot_{\rm sg} [f;p]=[\Omega_{\rm nc}^p(g)\odot f;p+q].$$
By Proposition~\ref{prop:Theta},  the Yoneda dg category $\mathcal{Y}_{\Lambda/E}$ is pretriangulated. We infer from Lemma~\ref{lem:SC-pre} that $\mathcal{SY}_{\Lambda/E}$ is also pretriangulated.

\subsection{The dg singularity categories}

Recall that $\mathbf{D}(\Lambda/E)$ denotes the $E$-relative derived category of $\Lambda$. We define the \emph{$E$-relative virtual singularity category of $\Lambda$} to be the following quotient triangulated category
$$\mathcal{V}(\Lambda/E)=\mathbf{D}(\Lambda/E)/{{\rm thick}\langle \Lambda\otimes_E V\; |\; V \mbox{ complex of } E\mbox{-modules} \rangle}.$$
Denote by $\mathcal{N}$ the full  dg subcategory of $\mathbf{D}_{\rm dg}(\Lambda/E)$ formed by those objects
in ${\rm thick}\langle \Lambda\otimes_E V\; |\; V \mbox{ complex of }E\mbox{-modules} \rangle$. The following dg quotient category
$$\mathcal{V}_{\rm dg}(\Lambda/E)=\mathbf{D}_{\rm dg}(\Lambda/E)/\mathcal{N}$$
might be called the \emph{$E$-relative virtual  dg singularity category of $\Lambda$}. By Lemma~\ref{lem:quo}, we identify the homotopy category $H^0(\mathcal{V}_{\rm dg}(\Lambda/E))$ with $\mathcal{V}(\Lambda/E)$.

Recall from Proposition~\ref{prop:Theta} the isomorphism $\overline \Theta \colon \mathbf{D}_{\rm dg}(\Lambda/E)\simeq \mathcal{Y}_{\Lambda/E}$.

\begin{prop}\label{prop:V-SY}
Keep the notation as above. Then the composite dg functor $\mathbf{D}_{\rm dg}(\Lambda/E) \stackrel{\overline \Theta} \rightarrow \mathcal{Y}_{\Lambda/E}\stackrel{\iota}\rightarrow \mathcal{SY}_{\Lambda/E}$ induces an isomorphism in $\mathbf{Hodgcat}$
$$\mathcal{V}_{\rm dg}(\Lambda/E)\simeq \mathcal{SY}_{\Lambda/E}.$$
Consequently, we have an isomorphism of triangulated categories
$$\mathcal{V}(\Lambda/E)\simeq H^0(\mathcal{SY}_{\Lambda/E}).$$
\end{prop}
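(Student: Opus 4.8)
The plan is to deduce Proposition~\ref{prop:V-SY} by combining the general localization result (Theorem~\ref{thm:dgl}) with the explicit identification of the cones of $\theta_X$ established in Proposition~\ref{prop:cone}. Concretely, recall that the singular Yoneda dg category $\mathcal{SY}_{\Lambda/E}$ is by definition the dg localization $\iota\colon\mathcal{Y}_{\Lambda/E}\to\mathcal{SY}_{\Lambda/E}$ along $\theta$, and by Proposition~\ref{prop:Theta} the dg category $\mathcal{Y}_{\Lambda/E}$ is pretriangulated. Therefore Theorem~\ref{thm:dgl} applies and gives an isomorphism in $\mathbf{Hodgcat}$
\[
\mathcal{Y}_{\Lambda/E}/\mathcal{N}'\stackrel{\sim}\longrightarrow\mathcal{SY}_{\Lambda/E},
\]
where $\mathcal{N}'$ is the full dg subcategory of $\mathcal{Y}_{\Lambda/E}$ on the objects of $\mathrm{thick}\langle{\rm Cone}(\theta_X)\mid X\in{\rm Obj}(\mathcal{Y})\rangle$ in $H^0(\mathcal{Y}_{\Lambda/E})$.

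The first key step is to identify this thick subcategory. By Proposition~\ref{prop:cone}, we have ${\rm Cone}(\theta_X)\simeq\Sigma(\Lambda\otimes_E X)$ in $H^0(\mathcal{Y}_{\Lambda/E})$ for every object $X$; since a thick subcategory is closed under shifts, we get
\[
\mathrm{thick}\langle{\rm Cone}(\theta_X)\mid X\rangle=\mathrm{thick}\langle\Lambda\otimes_E X\mid X\in{\rm Obj}(\mathcal{Y})\rangle
\]
as subcategories of $H^0(\mathcal{Y}_{\Lambda/E})\simeq\mathbf{D}(\Lambda/E)$. It then remains to check that the right-hand side equals $\mathrm{thick}\langle\Lambda\otimes_E V\mid V\text{ a complex of }E\text{-modules}\rangle$, which is the subcategory defining $\mathcal{V}(\Lambda/E)$. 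One inclusion is trivial. For the other, every object $X$ of $\mathcal{Y}_{\Lambda/E}$ is itself a complex of $\Lambda$-modules, hence a complex of $E$-modules via restriction, so $\Lambda\otimes_E X$ is already of the generating form; alternatively one argues that $\Lambda\otimes_E X$ is, up to the usual bicomplex/totalization argument, built from the $\Lambda\otimes_E X^n$ by finitely many cones and shifts in the bounded case and by a homotopy colimit in general, and each $X^n$ is an $E$-module. Thus the two thick subcategories coincide, and $\mathcal{N}'$ is precisely (the dg subcategory corresponding to) $\mathcal{N}$ transported across $\overline\Theta$.

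The second step is purely formal bookkeeping: transport the identification along the isomorphism $\overline\Theta\colon\mathbf{D}_{\rm dg}(\Lambda/E)\stackrel{\sim}\to\mathcal{Y}_{\Lambda/E}$ of Proposition~\ref{prop:Theta}. Since $\overline\Theta$ acts on objects by the identity and carries $\mathcal{N}$ to $\mathcal{N}'$, it induces an isomorphism of dg quotients $\mathbf{D}_{\rm dg}(\Lambda/E)/\mathcal{N}\stackrel{\sim}\to\mathcal{Y}_{\Lambda/E}/\mathcal{N}'$ in $\mathbf{Hodgcat}$ (functoriality of the dg quotient construction with respect to quasi-equivalences sending one subcategory into another, cf.\ Lemma~\ref{lem:quo} and the universal property Lemma~\ref{lem:q-universal}). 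Composing with the isomorphism $\mathcal{Y}_{\Lambda/E}/\mathcal{N}'\stackrel{\sim}\to\mathcal{SY}_{\Lambda/E}$ from Theorem~\ref{thm:dgl} gives the desired isomorphism $\mathcal{V}_{\rm dg}(\Lambda/E)=\mathbf{D}_{\rm dg}(\Lambda/E)/\mathcal{N}\stackrel{\sim}\to\mathcal{SY}_{\Lambda/E}$, and it is visibly induced by $\iota\circ\overline\Theta$. Applying $H^0$ and using Lemma~\ref{lem:quo} (which identifies $H^0$ of a dg quotient with the Verdier quotient) yields the triangulated statement $\mathcal{V}(\Lambda/E)\simeq H^0(\mathcal{SY}_{\Lambda/E})$.

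The main obstacle I expect is the identification of the two thick subcategories in the first step — more precisely, ensuring that $\mathrm{thick}\langle\Lambda\otimes_E X\mid X\text{ a complex of }\Lambda\text{-modules}\rangle$ does not exceed $\mathrm{thick}\langle\Lambda\otimes_E V\mid V\text{ a complex of }E\text{-modules}\rangle$ in the \emph{unbounded} setting, where one cannot simply truncate $X$ into finitely many stalk pieces. Here one must invoke that $\mathbf{D}(\Lambda/E)$ is compactly generated and that both sides are localizing (or, if one only wants thick subcategories, restrict to the bounded version as in Remark~\ref{rem:bounded-ss-Y} and Corollary~\ref{cor:finite}, where the truncation argument is elementary). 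Everything else is a routine application of the already-established Theorem~\ref{thm:dgl}, Proposition~\ref{prop:Theta}, and Proposition~\ref{prop:cone}.
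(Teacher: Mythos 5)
Your overall architecture is exactly the paper's: apply Theorem~\ref{thm:dgl} to $(\mathcal{Y}_{\Lambda/E},\Omega_{\rm nc},\theta)$, use Proposition~\ref{prop:cone} to identify the cones, match the two thick subcategories, and transport along $\overline\Theta$. The transport step and the passage to $H^0$ via Lemma~\ref{lem:quo} are fine. But there is a genuine gap in the identification of the thick subcategories: you only ever prove one of the two inclusions, and it is the easy one. Writing $\mathcal{T}_1=\mathrm{thick}\langle \Lambda\otimes_E V\mid V \mbox{ complex of } E\mbox{-modules}\rangle$ and $\mathcal{T}_2=\mathrm{thick}\langle \mathrm{Cone}(\theta_X)\mid X\rangle=\mathrm{thick}\langle \Lambda\otimes_E X\mid X \mbox{ complex of } \Lambda\mbox{-modules}\rangle$, your argument (``restrict $X$ to $E$, then $\Lambda\otimes_E X$ is already of the generating form'') establishes $\mathcal{T}_2\subseteq\mathcal{T}_1$ --- and this direction really is immediate, with no need for the totalization or compact-generation worries in your last paragraph, since $\Lambda\otimes_E X$ is literally equal to $\Lambda\otimes_E(X|_E)$ as a complex of $\Lambda$-modules. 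The inclusion you dismiss as ``trivial'', namely $\mathcal{T}_1\subseteq\mathcal{T}_2$, is the one that needs an idea: an arbitrary complex $V$ of $E$-modules carries no $\Lambda$-action, so $\Lambda\otimes_E V$ is \emph{not} of the form $\Lambda\otimes_E X$ with $X$ a complex of $\Lambda$-modules, and it is not clear a priori that it lies in the thick subcategory generated by such objects. Without this inclusion you only get a quasi-functor $\mathcal{SY}_{\Lambda/E}\to\mathcal{V}_{\rm dg}(\Lambda/E)$, not the claimed isomorphism.

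The paper closes this gap with a retraction trick: set $Y=\Lambda\otimes_E V$, which \emph{is} a complex of $\Lambda$-modules, and observe that the multiplication map $\Lambda\otimes_E Y\to Y$ admits the $\Lambda$-linear section $a\otimes_E v\mapsto a\otimes_E 1_\Lambda\otimes_E v$. Hence $Y$ is a direct summand of $\Lambda\otimes_E Y$ in $H^0(\mathcal{Y}_{\Lambda/E})$, and $\Lambda\otimes_E Y\simeq\Sigma^{-1}\mathrm{Cone}(\theta_Y)$ lies in $\mathcal{T}_2$; since $\mathcal{T}_2$ is closed under direct summands, $Y\in\mathcal{T}_2$. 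You need to add this (or an equivalent) argument for the proof to be complete.
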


\begin{proof}

Consider the two thick hulls $\mathcal{T}_1={{\rm thick}\langle \Lambda\otimes_E V\; |\; V \mbox{ complex of } E\mbox{-modules} \rangle}$ and $\mathcal{T}_2={\rm thick}\langle {\rm Cone}(\theta_X)\; |\; X \mbox{ complex of } \Lambda\mbox{-modules}\rangle$ in $H^0(\mathcal Y_{\Lambda/E})$. Denote by $\mathcal N_1$ the full dg subcategory of $\mathcal Y_{\Lambda/E}$ formed by those objects in $\mathcal T_1$ and similarly, denote by $\mathcal N_2$ the one formed by those objects in $\mathcal T_2$. Clearly,  we have $\overline \Theta(\mathcal N) = \mathcal N_1$.

 By Proposition~\ref{prop:cone}, we identify ${\rm Cone}(\theta_X)$ with $\Sigma(\Lambda\otimes_E X)=\Lambda\otimes_E \Sigma(X)$. Then we have $\mathcal{T}_2 \subseteq \mathcal{T}_1$. On the other hand, any complex $Y$ of the form $\Lambda\otimes_E V$ is isomorphic to a direct summand of $\Lambda\otimes_EY $ since the natural surjection $\Lambda \otimes_E Y \to Y$ splits in $\mathcal{Y}_{\Lambda/E}$ with a section given by $a \otimes_E v \mapsto a \otimes_E 1 \otimes_E v$.  It follows that $\mathcal{T}_1 \subseteq \mathcal{T}_2$. So we have $\mathcal{T}_1=\mathcal{T}_2$ and thus $\mathcal N_1 = \mathcal N_2$. Now the required isomorphism in $\mathbf{Hodgcat}$ follows by combining the isomorphisms in Theorem~\ref{thm:dgl} and Proposition~\ref{prop:Theta}.
 \end{proof}

The following remark is analogous to Remark~\ref{rem:bounded-ss-Y}.

\begin{rem}\label{rem:bounded-ss-SY}
(1) Denote by $\mathcal{SY}^b_{\Lambda/E}$  the full dg subcategory of  $\mathcal{SY}_{\Lambda/E}$ consisting of bounded complexes. We view the bounded homotopy category $\mathbf{K}^b(\mathcal{P}_{\Lambda/E})$ of $E$-relatively projective $\Lambda$-modules as a triangulated subcategory of $\mathbf{D}^b(\Lambda/E)$. Inspired by \cite{Kel18}, we define the  \emph{$E$-relative completed singularity category} of $\Lambda$ by the following Verdier quotient
    $$\widehat{\mathbf{S}}(\Lambda/E):=\mathbf{D}^b(\Lambda/E)/{\mathbf{K}^b(\mathcal{P}_{\Lambda/E})}.$$
As its dg analogue, the \emph{$E$-relative completed dg singularity category} of $\Lambda$ is defined to be
$$\widehat{\mathbf{S}}_{\rm dg}(\Lambda/E):=\mathbf{D}^b_{\rm dg}(\Lambda/E)/{C^b_{\rm dg}(\mathcal{P}_{\Lambda/E})}.$$
We mention that the relative dg singularity category \cite[Definition~2.23]{BRTV} is quite different from the ones here.

We have a bounded version of Proposition~\ref{prop:V-SY}: there is an isomorphism in $\mathbf{Hodgcat}$
$$\widehat{\mathbf{S}}_{\rm dg}(\Lambda/E)\simeq \mathcal{SY}^b_{\Lambda/E},$$
which induces an isomorphism of triangulated categories
$$\widehat{\mathbf{S}}(\Lambda/E)\simeq H^0(\mathcal{SY}^b_{\Lambda/E}).$$

(2) Assume that $E$ is a semisimple ring. Then  $\mathcal{V}(\Lambda/E)$ coincides with the (absolute) virtual singularity category of $\Lambda$, which is defined as
$$\mathcal{V}(\Lambda):=\mathbf{D}(\Lambda\mbox{-Mod})/{{\rm thick}\langle \oplus_{i\in \mathbb{Z}} \Sigma^{-i}(P^i)\; |\; P^i\in \Lambda\mbox{-Proj} \rangle}.$$
Similarly,  $\widehat{\mathbf{S}}(\Lambda/E)$ coincides with  the completed singularity category \cite{Kel18} of $\Lambda$
    $$ \widehat{\mathbf{S}}(\Lambda):=\mathbf{D}^b(\Lambda\mbox{-Mod})/{\mathbf{K}^b(\Lambda\mbox{-Proj})}.$$
 We have the dg analogue $\mathcal{V}_{\rm dg}(\Lambda)$ of $\mathcal{V}(\Lambda)$, and the dg analogue $\widehat{\mathbf{S}}_{\rm dg}(\Lambda)$ of $\widehat{\mathbf{S}}(\Lambda)$. Then by the same reason, the above coincidences lift to the dg level, namely
 $$\mathcal{V}_{\rm dg}(\Lambda/E)=\mathcal{V}_{\rm dg}(\Lambda) \quad \mbox{ and } \quad \widehat{\mathbf{S}}_{\rm dg}(\Lambda/E)=\widehat{\mathbf{S}}_{\rm dg}(\Lambda).$$

    Consequently, by Proposition~\ref{prop:V-SY} we have isomorphisms in $\mathbf{Hodgcat}$
    $$\mathcal{V}_{\rm dg}(\Lambda)\simeq \mathcal{SY}_{\Lambda/E} \quad \mbox{ and }\quad \widehat{\mathbf{S}}_{\rm dg}(\Lambda)\simeq \mathcal{SY}^b_{\Lambda/E},$$
    which induce isomorphisms of triangulated categories:
    \begin{align}\label{equ:rem-tri}
    \mathcal{V}(\Lambda)\simeq H^0(\mathcal{SY}_{\Lambda/E}) \quad \mbox{ and } \quad \widehat{\mathbf{S}}(\Lambda)\simeq H^0(\mathcal{SY}^b_{\Lambda/E}).
    \end{align}
\end{rem}

 In the remaining of this subsection, we further assume that $E$ is a semisimple ring and that $\Lambda$ is a left noetherian ring. Denote by  $\mathcal{SY}^f_{\Lambda/E}$  the full dg subcategory of  $\mathcal{SY}^b_{\Lambda/E}$ consisting of bounded complexes of finitely generated $\Lambda$-modules.

  We view the bounded homotopy category $\mathbf{K}^b(\Lambda\mbox{-proj})$ of finitely generated projective $\Lambda$-modules as a triangulated subcategory of $\mathbf{D}^b(\Lambda\mbox{-mod})$. Following \cite{Buc, Orl}, the \emph{singularity category} of $\Lambda$ is defined as the following Verdier quotient
 $$\mathbf{D}_{\rm sg}(\Lambda)=\mathbf{D}^b(\Lambda\mbox{-mod})/{\mathbf{K}^b(\Lambda\mbox{-proj})}.$$
Its dg analogue is the \emph{dg singularity category} \cite{Kel18, BRTV, BrDy}, defined as
 $$\mathbf{S}_{\rm dg}(\Lambda)=\mathbf{D}^b_{\rm dg}(\Lambda\mbox{-mod})/{C_{\rm dg}^{b}(\Lambda\mbox{-proj})}.$$
Here, we identify $C_{\rm dg}^{b}(\Lambda\mbox{-proj})$ with the full dg subcategory of $\mathbf{D}^b_{\rm dg}(\Lambda\mbox{-mod})$ formed by bounded complexes of finitely generated projective $\Lambda$-modules.

We have the following finite version of Proposition~\ref{prop:V-SY}; compare Remark~\ref{rem:bounded-ss-SY}(2).

\begin{cor}\label{cor:finite-SY}
Assume that $E$ is a semisimple ring and that $\Lambda$ is a left noetherian ring. Then there is an isomorphism in $\mathbf{Hodgcat}$
 $$ \mathbf{S}_{\rm dg}(\Lambda)\simeq \mathcal{SY}^f_{\Lambda/E},$$
 which induces an isomorphism of triangulated categories
 $$\mathbf{D}_{\rm sg}(\Lambda)\simeq H^0(\mathcal{SY}^f_{\Lambda/E}).$$
\end{cor}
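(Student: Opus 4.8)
The plan is to deduce the finite version from Proposition~\ref{prop:V-SY} by passing to bounded complexes of finitely generated modules, exactly in the spirit of Remark~\ref{rem:bounded-ss-SY}(2) and Corollary~\ref{cor:finite}. First I would invoke Corollary~\ref{cor:finite}, which already supplies an isomorphism $\mathbf{D}^b_{\rm dg}(\Lambda\mbox{-mod})\simeq \mathcal{Y}^f_{\Lambda/E}$ in $\mathbf{Hodgcat}$, induced by the restriction of $\Theta$ to bounded complexes in $\Lambda\mbox{-mod}$. The key point is then that $\Omega_{\rm nc}$ and $\theta$ restrict to the full dg subcategory $\mathcal{Y}^f_{\Lambda/E}$: indeed $\Omega_{\rm nc}(X)=s\overline{\Lambda}\otimes_E X$ is again a bounded complex of finitely generated $\Lambda$-modules whenever $X$ is, using that $_E\Lambda$ (hence $\overline{\Lambda}$) is finitely generated over the semisimple ring $E$ and $\Lambda$ is noetherian. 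Therefore the triple $(\mathcal{Y}^f_{\Lambda/E},\Omega_{\rm nc},\theta)$ satisfies the hypotheses of Section~\ref{section6}, and one forms its dg localization; by construction of the colimit \eqref{align:colimit} this localization is precisely the full dg subcategory $\mathcal{SY}^f_{\Lambda/E}$ of $\mathcal{SY}_{\Lambda/E}$, and Theorem~\ref{thm:dgl} applies to give $\mathcal{SY}^f_{\Lambda/E}\simeq \mathcal{Y}^f_{\Lambda/E}/\mathcal{N}^f$, where $\mathcal{N}^f$ is the full dg subcategory on $\mathrm{thick}\langle \mathrm{Cone}(\theta_X)\mid X\in\mathcal{Y}^f_{\Lambda/E}\rangle$.

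Next I would identify the dg quotient $\mathcal{Y}^f_{\Lambda/E}/\mathcal{N}^f$ with $\mathbf{S}_{\rm dg}(\Lambda)$. Transporting along the isomorphism of Corollary~\ref{cor:finite}, it suffices to match the thick subcategory $\mathcal{N}^f$ with $C^b_{\rm dg}(\Lambda\mbox{-proj})$ inside $\mathbf{D}^b_{\rm dg}(\Lambda\mbox{-mod})$. By Proposition~\ref{prop:cone}, $\mathrm{Cone}(\theta_X)\simeq \Sigma(\Lambda\otimes_E X)$ in $H^0(\mathcal{Y})$, and since $E$ is semisimple, every finitely generated $E$-module is projective, so $\Lambda\otimes_E X$ is a bounded complex of finitely generated projective $\Lambda$-modules; this gives $\mathrm{thick}\langle\mathrm{Cone}(\theta_X)\rangle\subseteq \mathbf{K}^b(\Lambda\mbox{-proj})$. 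For the reverse inclusion, every finitely generated projective $\Lambda$-module $P$ is a direct summand of some $\Lambda\otimes_E V$ with $V={}_E P$ finitely generated, and as in the proof of Proposition~\ref{prop:V-SY} the natural surjection $\Lambda\otimes_E P\twoheadrightarrow P$ splits already in $\mathcal{Y}_{\Lambda/E}$ (section $a\otimes_E v\mapsto a\otimes_E 1\otimes_E v$), so $P$ lies in $\mathrm{thick}\langle\mathrm{Cone}(\theta_X)\rangle$; taking shifts and cones, $\mathbf{K}^b(\Lambda\mbox{-proj})\subseteq \mathrm{thick}\langle\mathrm{Cone}(\theta_X)\rangle$. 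Hence $\mathcal{N}^f$ corresponds exactly to $C^b_{\rm dg}(\Lambda\mbox{-proj})$, and Lemma~\ref{lem:quo} turns the dg-quotient isomorphism into the claimed isomorphism of triangulated categories $\mathbf{D}_{\rm sg}(\Lambda)\simeq H^0(\mathcal{SY}^f_{\Lambda/E})$ at the level of $H^0$.

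The one genuinely non-formal point — and the step I expect to require the most care — is verifying that the dg localization of $\mathcal{Y}^f_{\Lambda/E}$ is literally the full dg subcategory $\mathcal{SY}^f_{\Lambda/E}$ of $\mathcal{SY}_{\Lambda/E}$, rather than merely quasi-equivalent to it; this is where one uses that the colimit \eqref{align:colimit} is computed pointwise and is preserved under passing to the full subcategory, together with the fact that $\Omega_{\rm nc}$ preserves $\mathcal{Y}^f_{\Lambda/E}$ strictly. Once that compatibility is in place, everything else is a bounded/finitely-generated restriction of arguments already carried out in Theorem~\ref{thm:dgl}, Proposition~\ref{prop:cone} and Proposition~\ref{prop:V-SY}, and the only subtlety is the harmless replacement of ``complex of $E$-modules'' by ``finitely generated $\Lambda$-module'' made legitimate by the semisimplicity of $E$ and the noetherianity of $\Lambda$.
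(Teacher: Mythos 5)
Your overall strategy is sound and in fact more self-contained than what the paper does, but it has one genuine gap relative to the stated hypotheses. The step you yourself flag as the delicate one --- that $\Omega_{\rm nc}$ restricts to $\mathcal{Y}^f_{\Lambda/E}$, so that the dg localization of $\mathcal{Y}^f_{\Lambda/E}$ can be formed and identified with the full dg subcategory $\mathcal{SY}^f_{\Lambda/E}$ --- rests on your assertion that $_E\Lambda$, hence $_E\overline{\Lambda}$, is finitely generated over $E$. That is precisely condition (Fin1), which is imposed only in Subsection~7.3 and in Section~10, and it does \emph{not} follow from ``$E$ semisimple and $\Lambda$ left noetherian'': take $E=\mathbb{K}\to\Lambda=\mathbb{K}[x]$, where $\Omega_{\rm nc}(X)\simeq\Sigma^{-1}(\overline{\Lambda}\otimes_E X)$ is not a finitely generated $\Lambda$-module even for $X=\Lambda$. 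Without this finiteness, $\Omega_{\rm nc}$ does not preserve $\mathcal{Y}^f_{\Lambda/E}$, the triple $(\mathcal{Y}^f_{\Lambda/E},\Omega_{\rm nc},\theta)$ does not satisfy the hypotheses of Section~6, and both halves of your identification of $\mathcal{N}^f$ with $C^b_{\rm dg}(\Lambda\mbox{-proj})$ (which also use that $\Lambda\otimes_E X^n$ is \emph{finitely generated} projective) break down. A second, minor, slip: for a general finitely generated projective $P$ the section $a\otimes_E v\mapsto a\otimes_E 1\otimes_E v$ only makes sense when $P$ is itself of the form $\Lambda\otimes_E V$; for arbitrary projective $P$ the surjection $\Lambda\otimes_E P\twoheadrightarrow P$ splits simply because $P$ is projective, or one reduces to $P\mid\Lambda^n=\Lambda\otimes_E E^n$ as in the proof of Proposition~\ref{prop:V-SY}.

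Under (Fin1) --- which holds in every application the paper makes of this corollary --- your argument does go through and is a genuinely different route: you rerun the localization machinery of Theorem~\ref{thm:dgl} internally to $\mathcal{Y}^f_{\Lambda/E}$ and thereby \emph{reprove} that $\mathbf{D}_{\rm sg}(\Lambda)$ embeds into the larger singularity categories. The paper instead exploits that $\mathcal{SY}^f_{\Lambda/E}$ is by definition a full dg subcategory of $\mathcal{SY}_{\Lambda/E}$: the composite $\mathbf{D}^b_{\rm dg}(\Lambda\mbox{-mod})\simeq\mathcal{Y}^f_{\Lambda/E}\hookrightarrow\mathcal{Y}_{\Lambda/E}\to\mathcal{SY}_{\Lambda/E}$ kills $C^b_{\rm dg}(\Lambda\mbox{-proj})$ and so induces a dense dg quasi-functor $\mathbf{S}_{\rm dg}(\Lambda)\to\mathcal{SY}^f_{\Lambda/E}$; its quasi-full-faithfulness then follows from Proposition~\ref{prop:V-SY} together with the known full faithfulness of $\mathbf{D}_{\rm sg}(\Lambda)\hookrightarrow\mathcal{V}(\Lambda)$ recorded in the remark after the corollary, and one concludes with Lemma~\ref{lem:tri-quasi-equiv}. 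That route needs no finiteness of $_E\Lambda$ and is the one valid at the stated level of generality; yours buys a self-contained proof of the embedding $\mathbf{D}_{\rm sg}(\Lambda)\hookrightarrow\mathcal{V}(\Lambda)$ at the cost of the extra hypothesis. You should either add (Fin1) to your argument explicitly or switch to the full-subcategory argument for the general case.
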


\begin{rem}
(1) Recall that a fully faithful dg functor between dg categories induces a fully faithful functor between their homotopy categories. So the inclusions $\mathcal{SY}^f_{\Lambda/E}\subseteq \mathcal{SY}^b_{\Lambda/E}\subseteq \mathcal{SY}_{\Lambda/E}$ of dg categories induce the following fully faithful triangle functors
$$\mathbf{D}_{\rm sg}(\Lambda)\hookrightarrow \widehat{\mathbf{S}}(\Lambda)\hookrightarrow \mathcal{V}(\Lambda).$$
Here, the fully faithfulness of $\widehat{\mathbf{S}}(\Lambda)\hookrightarrow \mathcal{V}(\Lambda)$ uses (\ref{equ:rem-tri}). We mention that the fully-faithfulness of the functor on the left is known; see \cite[Remark~3.6]{Chen11-MathNach} and compare \cite[Proposition~1.13]{Orl}.

(2) The Hom complexes in $\mathcal{SY}^f_{\Lambda/E}$ have similar flavor with the singular Hochschild  cochain complex \cite{Wan1}. In view of  \cite[Conjecture~1.3]{Kel18}, we expect that the $B_\infty$-structure of the latter complex might be related to the one of the Hochschild cochain complex of $\mathcal{SY}^f_{\Lambda/E}$.
\end{rem}

\subsection{The dg Leavitt algebra as an endomorphism algebra}

We now impose the conditions (Fin1) and (Fin2) in Subsection~\ref{subsec:YE} on the algebra extension $E \to \Lambda$. Associated to $(\overline{\Lambda}, \mu)$ in (\ref{equ:mu}), we have the dg Leavitt  algebra $(L_E(\overline{\Lambda}), \partial)$; see Definition~\ref{defn:dgleavittalgebra}. We view $E$ as a $\Lambda$-module via the algebra homomorphism $\pi$ in (Fin2). Identifying modules with stalk complexes concentrated in degree zero, we view $E$ as an object in $\mathcal{SY}_{\Lambda/E}$.

The isomorphism in the following theorem might be viewed as the core of this work, which establishes a link between the singular Yoneda dg category and the dg Leavitt algebra.

\begin{thm}\label{thm:SY-Leavitt}
Assume that (Fin1) and (Fin2) hold. Then there is an isomorphism  $\mathcal{SY}_{\Lambda/E}(E, E)\simeq (L_E(\overline{\Lambda}), \partial)^{\rm op}$ of dg algebras.
\end{thm}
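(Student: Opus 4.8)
The plan is to combine the explicit colimit description of $\mathcal{SY}_{\Lambda/E}(E,E)$ with the isomorphism $\mathcal{Y}_{\Lambda/E}(E,E) \simeq (T_E(\overline{\Lambda}^*),\partial)^{\mathrm{op}}$ from Proposition~\ref{prop:iso-Y-tensor}, and then to match the colimit over the endofunctor $\Omega_{\mathrm{nc}}$ on the Yoneda side with the colimit defining the Leavitt algebra in Theorem~\ref{thm:Leavitt-col}. Concretely, write $M = \overline{\Lambda}$. By definition, $\mathcal{SY}(E,E)$ is the colimit of the sequence
\[
\mathcal{Y}(E,E) \longrightarrow \mathcal{Y}(E,\Omega_{\mathrm{nc}}(E)) \longrightarrow \cdots \longrightarrow \mathcal{Y}(E,\Omega_{\mathrm{nc}}^p(E)) \longrightarrow \cdots,
\]
with structure maps given by $f \mapsto \theta_{\Omega_{\mathrm{nc}}^p(E)} \odot f$. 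The first step is to identify each term: since $\Omega_{\mathrm{nc}}^p(E) = (s\overline{\Lambda})^{\otimes_E p}\otimes_E E$ as a graded object, one computes $\mathcal{Y}(E,\Omega_{\mathrm{nc}}^p(E)) \simeq \prod_{n\geq 0}\mathrm{Hom}_E((s\overline{\Lambda})^{\otimes_E n}, (s\overline{\Lambda})^{\otimes_E p})$, which under the dualization isomorphisms of Section~\ref{section7} becomes $T_E(s^{-1}M^*)^{\mathrm{op}} \otimes_E M^{\otimes_E p}$ (up to sign bookkeeping on $s^{\pm 1}$). Thus the whole colimit sequence is, term by term, isomorphic to the sequence $T_E(M^*)\otimes_E M^{\otimes_E p} \to T_E(M^*)\otimes_E M^{\otimes_E(p+1)}$ of \eqref{equ:nat-mor-c}, whose colimit is exactly $L_E(M)$ by Theorem~\ref{thm:Leavitt-col}.

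The second and more delicate step is to check that the structure maps agree. On the Yoneda side, the structure map is $f \mapsto \theta_{\Omega_{\mathrm{nc}}^p(E)}\odot f$, which by \eqref{align:thetaidentification} prepends a tensor factor $s\overline{a}_1$. Under the identification above, I expect this to correspond precisely to the map $f_{1,q}\otimes_E x_{1,p} \mapsto f_{1,q}\otimes_E c \otimes_E x_{1,p}$ from \eqref{equ:nat-mor-c}, where $c = \sum_i \alpha_i^*\otimes_E \alpha_i$ is the Casimir element; the appearance of $c$ comes from resolving the identity map of $\Omega_{\mathrm{nc}}(-)$ through a dual basis, exactly as in the formula $s\overline{a} = \sum_i \alpha_i^*(s\overline a)\,\alpha_i$. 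Granting this, the colimit of Hom-complexes is isomorphic, as a graded $\mathbb{K}$-module, to $L_E(M)^{\mathrm{op}}$ via $\phi$ and Theorem~\ref{thm:Leavitt-col}.

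The third step is to verify that this graded isomorphism is multiplicative and commutes with differentials. For multiplicativity, one unwinds the composition $[g;q]\odot_{\mathrm{sg}}[f;p] = [\Omega_{\mathrm{nc}}^p(g)\odot f;\, p+q]$ and compares with the Cohn-algebra product rule \eqref{equ:mult}: the "contraction" pattern $g_p(x_1 g_{p-1}(\dots))$ in $Z$ should emerge from iterating the Yoneda composition formula \eqref{equation:yonedaproduct} together with the evaluation $x\bullet g = g(x)$, after passing to the quotient by $(1_E - c)$. For compatibility with differentials, recall that on $\mathcal{SY}$ the differential is $d([f;p]) = [\delta(f);p]$, and that on $\mathcal{Y}(E,\Omega_{\mathrm{nc}}^p(E))$ the internal part $\delta_{\mathrm{in}}$ vanishes while $\delta_{\mathrm{ex}}$ is induced from $\mu$; by Proposition~\ref{prop:iso-Y-tensor} this matches $\partial_+$ on $M^*$ up to sign, and by Remark~\ref{rem:partial} the differential on $L_E(M)$ is already completely determined by $\partial_+$ via the graded Leibniz rule and the relation $1_E = c$. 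So it suffices to check the match on the generating bimodule $M^*$, i.e.\ on filtration-degree-$1$ elements, which reduces to the commutativity of a single square analogous to \eqref{diag:partial+}. The main obstacle I anticipate is the sign and $s^{\pm 1}$-shift bookkeeping in threading the Casimir element through the colimit structure maps and confirming that the induced map on the colimit is exactly the quotient by the ideal $(1_E - c)$ rather than by some twisted variant; once the structure maps are correctly identified, the rest is a matter of assembling Theorems~\ref{thm:Leavitt-col} and Propositions~\ref{prop:isoC}, \ref{prop:iso-Y-tensor}, and the colimit-commutes-with-Hom-into-a-sequence observation.
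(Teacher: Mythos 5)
Your proposal is correct and follows essentially the same route as the paper's proof: identifying $\mathcal{Y}(E,\Omega_{\rm nc}^p(E))$ with $T_E(s^{-1}M^*)\otimes_E (sM)^{\otimes_E p}$, matching the structure map $\theta_{\Omega_{\rm nc}^p(E)}\odot-$ with the Casimir insertion of \eqref{equ:nat-mor-c} via the dual-basis identity \eqref{equ:dualbasis}, invoking Theorem~\ref{thm:Leavitt-col} for the colimit, and then checking multiplicativity and compatibility of differentials on generators. Your shortcut of verifying the differential only on $M^*$ via Remark~\ref{rem:partial} is exactly the alternative the paper itself points out, and the sign and shift bookkeeping you flag as the remaining obstacle is indeed where the bulk of the paper's computation lies.
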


\begin{proof}
For convenience, we write $M=\overline{\Lambda}$ and omit the subscript $\Lambda/E$ in $\mathcal{Y}_{\Lambda/E}$ and $\mathcal{SY}_{\Lambda/E}$. Observe that $\Omega_{\rm nc}^p(E)=(sM)^{\otimes_E p} \otimes_E E = (sM)^{\otimes_E p} $ and that
$$\mathcal{Y}(E, \Omega_{\rm nc}^{p}(E))=\prod_{n\geq 0}{\rm Hom}_E((sM)^{\otimes_E n}, (sM)^{\otimes_E p})=\bigoplus_{n\geq 0}{\rm Hom}_E((sM)^{\otimes_E n},(sM)^{\otimes_E p}),$$
whose differential is given by $\delta_{\rm ex}$; see Subsection \ref{subsection7.1}. Here, we note that $\delta_{\rm in}$ vanishes.
\newline

\emph{Step 1.} We have a canonical isomorphism
\[
\begin{array}{ccccl}
\psi_p\colon & \mathcal{Y}(E, E)\otimes_E (sM)^{\otimes_E p} &\longrightarrow &  \mathcal{Y}(E, \Omega_{\rm nc}^p(E)) &\\
& g\otimes_E s\overline{a}_{1,p} & \longmapsto &  (s\overline{x}_{1, n}\mapsto (-1)^ {pn} g(s\overline{x}_{1, n}) s\overline{a}_{1,p})&
\end{array}
\]
for any  $g\in \mathcal{Y}_n(E, E)$ and $s\overline{a}_{1, p}\in (sM)^{\otimes_E p}$. Recall from the proof of Proposition~\ref{prop:iso-Y-tensor} the isomorphism $\phi\colon T_E(s^{-1}M^*)\xrightarrow{\simeq} \mathcal{Y}(E, E)$. Then for each $p\geq 0$, we have a composite isomorphism
$$\widetilde \psi_p\colon T_E(s^{-1}M^*)\otimes_E (sM)^{\otimes_E p} \xrightarrow{\phi\otimes_E \mathrm{Id}} \mathcal{Y}(E, E)\otimes_E (sM)^{\otimes_E p}\xrightarrow{\psi_p} \mathcal{Y}(E, \Omega^p_{\rm nc}(E)). $$

We claim that the following diagram
\begin{align}\label{align:commutativediagramLR}
\xymatrix@R=1.5pc{
T_E(s^{-1}M^*)\otimes_E (sM)^{\otimes_E p} \ar[d]_-{L}\ar_-{\simeq}[rr]^-{\widetilde \psi_p} && \mathcal{Y}(E, \Omega^p_{\rm nc}(E)) \ar[d]^-{R} \\
T_E(s^{-1}M^*)\otimes_E (sM)^{\otimes_E p+1} \ar_-{\simeq}[rr]^-{\widetilde \psi_{p+1}} && \mathcal{Y}(E, \Omega^{p+1}_{\rm nc}(E))
}
\end{align}
commutes, where $L$ denotes the map (\ref{equ:nat-mor-c}) and $R$ sends $g$ to $\theta_{\Omega_{\rm nc}^p(Y)}\odot g$. Restricting to homogeneous components, we need  verify the  following commutative diagram.
\[\xymatrix@R=1.5pc{
(s^{-1}M^*)^{\otimes_E n}\otimes_E (sM)^{\otimes_E p} \ar[d]_-{L}\ar[rr]^-{\widetilde\psi_p} && {\rm Hom}_E((sM)^{\otimes_E n}, (sM)^{\otimes_E p}) \ar[d]^-{R} \\
(s^{-1}M^*)^{\otimes_E n+1}\otimes_E (sM)^{\otimes_E p+1} \ar[rr]^-{\widetilde \psi_{p+1}} && {\rm Hom}_E((sM)^{\otimes_E n+1}, (sM)^{\otimes_E p+1}) }\]
More explicitly, we have
$$L(s^{-1}f_{1,n}\otimes_Es\overline{a}_{1, p})=\sum_{i\in S}s^{-1}f_{1,n}\otimes_E s^{-1}  \alpha_i^*\otimes_E s\overline \alpha_i\otimes_E s\overline{a}_{1, p}$$
and $R(g)={\rm Id}_{sM}\otimes_E g$. Furthermore, $\widetilde \psi_p(s^{-1}f_{1,n}\otimes_Es\overline{a}_{1, p})$ is given by
$$s\overline{x}_{1,n}\longmapsto (-1)^{n(p+1)} f_n(\overline{x}_1f_{n-1}(\overline{x}_2 f_{n-2}(   \cdots (\overline{x}_{n-1}f_1(\overline{x}_n))\cdots))) s\overline{a}_{1,p}.$$
Similarly, $\widetilde \psi_{p+1}(s^{-1}f'_{1,n+1}\otimes_Es\overline{b}_{1, p+1})$ is given by
$$s\overline{y}_{1,n+1}\longmapsto (-1)^{(n+1)(p+2)} f'_{n+1}(\overline{y}_1 f'_n(\overline{y}_2 f'_{n-1}( \cdots (\overline{y}_{n}f'_1(\overline{y}_{n+1}))\cdots)))s\overline{b}_{1,p+1}.$$
Therefore, $R \circ \widetilde \psi_p(s^{-1}f_{1,n}\otimes_Es\overline{a}_{1, p})$ equals to the following map
$$s\overline{y}_{1,n+1}\longmapsto (-1)^{(n+1)p} \; s\overline{y}_1\otimes_E f_n(\overline{y}_2f_{n-1}(\overline{y}_3f_{n-2}(\cdots (\overline{y}_{n} f_1(\overline{y}_{n+1}))\cdots)))s\overline{a}_{1,p}.$$
Using (\ref{equ:dualbasis}), we observe that $\widetilde \psi_{p+1}\circ L (s^{-1}f_{1,n}\otimes_Es\overline{a}_{1, p})$ coincides with the above map, proving the claim.
\newline

\emph{Step 2.} Recall from Theorem~\ref{thm:Leavitt-col} that $L_E(M)$ is isomorphic to the colimit of the explicit sequence (\ref{equ:nat-mor-c}). Take the colimits along the maps $L$ and $R$ in \eqref{align:commutativediagramLR}. It follows that the isomorphisms $\widetilde \psi_p$ induces an isomorphism of graded $\mathbb{K}$-modules
$$\Psi\colon L_E(M) \stackrel{\sim} \longrightarrow \mathcal{SY}(E, E).$$

Recall  from \eqref{align:typicalelement} that a typical element $\alpha$ in $L_E(M)$ is represented by a tensor
$$f_{1,n}\otimes_E \overline{a}_{1, p}.$$
To stress the degrees, in what follows,  we will write  $\alpha$ as
$$s^{-1} f_{1, n} \otimes_E s \overline{a}_{1, p}: = s^{-1} f_1 \otimes_E \dotsb \otimes_E s^{-1}f_{n} \otimes_E s\overline{a}_1 \otimes_E \dotsb \otimes_E s \overline{a}_p.$$
 Using the structure map (\ref{equ:nat-mor-c}), we may simultaneously increase the number $n$ and $p$ by one. Consequently, given two typical elements $\alpha$ and $\beta$, we may assume that
$$\alpha=s^{-1}f_{1,n}\otimes_E s\overline{a}_{1, p} \quad \mbox{ and }\quad  \beta=s^{-1}g_{1,p}\otimes_E s\overline{b}_{1, q}$$
for sufficiently large $p$.

We will prove the following identity
\begin{align}\label{equ:Phi-mult}
\Psi(\alpha\bullet \beta)=(-1)^{(n-p)(p-q)}\Psi(\beta)\odot_{\rm sg} \Psi(\alpha).
\end{align}
This implies that $\Psi\colon L_E(M)\longrightarrow \mathcal{SY}(E, E)^{\rm op}$ is an algebra isomorphism. Here, $\bullet$  denotes the product in $L_E(M)$; compare (\ref{equ:mult}).

We observe that $\Psi(\alpha)$ is represented by
$$u=\widetilde \psi_p(s^{-1}f_{1,n}\otimes_E s\overline{a}_{1, p})\in \mathcal{Y}_n(E, \Omega_{\rm nc}^p(E))$$
 and that $\Psi(\beta)$ is represented by
 $$v=\widetilde \psi_q(s^{-1}g_{1,p}\otimes_E s\overline{b}_{1, q})\in \mathcal{Y}_p(E, \Omega_{\rm nc}^{q}(E)).$$ Then $\Psi(\beta) \odot_{\rm sg} \Psi(\alpha)$
is represented by $\Omega_{\rm nc}^p(v) \odot u$ in $\mathcal{Y}_{n+p}(E, \Omega_{\rm nc}^{p+q}(E))$, which is the following composition; compare \eqref{equation:yonedaproduct}
$$(sM)^{\otimes_E(n+p)} \xrightarrow{\mathrm{Id}_{sM}^{\otimes_E p} \otimes_E u}   (sM)^{\otimes_E 2p}\xrightarrow{\mathrm{Id}_{sM}^{\otimes_E p}\otimes_E v} (sM)^{\otimes_E(p+q)}.$$
More precisely, it sends $s\overline{z}_{1, n+p}$ to
$$(-1)^\epsilon s\overline{z}_{1,p}\otimes_E f_n(\overline z_{p+1} f_{n-1} ( {\tiny \dotsb} (\overline z_{n+p-1}f_1(\overline z_{n+p})) {\tiny \dotsb})) \, g_p(\overline{a}_1g_{p-1}(  {\tiny \dotsb}  (\overline{a}_{p-1}g_1(\overline{a}_p)){\tiny \cdots})) s\overline{b}_{1,q}.$$
Here, the sign is given by
$$\epsilon=p(n-p)+(n+1)p+p(p-q)+(p+1)q.$$
We remark that $\epsilon\equiv p+q \; ({\rm mod}\; 2)$.

By \eqref{equ:mult} we have
$$\alpha\bullet \beta= s^{-1}f_{1,n}\otimes_E g_p(\overline{a}_1 g_{p-1}(\cdots (\overline{a}_{p-1}g_1(\overline{a}_p)){\tiny \cdots})) s\overline{b}_{1,q}.$$
Therefore, $\Psi(\alpha\bullet \beta)$ is represented by
$$w=\widetilde \psi_q \left(s^{-1}f_{1,n}\otimes_E g_p(\overline{a}_1 g_{p-1}(\cdots (\overline{a}_{p-1}g_1(\overline{a}_p)){\tiny \cdots})) s\overline{b}_{1,q}\right),$$
 which is an element in $\mathcal{Y}_n(E, \Omega_{\rm nc}^{q}(E))$. However, in $\mathcal{SY}(E, E)$, $w$ is identified with $ \mathrm{Id}_{sM}^{\otimes_E p}\otimes_E w\in \mathcal{Y}_{n+p}(E, \Omega_{\rm nc}^{p+q}(E))$; see \eqref{align:thetaidentification}. The latter element is represented by a map $(sM)^{\otimes_E(n+p)}\rightarrow (sM)^{\otimes_E(p+q)}$, which  sends
  $s\overline{z}_{1, n+p}$ to
$$(-1)^{\epsilon'} s\overline{z}_{1,p}\otimes_E f_n(\overline z_{p+1} f_{n-1} ( {\tiny \dotsb} (\overline z_{n+p-1}f_1(\overline z_{n+p})) {\tiny \dotsb})) \, g_p(\overline{a}_1g_{p-1}(  {\tiny \dotsb}  (\overline{a}_{p-1}g_1(\overline{a}_p)){\tiny \cdots})) s\overline{b}_{1,q}$$
with
$$\epsilon'=p(n-q)+(n+1)q.$$
Then we conclude that
 $$\mathrm{Id}_{sM}^{\otimes_E p}\otimes_E w =(-1)^{(n-p)(p-q)}\; \Omega_{\rm nc}^p(v) \odot u.$$
This verifies (\ref{equ:Phi-mult}).
\newline

\emph{Step 3.} It remains to verify that $\Psi$ preserves the differentials. For this, it suffices to verify the following identity
$$\Psi\circ \partial=\delta_{\rm ex}\circ \Psi$$
on the generating $\mathbb{K}$-submodule $E\oplus(s^{-1}M^*\oplus sM)$. The identity holds trivially on $E$ since both sides vanish.  The verification on $s^{-1}M^*$ is already settled by (\ref{diag:partial+}).

The verification on $sM$ might be deduced from Remark~\ref{rem:partial} and in particular (\ref{equ:partial-uniq}). In what follows, we  give a direct argument. It suffices to verify the following commutative diagram.
\[\xymatrix@R=1.5pc{
sM=sM\otimes_E E\ar[d]_-{\partial_{-}} \ar[rr]^-{\widetilde \psi_0} && {\rm Hom}_E(E, sM)=sM \ar[d]^-{\delta_{\rm ex}}\\
s^{-1}M^*\otimes_E sM\ar[rr]^-{\widetilde \psi_1} && {\rm Hom}_E(sM, sM)
}\]
In this diagram, we observe that $\widetilde \psi_0$ is the identity map. We have
$$\delta_{\rm ex}(s\overline{a})(s\overline{x})=(x\blacktriangleright s\overline{a})=s\mu(\overline{x}\otimes_E \overline{a}),$$
where we use the $\Lambda$-action on $\Omega_{\rm nc}(E)=sM$; see (\ref{equ:nontri-rule}).  On the other hand,
$\partial_{-}(s\overline{a})=\sum_{i\in S}s^{-1}\alpha_i^*\otimes_E s\mu(\overline{\alpha}_i\otimes_E\overline{a})$. Hence, $\widetilde \psi_1\circ \partial_{-1}(s\overline{a})$ sends $s\overline{x}$ to
$$\sum_{i\in S}\alpha_i^*(x)s\mu(\overline{\alpha}_i\otimes_E\overline{a})=s\mu(\overline{x}\otimes_E \overline{a}),$$
where we use (\ref{equ:dualbasis}) for the equality. This proves the required commutativity.
\end{proof}

\section{Applications to finite dimensional algebras}\label{section10}

In this final section, we apply the obtained results to finite dimensional algebras. Proposition~\ref{prop:sing-L} and Theorem~\ref{thm:quiveralgebra} relate  the dg singularity category of a finite dimensional algebra  to a dg Leavitt (path) algebra.   Throughout, $\mathbb{K}$ will be a fixed field.

\subsection{Finite dimensional algebras}

 Let $\Lambda$ be a finite dimensional $\mathbb{K}$-algebra.  Denote by $J={\rm rad}(\Lambda)$ its Jacobson radical and set $E=\Lambda/J$. Denoted  by $\pi\colon \Lambda\rightarrow E$ the natural projection. We assume that there is an algebra embedding $\phi\colon E\rightarrow \Lambda$ satisfying $\pi\circ \phi={\rm Id}_E$. If $E$ is separable over $\mathbb{K}$, such an algebra embedding always exists; see \cite[Theorem~6.2.1]{DK}.

We fix $\phi$ and view $E$ as a $\mathbb{K}$-subalgebra of $\Lambda$. We will use the following isomorphism of $E$-$E$-bimodules
$$J\stackrel{\sim}\longrightarrow \overline{\Lambda}=\Lambda/E, \quad a\mapsto \overline{a}$$
to identify $J$ with $\overline{\Lambda}$. By Definition \ref{defn:dgleavittalgebra}, the multiplication map
$$\mu\colon J\otimes_E J\longrightarrow J, \quad a\otimes_E b\mapsto ab$$
allows us to define the dg tensor algebra $T_E(J^*)=(T_E(J^*), \partial)$ and the dg Leavitt  algebra $L_E(J)=(L_E(J), \partial)$. Here, $J^*={\rm Hom}_E(J, E)$, which is concentrated in degree one. We will suppress the differentials $\partial$ for both $T_E(J^*)$ and $L_E(J)$.

 The following result is expected by experts. It might be viewed a version of Koszul duality; compare \cite[10.5~Lemma, the `exterior' case, and Examples~(c)]{Kel94}.

\begin{prop}\label{prop:der-T}
Keep the notation and assumptions as above. Then there is an isomorphism in $\mathbf{Hodgcat}$
$$\mathbf{D}_{\rm dg}^b(\Lambda\mbox{-{\rm mod}})\simeq  \mathbf{per}_{\rm dg}(T_E(J^*)).$$
Consequently, we have  triangle equivalences
$$\mathbf{D}^b(\Lambda\mbox{-{\rm mod}})\simeq \mathbf{per}(T_E(J^*)) \quad \mbox{ and } \quad \mathbf{K}(\Lambda\mbox{-}{\rm Inj})\simeq \mathbf{D}(T_E(J^*)).$$
\end{prop}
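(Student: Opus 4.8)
The plan is to produce the desired quasi-equivalence as an instance of Proposition~\ref{prop:quasi-equiv}, applied to a suitable pretriangulated dg category with a distinguished generator whose endomorphism dg algebra is $T_E(J^*)^{\rm op}$ (or $T_E(J^*)$, after an opposite-algebra identification as in Proposition~\ref{prop:iso-Y-tensor}). First I would take $\mathcal{C}=\mathcal{Y}^f_{\Lambda/E}$, the finite Yoneda dg category of Corollary~\ref{cor:finite}, so that $H^0(\mathcal{C})\simeq \mathbf{D}^b(\Lambda\mbox{-mod})$ and $\mathcal{C}$ is pretriangulated. Here $E=\Lambda/J$ is semisimple (being separable over $\mathbb{K}$, or simply because $\Lambda$ is finite dimensional and we are in the split situation provided by $\phi$), and conditions (Fin1) and (Fin2) hold: $_E\Lambda$ is finitely generated projective because $E$ is semisimple, and $\pi\colon\Lambda\to E$ with the section $\phi$ gives the augmentation. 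Thus Proposition~\ref{prop:iso-Y-tensor} applies and yields $\mathcal{Y}^f_{\Lambda/E}(E,E)\simeq (T_E(\overline{\Lambda}^*),\partial)^{\rm op}=(T_E(J^*),\partial)^{\rm op}$ of dg algebras, using the identification $J\cong\overline{\Lambda}$.

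The key remaining point is that the object $E$ is a \emph{generator} of $H^0(\mathcal{Y}^f_{\Lambda/E})\simeq\mathbf{D}^b(\Lambda\mbox{-mod})$ and that $H^0(\mathcal{Y}^f_{\Lambda/E})$ is idempotent-split. Idempotent-splitness is automatic since $\mathbf{D}^b(\Lambda\mbox{-mod})$ is idempotent-split for a finite dimensional (hence artinian) algebra. For the generation statement, one argues that $E=\Lambda/J$, as a left $\Lambda$-module, is a direct sum of all the simple $\Lambda$-modules (with multiplicities), and that $\mathbf{D}^b(\Lambda\mbox{-mod})$ is generated as a thick subcategory by the simple modules: every finitely generated module has a finite composition series, so it lies in the thick hull of the simples by dévissage along short exact sequences, and every bounded complex is built from its cohomology modules by finitely many triangles. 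Hence $\mathbf{D}^b(\Lambda\mbox{-mod})={\rm thick}\langle E\rangle$. Now Proposition~\ref{prop:quasi-equiv} gives a quasi-equivalence
\[
\mathcal{Y}^f_{\Lambda/E}(E,-)\colon \mathcal{Y}^f_{\Lambda/E}\stackrel{\sim}\longrightarrow \mathbf{per}_{\rm dg}\bigl(\mathcal{Y}^f_{\Lambda/E}(E,E)^{\rm op}\bigr)=\mathbf{per}_{\rm dg}\bigl((T_E(J^*)^{\rm op})^{\rm op}\bigr)=\mathbf{per}_{\rm dg}(T_E(J^*)).
\]
Composing with the isomorphism $\mathbf{D}^b_{\rm dg}(\Lambda\mbox{-mod})\simeq\mathcal{Y}^f_{\Lambda/E}$ of Corollary~\ref{cor:finite} in $\mathbf{Hodgcat}$ yields the asserted isomorphism $\mathbf{D}^b_{\rm dg}(\Lambda\mbox{-mod})\simeq\mathbf{per}_{\rm dg}(T_E(J^*))$.

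For the consequences: applying $H^0$ to the above isomorphism in $\mathbf{Hodgcat}$ gives the triangle equivalence $\mathbf{D}^b(\Lambda\mbox{-mod})\simeq\mathbf{per}(T_E(J^*))$. For the last equivalence $\mathbf{K}(\Lambda\mbox{-Inj})\simeq\mathbf{D}(T_E(J^*))$, I would pass to the derived category of dg modules over the perfect dg derived category: by Lemma~\ref{lem:der-equi} we have $\mathbf{D}(\mathbf{per}_{\rm dg}(T_E(J^*))^{\rm op})\simeq\mathbf{D}(T_E(J^*))$, while the left-hand side is, via the quasi-equivalence just established (and the identification of $H^0$ with $\mathbf{D}^b(\Lambda\mbox{-mod})$, whose completion is $\mathbf{K}(\Lambda\mbox{-Inj})$ by the discussion around \eqref{intro:1}, i.e.\ $\mathbf{K}_{\rm ac}(\Lambda\mbox{-Inj})$ in the acyclic version — here one uses that over a finite dimensional algebra $\mathbf{K}(\Lambda\mbox{-Inj})$ is the compactly generated completion of $\mathbf{D}^b(\Lambda\mbox{-mod})$ with compact part $\mathbf{D}^b(\Lambda\mbox{-mod})$, cf.\ \cite{Kra}), triangle equivalent to $\mathbf{K}(\Lambda\mbox{-Inj})$. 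I expect the main obstacle to be precisely this last identification: pinning down the correct statement relating $\mathbf{D}(\mathbf{per}_{\rm dg}(A)^{\rm op})$ to the stable/homotopy category of injectives on the module side, and making sure the grading conventions and the op-algebra bookkeeping match those in \cite{Kra,CLiuW} and in \eqref{intro:1}; the rest is a routine assembly of results already proved in the paper.
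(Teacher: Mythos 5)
Your proposal is correct and follows essentially the same route as the paper: identify $\mathbf{D}^b_{\rm dg}(\Lambda\mbox{-mod})$ with $\mathcal{Y}^f_{\Lambda/E}$ via Corollary~\ref{cor:finite}, note that $E$ is a generator of the idempotent-split category $\mathbf{D}^b(\Lambda\mbox{-mod})$, and combine Propositions~\ref{prop:quasi-equiv} and~\ref{prop:iso-Y-tensor}, then deduce the last equivalence from $\mathbf{K}(\Lambda\mbox{-Inj})\simeq \mathbf{D}(\mathbf{D}^b_{\rm dg}(\Lambda\mbox{-mod})^{\rm op})$ (which is exactly \cite[Proposition~A.1]{Kra} and \cite[Theorem~2.2]{CLiuW}, so the identification you flag as a possible obstacle is already available) together with Lemma~\ref{lem:der-equi}. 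The only difference is that you spell out the d\'evissage argument for ${\rm thick}\langle E\rangle=\mathbf{D}^b(\Lambda\mbox{-mod})$, which the paper takes as known.
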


\begin{proof}
 By Corollary~\ref{cor:finite}, we have an isomorphism in $\mathbf{Hodgcat}$
$$\mathbf{D}_{\rm dg}^b(\Lambda\mbox{-{\rm mod}})\simeq \mathcal{Y}^{f}_{\Lambda/E},$$
since $E$ is semisimple.
Recall that $\mathbf{D}^b(\Lambda\mbox{-mod})$ is idempotent-split with $E$ a generator. It follows that $H^0(\mathcal{Y}^{f}_{\Lambda/E})$  is also idempotent-split with $E$  a generator.
Combining Propositions~\ref{prop:quasi-equiv} and ~\ref{prop:iso-Y-tensor},   we obtain an isomorphism in $\mathbf{Hodgcat}$
$$ \mathcal{Y}^{f}_{\Lambda/E}\simeq \mathbf{per}_{\rm dg}(T_E(J^*)).$$
Combining the above two isomorphisms, we obtain the required isomorphism in $\mathbf{Hodgcat}$ and the first consequence.

For the second consequence, we recall from \cite[Proposition~A.1]{Kra} and \cite[Theorem~2.2]{CLiuW} a triangle equivalence
$$ \mathbf{K}(\Lambda\mbox{-}{\rm Inj})\simeq \mathbf{D}(\mathbf{D}^b_{\rm dg}(\Lambda\mbox{-mod})^{\rm op}).$$
As any quasi-equivalence between dg categories induces a derived equivalence, the above two isomorphisms in $\mathbf{Hodgcat}$ induce a derived equivalence
$$ \mathbf{D}(\mathbf{D}^b_{\rm dg}(\Lambda\mbox{-mod})^{\rm op}) \simeq \mathbf{D}(\mathbf{per}_{\rm dg}(T_E(J^*))^{\rm op}).$$
Combining the above two triangle equivalences with the one in Lemma~\ref{lem:der-equi}, we infer the second consequence.
\end{proof}

The following result might be viewed as  a singular analogue of Proposition~\ref{prop:der-T} with a proof of the same style.

\begin{prop}\label{prop:sing-L}
Keep the notation and assumptions as above. Then there is an isomorphism in $\mathbf{Hodgcat}$
$$\mathbf{S}_{\rm dg}(\Lambda)\simeq \mathbf{per}_{\rm dg}(L_E(J)).$$
Consequently, we have  triangle equivalences
$$\mathbf{D}_{\rm sg}(\Lambda)\simeq \mathbf{per}(L_E(J)) \quad \mbox{ and }\quad  \mathbf{K}_{\rm ac}(\Lambda\mbox{-}{\rm Inj})\simeq \mathbf{D}(L_E(J)).$$
\end{prop}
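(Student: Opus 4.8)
The strategy mirrors the proof of Proposition~\ref{prop:der-T} exactly, replacing the Yoneda dg category by the singular Yoneda dg category and the dg tensor algebra by the dg Leavitt algebra. The key input is Corollary~\ref{cor:finite-SY}, which gives an isomorphism $\mathbf{S}_{\rm dg}(\Lambda)\simeq \mathcal{SY}^f_{\Lambda/E}$ in $\mathbf{Hodgcat}$; this applies because $E=\Lambda/J$ is semisimple (indeed separable, by our standing assumption) and $\Lambda$ is finite dimensional, hence left noetherian. So it suffices to identify $\mathcal{SY}^f_{\Lambda/E}$ with $\mathbf{per}_{\rm dg}(L_E(J))$ up to quasi-equivalence.

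For that I would invoke Proposition~\ref{prop:quasi-equiv}: I need to check that $H^0(\mathcal{SY}^f_{\Lambda/E})\simeq \mathbf{D}_{\rm sg}(\Lambda)$ is idempotent-split and has a generator. Idempotent-splitness is immediate since $\mathbf{D}_{\rm sg}(\Lambda)$ is idempotent-split (it is the compact part of $\mathbf{K}_{\rm ac}(\Lambda\mbox{-Inj})$, or one uses that $\mathbf{D}^b(\Lambda\mbox{-mod})$ is idempotent-split and the Verdier quotient of an idempotent-split category by a thick subcategory is idempotent-split). For the generator: $E$ generates $\mathbf{D}^b(\Lambda\mbox{-mod})$ (every finitely generated module has a finite filtration with semisimple quotients, hence lies in $\mathrm{thick}\langle E\rangle$), so its image generates the Verdier quotient $\mathbf{D}_{\rm sg}(\Lambda)$. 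Applying Proposition~\ref{prop:quasi-equiv} with $X=E$ then yields a quasi-equivalence
\[
\mathcal{SY}^f_{\Lambda/E}\stackrel{\sim}\longrightarrow \mathbf{per}_{\rm dg}\bigl(\mathcal{SY}^f_{\Lambda/E}(E,E)^{\rm op}\bigr).
\]
Now Theorem~\ref{thm:SY-Leavitt} (applied with this $E\to\Lambda$, which satisfies (Fin1) since $_E\Lambda$ is finitely generated and $E$ semisimple, and (Fin2) via $\pi$ and $\phi$) identifies $\mathcal{SY}^f_{\Lambda/E}(E,E)$ — which is the same complex as $\mathcal{SY}_{\Lambda/E}(E,E)$, since $E$ is a bounded complex of finitely generated modules — with $(L_E(J),\partial)^{\rm op}$. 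Hence $\mathcal{SY}^f_{\Lambda/E}(E,E)^{\rm op}\simeq L_E(J)$, and composing the isomorphisms in $\mathbf{Hodgcat}$ gives $\mathbf{S}_{\rm dg}(\Lambda)\simeq \mathbf{per}_{\rm dg}(L_E(J))$.

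The two consequences follow formally. Passing to $H^0$ gives the triangle equivalence $\mathbf{D}_{\rm sg}(\Lambda)\simeq \mathbf{per}(L_E(J))$. For the last one, a quasi-equivalence of dg categories induces a derived equivalence, so from $\mathbf{D}^b_{\rm dg}(\Lambda\mbox{-mod})$ and the completed version one gets (using that $\mathbf{S}_{\rm dg}(\Lambda)\simeq \mathbf{per}_{\rm dg}(L_E(J))$) a derived equivalence $\mathbf{D}(\mathbf{S}_{\rm dg}(\Lambda)^{\rm op})\simeq \mathbf{D}(\mathbf{per}_{\rm dg}(L_E(J))^{\rm op})$; combining with the equivalence $\mathbf{K}_{\rm ac}(\Lambda\mbox{-Inj})\simeq \mathbf{D}(\mathbf{S}_{\rm dg}(\Lambda)^{\rm op})$ from \cite{Kra,CLiuW} and with Lemma~\ref{lem:der-equi} (which says $\mathbf{D}(\mathbf{per}_{\rm dg}(L_E(J))^{\rm op})\simeq \mathbf{D}(L_E(J))$), one obtains $\mathbf{K}_{\rm ac}(\Lambda\mbox{-Inj})\simeq \mathbf{D}(L_E(J))$. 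The only point requiring a little care — and the step I expect to be the main obstacle, though it is more bookkeeping than genuine difficulty — is the verification that $E$ really generates $\mathbf{D}_{\rm sg}(\Lambda)$ and that $\mathcal{SY}^f_{\Lambda/E}(E,E)$ agrees with $\mathcal{SY}_{\Lambda/E}(E,E)$ as dg algebras, so that Theorem~\ref{thm:SY-Leavitt} can be fed directly into Proposition~\ref{prop:quasi-equiv}; everything else is a routine assembly of results already established.
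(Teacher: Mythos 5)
Your proposal is correct and follows essentially the same route as the paper: Corollary~\ref{cor:finite-SY} to reduce to $\mathcal{SY}^f_{\Lambda/E}$, then Proposition~\ref{prop:quasi-equiv} applied to the generator $E$ together with Theorem~\ref{thm:SY-Leavitt} to identify the endomorphism dg algebra with $L_E(J)^{\rm op}$, and the same assembly of the equivalence $\mathbf{K}_{\rm ac}(\Lambda\mbox{-Inj})\simeq \mathbf{D}(\mathbf{S}_{\rm dg}(\Lambda)^{\rm op})$ with Lemma~\ref{lem:der-equi} for the final consequence. One caveat: your parenthetical claim that ``the Verdier quotient of an idempotent-split category by a thick subcategory is idempotent-split'' is false in general (this is precisely why idempotent-completeness of singularity categories is a nontrivial fact), and your alternative justification via $\mathbf{K}_{\rm ac}(\Lambda\mbox{-Inj})^c$ presupposes the identification of $\mathbf{D}_{\rm sg}(\Lambda)$ with the full compact subcategory, which is equivalent to the idempotent-completeness being asserted; the correct, non-circular reference for finite dimensional algebras is \cite[Corollary~2.11]{Chen11}, which is what the paper cites.
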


\begin{proof}
By Corollary~\ref{cor:finite-SY}, we have an isomorphism in  $\mathbf{Hodgcat}$
$$\mathbf{S}_{\rm dg}(\Lambda) \simeq \mathcal{SY}^f_{\Lambda/E}.$$
Recall from \cite[Corollary~2.11]{Chen11} that $\mathbf{D}_{\rm sg}(\Lambda)$ is idempotent-split and that $E$ certainly generates it. It follows that the same holds for $H^0(\mathcal{SY}^f_{\Lambda/E})$. By combining Proposition~\ref{prop:quasi-equiv} and Theorem~\ref{thm:SY-Leavitt}, we obtain an isomorphism in  $\mathbf{Hodgcat}$
$$\mathcal{SY}^f_{\Lambda/E}\simeq \mathbf{per}_{\rm dg}(L_E(J)).$$
Then we have the required isomorphism in $\mathbf{Hodgcat}$ and the first consequence.

For the second consequence, we recall from \cite[Theorem~2.2]{CLiuW} the following triangle equivalence
$$ \mathbf{K}_{\rm ac}(\Lambda\mbox{-}{\rm Inj})\simeq \mathbf{D}(\mathbf{S}_{\rm dg}(\Lambda)^{\rm op}).$$
The above two isomorphisms in $\mathbf{Hodgcat}$ yield a derived equivalence
$$\mathbf{D}(\mathbf{S}_{\rm dg}(\Lambda)^{\rm op}) \simeq \mathbf{D}(\mathbf{per}_{\rm dg}(L_E(J))^{\rm op}).$$
Combining the above two triangle equivalences with the one in Lemma~\ref{lem:der-equi}, we infer the second consequence.
\end{proof}

\begin{rem}
Assume that $E$ is separable over $\mathbb{K}$. Then the dg tensor algebra $T_E(J^*)$ is smooth; compare \cite[Subsection~3.6]{Kel11}. Consequently, the quasi-equivalence in Proposition~\ref{prop:der-T}  gives another proof of the smoothness of $\mathbf{D}_{\rm dg}^b(\Lambda\mbox{-{\rm mod}})$; see \cite[Theorem~3.7 and Remark~3.8]{ELS}.

By \cite[Proposition~3.10 c)]{Kel11} or \cite[Section~1]{Kel18}, $\mathbf{S}_{\rm dg}(\Lambda)$ is  also smooth. It follows from the quasi-equivalence in Proposition~\ref{prop:sing-L} that the dg Leavitt algebra $L_E(J)$ is smooth. We expect that a direct proof of this fact might be given by constructing an explicit bounded dg-projective bimodule resolution of $L_E(J)$ from the one in \cite[Proposition~7.5]{CLW}, via  the homological perturbation lemma.
\end{rem}

\subsection{The quiver case}
\label{ss:The quiver case}

In this subsection, we will explore Proposition~\ref{prop:sing-L} in the quiver case. Let $Q$ be a finite quiver. An ideal $I$ of the path algebra $\mathbb{K}Q$ is \emph{admissible} provided that there exists  $d\geq 2$ satisfying $\bigoplus_{n\geq d}\mathbb{K}Q_n\subseteq I\subseteq \bigoplus_{n\geq 2}\mathbb{K}Q_n$.

We fix $\Lambda=\mathbb{K}Q/I$ with $I$ an admissible ideal. Set $E=\mathbb{K}Q_0$, which is naturally a subalgebra of $\Lambda$. The Jacobson radical $J$ equals $\bigoplus_{n\geq 1}\mathbb{K}Q_n/I$. The decomposition
$$\Lambda=E\oplus J$$
allows us to identify $J$ with $\overline{\Lambda} = \Lambda/ E$.

The following notion is due to \cite[Section~3, Definition]{Sch}, in which it is called the basis-graph of $\Lambda$.

\begin{defn}
The \emph{radical quiver} $\widetilde Q$ of $\Lambda=\mathbb{K}Q/I$ is defined as follows: $\widetilde Q_0=Q_0$ and for any vertices $i$ and $j$, the number of arrows in $\widetilde Q$ from $i$ to $j$ equals the dimension of $e_jJe_i$.
\end{defn}

By the very definition and choosing  bases for $e_jJe_i$, we may identify $\mathbb{K}\widetilde Q_1$ with $J$ as a $\mathbb{K}Q_0$-$\mathbb{K}Q_0$-bimodule. The multiplication of $J$ yields an associative map
\begin{align}\label{align:guttproduct}
\mu\colon \mathbb{K}\widetilde Q_1\otimes_{\mathbb{K}\widetilde Q_0} \mathbb{K}\widetilde Q_1\longrightarrow \mathbb{K}\widetilde Q_1.
\end{align}
As in Section~\ref{section4}, we have the dg Leavitt path algebra
$$L(\widetilde Q^\circ)=(L(\widetilde Q^\circ), \partial)$$
 associated to $(\widetilde Q, \mu)$. Here, $\widetilde Q^\circ$ denotes the finite quiver without sinks obtained from $\widetilde Q$ by repeatedly removing sinks. We mention that the differential $\partial$ is determined by the explicit maps in  (\ref{equ:+}) and (\ref{equ:-}).

 The following  reformulation of Proposition~\ref{prop:sing-L} describes the singularity category of $\Lambda$, explicitly to some extent.

\begin{thm}\label{thm:quiveralgebra}
Let $\Lambda = \mathbb KQ/I$ be a finite dimensional $\mathbb K$-algebra with $\widetilde Q$ its radical quiver, and $L(\widetilde Q^\circ)$ be the dg Leavitt path algebra associated to $(\widetilde Q, \mu)$. Then there is an isomorphism in  $\mathbf{Hodgcat}$
 $$\mathbf{S}_{\rm dg}(\Lambda) \simeq  \mathbf{per}_{\rm dg}(L(\widetilde Q^\circ)).$$
Consequently, we have  triangle equivalences
$$\mathbf{D}_{\rm sg}(\Lambda)\simeq \mathbf{per}(L(\widetilde Q^\circ)) \quad \mbox{ and } \quad \mathbf{K}_{\rm ac}(\Lambda\mbox{-}{\rm Inj})\simeq \mathbf{D}(L(\widetilde Q^\circ)).$$
\end{thm}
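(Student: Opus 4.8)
The plan is to deduce this statement directly from Proposition~\ref{prop:sing-L} by specializing to the algebra extension $E=\mathbb{K}Q_0\hookrightarrow\Lambda=\mathbb{K}Q/I$ and then identifying the dg Leavitt algebra $L_E(J)$ with the dg Leavitt path algebra $L(\widetilde Q^\circ)$.

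First I would check that the hypotheses preceding Proposition~\ref{prop:sing-L} hold in this situation. Since $I$ is admissible, the Jacobson radical of $\Lambda$ is $J=\bigoplus_{n\geq 1}\mathbb{K}Q_n/I$ and the semisimple quotient $\Lambda/J$ is canonically $E=\mathbb{K}Q_0$; the inclusion of vertices provides an algebra embedding $E\hookrightarrow\Lambda$ splitting the projection $\pi\colon\Lambda\to E$. Moreover $E$ is semisimple and $\Lambda$, being finite dimensional, is left noetherian. Hence Proposition~\ref{prop:sing-L} applies with this $E$ and $J$, and it already gives an isomorphism $\mathbf{S}_{\rm dg}(\Lambda)\simeq\mathbf{per}_{\rm dg}(L_E(J))$ in $\mathbf{Hodgcat}$ together with the triangle equivalences $\mathbf{D}_{\rm sg}(\Lambda)\simeq\mathbf{per}(L_E(J))$ and $\mathbf{K}_{\rm ac}(\Lambda\mbox{-}{\rm Inj})\simeq\mathbf{D}(L_E(J))$.

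It then remains to identify $L_E(J)$ with $L(\widetilde Q^\circ)$ as dg algebras. By the definition of the radical quiver, choosing bases for the spaces $e_jJe_i$ yields an identification of $E$-$E$-bimodules $J=\mathbb{K}\widetilde Q_1$, under which the multiplication of $J$ becomes the associative product $\mu$ of \eqref{align:guttproduct}; since $E$ is semisimple, ${}_EJ$ is finitely generated projective, so Definition~\ref{defn:dgleavittalgebra} applies and produces $(L_E(J),\partial)=(L_E(\mathbb{K}\widetilde Q_1),\partial)$. Proposition~\ref{prop:iso-path}(2), applied to the quiver $\widetilde Q$, furnishes a canonical isomorphism $L_E(\mathbb{K}\widetilde Q_1)\cong L(\widetilde Q^\circ)$, and this isomorphism is by construction a dg isomorphism: indeed, the differential on $L(\widetilde Q^\circ)$ was defined in Section~\ref{section4} by transfer along precisely this isomorphism, and equivalently both differentials are the $E$-derivations uniquely determined by the same explicit formulas \eqref{equ:+} and \eqref{equ:-} on the generating bimodule. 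Composing the two isomorphisms gives the desired quasi-equivalence $\mathbf{S}_{\rm dg}(\Lambda)\simeq\mathbf{per}_{\rm dg}(L(\widetilde Q^\circ))$; the two triangle equivalences follow by passing to $H^0$ and invoking Lemma~\ref{lem:der-equi} exactly as in the proof of Proposition~\ref{prop:sing-L}.

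Since Proposition~\ref{prop:sing-L} carries all of the homological content, I do not expect a serious obstacle here. The only point demanding care is the bookkeeping around the passage from $\widetilde Q$ to $\widetilde Q^\circ$: one must check that repeatedly deleting sinks of $\widetilde Q$ corresponds on the algebra side to killing the idempotents $e_j$ that become zero after imposing the second Cuntz--Krieger relation, which is precisely what Proposition~\ref{prop:iso-path}(2) records. With that in hand the argument is essentially a two-line deduction.
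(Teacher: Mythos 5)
Your proposal is correct and follows exactly the paper's own proof: the paper likewise identifies $L_E(J)$ with $L(\widetilde Q^\circ)$ via the identification $J=\mathbb{K}\widetilde Q_1$ and Proposition~\ref{prop:iso-path}(2), and then invokes Proposition~\ref{prop:sing-L}. Your additional checks (the splitting $E\hookrightarrow\Lambda$, semisimplicity of $E$, and the compatibility of the differentials under the transfer) are exactly the points the paper leaves implicit.
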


\begin{proof}
By the identification $J=\mathbb{K}\widetilde Q_1$, we identify the dg Leavitt algebra $L_E(J)$ with the dg Leavitt path algebra $L(\widetilde Q^\circ)$; see Proposition~\ref{prop:iso-path}(2). Then the result follows immediately from Proposition~\ref{prop:sing-L}.
\end{proof}

\begin{rem}\label{rem:deformation}
(1) Assume that $\Lambda$ is radical square zero, or equivalently, $I=\bigoplus_{n\geq 2}\mathbb{K}Q_n$. Then $\widetilde Q=Q$ and the map $\mu$ is zero. It follows that the differential $\partial$ on $L(Q^\circ)$ is also zero. The above equivalences specialize to \cite[Theorem~7.2]{Smi} and \cite[Theorem~6.1]{CY}, respectively. We refer to \cite[Section~10]{AV} and \cite{Chen11, EL} for the study of the singularity category of $\Lambda$ from quite different perspectives.

(2) Let us provide a deformation-theoretic perspective for Theorem~\ref{thm:quiveralgebra}; compare \cite[Section~3]{Sch}.  Consider the radical-square-zero algebra $\widetilde{\Lambda} =\mathbb{K}\widetilde Q_0\oplus \mathbb{K}\widetilde Q_1$ and the Leavitt path algebra $L=L(\widetilde Q^\circ)$ with trivial differential. In view of \cite[Remark~5.19]{CLW}, the sequence of explicit $B_\infty$-quasi-isomorphisms in \cite[the second paragraph of Section~12]{CLW} induces an explicit quasi-isomorphism of dg Lie algebras of degree $-1$
\[
\Psi \colon \overline C_{{\sg}, R, E}^*(\widetilde \Lambda, \widetilde \Lambda) \longrightarrow  \overline{C}_E^*(L, L),
\]
where  $\overline C_{{\sg}, R, E}^*(\widetilde \Lambda, \widetilde \Lambda)$ is the $E$-relative singular Hochschild cochain complex of $\widetilde{\Lambda}$,  and $\overline{C}_E^*(L, L)$ is the $E$-relative normalized Hochschild cochain complex of $L$.

The associative product  $\mu$ in \eqref{align:guttproduct} and the differential $\partial$ in Theorem~\ref{thm:quiveralgebra}  can be respectively viewed as a Maurer--Cartan element in $\overline C_E^*(\widetilde \Lambda, \widetilde \Lambda)$ and in $\overline{C}_E^*(L, L)$; compare \cite[Theorem~7.42]{BW}.  We observe that, under the natural inclusion  $\overline C_E^*(\widetilde \Lambda, \widetilde \Lambda)\hookrightarrow \overline C_{{\sg}, R, E} ^*(\widetilde \Lambda, \widetilde \Lambda)$, the following equality holds.
$$\Psi(\mu) = \partial$$
For this,  we recall  by \cite[(13.4)]{CLW} that $\Psi(\mu)$ lies in $\overline{C}_E^{*, 1}(L, L) = \mathrm{Hom}_{\text{$E$-$E$}}(\overline L, L)$, the Hom complex between two graded $E$-$E$-bimodules $\overline{L}=L/E$ and $L$. Then it suffices to verify that the restriction of $\Psi(\mu)$ on $\mathbb{K}\widetilde{Q}^\circ_1$ and $\mathbb{K} (\widetilde{Q}^\circ_1)^*$ coincides with $\partial_{-}$  in (\ref{equ:-}) and $\partial_{+}$ in (\ref{equ:+}), respectively.  The verification is routine by \cite[Lemma~13.1]{CLW}.

The above observation actually motivates Theorem~\ref{thm:quiveralgebra}. Let us point out that presently,
there seems to be no general deformation theory for pretriangulated dg categories which contains our class of examples.

As observed by \cite{KeLo}, the Hochschild cochain complex of a dg category $\mathcal{A}$ with its Gerstenhaber bracket does not control the deformations of $\mathcal{A}$ among dg categories but among {\em curved} $A_\infty$-categories. The disadvantage of these is that their derived categories often vanish \cite{KellerLowenNicolas10}. In order to obtain deformations without curvature,
it is natural  to impose boundedness conditions on the homology of the morphism
complexes of $\mathcal{A}$; compare \cite{DeDekenLowen13, LowenVandenBergh15, Lur}. These do not
hold in our setting. In particular, Lurie's results in  \cite[Section~5.3]{Lur} do not apply directly.

Indeed, in the section mentioned, Lurie investigates deformations of $\infty$-categories. Via the dg nerve, each pretriangulated dg category gives rise to an $\infty$-category; compare   \cite[Construction~1.3.1.6]{LurieHA}. The main result of \cite[Section~5.3]{Lur} is \cite[Theorem~5.3.33]{Lur}. Here, Lurie shows that under suitable conditions, the deformation
theory of an $\infty$-category is controlled by its Hochschild cochain complex.
The conditions actually only concern the triangulated category $\ct$ associated with
a given  $\infty$-category (for example,  $\ct=H^0(\ca)$ for a pretriangulated
dg category $\ca$). They are as follows:
\begin{itemize}
\item[i)] $\ct$ is {\em tamely} compactly generated, i.e. it is compactly generated and for any
two compact objects $C$ and $D$, we have $\Ext_\mathcal{T}^n(C,D)=0$ for all $n\gg 0$;
\item[ii)] $\ct$ equals its localizing subcategory generated by a family of {\em unobstructible}
objects, i.e. objects $C$ such that $\Ext^n_\mathcal{T}(C,C)=0$ for all $n\geq 2$.
\end{itemize}
Clearly, these conditions are (almost) never satisfied for
singularity categories,  and in particular, Lurie's theorem does not apply in our setting.

(3) By the isomorphism in Theorem~\ref{thm:quiveralgebra},  the cohomology algebra $H^*(L(\widetilde Q^\circ))$ of the dg Leavitt path algebra $L(\widetilde Q^\circ)$ is isomorphic to
the {\it singular Yoneda algebra}
$$\underline{\mathrm{Ext}}_\Lambda^*(E, E): = \bigoplus_{i \in \mathbb Z} \mathrm{Hom}_{\mathbf{D}_{\rm sg}(\Lambda)}  (E, \Sigma^i(E)).$$
In general, $L(\widetilde Q^\circ)$ is not $A_\infty$-quasi-isomorphic to $\underline{\mathrm{Ext}}_\Lambda^*(E, E)$. In other words, $L(\widetilde Q^\circ)$ is not necessarily formal; see Proposition~\ref{prop:nonformal} below.

By the homotopy transfer theorem, we may endow $\underline{\mathrm{Ext}}_\Lambda^*(E, E)$ with an $A_\infty$-algebra structure so that it is $A_\infty$-quasi-isomorphic to $L(\widetilde Q^\circ)$; such an $A_\infty$-algebra structure is unique up to  $A_\infty$-quasi-isomorphism. We might call $\underline{\mathrm{Ext}}_\Lambda^*(E, E)$, endowed with this $A_\infty$-algebra structure, the {\it minimal $A_\infty$-model} of $L(\widetilde Q^\circ)$; compare \cite[Subsection~3.3]{Kel01}.
\end{rem}

\subsection{An example}\label{example:truncatedpoly}

In this final subsection, we will give an explicit example. Let $n\geq 1$ and $Q$ be the quiver with one vertex and one loop $x$. Denote $\Lambda_n = \mathbb K Q/(x^{n+1})$, which is a truncated polynomial algebra in  one variable.

The Jacobson radical $J$ of $\Lambda_n$ has a basis $\{x, x^2, \dotsc, x^n\}$. The radical quiver of $\Lambda_n$ coincides with the rose quiver $R_n$  in  Example~\ref{example:quiverone}, where the arrow $x_i$ corresponds to the basis element $x^i\in J$. The multiplication on $J$ is transferred to the associative product $\mu$ therein. Therefore, the corresponding  dg Leavitt path algebra  coincides with $L(R_n)=(L(R_n), \partial)$, which is explicitly given in Example~\ref{example:quiverone}.

Since $\Lambda_n $  is self-injective,  the singularity category $\mathbf{D}_{\rm sg}(\Lambda_n)$ is triangle equivalent to the stable module category $\Lambda_n\mbox{-\underline{\rm mod}}$; see \cite[Theorem~4.4]{Buc} or \cite[Theorem~2.1]{Ric89JPAA}. Combining this fact with  Theorem~\ref{thm:quiveralgebra}, we have a triangle equivalence
\[
\Lambda_n\mbox{-\underline{\rm mod}} \simeq \mathbf{per}(L(R_n)).
\]

\begin{rem}
If $n=1$ then $\Lambda_1=\mathbb{K}[\epsilon]$ is the algebra of dual numbers. We observe that $L(R_1)\simeq \mathbb{K}[y, y^{-1}]$ with $|y|=1$ and $\partial = 0$; in particular, $L(R_1)$ is formal. We recover the following well-known triangle equivalence
\[
\mathbb{K}[\epsilon]\mbox{-\underline{\rm mod}} \simeq \mathbf{per}(\mathbb{K}[y, y^{-1}]).
\]
\end{rem}

\begin{prop}\label{prop:nonformal}
 The dg Leavitt path algebra   $L(R_n)=(L(R_n), \partial)$ is not formal for any $n\geq 2$.
\end{prop}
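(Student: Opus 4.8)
The plan is to prove non-formality by exhibiting a non-vanishing higher Massey product in the cohomology algebra $H^*(L(R_n))$, which forbids $L(R_n)$ from being quasi-isomorphic to $(H^*(L(R_n)),0)$. The starting point is Remark~\ref{rem:deformation}(3) together with Theorem~\ref{thm:quiveralgebra}: since the radical quiver of $\Lambda_n=\mathbb{K}[x]/(x^{n+1})$ is $R_n$ and here $E=\mathbb{K}$, there is an isomorphism of graded algebras $H^*(L(R_n))\cong\underline{\mathrm{Ext}}^*_{\Lambda_n}(\mathbb{K},\mathbb{K})$. As $\Lambda_n$ is self-injective, $\mathbf{D}_{\mathrm{sg}}(\Lambda_n)\simeq\Lambda_n\mbox{-}\underline{\mathrm{mod}}$ and the simple module $\mathbb{K}$ is $\Omega$-periodic of period $2$; hence $\underline{\mathrm{Ext}}^i_{\Lambda_n}(\mathbb{K},\mathbb{K})\cong\mathbb{K}$ for every $i\in\mathbb{Z}$, with the degree-$2$ periodicity class $\eta$ invertible. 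I will take $\xi:=[y_1]\in H^1(L(R_n))$; since $\partial(y_1)=0$ and $\partial(y_2)=y_1^2$ (see Example~\ref{example:quiverone}), one gets $\xi^2=[y_1^2]=0$ (this is where $n\ge2$ enters — for $n=1$ there is no $y_2$, and indeed $\xi^2\neq0$).

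Next I would compute the $(n+1)$-fold Massey product $\langle\xi,\dots,\xi\rangle$ in $H^*(L(R_n))$. It is defined because $\xi^2=0$, and its indeterminacy lies in $\xi\cdot H^1+H^1\cdot\xi=\mathbb{K}\xi^2=0$, so it is a single class in $H^2(L(R_n))=\mathbb{K}\eta$. The claim is $\langle\xi,\dots,\xi\rangle=\pm\eta\neq0$. This can be checked inside $L(R_n)$ directly: the relations $\partial(y_i)=\sum_{j=1}^{i-1}y_jy_{i-j}$ for $2\le i\le n$ assemble, with suitable signs, into a Massey defining system for $\langle\xi,\dots,\xi\rangle$ built out of $y_2,\dots,y_n$, and its value is the cocycle $c:=\sum_{m=1}^{n}y_my_{n+1-m}$. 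The point is that $c$ is not a coboundary: following the pattern it would be $\partial(y_{n+1})$, but $y_{n+1}$ does not exist in $L(R_n)$, so $c$ represents the generator $\pm\eta$ of $H^2(L(R_n))$. Equivalently, this is the classical statement that the minimal $A_\infty$-model of $\mathrm{Ext}^*_{\mathbb{K}[x]/(x^{n+1})}(\mathbb{K},\mathbb{K})$ has $m_3=\dots=m_n=0$ and $m_{n+1}(\xi,\dots,\xi)=\pm\eta$ (encoding the single relation $x^{n+1}=0$), a fact that passes to the localization $\underline{\mathrm{Ext}}^*=\mathrm{Ext}^*[\eta^{-1}]$.

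Finally, I would invoke the standard facts that Massey products are invariants of the quasi-isomorphism type of a dg algebra, and that in a formal dg algebra — one quasi-isomorphic to its cohomology equipped with zero differential — every higher Massey product contains $0$. If $L(R_n)$ were formal this would force $0\in\langle\xi,\dots,\xi\rangle$, contradicting the previous step. Therefore $L(R_n)$ is not formal for any $n\ge2$.

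The main obstacle is the computation in the second paragraph, namely verifying that $c=\sum_{m=1}^{n}y_my_{n+1-m}$ is not a coboundary in $L(R_n)$ — equivalently, that $m_{n+1}\neq0$ in the minimal model. Granting the ring structure of $H^*(L(R_n))$ and the vanishing $\xi^2=0$, the rest is routine Massey-product formalism; pinning down that this specific degree-$2$ cocycle is cohomologically non-trivial is the one step that needs genuine input — either the explicit chain-level bookkeeping against the $2$-periodic projective resolution of $\mathbb{K}$ over $\Lambda_n$, or a citation of the known $A_\infty$-structure on the Ext-algebra of a truncated polynomial ring.
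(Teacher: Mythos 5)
Your route is genuinely different from the paper's. The paper's proof does no chain-level computation inside $L(R_n)$ at all: it compares zeroth Hochschild cohomologies, showing $\HH^0(L(R_n),L(R_n))$ is \emph{finite} dimensional (by Keller's theorem it is the zeroth singular Hochschild cohomology of $\Lambda_n$, i.e.\ the stable center of a finite dimensional self-injective algebra), while $\HH^0$ of $\mathbb{K}[\epsilon,u,u^{-1}]$ with zero differential is \emph{infinite} dimensional (computed from an explicit dg-projective bimodule resolution); since $A_\infty$-quasi-isomorphisms preserve Hochschild cohomology, $L(R_n)$ cannot be formal. Your Massey-product strategy is in principle viable and, if completed, yields finer information (it identifies the obstruction $m_{n+1}$, which the paper records only afterwards, in the description of the minimal $A_\infty$-model), but as written it has two gaps.

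The first gap you flag yourself: showing that $c=\sum_{m=1}^{n}y_my_{n+1-m}$ is not a coboundary. This does require real work: the degree-$1$ component of $L(R_n)$ is infinite dimensional, word length in the generators is not preserved by the Cuntz--Krieger relations (e.g.\ $y_1=\sum_k y_1y_kx_k$), and even the preliminary fact $[y_1]\neq 0$ in $H^1$, which your argument also needs, is not free. The second gap is unflagged and is the more serious structural one: the indeterminacy formula $\xi\cdot H^1+H^1\cdot\xi$ is the indeterminacy of the \emph{triple} Massey product only, so your argument is complete (modulo the first gap) only for $n=2$. For $n\geq 3$ the $(n+1)$-fold Massey product is not a coset of that subgroup; its indeterminacy involves lower matric Massey products, and exhibiting one defining system with nonzero value does not by itself exclude $0\in\langle\xi,\dots,\xi\rangle$. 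The clean repair is to phrase the obstruction via Kadeishvili's theory: in a minimal model with $m_3=\dots=m_n=0$, the Hochschild class of $m_{n+1}$ is a quasi-isomorphism invariant which must vanish if the algebra is formal; here it is nonzero because any Hochschild coboundary $\delta(f)$ of the relevant bidegree satisfies $\delta(f)(\epsilon,\dots,\epsilon)\in \epsilon\cdot H^1+H^1\cdot\epsilon=\mathbb{K}\epsilon^2=0$, whereas $m_{n+1}(\epsilon,\dots,\epsilon)=u\neq 0$. With these two points supplied, your argument goes through; without them it is a plan rather than a proof.
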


\begin{proof}
In this proof, we write $L=(L(R_n), \partial)$. We first claim that the singular Yoneda algebra $\underline{\mathrm{Ext}}_{\Lambda_n}^*(\mathbb K, \mathbb K)$ is isomorphic to the graded commutative algebra $\mathbb K[\epsilon, u, u^{-1}]$ with $\epsilon^2=0$, which is graded by means of $|\epsilon| = 1$ and $|u| = 2$.

Indeed, as mentioned above, we identify $\mathbf{D}_{\rm sg}(\Lambda_n)$ with $\Lambda_n\mbox{-\underline{\rm mod}}$. Denote by $\Omega$ the syzygy endofunctor on $\Lambda_n\mbox{-\underline{\rm mod}}$. For each $i$, $\Sigma^i(\mathbb{K})$ corresponds to $\Omega^{-i}(\mathbb{K})$. In view of Remark~\ref{rem:deformation}(3), we have
$$\underline{\mathrm{Ext}}_{\Lambda_n}^*(\mathbb K, \mathbb K)\simeq \bigoplus_{i\in \mathbb{Z}} { \underline{\rm Hom}}_{\Lambda_n}(\mathbb{K}, \Omega^{-i}(\mathbb{K})),$$
where $\underline{\rm Hom}$ denotes the Hom spaces in $\Lambda_n\mbox{-\underline{\rm mod}}$.
The following fact is well known: if $i$ is even, we have $\Omega^i(\mathbb{K})\simeq \mathbb{K}$; if $i$ is odd,  we have $\Omega^i(\mathbb{K})\simeq \Lambda_n/(x^n)$. It follows that each homogeneous component of $\underline{\mathrm{Ext}}_{\Lambda_n}^*(\mathbb K, \mathbb K)$ is one dimensional, which is represented by the obvious morphism from $\mathbb{K}$ to $\mathbb{K}$, or from $\mathbb{K}$ to $\Lambda_n/(x^n)$, namely the map sending $1_\mathbb K$ to $x^{n-1}+(x^n)$. The multiplication of $\underline{\mathrm{Ext}}_{\Lambda_n}^*(\mathbb K, \mathbb K)$ is induced by the composition of morphisms in $\Lambda_n\mbox{-\underline{\rm mod}}$. Then the claim follows readily.

Recall that  $A_\infty$-quasi-isomorphisms preserve Hochschild cohomology. Therefore, for the required result, it suffices to show that the zeroth Hochschild cohomology of $L$ and $\mathbb K[\epsilon, u, u^{-1}]$ are not isomorphic.

 On one hand, we have
 $$\HH^0(L, L) \simeq \HH^0(\mathbf{per}_{\rm dg}(L), \mathbf{per}_{\rm dg}(L))\simeq \HH^0(\mathbf{S}_{\rm dg}(\Lambda_n), \mathbf{S}_{\rm dg}(\Lambda_n))\simeq  \HH^0_{\sg}(\Lambda_n, \Lambda_n),$$
 where the leftmost isomorphism follows from \cite[Theorem~4.6~c)]{Kel03}, the middle one follows by combining the isomorphism in  Theorem ~\ref{thm:quiveralgebra} and \cite[Theorem~4.6~b)]{Kel03}, and the last one follows from \cite[Theorem~1.1]{Kel18}. Here, $\HH^0_{\rm sg}$ denotes the zeroth singular Hochschild cohomology. Since $\Lambda_n$ is selfinjective, $ \HH^0_{\sg}(\Lambda_n, \Lambda_n)$ is isomorphic to the stable center of $\Lambda_n$; see \cite[Example~3.19]{LZZ}. In particular, it is finite dimensional.

 On the other hand, we claim that the zeroth Hochschild cohomology of
 $$\mathbb K[\epsilon, u, u^{-1}]=\mathbb{K}[\epsilon]\otimes \mathbb{K}[u, u^{-1}]$$
  is infinite dimensional. This will complete the proof.

  Indeed, by \cite[Subsection~6.6]{Kel94}, the normalized bar resolution of $\mathbb K[\epsilon]$ is given by
 \[
P:= \bigoplus_{i \geq 0} \mathbb K [\epsilon] \otimes (s\overline{\mathbb K [\epsilon]})^{\otimes i} \otimes \mathbb K[\epsilon] = \bigoplus_{i \geq 0} \mathbb K [\epsilon] \otimes (s\mathbb{K}\epsilon)^{\otimes i}\otimes \mathbb K[\epsilon],
 \]
whose differential is given by the external one, namely
$$d_{\rm ex}(1\otimes (s\epsilon)^{\otimes i}\otimes 1)=\epsilon\otimes (s\epsilon)^{\otimes i-1} \otimes 1-1\otimes (s\epsilon)^{\otimes i-1} \otimes \epsilon.$$
Here, we identify $s\overline{\mathbb K [\epsilon]}$ with $s\mathbb{K}\epsilon$, which is concentrated in degree zero. We have  the following well-known dg-projective bimodule resolution of $\mathbb K[u, u^{-1}]$
  \[
 Q:= \mathbb K[u, u^{-1}] \otimes s\mathbb K \otimes \mathbb K[u, u^{-1}]  \bigoplus   \mathbb K[u, u^{-1}] \otimes \mathbb{K}\otimes \mathbb K[u, u^{-1}],
  \]
  where $s\mathbb K$ denotes the one dimensional  space concentrated in degree $-1$. The differential $d$ on $Q$ is uniquely determined  by
  $$d(1 \otimes s \otimes 1)=1 \otimes 1 \otimes  1 - u \otimes 1 \otimes u^{-1} \mbox{ and } d(1 \otimes1\otimes  1)=0.$$

  We observe that $P \otimes Q$ is a dg-projective bimodule resolution of $\mathbb K[\epsilon, u, u^{-1}]$. Therefore, the Hochschild cohomology of $\mathbb K[\epsilon, u, u^{-1}]$ is computed by the following Hom complex
  \[
 \mathrm{Hom}_{{\mathbb K[\epsilon, u, u^{-1}]}^e} (P \otimes Q, \mathbb K[\epsilon, u, u^{-1}]),
  \]
  where ${\mathbb K[\epsilon, u, u^{-1}]}^e$ denotes the enveloping algebra. The above Hom complex is isomorphic to
  $$
  \mathrm{Hom}( \bigoplus_{i\geq 0}(s\mathbb{K}\epsilon)^{\otimes i}\otimes (s\mathbb{K}\oplus \mathbb{K}), \mathbb K[\epsilon, u, u^{-1}]);$$
  moreover, the induced differential on the latter complex is zero. It follows that the zeroth Hochschild cohomology of $\mathbb K[\epsilon, u, u^{-1}]$ is isomorphic to the zeroth  component
  \[
  \prod_{i \geq 0} \mathbb K \oplus \prod_{i \geq 0}  \mathbb K,
  \]
 of the above Hom complex,  which is clearly infinite dimensional. This proves the claim.
   \end{proof}

Using the homotopy deformation retract constructed in \cite[Subsection~5.6]{Dyc}, we may obtain the minimal $A_\infty$-model of $L(R_n)$ for $n \geq 2$:
\begin{align*}
 (\mathbb K[\epsilon, u, u^{-1}]; m_1=0, m_2, m_3,\dotsb)
 \end{align*}
 where $m_2$ is the  product  of  $\mathbb K[\epsilon, u, u^{-1}]$ and the only nonzero higher  product is \begin{align*}
m_{n+1}(\epsilon u^{k_1} \otimes \epsilon u^{k_2} \otimes \dotsb \otimes \epsilon  u^{k_{n+1}}) & = u^{k_1 + k_2+ \dotsb + k_{n+1}+1}
\end{align*}
for any $k_1, k_2, \dotsc, k_{n+1} \in \mathbb Z$.

The above $A_\infty$-algebra might be viewed as  a localization of the $A_{\infty}$-algebra structure on  the Yoneda algebra
$$\mathrm{Ext}_{\Lambda_n}^*(\mathbb K, \mathbb K) = \mathbb K[\epsilon, u]$$
with respect to the central element $u$; see \cite[Example~6.3]{LPWZ}. We refer to \cite{CLT} for a $\mathbb Z/2\mathbb Z$-graded version of the above $A_\infty$-algebra, which is obtained from the  category $\mathrm{MF}(\mathbb K [x], x^{n+1})$ of matrix factorizations.

\vskip 10pt

\noindent {\bf Acknowledgements.}\quad We are very grateful to Bernhard Keller for his encouragement, and to Bernhard Keller and Yu Wang for agreeing to write the appendix.  We thank Pere Ara for the reference \cite{CO}, and thank   Martin Kalck and Julian K\"{u}lshammer for helpful comments. This work is supported by the National Natural Science Foundation (Nos.\ 12131015, 	
12071137 and 12161141001) and the Alexander von Humboldt Stiftung.

\vskip 5pt

\appendix
\section{DG Leavitt path algebras for singularity categories, by Bernhard Keller and Yu Wang}
\label{app:A}

In this appendix, we present an alternative proof of Theorem~\ref{thm:quiveralgebra}.
 It is based on Koszul--Moore duality as described
in \cite{Kel03a} and on derived localizations as described in \cite{BCL18}.

\subsection{Modules and comodules}

Let $k$ be a field and $Q$ a finite quiver.
Let $A$ be the quotient $kQ/I$ of the path algebra $kQ$ by
an admissible ideal $I$ (cf.\ Subsection~\ref{ss:The quiver case} for the terminology). Let $R=kQ_0$
be the subalgebra of $A$ generated by the lazy idempotents $e_i$, $i\in Q_0$, and let
$J$ be the Jacobson radical of $A$ (which equals the ideal generated by the arrows).
We have the decomposition $A=R\oplus J$ in the category of $R$-bimodules and we
view $A$ as an augmented algebra in the monoidal category of $R$-bimodules with
the tensor product $\ten_R$. Notice that the vector space $kQ_1$ whose basis is formed
by the arrows of $Q$ is naturally an $R$-bimodule and that the path algebra $kQ$ identifies
with the tensor algebra $T_R(kQ_1)$.

For an $R$-bimodule $M$, we define the dual bimodule by
\[
M^\vee = \Hom_{R^e}(M,R^e).
\]
For example, for $M=kQ_1$, the dual bimodule $M^\vee$ canonically identifies with
$kQ_1^*$, where $Q^*$ is the quiver with the same vertices as $Q$ and whose
arrows are the $\alpha^*\colon j\to i$ for each arrow $\alpha\colon i\to j$ of $Q$.
Notice that for an arbitrary $R$-bimodule $M$, the underlying vector space
of $M^\vee$ identifies with the dual
\[
DM= \Hom_k(M,k)
\]
via the map taking an $R$-bilinear map $f: M \to R^e$ to the linear form
$t\circ f$, where $t: R^e \to k$ takes $e_i\ten e_j$ to $\delta_{ij}\in k$.

As an $R$-bimodule, the algebra $A$ is finitely generated projective so that
$C=A^\vee$ becomes a coalgebra in the category of $R$-bimodules.
We have $C=R\oplus J^\vee$ and the induced comultiplication
\[
J^\vee \to J^\vee \ten_R J^\vee
\]
is conilpotent because the Jacobson radical $J$ of $A$ is nilpotent. Thus, we
may view $C$ as an augmented cocomplete differential graded coalgebra
(in the sense of \cite[Section~2]{Kel03a}), which is moreover concentrated
in degree $0$.

Since $A$ is finitely generated projective as an $R$-bimodule, for each right
$R$-module $M$, we have natural isomorphisms
\[
\Hom_{R}(M\ten_R A, M) \iso M\ten_R A^\vee \ten_R \Hom_R(M,R)  \iso  \Hom_{R}(M, M\ten_R C).
\]
This allows us to convert right $A$-modules into right $C$-comodules.
More precisely, we have an isomorphism of categories
\[
\Mod A \iso \Com C \ko
\]
where $\Mod A$ denotes the category of all right $A$-modules
and $\Com C$ the category of all right $C$-comodules.
Clearly, this isomorphism restricts to an isomorphism
\[
\mod A \iso \com C
\]
between the categories of finite-dimensional modules respectively comodules.

\subsection{Koszul--Moore duality} We refer to  \cite[Section~4]{Kel03a} for
all non defined terminology and for proofs or references to proofs of the claims we make.

Let $\Omega C$ be the cobar construction of $C$ over $R$.
Thus, the underlying graded algebra of $\Omega C$ is the tensor algebra
$T_R(\Sigma^{-1} J^\vee)$ on the desuspension $\Sigma^{-1} J^\vee = J^\vee[-1]$ of $J^\vee=C/R$.
The differential of $\Omega C$ encodes the comultiplication $J^\vee \to J^\vee \ten_R J^\vee$.
The projection $C \to J^\vee$ composed with the inclusion $\Sigma^{-1} J^\vee \to \Omega C$
is the canonical {\em twisting cochain} $\tau\colon C \to \Omega C$. It is an $R$-bimodule
morphism of degree $1$ satisfying
\[
d(\tau) + \tau * \tau =0\ko
\]
where  $d(\tau) = d_{\Omega C} \circ \tau + \tau \circ d_C$ and $*$ is the convolution product
on $\Hom_{R^e}(C, \Omega C)$. For a dg $\Omega C$-module
$L$, the twisted tensor product $L\ten_\tau C$ is defined by twisting the differential
on $L\ten_R C$ using $\tau$, and similarly, for a cocomplete dg $C$-comodule
$M$, the twisted tensor product $M\ten_\tau (\Omega C)$ is defined by twisting
the differential on $M\ten_R (\Omega C)$. We get a pair
of adjoint functors
\[
F=?\ten_\tau (\Omega C) \colon \dgCom(C) \leftrightarrow \dgMod (\Omega C) \colon ?\ten_\tau C = G\ko
\]
where $\dgMod(\Omega C)$ denotes the category of dg right $\Omega C$-modules and
$\dgCom(C)$ the category of cocomplete dg right $C$-comodules. These functors
form in fact a Quillen equivalence for the standard Quillen model structure on
$\dgMod(\Omega C)$ and a suitable Quillen model structure on $\dgCom(C)$,
cf.~\cite{LH03}. Thus, they induce quasi-inverse equivalences
\[
F\colon \cd^c(C) \iso \cd(\Omega C) \colon G\ko
\]
where $\cd^c(C)$ is the coderived category of $C$ and $\cd(\Omega C)$ the
derived category of $\Omega C$. The equivalence $F$ sends $C$ to $R$ and
$R$ to $\Omega C$.

Although it is not necessary for the sequel, let us point out that $\cd^c(C)$ is
equivalent to the homotopy category $\ch(\Inj A)$ of complexes of injective
$A$-modules; cf. Proposition~\ref{prop:der-T}.

Indeed, we know from \cite{LH03} that the fibrant-cofibrant
objects of $\dgCom C$ are exactly the retracts of the cofree dg comodules
(which are automatically cocomplete since $C$ is conilpotent) and that two
morphisms between fibrant-cofibrant dg comodules are homotopic in the
model-theoretic sense if and only if they are homotopic in the classical sense. Thus,
the homotopy category $\cd^c(C)$ of the Quillen model category $\dgCom C$
is equivalent to the usual homotopy category of all complexes of
right $C$-comodules which are retracts of complexes of cofree comodules.
It is not hard to see that this homotopy category is equivalent to the
(slightly larger) homotopy category of complexes
of dg comodules with injective components.
Via the isomorphism $\Com C \to \Mod A$, the complexes of $C$-comodules
with injective components correspond exactly to the
complexes of $A$-modules with injective components so that
we get the equivalence
\[
\ch(\Inj A) \iso \cd^c(C).
\]

\subsection{Description of the singularity category}
Since $R$ generates $D^b(\com C)$ and $\Omega C$
generates $\per(\Omega C)$, the equivalence $\cd^c(C) \iso \cd(\Omega C)$
induces an equivalence
\[
\cd^b(\com C) \iso \per(\Omega C).
\]
By composition with the isomorphism $\cd^b(\mod A) \iso \cd^b(\com C)$, we get
an equivalence $\cd^b(\mod A) \iso \per(\Omega C)$ which sends $R$ to $\Omega C$.
In particular, we get an induced algebra isomorphism
\[
\Ext_A^*(R,R) \iso \Ext^*_{\Omega C}(\Omega C, \Omega C) \iso H^*(\Omega C).
\]
The isomorphism $\cd^b(\mod A) \iso \cd^b(\com C)$ sends the injective
cogenerator $DA \iso A^\vee$ to the cofree comodule $C$ and the equivalence
$\cd^b(\com C) \iso \per(\Omega C)$ sends $C$ to $R$. Thus, we get an induced
equivalence
\[
\cd^b(\mod A)/\thick(DA) \iso \per(\Omega C)/\thick(R).
\]
By composing with the duality functor
\[
D: \cd^b(\mod A^{\op})^{\op} \iso \cd^b(\mod A)
\]
we find an equivalence
\[
\sg(A^{\op})^{\op} \iso \per(\Omega C)/\thick(R) \ko
\]
where $\sg(A^{\op})$ denotes the singularity category of $A^{\op}$.

\subsection{Description of the singularity category as a derived localization}
\label{ss:derived localization}
We put $V=\Sigma^{-1} J^\vee$ so that $\Omega C = T_R(V)$
as a graded algebra.
We have an exact sequence of dg $\Omega C$-modules
\[
0 \to K \to \Omega C \to R \to 0.
\]
Its underlying sequence of graded modules identifies with
\[
0 \to V \ten_R T_R(V) \to T_R(V) \to R \to 0 \ko
\]
where the morphism $V\ten_R T_R(V) \to T_R(V)$ is
just multiplication. Notice that the differential on $K=V\ten_R T_R(V)$
is not $\id_V \ten d_{\Omega C}$ but is induced by that of
$\Omega C$ via the inclusion $K \to \Omega C$. It is not hard
to see that the cone over $K \to \Omega C$ is isomorphic
to $C \ten_\tau \Omega C$. This shows in particular that
this cone lies in $\pretr(\Omega C)$, the closure of $\Omega C$
under shifts and graded split extensions in the category of
dg modules. It follows that $K$ also lies in this category.
Thus, we wish to describe the localization of
$\per(\Omega C)$ with respect to the thick subcategory
generated by the cone over the morphism
\[
V\ten_R (\Omega C) = K \to \Omega C
\]
between two dg $\Omega C$-modules in $\pretr(\Omega C)$.

For this description, let us first recall the construction of universal localizations.
Let $B$ be a hereditary ring and  $S$ a set of morphisms $s: P_1 \to P_0$ between
finitely generated projective (right) $B$-modules. Let $B_S$ be the
universal localization of $B$ with respect to $S$ in the sense of Cohn
\cite{Cohn85}. Thus, the ring $B_S$ is endowed with
a morphism $B \to B_S$ which is universal among the ring morphisms $B \to B'$ such
that $s \ten_B B' : P_1\ten_B B' \to P_0 \ten_B B'$ is invertible for each $s\in S$. If $s$
is a morphism between finitely generated free modules
given by left multiplication by a $p\times q$-matrix $M$, then $B_s$ is obtained from
$B$ by formally adjoining the entries of a matrix $M'$ satisfying
$M M' = I_p$ and $M' M = I_q$. Of course, mutatis mutandis, these
constructions apply to graded hereditary rings and sets $S$ of homogeneous
morphisms of degree $0$.

In our setting, we apply the above to the graded
algebra $B=T_R(V)$. Let $L$ be the graded algebra obtained from $T_R(V)$
by adjoining all the matrix coefficients of a formal inverse of the morphism
(given by multiplication)
\[
V \ten_R T_R(V) \to T_R(V)
\]
between free graded $T_R(V)$-modules. Recall that
$\Sigma V$ is the $R^e$-module dual to the $R^e$-module $J=kQ_1$.
Thus, the $R^e$-module $V$ has the $R^e$-basis  $(\alpha^*)$ dual to the basis $(\alpha)$ of
$J$ formed by the arrows of $Q$ and each $\alpha^*$ is of degree $1$.
If $g$ is the formal inverse of the morphism $V\ten_R T_R(V) \to T_R(V)$, we can write
\[
g(1) = \sum_{\alpha} \alpha^*\ten \alpha.
\]
It is easy to check that requiring the two compositions to be the
respective identities amounts to imposing the Cuntz--Krieger relations; cf. \cite[Proposition~5.2]{Smi}. Thus, the algebra $L$ becomes isomorphic
to a graded Leavitt path algebra; cf. Proposition~\ref{prop:iso-path}(2).
We endow $L$ with the unique differential such that the canonical morphism
\[
\Omega C \to L
\]
becomes a morphism of dg algebras. Clearly, an induction along
the morphism $\Omega C \to L$ induces a triangle functor
\[
\per(\Omega C) \to \per(L),
\]
which annihilates the cone over $K \to \Omega C$ (indeed, the
image of this morphism in $\per(L)$ is invertible and hence its
cone becomes contractible) and thus induces a triangle functor
\[
\per(\Omega C)/\thick(R) \to \per(L).
\]
We claim that this functor is an equivalence. Indeed, this follows
by combining Theorem~4.36 with (a slight generalization with a
similar proof of) Corollary~4.15 in \cite{BCL18}. In the section below, we sketch an alternative, alas not
yet complete approach to the proof of the equivalence based
on a theorem of Neeman--Ranicki \cite{NeemanRanicki04}.

By composition, we obtain the desired equivalence in Theorem~\ref{thm:quiveralgebra}
\[
\sg(A^{op})^{op} \iso \per(L).
\]
It is clear how to obtain a similar description of $\sg(A)$ itself.

\newpage
\subsection{Conjectural approach via Neeman--Ranicki's theorem}

\subsubsection{For rings concentrated in degree $0$}

Let $R$ be a hereditary ring and  $S$ a set of morphisms $s: P_1 \to P_0$ between
finitely generated projective (right) $R$-modules. Let $R_S$ be the
universal localization of $R$ with respect to $S$ in the sense of Cohn
\cite{Cohn85}., cf.~section~\ref{ss:derived localization}.

Let $\cR$ be the localizing subcategory of the
derived category $\cs=\cd R$ generated by the cones $N_s$ over the morphisms $s\in S$.
Put $\ct=\cs/\cR$ so that we have an exact sequence of triangulated categories
\[
0 \to \cR \to \cs \to \ct \to 0.
\]
Clearly the extension of scalars functor $?\lten_R R_S: \cd(R) \to \cd(R_S)$
kills the $N_s$, $s\in S$, and commutes
with arbitrary coproducts. Thus, it kills $\cR$ and we have an induced
canonical triangle functor
\[
\ct=\cs/\cR \to \cd(R_S).
\]
The following theorem is an immediate consequence
of Neeman--Ranicki's work in \cite{NeemanRanicki04}.
\begin{theorem}[Neeman--Ranicki]  \label{thm:Neeman--Ranicki}
The canonical functor $\ct \to \cd(R_S)$ is
an equivalence.
\end{theorem}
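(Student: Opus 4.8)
The plan is to deduce the statement from two inputs of Neeman and Ranicki \cite{NeemanRanicki04}. The first is that, since $R$ is hereditary, the universal localization $\lambda\colon R\to R_S$ is \emph{stably flat}, i.e.\ $\mathrm{Tor}^R_i(R_S,R_S)=0$ for all $i\geq 1$; equivalently, $\lambda$ is a homological epimorphism, so that the multiplication map $R_S\lten_R R_S\to R_S$ is an isomorphism in $\cd(R_S)$. The second is the standard package of derived-category consequences of being a homological epimorphism. Granting these, one finishes by identifying the kernel of the associated Bousfield localization with $\cR$.

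First I would record the stable flatness. For $i\geq 2$ it is automatic, since $R$ hereditary forces $\mathrm{gl.dim}\,R\leq 1$; the vanishing of $\mathrm{Tor}^R_1(R_S,R_S)$ for a universal localization of a hereditary ring is exactly what Neeman and Ranicki establish (building on work of Schofield). I expect this to be the one genuinely substantive ingredient: the hereditary hypothesis is used here and essentially nowhere else, and everything afterwards is formal triangulated algebra.

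Next, assuming $\lambda$ is a homological epimorphism, the extension of scalars $\lambda^*={?}\lten_R R_S\colon \cd(R)\to\cd(R_S)$ is left adjoint to the derived restriction functor $\lambda_*$, and the counit $\lambda^*\lambda_*\to\mathrm{id}$ is an isomorphism; hence $\lambda_*$ is fully faithful. Since $\lambda_*$ preserves arbitrary coproducts, its essential image is a coproduct-closed localizing subcategory of $\cd(R)$, so $\lambda^*$ is a smashing Bousfield localization and induces an equivalence $\cd(R)/\ker(\lambda^*)\iso\cd(R_S)$. It then remains to prove $\ker(\lambda^*)=\cR=\mathrm{Loc}(N_s\mid s\in S)$. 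The inclusion $\cR\subseteq\ker(\lambda^*)$ is immediate, since $N_s\lten_R R_S=\mathrm{Cone}(s\ten_R R_S)\simeq 0$ once each $s$ becomes invertible over $R_S$. For the reverse inclusion I would use that the kernel of a smashing localization is the localizing subcategory generated by $\mathrm{Cone}(R\to\lambda_*\lambda^*R)=\mathrm{Cone}(\lambda)$, together with the key fact --- again essentially \cite{NeemanRanicki04}, via the presentation of $R_S$ as an iterated extension and filtered colimit assembled from the morphisms in $S$ --- that $\mathrm{Cone}(\lambda)$ already lies in $\cR$.

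Putting the pieces together gives $\ct=\cs/\cR=\cd(R)/\ker(\lambda^*)\iso\cd(R_S)$, and by construction the resulting equivalence is the canonical functor of the statement. The hard part will be the stable flatness of $R_S$ over $R$; once that is available, the identification of the Bousfield kernel and the quotient description are routine. As the appendix notes, the genuinely delicate remaining point --- in upgrading this to the $\per$-level statement of Theorem~\ref{thm:quiveralgebra} --- is instead the comparison of the subcategories of compact objects on the two sides.
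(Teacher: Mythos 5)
Your argument is correct in outline but takes a genuinely different route from the one sketched in the appendix. The appendix works directly inside the Verdier quotient $\ct=\cd(R)/\cR$: one first proves that $\Hom_{\ct}(\pi(R),\Sigma^n\pi(R))$ vanishes for $n\neq 0$, so that $\pi(R)$ is a tilting object and $\ct\simeq\cd(E)$ with $E=\Hom_{\ct}(\pi(R),\pi(R))$, and one then identifies $R\to E$ with the universal localization $R\to R_S$. You instead start from the $R_S$-side: stable flatness makes $\lambda\colon R\to R_S$ a homological epimorphism, so that $\lambda_*$ is fully faithful, $\lambda^*=?\lten_R R_S$ is a smashing localization with $\cd(R)/\ker(\lambda^*)\simeq\cd(R_S)$, and it remains to identify $\ker(\lambda^*)$ with $\cR$. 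The two packagings rest on the same substantive content of \cite{NeemanRanicki04}: there the negative self-extensions of $\pi(R)$ in $\ct$ are identified with the groups $\mathrm{Tor}^R_{*}(R_S,R_S)$, which is precisely why ``stably flat'' is equivalent to the canonical functor being an equivalence, and the hereditary hypothesis enters only to kill $\mathrm{Tor}_1$ (after Schofield). What your version buys is the smashing/recollement formalism and an argument that applies verbatim to any stably flat universal localization; what the tilting version buys is the realization of $R_S$ as an endomorphism ring in $\ct$, which is the form of the statement actually reused in the graded and dg upgrades later in the appendix. Two remarks on your kernel identification: the reduction of $\ker(\lambda^*)$ to the localizing subcategory generated by $\mathrm{Cone}(\lambda)$ is indeed formal (the acyclization functor is $M\mapsto M\lten_R\Sigma^{-1}\mathrm{Cone}(\lambda)$, and the class of $M$ with $M\lten_R\mathrm{Cone}(\lambda)$ in that subcategory is localizing and contains $R$), but the remaining step that $\mathrm{Cone}(\lambda)$ lies in $\cR$ is not routine --- in \cite{NeemanRanicki04} it is extracted from the same computation of morphism spaces in the quotient, so your sketch is not fully independent of the tilting computation it appears to bypass.
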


\begin{proof}[Sketch of proof] Let $\ct=\cd(R)/\cn$ and let
$\pi: \cd(R) \to \cd(R)/\cn$ be the quotient functor. One first shows that
$\Hom_\ct(\pi(R), \Sigma^n \pi(R))$ vanishes for $n\neq 0$. Thus,
the image $\pi(R)$ is a tilting object in $\ct$ and we have a triangle
equivalence $\cd(E) \iso \ct$, where $E=\Hom_\ct(\pi(R),\pi(R))$.
Now one shows that the morphism $R \to E$ given by $\pi$ identifies with
the universal localization $R \to R_s$. The detailed arguments
are contained in \cite{NeemanRanicki04}.
\end{proof}

\begin{example} Let $k$ be a field and $Q$ a finite quiver.
For each vertex
$i$ of $Q$ which is the source of at least one arrow of $Q$, let
$s_i$ be the morphism
\[
P_i \to \bigoplus_{\alpha: i \to j} P_j
\]
where $P_i=e_i kQ$ and the component of the map associated with $\alpha$ is
the left multiplication by $\alpha$. Let $S$ be the (finite) set of the $s_i$.
Clearly the hypotheses of the theorem hold so that $\cd(R_S)$ identifies
with the quotient of $\cd(R)$ by the localizing subcategory generated by
the cokernels (equivalently: cones) of the $s_i$.
\end{example}

Let us observe that the above theorem easily generalizes
from rings to small categories and to small graded categories.
Of course, its analogue holds for small graded $k$-categories, where
$k$ is a field. So let $\cp$ be a small graded $k$-category
whose category of graded modules (i.e. the category of
$k$-linear graded functors with values in the category of
$\Z$-graded vector spaces) is hereditary. Let $S$ be a
set of morphisms of $\cp$ and $\cp_S$ the localization
of $\cp$ at the set $S$ in the sense of Gabriel--Zisman
\cite{GabrielZisman67}. For example, if $A$ is a graded
$k$-algebra and $S$ a set of homogeneous morphisms in the
category of finitely generated graded projective right
$A$-modules, then $\cp_S$ is Morita-equivalent to
the universal localization $A_S$ of $A$ at $S$; cf. \cite[Proposition~3.1]{CY}.

Now let us denote by $\cp_{hS}$ the localization of
$\cp$ as a dg category in the sense of Drinfeld \cite{Dri}.
By the main result of \cite{Dri}, the canonical functor
\[
\ct=\cd(\cp)/\cn \to \cd(\cp_{hS})
\]
is an equivalence. As a consequence, we obtain the
following variant of Neeman--Ranicki's theorem.

\begin{theorem}[Neeman--Ranicki] The canonical morphism $\cp_{hS} \to \cp_S$
is a quasi-equivalence.
\end{theorem}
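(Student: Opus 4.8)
The plan is to derive the statement from the two equivalences of triangulated categories already at hand: the equivalence $\cd(\cp)/\cn\xrightarrow{\sim}\cd(\cp_{hS})$ supplied by the main result of \cite{Dri}, and the equivalence $\cd(\cp)/\cn\xrightarrow{\sim}\cd(\cp_S)$, which is the graded analogue of Theorem~\ref{thm:Neeman--Ranicki}. Together these give a triangle equivalence $\cd(\cp_{hS})\xrightarrow{\sim}\cd(\cp_S)$, and the task is to realize it as coming from a genuine comparison dg functor. First I would build that dg functor. Viewing the graded category $\cp_S$ as a dg category with zero differential, the Gabriel--Zisman localization functor $\cp\to\cp_S$ sends every morphism of $S$ to an isomorphism, hence to a homotopy equivalence; by the universal property of Drinfeld's dg localization there is then a morphism
\[
\Phi\colon\cp_{hS}\longrightarrow\cp_S
\]
in $\mathbf{Hodgcat}$, compatible with the two localization functors and essentially the identity on objects. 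In particular $H^0(\Phi)$ is dense, so it remains to show that $\Phi$ is quasi-fully faithful.

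Next I would pin down derived induction along $\Phi$. Write $q\colon\cd(\cp)\to\cd(\cp)/\cn$ for the Verdier quotient functor. By \cite{Dri}, induction $F_!$ along $\cp\to\cp_{hS}$ factors as $D\circ q$ for an equivalence $D\colon\cd(\cp)/\cn\xrightarrow{\sim}\cd(\cp_{hS})$, and by the graded Neeman--Ranicki theorem induction $G_!$ along $\cp\to\cp_S$ factors as $N\circ q$ for an equivalence $N\colon\cd(\cp)/\cn\xrightarrow{\sim}\cd(\cp_S)$. Since $\Phi\circ F\simeq G$ in $\mathbf{Hodgcat}$, derived induction along $\Phi$ satisfies $\Phi_!\circ F_!\cong G_!$, whence $(\Phi_!\circ D)\circ q\cong N\circ q$. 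Restriction along a Verdier quotient functor being fully faithful on categories of triangle functors, this forces $\Phi_!\circ D\cong N$, so $\Phi_!\cong N\circ D^{-1}$ is an equivalence of triangulated categories. As $\Phi_!$ carries representable modules to representable modules, it identifies $\cp_{hS}(-,X)$ with $\cp_S(-,X)$ for every object $X$.

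Finally I would conclude via Yoneda. Quasi-full faithfulness of $\Phi$ means that the induced map $\cp_{hS}(X,Y)\to\cp_S(X,Y)$ is a quasi-isomorphism for all $X,Y$. Since representable modules are cofibrant, the $n$-th cohomology of either Hom complex is computed by the group of morphisms, in the appropriate derived module category, from the representable attached to $X$ to the $n$-th shift of the one attached to $Y$; under these identifications the comparison map becomes the bijection induced on morphism groups by the equivalence $\Phi_!$, hence an isomorphism in every degree. Thus $\Phi$ is a quasi-equivalence, as claimed. I expect the real obstacle to lie in the second step: one must check that the triangle equivalence $\cd(\cp_{hS})\simeq\cd(\cp_S)$ assembled from \cite{Dri} and the graded Neeman--Ranicki theorem coincides, up to isomorphism, with derived induction along the canonical comparison functor $\Phi$, and is not merely an abstract equivalence of triangulated categories. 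Granting this, the remaining steps are routine dg-categorical bookkeeping.
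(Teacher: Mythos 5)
Your proposal is correct and follows the same route the paper intends: the paper derives this theorem in one line as a consequence of Drinfeld's result that $\cd(\cp)/\cn\to\cd(\cp_{hS})$ is an equivalence combined with the graded Neeman--Ranicki theorem identifying $\cd(\cp)/\cn$ with $\cd(\cp_S)$, and your argument simply makes explicit the bookkeeping (construction of the comparison functor $\Phi$ via the universal property, identification of $\Phi_!$ with the composite equivalence using full faithfulness of restriction along the Verdier quotient, and the Yoneda step) that the paper leaves implicit. The obstacle you flag at the end is indeed the only real content beyond the two cited equivalences, and your second step resolves it correctly.
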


\subsubsection{The differential graded case}

Let $k$ be a commutative ring and $\dgcat_k$ the category of
small dg $k$-categories.
Let $F:\ca\to\cb$ be a dg functor between small dg categories.
The functor $F$ is a {\em derived localization} if the induced
functor $F^*: \cd\ca \to \cd\cb$ is a Verdier localization.
Recall the two main model structures on the category of dg categories due
to Tabuada \cite{Tab, Tabuada05b}: the Dwyer-Kan model
structure of \cite{Tab}, whose weak equivalences are
the quasi-equivalences, and the Morita model structure of
\cite{Tabuada05b}, whose weak equivalences are the
Morita functors, \ie the dg functors $F:\ca\to\cb$ such that
$F^*: \cd\ca \to \cd\cb$ is an equivalence. Recall that these
are cofibrantly generated model structures whose sets
of generating cofibrations are identical and consist of
the following dg functors:
\begin{itemize}
\item[a)] the inclusion $\emptyset \to k$
of the empty dg category into the one-object dg category
given by the dg algebra $k$ and
\item[b)] the dg functors $S_n : \cc(n) \to \cp(n)$, defined for $n\in \Z$,
where $\cc(n)$ and $\cp(n)$ are the dg categories with two
objects $1,2$ respectively $3,4$ whose only non trivial morphism complexes
are $\cc(n)(1,2) = S^{n-1}$ respectively $\cp(n)=D^n$, where $S^{n-1}$
is $\Si^{n-1} k$ and $D^n$ the cone over the identity of $S^{n-1}$.
The dg functor $S_n$ maps $1$ to $3$ and $2$ to $4$ and
induces the inclusion $S^{n-1} \to D^n$ of $S^{n-1}$ into the cone
over its identity morphism.
\end{itemize}
A dg category is {\em finitely cellular} if it is obtained from the empty
dg category by a finite number of pushouts along functors in a) or b).
Equivalently, it is the path category of a graded quiver $Q$ such
that the set of arrows $Q_1$ admits a filtration
\[
\emptyset = F_0 Q_1 \subset F_1 Q_1 \subset \cdots \subset F_n Q_1 = Q_1
\]
such that the differential maps the graded path category of $(Q_0, F_p Q_1)$ to
that of $(Q_0, F_{p-1} Q_1)$ for each $1\leq p\leq n$.

From now on, let us assume that $k$ is a field and that $A$ is
a finitely cellular dg algebra given by a graded quiver $Q$ and
a differential on $kQ$. Let $\cp$ be the full subcategory of
the dg category of right dg $A$-modules whose objects
are the finite direct sums of the $\Si^p P_i = e_i A$, $i\in Q_0$, $p\in \Z$.
Let $S$ be a set of (closed) morphisms of $\cp$.

\begin{conjecture} The canonical dg functor $\cp_{hS} \to \cp_S$ is
a quasi-equivalence.
\end{conjecture}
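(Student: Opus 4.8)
The plan is to construct the comparison dg functor, to reduce its being a quasi-equivalence to a statement about Hom-complexes, and then to prove that statement by degenerating to the hereditary case already treated above. First, every morphism of $S$ becomes a strict, hence a homotopy, isomorphism in $\cp_S$, so the canonical dg functor $\cp\to\cp_S$ inverts $S$ in $\mathbf{Hodgcat}$; by the universal property of Drinfeld's dg localization \cite{Dri} it factors as $\cp\to\cp_{hS}\xrightarrow{\Phi}\cp_S$ with $\Phi$ fixing all objects. Since $\cp_{hS}$ and $\cp_S$ have the same objects (finite sums of shifts of the $P_i$) and $\Phi$ is the identity on them, $H^0(\Phi)$ is automatically dense, so it remains only to show that $\Phi$ is quasi-fully faithful, i.e.\ induces a quasi-isomorphism $\cp_{hS}(X,Y)\to\cp_S(X,Y)$ for all $X,Y$. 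Equivalently, one must show that the induced functor $\per(\cp_{hS})\to\per(\cp_S)$ is an equivalence; by Drinfeld's theorem the source is the Verdier quotient of $\per(\cp)$ by the thick subcategory generated by the cones of $S$, so the assertion is precisely that the derived localization of $\cp$ at $S$ coincides with the underived, Cohn-type one $\cp_S$.

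To prove this I would forget the differential. The underlying graded category $\cp^{\mathrm{gr}}$ consists of finitely generated graded projective modules over the graded path algebra $kQ$, whose graded module category is hereditary; hence the graded Neeman--Ranicki theorem established above applies and $\cp^{\mathrm{gr}}_{hS}\to\cp^{\mathrm{gr}}_S$ is already a quasi-equivalence. Now reinstate $d$ via the cellular filtration $\emptyset=F_0Q_1\subset\cdots\subset F_nQ_1=Q_1$: since $d$ carries $F_pQ_1$ into paths in $F_{p-1}Q_1$, it induces an exhaustive decreasing filtration on $A$, on $\cp$, and --- provided the two localization constructions are shown to be compatible with it --- on $\cp_{hS}$ and on $\cp_S$, with respect to which $\Phi$ is a filtered dg functor whose associated graded $\mathrm{gr}\,\Phi$ is, up to reindexing, the graded comparison $\cp^{\mathrm{gr}}_{hS}\to\cp^{\mathrm{gr}}_S$. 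That comparison is a quasi-isomorphism on Hom-complexes, so a spectral-sequence (or filtered quasi-isomorphism) argument would promote the statement to $\Phi$ itself, completing the proof. An alternative, following Braun--Chuang--Lazarev \cite{BCL18}, is to avoid the filtration and reduce the conjecture to the \emph{stable flatness} of $\cp\to\cp_S$, namely to the derived-Tor vanishing $\cp_S\otimes_{\cp}^{\mathbf{L}}\cp_S\simeq\cp_S$; in the hereditary situation this is exactly the content of Neeman--Ranicki's theorem \cite{NeemanRanicki04}.

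The main obstacle is precisely the step the appendix leaves open: making the degeneration argument rigorous. Drinfeld's localization $\cp_{hS}$ is not given by a transparent presentation adapted to the cellular filtration --- its Hom-complexes arise from a bar-type construction that adjoins contracting homotopies for the cones of $S$ --- so one must check that $\mathrm{gr}$ of this construction is $\cp^{\mathrm{gr}}_{hS}$ rather than something strictly larger, and similarly for $\cp_S$. Moreover the induced filtration on a Hom-complex $\cp_{hS}(X,Y)$ need not be bounded, so the comparison spectral sequence does not converge automatically; one would have to invoke completeness together with the finiteness of the Hom-complexes in each cohomological degree. Equivalently, on the stable-flatness side, the open point is to propagate the degree-zero, hereditary Tor-vanishing of Neeman--Ranicki through the differential graded structure of $A$, which is the genuinely new ingredient that the present approach has not yet supplied; for now, a complete and unconditional proof of the conjecture is available only through the route of \cite{BCL18} used in the main argument.
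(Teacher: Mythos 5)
The statement you are addressing is stated in the paper only as a \emph{Conjecture}: the appendix explicitly says ``We have not yet proved the conjecture but believe the following strategy is promising,'' sketches a filtration argument, and ends by admitting that the desired conclusion ``is not clear because the filtrations are indexed by $\Z$ rather than $\N$.'' So there is no proof in the paper to compare against; what you have written is, in substance, the same strategy the authors sketch. Your reduction --- factor the canonical functor through $\cp_{hS}$ by the universal property of the Drinfeld localization, observe that it is the identity on objects so only quasi-fully-faithfulness is at stake, pass to the associated graded along the cellular filtration, apply the graded Neeman--Ranicki theorem to $\gr(\cp)_{h\sigma(S)}\to\gr(\cp)_{\sigma(S)}$, and then try to promote the resulting quasi-isomorphism of Hom-complexes back to the comparison functor --- is exactly the authors' route, including their two (unproved in detail) lemmas identifying $\gr(\cp_{hS})$ with $\gr(\cp)_{h\sigma(S)}$ and $\gr(\cp_S)$ with $\gr(\cp)_{\sigma(S)}$.

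You also correctly locate the genuine gaps, which are the same ones the authors leave open: (i) the induced filtration on the Hom-complexes of the two localizations is indexed by $\Z$ and need not be bounded, so the comparison spectral sequence does not converge automatically, and the completeness/finiteness argument you allude to is supplied neither by you nor by the paper; (ii) one must actually verify that the associated graded of Drinfeld's bar-type localization is the graded homotopy localization and not something strictly larger. Your closing remark --- that for the specific set $S$ relevant to the main theorem one can bypass all of this via the stable-flatness/derived-localization machinery of Braun--Chuang--Lazarev --- is precisely how the appendix proves the equivalence it actually needs. In short, your proposal is an accurate and honest account of the intended strategy, but, like the paper, it does not close the conjecture: the $\Z$-indexed convergence issue is the missing ingredient, and no argument you give resolves it.
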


We have not yet proved the conjecture but believe the following
strategy is promising. The given filtration on $A$ yields a filtration
on $\cp$ indexed by $\N$. The localization $\cp_S$ admits a
filtration indexed by $\Z$ such that the functor $\cp \to \cp_S$
becomes universal among the dg functors respecting the
filtration and making the elements of $S$ invertible. We would
like to describe the associated graded category $\gr(\cp_S)$.
Each morphism $s$ in $S$ is given by a matrix whose
entries are linear combinations of paths of $Q$. The
filtration degree $d$ of $s$ is the maximum of the
degrees of the paths appearing with non zero coefficients.
We write $\sigma(s)$ for the image of $s$ in the
$d$th graded component of $\gr(\cp)$ and we write
$\sigma(S)$ for the set of morphisms of $\gr(\cp)$ formed
by the $\sigma(s)$, $s\in S$. It is clear that the $\sigma(s)$
become invertible in $\gr(\cp_S)$.

\begin{lemma} The canonical morphism functor
\[
\gr(\cp)_{\sigma(S)} \to \gr(\cp_S)
\]
is invertible.
\end{lemma}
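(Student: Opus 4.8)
The idea is to read off both categories from the universal property of the \emph{filtered} localization, the single delicate point being a ``no collapsing'' (flatness) statement.

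\emph{Step 1: the filtration on $\cp_S$.} First I would fix the $\Z$-filtration on $\cp_S$: each adjoined inverse $s^{-1}$, $s\in S$, is given filtration degree $-d_s$, where $d_s$ denotes the filtration degree of $s$, and the filtration is extended multiplicatively. The defining relations $ss^{-1}=\mathrm{id}$ and $s^{-1}s=\mathrm{id}$ lie in filtration degree $0$, so this is well defined; it is exhaustive, and it is separated since every morphism of $\cp_S$ is a finite sum of words in the arrows of $\cp$ and the $s^{-1}$, each of finite filtration degree. As the differential of $\cp$ strictly lowers the filtration by $1$ (finite cellularity) and $d(s^{-1})=-s^{-1}\,d(s)\,s^{-1}$, the differential of $\cp_S$ also strictly lowers the filtration by $1$; hence $\gr(\cp)$ and $\gr(\cp_S)$ carry the zero differential and are honest graded categories. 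With this filtration, $\cp\to\cp_S$ is initial among filtered dg functors $\cp\to\cd$ inverting every $s\in S$ with $\cd$-inverse of filtration degree $-d_s$, as follows at once from the presentation by generators and relations; this is the content of the universal property recalled above.

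\emph{Step 2: the comparison functor and its fullness.} Applying $\gr$ yields a graded dg functor $\gr(\cp)\to\gr(\cp_S)$ under which each $\sigma(s)$ becomes invertible, with inverse the class of $s^{-1}$ in graded degree $-d_s$; by the universal property of $\gr(\cp)_{\sigma(S)}$ this produces the canonical comparison functor $\Phi\colon\gr(\cp)_{\sigma(S)}\to\gr(\cp_S)$. The functor $\Phi$ is the identity on objects. It is full as well: any morphism of $\gr(\cp_S)$ is the symbol $\sigma_p(f)$ of some $f\in\cp_S$ of filtration degree $p$, and $\sigma_p(f)$ is the sum of the symbols of those words occurring in $f$ whose letter-wise filtration degree equals $p$; for such a word no prefix can drop the filtration degree, so its symbol equals the corresponding product of classes of arrows of $\gr(\cp)$ and of the $\sigma(s)^{-1}$, which lies in the image of $\Phi$.

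\emph{Step 3: faithfulness --- the main obstacle.} It remains to show that $\Phi$ is faithful, and this is the step I expect to be hard. Equivalently, one must prove that the filtration on $\cp_S$ is minimal, i.e.\ that a word in the arrows of $\cp$ and the $s^{-1}$ never has filtration degree strictly below its letter-wise degree --- no unexpected cancellation occurs. In Rees-theoretic terms, writing $\mathrm{Rees}$ for the Rees category of a filtration, one wants the canonical $k[t]$-linear dg functor
\[
\mathrm{Rees}(\cp)[\widehat{S}^{-1}]\longrightarrow\mathrm{Rees}(\cp_S),
\]
obtained by adjoining to $\mathrm{Rees}(\cp)$ the $t$-homogeneous inverses $\widehat{s^{-1}}=s^{-1}t^{-d_s}$ of $\widehat{s}=s\,t^{d_s}$, to be an isomorphism; after inverting $t$ both sides become $\cp_S\otimes_k k[t,t^{-1}]$, and $\mathrm{Rees}(\cp_S)$ is $t$-torsion free, so the whole issue is that this localized Rees category remain flat over $k[t]$. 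Reducing modulo $t$ would then give $\gr(\cp_S)\cong\gr(\cp)_{\sigma(S)}$, proving the lemma. To establish the flatness I would induct on the cellular filtration $F_\bullet Q_1$ of $A$ and run a Bergman-style confluence (diamond-lemma) argument on words in the arrows and the $s^{-1}$, the reductions being $ss^{-1}=\mathrm{id}$, $s^{-1}s=\mathrm{id}$ together with the rewriting rules induced by the differential; alternatively, when the category of graded $\cp$-modules is hereditary one may invoke the normal-form theory for universal localizations of hereditary rings due to Cohn, Schofield and Neeman--Ranicki. This is precisely the point the present approach does not yet carry out in full.
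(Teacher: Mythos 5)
The paper gives no proof of this lemma: it appears in the appendix's ``alternative, alas not yet complete approach,'' and is stated without argument, so there is nothing to compare your write-up against. Judged on its own terms, your proposal is a plan rather than a proof. Steps 1 and 2 are sound in outline: the filtration on $\cp_S$ generated by assigning $s^{-1}$ degree $-d_s$, the induced comparison functor $\Phi\colon \gr(\cp)_{\sigma(S)}\to\gr(\cp_S)$ obtained from the universal property of the Gabriel--Zisman localization of $\gr(\cp)$, and the fullness of $\Phi$ (every symbol in $\gr(\cp_S)$ is a sum of symbols of words, each of which factors through $\Phi$) are all correctly identified and essentially routine. But the lemma asserts that $\Phi$ is \emph{invertible}, and you explicitly leave faithfulness --- equivalently, the absence of filtration collapse in $\cp_S$, or the $k[t]$-flatness of the localized Rees category --- unproved, offering only two candidate strategies (a Bergman-style confluence argument, or Cohn/Schofield/Neeman--Ranicki normal forms in the hereditary graded case) without executing either. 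That is precisely the content of the lemma; without it nothing is established.

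A smaller internal problem: in Step 1 you assert that the filtration on $\cp_S$ is separated because every morphism is a finite sum of words of finite filtration degree. That argument only gives exhaustiveness. For a $\Z$-indexed increasing filtration, separatedness fails exactly when some nonzero morphism admits representatives as sums of words of arbitrarily low total degree --- which is the same collapse phenomenon you defer to Step 3. So the separatedness claim is asserted before the point at which you acknowledge it as the main obstacle. In short: the framework is right and the difficulty is correctly located, but the proposal does not prove the lemma.
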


Recall that $\cp_{hS}$ is obtained from $\cp$ by adjoining,
for each $s: P_1 \to P_2$ in  $S$,
\begin{itemize}
\item a morphism $t: P_2 \to P_1$ of degree $0$,
\item endomorphisms $h_i$ of $P_i$ homogeneous of degree $-1$
such that $d(h_1)=ts-\id_{P_1}$ and $d(h_2)=st-\id_{P_2}$,
\item a morphism $u: P_1 \to P_2$ of degree $-2$ such that
$d(u) = h_2 s - s h_1$.
\end{itemize}
We see that $\cp_{hS}$ admits a $\Z$-indexed filtration such that
the canonical functor $\cp \to \cp_{hS}$ becomes universal
among the functors respecting the filtration.

\begin{lemma} The canonical functor
\[
\gr(\cp)_{h\sigma(S)} \to \gr(\cp_{hS})
\]
is invertible.
\end{lemma}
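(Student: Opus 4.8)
The plan is to prove the lemma by showing that its two sides are \emph{literally the same dg category}: both $\gr(\cp)_{h\sigma(S)}$ and $\gr(\cp_{hS})$ will be exhibited as the free extension of the graded category $\gr(\cp)$ by the same set of cells, carrying the same differential. Granting that, the canonical dg functor $\gr(\cp)_{h\sigma(S)}\to\gr(\cp_{hS})$ sends generators to generators and is an isomorphism of dg categories, hence in particular invertible in $\mathbf{Hodgcat}$. So the whole point is to compute $\gr(\cp_{hS})$.

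First I would recall the explicit shape of the Drinfeld localization, as described above: forgetting differentials, $\cp_{hS}$ is the free extension of the graded category $\cp$ by, for each $s\colon P_1\to P_2$ in $S$, generators $t_s$ of degree $0$, $h_{1,s},h_{2,s}$ of degree $-1$ and $u_s$ of degree $-2$; its differential is the unique derivation extending $d_\cp$ with $d(t_s)=0$, $d(h_{1,s})=t_ss-\id_{P_1}$, $d(h_{2,s})=st_s-\id_{P_2}$, $d(u_s)=h_{2,s}s-sh_{1,s}$. The same recipe, applied to $\gr(\cp)$ and to the set $\sigma(S)$ in place of $\cp$ and $S$, \emph{is} the definition of $\gr(\cp)_{h\sigma(S)}$; here one first notes that each $\sigma(s)$ is a closed morphism of $\gr(\cp)$, which is immediate from $d_\cp(s)=0$ by passing to the top filtration layer.

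The core of the argument is then the identification of $\gr(\cp_{hS})$. One would write down the $\mathbb Z$-filtration on $\cp_{hS}$ explicitly, extending the given $\mathbb N$-filtration of $\cp$ by declaring $t_s$ to have filtration degree $-d_s$, the $h_{i,s}$ filtration degree $0$, and $u_s$ filtration degree $d_s$, where $d_s$ denotes the filtration degree of $s$; using that $d_\cp$ lowers the filtration of $\cp$ and that composition is compatible with the filtration, one checks that $d_{\cp_{hS}}$ respects this filtration, which is exactly the (asserted) universal filtered property of $\cp\to\cp_{hS}$, now made precise. Because the cells $t_s,h_{i,s},u_s$ are freely adjoined and lie in pure filtration degrees, $\gr$ of the underlying graded category of $\cp_{hS}$ is the free extension of $\gr(\cp)$ by the classes $\bar t_s,\bar h_{i,s},\bar u_s$ in the same degrees, with no further generators. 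It remains to compute the induced differential $\gr(d)$ on these classes as leading (top‑filtration‑degree) parts: since $t_s$ is pure of filtration degree $-d_s$ and compositions with the freely adjoined morphisms do not degenerate, the leading part of $t_ss$ is $\bar t_s\,\sigma(s)$, so $\gr(d)(\bar h_{1,s})=\bar t_s\,\sigma(s)-\id$, and likewise $\gr(d)(\bar h_{2,s})=\sigma(s)\,\bar t_s-\id$, $\gr(d)(\bar u_s)=\bar h_{2,s}\,\sigma(s)-\sigma(s)\,\bar h_{1,s}$, while $\gr(d)(\bar t_s)=0$. This is precisely the defining presentation of $\gr(\cp)_{h\sigma(S)}$, which finishes the proof.

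The step I expect to be the main obstacle is the identification of $\gr(\cp_{hS})$ with a free cell extension of $\gr(\cp)$ with exactly these differentials: one must know that the $\mathbb Z$-filtration on $\cp_{hS}$ is exhaustive and split in each degree, so that taking associated graded genuinely commutes with Drinfeld's free adjunction of cells and produces no hidden lower‑order corrections to the structural differentials, and that the compatibility of composition with the filtration on $\cp$ — which ultimately rests on the finitely cellular structure of $A$ — is sharp enough to pin down the leading parts of $t_ss$, $st_s$, $h_{2,s}s$ and $sh_{1,s}$ as claimed, especially in the negative filtration range occupied by the formal inverses $t_s$ and the degree $-2$ morphisms $u_s$. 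Making this filtration bookkeeping fully rigorous is exactly the ingredient that the appendix flags as not yet complete.
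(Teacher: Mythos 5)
You should be aware that the paper gives no proof of this lemma: the entire subsection is explicitly presented as a ``conjectural approach'' to the localization theorem, and both lemmas in it are stated without argument. So there is no proof of record to compare yours against; what can be assessed is whether your sketch is sound and whether it matches what the authors evidently intend.

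Your strategy --- exhibit both $\gr(\cp)_{h\sigma(S)}$ and $\gr(\cp_{hS})$ as the free extension of $\gr(\cp)$ by the same cells $\bar t_s,\bar h_{1,s},\bar h_{2,s},\bar u_s$ with the same differential, so that the canonical functor is an isomorphism on the nose --- is surely the intended argument, and the outline is correct. Your filtration assignment ($t_s$ in filtration degree $-d_s$, the $h_{i,s}$ in degree $0$, $u_s$ in degree $d_s$) is the unique one making the structural differentials filtration-preserving, and your observation that products with the freely adjoined generators cannot drop in filtration (so that the leading term of $t_ss$ really is $\bar t_s\,\sigma(s)$, etc.) is the right reason the induced differential on $\gr(\cp_{hS})$ has no lower-order corrections. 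Over the ground field, $\gr$ is monoidal on exhaustive separated filtered vector spaces, which is what makes ``$\gr$ commutes with free adjunction of cells in pure filtration degrees'' work; since every morphism of $\cp_{hS}$ is a finite word in the generators, exhaustiveness and separatedness of the $\Z$-filtration hold, so the obstacle you flag is real but surmountable. Two small corrections. First, to get the canonical functor at all you should invoke the universal property of $\gr(\cp)_{h\sigma(S)}$ applied to $\gr(\cp)\to\gr(\cp_{hS})$ (the images of the $\sigma(s)$ acquire homotopy inverses there), rather than just ``sending generators to generators''; your isomorphism argument then shows this functor is bijective on generators and hence invertible. Second, your closing sentence misattributes the gap the authors acknowledge: what they flag as incomplete is not this lemma but the final deduction of a quasi-equivalence $\cp_{hS}\to\cp_S$ from the equivalence of associated gradeds, which fails to be automatic because the filtrations are indexed by $\Z$ rather than $\N$. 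That last step is genuinely open in the appendix; your lemma, as you have sketched it, is not where the difficulty lies.
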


Clearly, we have a commutative square.
\[
\xymatrix{
	\mathrm{gr}(\cp)_{h\sigma(S)} \ar[d] \ar[r] &\mathrm{gr}(\cp_{hS}) \ar[d]\\
	\mathrm{gr}(\cp)_{\sigma(S)}  \ar[r] & \mathrm{gr}(\cp_{S})
}\]
By the two preceding lemmas, the horizontal functors are invertible.
By Neeman--Ranicki's theorem, the left vertical arrow is a quasi-equivalence.
Thus, the canonical functor
\[
\gr(\cp_{hS}) \to \gr(\cp_S)
\]
is a quasi-equivalence. We would like to conclude that
the canonical functor $\cp_{hS} \to \cp_S$ is a quasi-equivalence.
Unfortunately, this is not clear because the filtrations  are
indexed by $\Z$ rather than $\N$.

\bibliography{}

\vskip 10pt

 {\footnotesize \noindent Xiao-Wu Chen\\
 Key Laboratory of Wu Wen-Tsun Mathematics, Chinese Academy of Sciences\\
 School of Mathematical Sciences, University of Science and Technology of China, Hefei 230026, Anhui, PR China\\
 xwchen@mail.ustc.edu.cn}

 \vskip 5pt

 {\footnotesize \noindent Bernhard Keller\\
Universit\'e de Paris, UFR de math\'ematiques, CNRS IMJ--PRG \\
8 place Aur\'elie Nemours, 75013 Paris, France\\
bernhard.keller$\symbol{64}$imj-prg.fr}

  \vskip 5pt

{\footnotesize \noindent Yu Wang\\
School of Mathematics and Statistics, Taiyuan Normal University, Jinzhong 030619, PR China\\
dg1621017$\symbol{64}$smail.nju.edu.cn\\
and\\
Universit\'e de Paris, UFR de math\'ematiques, CNRS IMJ--PRG \\
8 place Aur\'elie Nemours, 75013 Paris, France\\
yu.wang$\symbol{64}$imj-prg.fr}

\vskip 5pt

{\footnotesize \noindent Zhengfang Wang\\
Institute of Algebra and Number Theory, University of Stuttgart\\
Pfaffenwaldring 57, 70569 Stuttgart, Germany \\
 zhengfangw$\symbol{64}$gmail.com}

\end{document}